\documentclass[11pt]{amsart}

\usepackage{graphicx}
\usepackage{amsmath,amssymb,amsthm,hyperref}
\usepackage[capitalise]{cleveref}
\usepackage{fullpage}
\usepackage{enumerate}
\usepackage{enumitem}
\usepackage{pgf,tikz}
\usetikzlibrary{patterns}
\usetikzlibrary{arrows}
\usetikzlibrary{positioning}
\usepackage{thmtools,thm-restate}

\def\COMMENT#1{}
\let\COMMENT=\footnote
\newtheorem{theorem}{Theorem}[section]
\newtheorem{example}[theorem]{Example}
\newtheorem{lemma}[theorem]{Lemma}

\newtheorem{claim}[theorem]{Claim}
\newtheorem{fact}[theorem]{Fact}

\newtheorem{proposition}[theorem]{Proposition}
\newtheorem{remark}[theorem]{Remark}
\newtheorem{observation}[theorem]{Observation}

\def\eps{\varepsilon}

\theoremstyle{definition}
\newtheorem{define}[theorem]{Definition}

\newcommand{\qedclaim}{\hfill$\blacksquare$}
\newenvironment{proofclaim}{\removelastskip\penalty55\medskip\noindent{\it Proof of the claim.}}{\medskip\phantom{.}\hfill\qedclaim}

\title{Ramsey numbers of multiple copies of a graph and the random Ramsey theorem}
\author{Andrea Freschi \and Ryan R. Martin \and Andrew Treglown}
\thanks{AF: HUN-REN Alfr\'ed R\'enyi Institute of Mathematics, Budapest, Hungary. 
Research partially supported by ERC\break 
\indent Advanced Grants ``GeoScape", no. 882971 and ``ERMiD", no. 101054936.
Email: \texttt{freschi.andrea@renyi.hu}.
\\
\indent RRM: Department of Mathematics, Iowa State University, United States of America.
 Research partially supported \indent by a Simons Collaboration Grant for Mathematicians, \#709641, and by a Fulbright Scholarship at the University 
\indent of Birmingham, UK. Special thanks to the University  of Birmingham  and the US-UK Fulbright Commission. \indent Email: \texttt{rymartin@iastate.edu}.
\\
\indent  AT: School of Mathematics, University of Birmingham, United Kingdom. 
Research supported by EPSRC grant \indent UKRI1117.
Email: \texttt{a.c.treglown@bham.ac.uk}. }
\date{\today}

\begin{document}

\begin{abstract}
A well-known result of Burr, Erd\H{o}s and Spencer [Transactions of the American Mathematical Society, 1975] determines the $2$-colour Ramsey number for any sufficiently large collection of vertex-disjoint copies of a fixed graph $H$ without isolated vertices. A focus of this paper is to give analogous results for the corresponding $r$-colour Ramsey problem.
More precisely we determine, up to an \emph{additive constant}, this Ramsey number in the case when $r=3$ and also in the case when the chromatic number of $H$ is at least $r$. In these cases our results depend on a parameter which, roughly speaking, describes the corresponding class of potential extremal colourings of the complete graph.
Our proofs rely on our notion of {\it $(H,r)$-gadgets}, which is crucial to obtain results that are best-possible up to additive constant terms.
We exploit this notion using linear programming and the Poincar\'e--Miranda theorem.

We also determine, up to a linear error term, the corresponding $r$-colour Ramsey number in the case when $H$ is a complete bipartite graph.
Here, the corresponding class of potential extremal examples exhibits a connection with the well-known clique-edge-covering problem.

We also prove random versions of some of our results. In particular,
we prove a random version of the Burr--Erd\H{o}s--Spencer theorem, thereby generalising the \textit{random Ramsey theorem} of R\"odl and Ruci\'nski [Journal of the American Mathematical Society, 1995]. 
Our proofs make use of coloured versions of the (sparse) regularity lemma and the K{\L}R conjecture for random graphs.
\end{abstract}

\maketitle

\section{Introduction}
\subsection{Ramsey numbers of multiple copies of a graph}\label{sec11}
One of the most central topics in combinatorics is \textit{Ramsey theory}: the study of partitions of mathematical objects, and in particular, what structures one can guarantee in such partitions. Ramsey's foundational work~\cite{Ramsey} yields the following result for graphs: given any $r \in \mathbb N$ and any fixed graph $H$, there is $n \in \mathbb N$ such that every $r$-edge-colouring of the complete graph $K_n$ contains a monochromatic copy of $H$. We write $R_r(H)$ to denote the smallest~$n$ for which the above holds, and set $R(H):=R_2(H)$.

Since the 1930s, there has been significant interest in determining the values of such \textit{Ramsey numbers} $R_r(H)$. 
The most famous subcase of this problem is the case when $r=2$ and $H$ is a complete graph. The value of $R(K_k)$ is only known exactly when $k \leq 4$ and thus there has been a focus on general upper and lower bounds for $R(K_k)$.
In particular, a famous probabilistic argument of Erd\H{o}s~\cite{erdos1947}  yields the bound $R(K_k) \geq 2^{k/2}$, whilst  Erd\H{o}s and Szekeres~\cite{es}
proved that $R(K_k) \leq 4^k$. After a number of improvements to this latter upper bound~\cite{conlon, sah, thom}, Campos, Griffiths, Morris and Sahasrabudhe~\cite{cgms} recently gave the first exponential improvement: there is an $\eps >0$ such that  $R(K_k) \leq (4-\eps)^k$; see also~\cite{balisteretal} for a recent general upper bound on $R_r(K_k)$. The method of~\cite{cgms} was then optimised by Gupta, Ndiaye, Norin and Wei~\cite{gnnw} who proved that $R(K_k) \leq 3.8^k$.

In general, there are relatively few graphs $H$ for which the exact value of $R_r(H)$ is known.
One class of graphs where we do have a better understanding of (two-colour) Ramsey numbers are so-called tilings.
For a fixed graph $H$, an {\it $H$-tiling} is a collection of \hbox{vertex-disjoint} copies of $H$.
For $m\in\mathbb N$, we write $mH$ to denote an $H$-tiling consisting of $m$ copies of $H$.
In the late 1960s,
Erd\H os~\cite[Problem 9]{erdosproblem} raised the question of determining $R(mK_{k})$ for $k \geq 3$.
The following well-known result of Burr, Erd\H{o}s and Spencer~\cite{BurrES} answers this question for sufficiently large $H$-tilings more generally.
\begin{theorem}[The Burr--Erd\H{o}s--Spencer theorem~\cite{BurrES}]\label{theorem:BES_symmetric}
For a fixed graph~$H$ without isolated vertices, there exist constants $C$ and $m_0$ such that $$R(mH)=\bigl(2|H|-\alpha(H)\bigr)m+C$$ provided $m\ge m_0$.
\end{theorem}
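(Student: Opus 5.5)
Write $h:=|H|$ and $\alpha:=\alpha(H)$. The plan has two halves. First, a lower-bound construction shows $R(mH)>(2h-\alpha)m-1$. Second, an upper bound $R(mH)\le (2h-\alpha)m+C'$ holds for some constant $C'$. Together these show that $f(m):=R(mH)-(2h-\alpha)m$ is bounded. To upgrade boundedness to eventual constancy, we show that $f$ is non-increasing for large $m$. Indeed, $R((m+1)H)\le R(mH)+2h-\alpha$ once $m$ is large: given a colouring of $K_N$ with $N=R(mH)+2h-\alpha$, find a monochromatic copy of $K_{2h-\alpha}$-ish structure or use the standard trick below. A bounded integer sequence that is eventually non-increasing is eventually constant, which gives $C$ and $m_0$.

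\textbf{Lower bound.} Take $K_{N}$ with $N=(2h-\alpha)m-1$. Split it into a red clique $A$ of size $hm-1$ and a set $B$ of size $(h-\alpha)m$. Colour all edges inside $B$ and between $A$ and $B$ blue. A red copy of $mH$ must lie in $A$, which is too small. In a blue copy of $mH$, each copy of $H$ uses at most $\alpha$ vertices of $A$, since $A$ is independent in blue. So each copy uses at least $h-\alpha$ vertices of $B$, and $m$ copies need at least $(h-\alpha)m$ vertices of $B$. That is fine, so the construction must be adjusted: take $|B|=(h-\alpha)m-1$ and $|A|=hm$... Balancing the two constraints, the correct choice is $|A|=hm-1$ and $|B|=(h-\alpha)m-1$. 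This gives a lower bound of $(2h-\alpha)m-2$, i.e.\ $R(mH)\ge (2h-\alpha)m-1$. Here we use that $H$ has no isolated vertices only marginally.

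\textbf{Upper bound (main obstacle).} Fix a colouring of $K_N$, $N=(2h-\alpha)m+C'$. Greedily remove disjoint monochromatic copies of $H$ while at least $R(H)$ vertices remain, obtaining $r$ red and $b$ blue copies. If either count reaches $m$ we are done. The key is to ensure the process yields many \emph{``bichromatic gadgets''}: small vertex sets spanning both a red and a blue $H$, whose vertices can be reallocated. The standard BES argument proceeds as follows. Given a red $H$ and a blue $H$ (disjoint), the bipartite colouring between them contains, by a pigeonhole and Ramsey-type argument on bounded-size sets, a configuration allowing either two red copies or two blue copies after adding few leftover vertices. I would repeatedly use the following: among a bounded number $t$ of red copies and $t$ blue copies, together with a few extra vertices, one can convert to an extra copy of the majority colour. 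Iterating shows that the leftover set is bounded unless one colour already has $m$ copies. A counting of vertices then gives the coefficient $2h-\alpha$. When we are forced into the extremal structure, a blue copy uses independent-set vertices from the red side, so the $\alpha$ saving is exactly attained. Making this exchange step precise, with only $O(1)$ waste independent of $m$, is the hard part; I would follow BES's lemma that any 2-coloured $K_{2h}$ containing vertex-disjoint red and blue $H$ can be rearranged, using $\alpha$ overlap.

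Finally, the monotonicity $f(m+1)\le f(m)$ for large $m$ follows similarly. In a colouring on $R(mH)+2h-\alpha$ vertices, first extract a monochromatic $H$ within a structure leaving a colouring on $R(mH)$ vertices that can be extended in the same colour. This uses the same gadget argument, and then concludes as above.
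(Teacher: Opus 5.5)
\paragraph{Verdict.} There is a genuine gap.

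\paragraph{What works.} Your skeleton is sound. The lower-bound construction you finally settle on (a red clique $A$ with $|A|=hm-1$ and $|B|=(h-\alpha)m-1$) is exactly the paper's; the paper does not reprove this theorem but cites it. Upgrading ``bounded below and eventually non-increasing'' to ``eventually constant'' is the right route, and it is how the paper's appendix proves Theorem~\ref{thm:rambi}.

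\paragraph{The gap.} The step that carries all the weight, $R((m+1)H)\le R(mH)+2h-\alpha$ for large $m$ (and likewise your upper bound), is never supplied. The mechanisms you sketch do not work.
\begin{itemize}
\item Removing a monochromatic $H$, or a monochromatic $K_{2h-\alpha}$, from $2h-\alpha$ vertices is useless. The copy of $mH$ found in the remaining $R(mH)$ vertices may have the other colour. You need a $(2h-\alpha)$-vertex structure containing \emph{both} a red and a blue $H$, namely an $H$-bowtie.
\item The lemma you attribute to BES is false. Take $H=K_4$, so $h=4$ and $\alpha=1$. Colour $K_8$ with a red $K_4$ on $A=\{a_1,\dots,a_4\}$ and a blue $K_4$ on $B=\{b_1,\dots,b_4\}$. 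Colour an $A$--$B$ edge red if it lies on the $8$-cycle $a_1b_1a_2b_2a_3b_3a_4b_4$, and blue otherwise. Every vertex of $B$ has red degree $2$ and every vertex of $A$ has blue degree $2$. So $A$ is the only red $K_4$ and $B$ the only blue $K_4$. There is no bowtie, and no two disjoint $K_4$'s of one colour.
\item Consequently, pigeonhole or Ramsey arguments on sets of bounded size cannot produce the exchange you want. You need cliques that are large in terms of $h$.
\end{itemize}

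\paragraph{The missing idea.} It is the bowtie dichotomy used in the proof of Lemma~\ref{robust2colours}. Let $N=R(mH)+2h-\alpha$ and take a $2$-coloured $K_N$.
\begin{itemize}
\item If it contains an $H$-bowtie, delete it. Find a monochromatic $mH$ in the remaining $R(mH)$ vertices, and add the copy of $H$ of that colour from the bowtie.
\item If it contains no bowtie, fix $L\gg h$ divisible by $h$. Greedily cover all but fewer than $R(K_L)$ vertices by vertex-disjoint monochromatic copies of $K_L$.
\begin{itemize}
\item A red $K_L$ and a blue $K_L$ in this cover would be completely joined. By the K\"ov\'ari--S\'os--Tur\'an theorem applied to the majority colour of the cross edges, they would contain $h$-sets $V_1,V_2$ with all $V_1$--$V_2$ edges of one colour.
\item $G[V_1\cup V_2]$ then contains a bowtie: take $\alpha$ vertices of the appropriate clique as the shared independent set. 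This is a contradiction.
\item So all the $K_L$'s have one colour. They give a monochromatic $H$-tiling covering at least $N-R(K_L)\ge (2h-\alpha)(m+1)-1-R(K_L)$ vertices.
\item This is at least $h(m+1)$ for large $m$, since $\alpha\le h-1$.
\end{itemize}
\end{itemize}

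\paragraph{Conclusion.} Combined with $R(mH)\ge(2h-\alpha)m-1$, this shows that $f(m)=R(mH)-(2h-\alpha)m$ is bounded below by $-1$ and eventually non-increasing, hence eventually constant. Your separate upper-bound step is then redundant; it also follows from a maximum bowtie packing by the same clique argument.
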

Here $|H|$ denotes the number of vertices in $H$ and $\alpha (H)$ denotes the size of the largest independent set in $H$.
In the case of triangles, Burr, Erd\H{o}s and Spencer~\cite{BurrES}  showed that in fact
$R(mK_3)=5m$ for every $m \geq 2$.

Given any graph~$H$ without isolated vertices, the following example (see Figure~\ref{fig:BES_symmetric}) shows that 
$R(mH)\geq \bigl(2|H|-\alpha(H)\bigr)m-1$.
Let~$G$ be a complete graph whose vertices are partitioned into two sets $X, Y$ with $|X|=|H|m-1$ and $|Y|=\bigl(|H|-\alpha(H)\bigr)m-1$.
All edges in~$X$ are coloured red, whereas all remaining edges are coloured blue.
Since~$H$ has no isolated vertices, a red copy of~$H$ must lie completely in~$X$.
On the other hand, a blue copy of~$H$ has at most $\alpha(H)$ vertices in~$X$ and thus at least~$|H|-\alpha(H)$ vertices in~$Y$.
Using these observations, it is easy  to check that~$G$ does not contain  a red copy of $mH$ or a blue copy of $mH$. 

\begin{figure}[h!]
\centering
\begin{tikzpicture}
    \fill[fill=blue!60] (0,1.5)--(3,1)--(3,-1)--(0,-1.5)--(0,1.5);
    \filldraw[very thick, fill=red!60] (0,0) circle (1.5cm);
    \filldraw[very thick, fill=blue!60] (3,0) circle (1cm);
    \node at (0,0) {$X$};
    \node at (3,0) {$Y$};
    \node at (-1,-2) {$|X|=|H|m-1$};
    \node at (4,-2) {$|Y|=(|H|-\alpha(H))m-1$};
\end{tikzpicture}
\caption{The extremal construction of Theorem~\ref{theorem:BES_symmetric}.}
\label{fig:BES_symmetric}
\end{figure}

Burr~\cite{Burr}, and subsequently Buci\'c and Sudakov~\cite{BucicS}, provided methods for computing the value of~$C$ in Theorem~\ref{theorem:BES_symmetric} exactly.
Buci\'c and Sudakov~\cite{BucicS} also obtained the current best bounds for~$m_0$. In the case of $K_k$-tilings, their work states that there is a constant $D>0$ such that $R(mK_k)=(2k-1)m+R(K_{k-1})-2$ provided $m\ge 2^{Dk}$. Moreover, the bound on $m$ is essentially tight; see~\cite{BucicS}.
Sulser and Truji\'c~\cite{sulser} obtained better bounds on $m_0$ in Theorem~\ref{theorem:BES_symmetric} in the case when $H$ is sparse (e.g., a graph of bounded maximum degree).
There has also been interest in analogues of Theorem~\ref{theorem:BES_symmetric} in the setting of graphs of large minimum degree; see~\cite{balogh}.

\subsubsection{The multi-colour case}

An aim of this paper is to provide  analogues of Theorem~\ref{theorem:BES_symmetric} for more colours.
More precisely, for every graph $H$ and $m \in \mathbb N$, we 
determine the value of $R_3(mH)$ up to an additive constant. In fact, we  determine the value of 
$R_r(mH)$ up to an additive constant for many choices of $r$ and $H$.

It turns out the most interesting case of this problem is when $H$ has a bipartite component. 
In the case when $H$ has no bipartite component, the following result 
yields the value of $R_r (mH)$, up to a constant, \textit{for all}
$r\geq 3$.

\begin{theorem}\label{thm:rambi}
    Let $r \geq 3$ be an integer and let $H$ be a graph with no bipartite component.
    There exist constants $C=C(r,H)$ and $m_0=m_0(r,H)$ such that
    $$R_r(mH)=mr|H|+C$$
    provided $m\ge m_0$. 
    Furthermore, $-r+1\le C\le R_r(H)-r|H|$.
\end{theorem}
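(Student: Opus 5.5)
The plan is to prove the lower bound $R_r(mH)\ge mr|H|-r+1$ by an explicit construction and the upper bound $R_r(mH)\le mr|H|+R_r(H)-r|H|$ by induction on $m$ with a base step at $m_0$.

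\emph{Lower bound.} Take $r-1$ disjoint sets $X_1,\dots,X_{r-1}$ of size $m|H|-1$ and one further set $X_r$ of size $m|H|-1$. Colour each edge inside $X_i$ with colour $i$; this uses colour $r$ inside $X_r$ as well. Colour the edges between $X_i$ and $X_j$ with $i<j$ using some colour in $\{1,\dots,r\}\setminus\{i,j\}$, which is possible because $r\ge 3$. Every component of $H$ is non-bipartite, so it contains an odd cycle. An odd cycle cannot live in a bipartite graph, and we would like the colour-$c$ graph outside $X_c$ to be bipartite, or at least to contain no odd cycle touching another part. Then every monochromatic copy of a component of $H$ in colour $c$ lies inside $X_c$, and $X_c$ has fewer than $m|H|$ vertices.

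The between-part colouring needs care. A simple choice is to order the parts cyclically and colour the edges between $X_i$ and $X_j$ with some colour $k\notin\{i,j\}$ so that, for each colour $k$, the parts joined in colour $k$ form a bipartite pattern avoiding $X_k$. The components of colour $k$ edges must also not meet $X_k$ through cross edges in a way that creates odd cycles. For $r=3$ this is forced: edges between $X_1$ and $X_2$ get colour 3, and so on, so colour $3$ consists of a clique on $X_3$ plus a complete bipartite graph between $X_1$ and $X_2$, and these are disconnected. For general $r$ I would use a cyclic or Latin-type assignment, or simply reduce to the $r=3$ construction by enlarging to parts of size $m|H|-1$ for every colour. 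I expect the construction to be adjustable to give the total of $r(m|H|-1)=mr|H|-r$ vertices, which yields $C\ge -r+1$.

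\emph{Upper bound.} Let $N=mr|H|+R_r(H)-r|H|$ and take an $r$-colouring of $K_N$. I induct on $m$; the case $m=1$ is exactly $R_r(H)$. Greedily remove monochromatic copies of $H$ while at least $R_r(H)$ vertices remain. This yields many copies, but split among the colours. The issue is that the copies are not all in one colour, so a pigeonhole argument alone gives only about $N/(r|H|)$ copies per colour.

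The right tool is Burr--Erd\H{o}s--Spencer style: use stability. Show that if no colour contains $mH$, then the structure is near the construction above, with each colour's $H$-tiling number nearly saturated and cross edges forced. One way to organise this is to take the maximum monochromatic $H$-tilings $\mathcal T_c$ in each colour $c$, each of size less than $m$, and argue with the regularity lemma and fractional tilings that $\sum_c |V(\mathcal T_c)|$ must cover almost everything. This gives $N\le mr|H|+o(m)$. An absorbing or exchange argument then upgrades this to an additive constant: any leftover vertex set of size at least $R_r(H)-r|H|+\ldots$ can be combined with the existing copies to augment some tiling.

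\emph{Main obstacle.} The hardest step is the upper bound up to an additive constant, in particular the exchange argument. I would first try a local augmentation: a monochromatic $H$ found among the leftover vertices together with vertices swapped from copies of other colours. This uses the non-bipartiteness of $H$ to show that the cross edges cannot all avoid the relevant colour.
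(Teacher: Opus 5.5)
Your lower-bound idea is right, and you set up the right first step for the upper bound: induction on $m$ plus a greedy family $\mathcal H$ of disjoint monochromatic copies of $H$ covering all but at most $R_r(H)-1$ vertices. The argument breaks down after that. Having noticed that pigeonholing over all $r$ colours only gives about $N/(r|H|)$ copies per colour, you switch to stability, regularity, fractional tilings and an unspecified ``absorbing or exchange'' step. None of this is carried out, and regularity would at best give $mr|H|+o(m)$. The upgrade to an additive constant is exactly what the theorem asserts, so the upper bound is not proved.

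The missing idea is a simple dichotomy on $\mathcal H$, where $|G|=r(m+1)|H|+C$. If every colour occurs in $\mathcal H$, delete one copy of each colour ($r|H|$ vertices). Apply the induction hypothesis to the remaining $rm|H|+C$ vertices to get a monochromatic $mH$ in some colour $j$, and add back the deleted copy of colour $j$. If some colour is absent, pigeonhole over only $r-1$ colours. This gives a colour with at least $(|G|-R_r(H)+1)/((r-1)|H|)$ copies, which is at least $m+1/((r-1)|H|)>m$ whenever $C\ge R_r(H)-(m+r)|H|$. Starting from $C=R_r(H)-r|H|$ at $m=1$, this yields $R_r(mH)\le mr|H|+R_r(H)-r|H|$ for all $m$. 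This step never uses that $H$ has no bipartite component. Your suggestion that non-bipartiteness drives the exchange argument is misplaced: the hypothesis is needed only for the lower bound.

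Second, even with both bounds you would only know that $C_m:=R_r(mH)-mr|H|$ lies in $[-r+1,\,R_r(H)-r|H|]$. The theorem asserts a single $C$ with $R_r(mH)=mr|H|+C$ for all $m\ge m_0$, and a bounded integer sequence need not be eventually constant. Your proposal does not address this. The paper gets it from the same inductive step applied with $C=C_m$. For large $m$ we have $C_m\ge -r+1\ge R_r(H)-(m+r)|H|$, hence $C_{m+1}\le C_m$, and a non-increasing integer sequence bounded below is eventually constant.

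Finally, your lower-bound colouring for general $r$ is left unspecified, and ``reducing to $r=3$'' would lose vertices. An arbitrary choice of cross colours can fail. For example, with $r=4$, colouring all pairs among $X_1,X_2,X_3$ with colour $4$ creates a blown-up triangle in colour $4$. What you need is that, for each $k$, the pairs $\{i,j\}$ receiving colour $k$ form a bipartite graph on $[r]\setminus\{k\}$. One choice is: for $i=1,\dots,r$ in turn, give colour $i$ to every still-uncoloured pair $\{i+1,j\}$ with $j\notin\{i,i+1\}$ (indices mod $r$). Then the colour-$i$ cross edges form a blown-up star avoiding $X_i$, and $r\ge 3$ guarantees every pair is coloured.
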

Note that existing arguments in the literature yield a slightly weaker version of Theorem~\ref{thm:rambi}  (see, e.g., \cite[Theorem~1.11]{GyarfasSS}). For completeness, we prove Theorem~\ref{thm:rambi}
in the appendix.

As the reader will see in the proof of Theorem~\ref{thm:rambi}, the bound on 
$R_r(mH)$ is not too difficult to obtain (the main subtlety is ensuring equality in the displayed equation in Theorem~\ref{thm:rambi}). In particular, note that in any large enough $r$-edge-coloured complete graph~$G$ one can easily (through repeated use of Ramsey's theorem) obtain a collection of vertex-disjoint monochromatic copies of~$H$
covering almost all of the vertices of $G$. As at least a $1/r$-proportion of these copies of~$H$ have the same colour, this simple argument implies that
$R_r(mH) \leq mr|H|+R_r(H)-1$.

\smallskip

As we will now explain, to understand the behaviour of $R_r (mH)$  when~$H$ has a bipartite component, things are more involved and we 
require the following definition.

\begin{define}\label{definition:extremal_construction}
    Given a graph $H$ and $m,r\in\mathbb N$, we let $f_r(m,H)$ denote the largest $n\in\mathbb N$ such that there exists an $n$-vertex $r$-edge-coloured complete graph $G$ on vertex set $V(G)=V_1\cup\dots\cup V_r$ satisfying the following:
\begin{itemize}
    \item $G$ does not contain a monochromatic copy of $mH$;
    \item $G[V_i]$ is a monochromatic clique for every $i\in[r]$;
    \item $G[V_i,V_j]$ is a monochromatic complete bipartite graph for every distinct $i,j\in[r]$.
\end{itemize}
\end{define}

\cref{definition:extremal_construction} is motivated by the extremal construction of \cref{theorem:BES_symmetric} (see Figure~\ref{fig:BES_symmetric}) and extremal constructions for Theorem~\ref{thm:rambi}.
In particular, it is easy to check that~$f_2(m,H)=\bigl(2|H|-\alpha(H)\bigr)m-2$ and so $R_2(mH)\le f_2(m,H)+C$ for some constant $C=C(H)$.
In general, we clearly have $R_r(mH)\geq f_r(m,H)+1$ for all choices of $H,r,m$.

Note though it may not be the case that $R_r(mH)=f_r(m,H)+1$ for a given choice of $H,r,m$.
For example, the aforementioned work of Burr, Erd\H{o}s and Spencer~\cite{BurrES}
yields $R_2(mK_3)=5m=f_2(m,K_3)+2$ for all $m \geq 2$. However,
our first main result states that~$R_3(mH)$ and~$f_3(m,H)$ must differ by at most a constant.

\begin{theorem}\label{theorem:r=3}
    Let~$H$ be a graph containing at least one edge.
    There exists a constant~$C=C(H)$ such that for every $m\in\mathbb N$ we have
    $$|R_3(mH)-f_3(m,H)|\le C.$$
\end{theorem}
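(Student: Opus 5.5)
The lower bound $R_3(mH)\ge f_3(m,H)+1$ holds by definition, so the task is the upper bound $R_3(mH)\le f_3(m,H)+C$. My plan is to take a 3-edge-coloured $K_n$ with $n=f_3(m,H)+C$, assume it has no monochromatic $mH$, and derive a contradiction. For $m$ bounded the claim is trivial: $R_3(mH)$ is then bounded by a constant, and $f_3(m,H)\ge 0$. So I assume $m$ is large.

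Throughout I will use the following fractional relaxation. For each colour $c$ and each monochromatic ``pattern'', the number of vertex-disjoint copies of $H$ in colour $c$ is governed by a linear program. Its variables count how many copies of $H$ use vertices in each part, and its constraints are given by the sizes of the parts.

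\textbf{Step 1 (structure).} I will apply the multicolour regularity lemma to get a reduced 3-coloured graph $\Gamma$. The goal is to show that, up to $o(m)$ changes, $G$ looks like a blow-up of a configuration with parts $V_1,V_2,V_3$ as in Definition~\ref{definition:extremal_construction}. The key inputs are these. A colour class of $\Gamma$ containing a large connected matching, or a non-bipartite connected component, gives many disjoint monochromatic copies of $H$: by the blow-up lemma one can embed $H$-tilings into regular pairs, and into regular triangles when $\chi(H)\ge 3$. The cover argument used in the discussion after Theorem~\ref{thm:rambi} then shows that almost all vertices must be covered by few ``bad'' monochromatic structures. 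A stability analysis of 3-colourings of the reduced graph, using K\H{o}nig--Egerváry type arguments on monochromatic connected matchings, then yields that $V(G)$ splits into at most three clusters $V_1,V_2,V_3$, together with an exceptional set $E$ with $|E|=o(m)$. Inside each $V_i$ and between each pair, almost all edges have a single colour.

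\textbf{Step 2 (cleaning to additive constant).} This is the main obstacle, since regularity only gives $o(m)$ errors. I would move exceptional vertices greedily: each $v\in E$ is placed in the part where its majority colour pattern fits. Vertices whose neighbourhoods do not match any pattern are used as flexible ``gadgets''. Each such vertex, together with a bounded number of typical vertices, lets us build an extra monochromatic copy of $H$ in some colour. I then argue via the LP (or via its dual weights) that each misfit vertex or edge increases the maximum number of disjoint monochromatic copies of $H$ in some colour by a fixed positive fraction. Hence only $O(1)$ misfits can exist. The vertex-disjoint copies are found by Hall-type/greedy arguments, because the parts are nearly monochromatic complete.

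\textbf{Step 3 (counting).} Once $G$ equals an exact canonical configuration on all but $O(1)$ vertices, with sizes $|V_i|=n_i$, I would compare with the optimum defining $f_3(m,H)$. Absence of monochromatic $mH$ forces $(n_1,n_2,n_3)$ to satisfy, up to $O(1)$, the linear constraints saying each colour's fractional tiling number is below $m$. The maximum of $n_1+n_2+n_3$ subject to these constraints is $f_3(m,H)+O(1)$. This is because the LP optimum scales linearly in $m$, and integer rounding costs only a constant depending on $H$. This gives $n\le f_3(m,H)+C$, a contradiction.

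I expect Step 2 to be hardest. I would try to reduce each of the finitely many possible colour patterns on three parts to an explicit check that an extra vertex of the wrong type completes an additional $H$-copy.
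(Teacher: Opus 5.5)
There is a genuine gap, in Steps~1 and~2. They describe what a stability argument would have to deliver rather than proving it, and the rigid structure you ask for in Step~1 cannot be obtained at the resolution the regularity lemma sees.

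Step~1 uses only information visible in the reduced graph $\Gamma$, where errors of order $\eps n$ are unavoidable. So whatever it proves for $n=f_3(m,H)+C$, it also proves for colourings on $f_3(m,H)-o(m)$ vertices without a monochromatic $mH$. Such colourings need not be close to a configuration as in Definition~\ref{definition:extremal_construction}. Take $H=K_{1,4}$ and disjoint sets $Z_{12},Z_{13},Z_{23}$ of size $\lfloor(5m-1)/2\rfloor$. Colour every edge between $Z_{ij}$ and $Z_{ik}$ with $i$, and colour the edges inside $Z_{ij}$ randomly with $i$ and $j$. All colour-$i$ edges lie in $Z_{ij}\cup Z_{ik}$, which has fewer than $m|H|=5m$ vertices, so there is no monochromatic $mK_{1,4}$. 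Making each $Z_{ij}$ monochromatic in a suitable colour gives the cyclic configuration of Proposition~\ref{proposition:gadgets_r=3}. That configuration is optimal up to $o(m)$ for this $H$, since $t_3(K_{1,4})=15/2$ (see Theorem~\ref{theorem:complete_bipartite}). Yet a typical such colouring is $\Omega(m^2)$ edges away from every three-block colouring. It is a structure of the type in Definition~\ref{definition:extremal_construction_bi}, with bichromatic regions.

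The tools you name do not fit either. Connected matchings are what one needs for paths and cycles, whereas for tilings any (fractional) $H$-tiling in a colour class of $\Gamma$ transfers to $G$. The greedy cover after Theorem~\ref{thm:rambi} gives only $R_r(mH)\le mr|H|+R_r(H)-1$, not structure. Note also that nothing in your outline uses $r=3$. If it worked, it would give $R_r(mH)=f_r(m,H)+O(1)$ for all $r$ and $H$, far beyond what the paper proves.

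Even granting a three-block structure, the accounting in Step~2 does not close. Let $c^\ast:=\max\{||{\bf c}||:(F,{\bf c})\text{ is an $(H,3)$-gadget}\}$. With $k$ misfit vertices, the canonical part has $n-k$ vertices and hence slack of order $k$ in each colour. So a gain of a fixed fraction of a copy per misfit, in \emph{some} colour, gives a contradiction only if two things hold: the canonical configuration is tight in all three colours at once, and the gains beat the $1/c^\ast$ copies per colour that an ordinary vertex is worth. Likewise, $o(m^2)$ scattered misfit edges cannot each be charged.

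Tightness in every colour simultaneously is exactly what an $(H,3)$-gadget encodes, and this is the idea you are missing. The paper uses it to avoid stability altogether. It proves the sharper Theorem~\ref{theorem:r=3'} by induction on $m$. Whenever $G$ contains a bounded-size $B$ with a monochromatic $m'H$ in \emph{every} colour and $|B|\le c^\ast m'$, it deletes $B$ and applies induction, losing nothing. Here $B$ is a blow-up of a rainbow triangle, of a $K_4$ with a rainbow spanning star, or of a rational-capacity gadget via Proposition~\ref{proposition:gadget_induction}.

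Only when no such $B$ exists is regularity invoked. Then $\Gamma$ has no rainbow triangle and no rainbow $K_{1,3}$, which forces the partition $W_{12}\cup W_{13}\cup W_{23}$. In each case one finds a monochromatic tiling with linear room to spare, using $c^\ast\ge 3|H|/2$ and $c^\ast>|H|+\sigma(H)$, and Gy\'arf\'as--S\'ark\"ozy in Case~3. So the $o(m)$ errors are harmless. The additive constant comes from lossless gadget removal, not from cleaning an approximate structure.
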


Our next result asserts that~$R_r(mH)$ and~$f_r(m,H)$ differ by a constant for arbitrary $r\in\mathbb N$, provided the chromatic number~$\chi(H)$ of~$H$ is large enough.

\begin{theorem}\label{theorem:chi>=r}
    Let~$r\ge 2$ be an integer and let~$H$ be a graph with $\chi(H)\ge r$.
    There exists a constant~$C=C(H,r)$ such that for every $m\in\mathbb N$ we have
    $$|R_r(mH)-f_r(m,H)|\le C.$$
\end{theorem}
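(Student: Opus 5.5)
The lower bound $R_r(mH)\ge f_r(m,H)+1$ is immediate from \cref{definition:extremal_construction}. So the work is the upper bound $R_r(mH)\le f_r(m,H)+C$. Fix an $r$-edge-coloured $K_n$ with $n=f_r(m,H)+C$, and suppose it contains no monochromatic $mH$. The plan is to show that it is close to one of the structured colourings in the definition, and then to absorb the leftover vertices at a cost of only $O(1)$. Throughout, $m$ can be assumed large, since small $m$ is absorbed into $C$.

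\textbf{Step 1: find a maximal monochromatic tiling and bound its size.} In each colour $i$, take a largest family of vertex-disjoint copies of $H$ in colour $i$; it has fewer than $m$ copies. Greedily choose disjoint monochromatic copies of $H$ until fewer than $R_r(H)$ vertices remain. Recording colours gives a set of at most $r(m-1)$ copies, so $n\le r|H|(m-1)+R_r(H)$. This shows $n=\Theta(m)$. It also forces the vertices to be almost entirely covered by these tiles.

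\textbf{Step 2: impose structure via gadgets.} This is the key step. Since $\chi(H)\ge r$, I would use the following Ramsey-type fact. Any $r$-coloured complete graph on a bounded number of vertices that contains many disjoint monochromatic copies of $H$, in several colours, also contains configurations (``$(H,r)$-gadgets''). A gadget is a constant-size vertex set that can be re-tiled to shift the counts of copies between colours. The condition $\chi(H)\ge r$ matters because of the following. If the colouring between two clusters of tiles used a non-dominant colour on a positive-density set of edges, then by Kővári--Sós--Turán-type arguments we could find a monochromatic copy of $H$ mixing the clusters. That would let us increase the total number of usable copies beyond the extremal bound.

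Concretely, I would run the argument as follows.
\begin{enumerate}
\item Group the tiles by their colour and build a reduced ``tile graph''. Its vertices are the tiles, and two tiles are joined by the majority colour between them.
\item Apply Ramsey/Turán arguments to this tile graph to partition all but $O(1)$ tiles into at most $r$ classes $V_1,\dots,V_r$. Each class should be essentially monochromatic inside, and each pair of classes essentially monochromatic between.
\item Show that the exceptional edges span only $O(1)$ vertices. Each exceptional edge, together with a bounded number of surrounding tiles, forms a gadget that would increase some colour's tiling number. Exchanging it would produce a monochromatic $mH$ once $n>f_r(m,H)+C$.
\end{enumerate}

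\textbf{Step 3: compare with $f_r$.} Delete the $O(1)$ vertices touched by exceptional edges. The rest is a colouring of exactly the type in \cref{definition:extremal_construction}, with no monochromatic $mH$. Hence it has at most $f_r(m,H)$ vertices, so $n\le f_r(m,H)+O(1)$.

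\textbf{Main obstacle.} I expect the hard part to be bounding the exceptional vertices by a constant rather than $o(m)$. Regularity-type cleaning only gives a linear error. To get an additive constant, I would set up the tiling counts as a linear program: the variables are how many copies of $H$ each colour takes from each part size. I would then use the gadgets to show that every near-optimal configuration has slack that is absorbed by $O(1)$ changes. If the linear program's optimum sits at a vertex where several constraints are tight, the perturbation argument might need a topological fixed-point tool (such as Poincar\'e--Miranda) to balance part sizes simultaneously. That is where I would look first if the direct exchange argument fails.
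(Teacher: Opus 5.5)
Your lower bound and Step~1 are fine, but Steps~2--3 fail, and they carry the whole argument. You are asserting a stability statement. It says that after deleting $O(1)$ vertices, a colouring with no monochromatic $mH$ is an $r$-part block colouring as in \cref{definition:extremal_construction}, because every ``exceptional'' edge sits in a bounded configuration that raises some colour's tiling number.

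You give no mechanism for the partition in~2.2: Ramsey/Tur\'an arguments on a tile graph give monochromatic substructures, not a global $r$-part block structure. The exchange claim in~2.3 is false, because deviations from block structure can be completely harmless. Here is a counterexample.
\begin{itemize}
\item Take $r=2$, $H=K_3$ and the colouring of \cref{theorem:BES_symmetric}: $X$ is a red $K_{3m-1}$, $|Y|=2m-1$, and all other edges are blue.
\item Recolour red all edges between two halves $Y_1,Y_2$ of $Y$.
\item Red triangles still lie inside $X$, and every blue triangle still has two vertices in $Y$. Both tiling numbers stay $m-1$, so there is no monochromatic $mK_3$ on $f_2(m,K_3)=5m-2$ vertices.
\item Yet about $m$ vertices must be deleted to reach a $2$-part block colouring.
\end{itemize}
So no local exchange or cleaning argument can bound the exceptional set by a constant. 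Your proposal contains no global argument that actually uses the extra $C$ vertices. The ``linear program plus Poincar\'e--Miranda'' remark names tools but gives no argument. Also, $\chi(H)\ge r$ does not enter through K\H{o}v\'ari--S\'os--Tur\'an mixing.

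The paper never proves stability. It sets $g:=\max\{\|\mathbf c\|:(F,\mathbf c)\text{ is an }(H,r)\text{-gadget}\}$ and compares $R_r(mH)$ with $g\cdot m$. Here $F$ is a looped $r$-vertex complete graph with capacities satisfying $\nu_H(F_i,\mathbf c)=|H|$ for every colour $i$. Since $f_r(m,H)\ge gm-C_0$ by \cref{proposition:gadget_extremal}, it suffices to prove $R_r(mH)\le gm+C$, and this is done by induction on $m$.
\begin{itemize}
\item Cover all but $r^{r\ell}$ vertices by monochromatic copies of $K_\ell$.
\item Suppose every colour occurs. Take one clique of each colour and apply bipartite Ramsey to get a blow-up of a looped $K_r$ whose loop at $i$ has colour $i$.
\item Since $\chi(H)\ge r$, there is no homomorphism from $H$ into $F_i\setminus\{i\}\subseteq K_{r-1}$. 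Hence \cref{prop:mean_value} (via Poincar\'e--Miranda) shows this blow-up comes from a gadget.
\item After making the capacities rational (\cref{lemma:gadget_induction}), \cref{proposition:gadget_induction} gives a sub-blow-up $B'$ with $|B'|\le m_Fg$ containing $m_FH$ in every colour.
\item By induction, $G\setminus V(B')$ contains $(m-m_F)H$ in some colour. Add the copies of that colour in $B'$.
\item If instead some colour is missing among the cliques, pigeonhole over the other $r-1$ colours already yields $mH$, because $g\ge (r-1)|H|$ by \cref{remark:chi>=r}.
\end{itemize}
The additive constant falls out of this induction directly, with no regularity or cleaning.
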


One should interpret Theorems~\ref{theorem:r=3} and~\ref{theorem:chi>=r} as follows: in these cases the Ramsey problem has a (near) extremal example which takes the form of a graph $G$ as in Definition~\ref{definition:extremal_construction}. In fact, we prove stronger versions of these theorems in Section~\ref{sec:5}. Indeed, roughly speaking Theorems~\ref{theorem:r=3'} and~\ref{theorem:chi>=r'} tell us that for these cases, the Ramsey problem has a (near) extremal example which not only takes the form of a graph $G$ as in Definition~\ref{definition:extremal_construction}, but further in $G$ the largest $H$-tiling in each of the colour classes has the \textit{same size}.
Notice this resonates with the extremal construction for Theorem~\ref{theorem:BES_symmetric} stated on page 2.

\medskip

We obtain a similar result for arbitrary $r$ when $H$ is a complete bipartite graph.
 Given any graph~$F$, let $\sigma(F)$ denote the size of the smallest possible colour class in any proper $\chi(F)$-colouring of the vertices of $F$.

\begin{define}\label{definition:extremal_construction_bi}
    Given a complete bipartite graph $H$ and $m,r\in\mathbb N$, we let $b_r(m,H)$ denote the largest $n\in\mathbb N$ such that there exists an $n$-vertex $r$-edge-coloured complete graph $G$ satisfying the following two conditions:
    \begin{itemize}
        \item[(i)]  There exist (not necessarily disjoint) vertex sets $X_1,\dots, X_r, Y_1,\dots,Y_r \subseteq V(G)$, where $V(G)= X_1 \cup \dots \cup X_r \cup Y_1 \cup \dots \cup Y_r$ and such that every edge in $G$ coloured  with $i\in[r]$ is either incident to $X_i$ or lies completely in $Y_i$.
        \item[(ii)]  For every $i\in[r]$ we have 
    $$|X_i|\cdot \frac{|H|}{\sigma(H)}+|Y_i|< m|H|.$$
    \end{itemize}
\end{define}

\begin{remark}
    It is easy to see that the value of $b_r(m,H)$ does not change if we further impose that the sets $X_1,\dots,X_r$ are pairwise disjoint and $X_i\cap Y_j=\emptyset$ for every $i,j\in[r]$.
    On the other hand, all edges not incident to $X_1\cup\dots\cup X_r$ must lie in $E(G[Y_1])\cup\dots\cup E(G[Y_r])$.
\end{remark}

\begin{remark}
    By the previous remark, once we fix the sizes of the sets $X_1,\dots, X_r$ and $Y_1,\dots, Y_r$, determining the maximum cardinality of $|V(G)|$ is equivalent to determining the largest clique whose edges can be covered using $r$ cliques of sizes $|Y_1|,\dots,|Y_r|$.
    A symmetric version of this question has been considered by Erd\H{o}s and R{\' e}nyi \cite{ErdosRenyi} and by Hor{\'a}k and Sauer \cite{clique-cover}.
    There is a vast literature on clique-covering problems, see e.g., the survey by Pullman \cite{Pullman}.
\end{remark}

Observe that a graph $G$ as in \cref{definition:extremal_construction_bi} does not contain a monochromatic copy of $mH$.
Indeed, a monochromatic copy of $H$ of colour $i$ with at least one vertex in $V(G)\setminus (X_i\cup Y_i)$ has at least $\sigma(H)$ vertices in $X_i$.
It follows that a monochromatic copy of $mH$ of colour $i$ has at most $(|H|-\sigma(H))|X_i|/\sigma(H)$ vertices in $V(G)\setminus (X_i\cup Y_i)$ and thus at most
$$|X_i|+|Y_i|+\frac{|H|-\sigma(H)}{\sigma(H)}\cdot|X_i|=|X_i|\cdot \frac{|H|}{\sigma(H)}+|Y_i|< m|H|$$
vertices in total, a contradiction.
In particular, we have $R_r(mH)\ge b_r(m,H)+1$.
Our next theorem states that this bound is tight up to a linear error term.

\begin{theorem}\label{thm:bi}
    Let~$r\ge3$ be an integer and let~$H$ be a complete bipartite graph.
    For every $\eps>0$ there exists $m_0\ge1$ such that
    $$R_r(mH)\le(1+\eps)\cdot b_r(m,H)$$
    whenever $m\ge m_0$.
\end{theorem}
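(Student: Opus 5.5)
We argue by contradiction via multicolour regularity. Put $H=K_{s,t}$ with $s\le t$, so $\sigma(H)=s$ and $|H|=s+t$. Suppose $G$ is an $r$-edge-coloured $K_n$ with $n\ge(1+\eps)b_r(m,H)$ and no monochromatic $mH$. First note $b_r(m,H)=\Theta(m)$: choosing $X_1=\dots=X_r=\emptyset$ and $Y_1$ of size $m|H|-1$ already gives $b_r(m,H)\ge m|H|-1$. Hence $n=\Theta(m)$, and an $\eps'n$ error with $\eps'\ll\eps$ is harmless. Apply the multicolour regularity lemma with parameters $\eta\ll d\ll\eps$ to obtain a reduced graph $\Gamma$ on $k$ clusters, each of size $L=n/k$. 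Every pair of clusters in $\Gamma$ is $\eta$-regular apart from $\eta k^2$ exceptional pairs, and each regular pair is labelled by the colours of density at least $d$ on it. Since $r\ge 1$ and all densities sum to $1$, every regular pair carries at least one colour as long as $d<1/r$.

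\textbf{Step 1 (tilings from fractional structures).} For each colour $i$, consider fractional $K_{s,t}$-tilings in the colour-$i$ reduced graph $\Gamma_i$. These are weight functions on ordered edges $(A,B)$ of $\Gamma_i$, in which $A$ supplies $s$-sides and $B$ supplies $t$-sides, subject to each cluster carrying total load at most $1$. By the blow-up/embedding lemma, each regular dense pair with prescribed loads can be greedily filled with vertex-disjoint copies of $K_{s,t}$ of the appropriate orientation, up to an $o(L)$ loss per pair. So if $\Gamma_i$ has a fractional tiling of total vertex weight $w_i$, then $G$ contains colour-$i$ copies of $H$ covering about $w_iL$ vertices. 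The absence of a monochromatic $mH$ therefore forces $w_i<(1+\eps/4)m|H|/L$ for every $i$.

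\textbf{Step 2 (LP duality).} The maximum fractional tiling in $\Gamma_i$ is a linear program, and we take its dual. The dual assigns a weight $y_C\ge 0$ to each cluster $C$, with constraint $s\,y_A+t\,y_B\ge s+t$ for every ordered edge $AB$ of $\Gamma_i$, minimising $\sum_C y_C$. One should be able to take optimal dual solutions that are half-integral-like with values in $\{0,1,(s+t)/s\}$. This is the key structural claim: clusters with $y=(s+t)/s=|H|/\sigma(H)$ form $X_i$, clusters with $y=1$ form $Y_i$, and every colour-$i$ edge of $\Gamma_i$ is then incident to $X_i$ or lies inside $Y_i$. In general the optimal vertices of this polytope need not be exactly of this form. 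The plan is to round an arbitrary optimal dual by thresholding: put $C$ into $X_i$ if $y_C>1+\delta$, and into $Y_i$ if $y_C\ge 1-\delta'$, and so on. The rounding should lose only a $(1+O(\eps))$ factor in $|X_i|\frac{|H|}{s}+|Y_i|$, possibly at the cost of a few extra clusters being placed in $X_i$. I expect this rounding, i.e.\ showing that the dual optimum is within $1+\eps$ of a $\{0,1,|H|/s\}$-valued cover, to be the main obstacle. If the direct thresholding argument fails, I would first try combining an edge-by-edge argument with the observation that edges with both endpoints of small weight are rare. Failing that, I would fall back on reducing to the weighted vertex-cover LP for the auxiliary problem, which is known to have half-integral optimal solutions. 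Duality then gives $|X_i|\frac{|H|}{s}+|Y_i|\le (w_i+\text{err})$, measured in clusters.

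\textbf{Step 3 (assembling the colouring).} Blow the clusters back up: let $X_i'$ and $Y_i'$ be the unions of the clusters in $X_i$ and $Y_i$. Every edge of $G$ not lying in an irregular pair, in a low-density colour, or inside a cluster is covered by the sets of Step 2 as in Definition~\ref{definition:extremal_construction_bi}(i). The exceptional edges and vertices number at most $O(\eta+d)n^2$ and $O(\eta n)$ respectively. Because every regular pair receives some colour, all pairs of clusters outside the irregular ones are covered, so the sets $X_i'$ and $Y_i'$ cover every vertex. We then delete a vertex set of size $O(\sqrt{\eta+d})\,n$ so that no exceptional edges remain; the edges within clusters can be handled by declaring those clusters to be part of some $Y_i$ or by discarding them. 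This produces a complete graph on $(1-\eps/2)n$ vertices satisfying Definition~\ref{definition:extremal_construction_bi}(i), and, after rescaling $m$ by a factor $1+\eps/4$, also (ii). It follows that $(1-\eps/2)n\le b_r((1+\eps/4)m,H)\le(1+\eps/3)b_r(m,H)$. The last inequality uses near-linearity of $b_r$ in $m$, which follows by scaling the sets. This contradicts $n\ge(1+\eps)b_r(m,H)$.
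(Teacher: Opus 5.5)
Your architecture matches the paper's: regularity, fractional $K_{s,t}$-tilings in each colour of the reduced graph, an $X_i/Y_i$ description of each colour, then reading off a configuration as in \cref{definition:extremal_construction_bi}. However, the step that carries the whole theorem is left open. The claim that some optimal dual is $\{0,1,\frac{s+t}{s}\}$-valued is exactly the paper's \cref{lemma:key}. Equivalently, $\nu_H(\Gamma_i)=|Y_i|+\frac{|H|}{s}|X_i|$ for disjoint $X_i,Y_i$ such that every edge of $\Gamma_i$ meets $X_i$ or lies in $Y_i$. The thresholding you propose does not give this, even approximately. For example, let $\Gamma_i$ be complete bipartite between $P$ and $N$ with $|N|=\frac ts|P|$. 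Then $y=1+\frac ts\alpha$ on $P$ and $y=1-\alpha$ on $N$ is optimal for every $\alpha\in[0,1]$. Unless $\delta'=\frac st\delta$, some $\alpha$ makes your rounding either leave all $P$--$N$ edges uncovered or cost $|P|+2|N|$ instead of $|P|+|N|$. So a pointwise comparison of costs cannot work, and the vertex-cover fallback does not apply to these asymmetric constraints.

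What is needed is an averaging argument. For an optimal $y$, the set $U=\{C:y_C<1\}$ is independent and all its neighbours have $y>1$. Feasibility gives $y_B-1\ge\frac ts(1-y_A)$ for every edge $AB$ with $A\in U$. Setting $S_\theta:=\{A\in U:1-y_A>\theta\}$, you get $\sum_C y_C\ge|\Gamma_i|+\int_0^1\bigl(\frac ts\,|N_{\Gamma_i}(S_\theta)|-|S_\theta|\bigr)\,d\theta$. For the best $\theta$, the cover equal to $0$ on $S_\theta$, $\frac{s+t}{s}$ on $N_{\Gamma_i}(S_\theta)$ and $1$ elsewhere is then feasible and optimal. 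For optimal $y$ this is your thresholding with $\delta'=\frac st\delta$ at a well-chosen level. The paper argues on the primal side instead: it takes an optimal fractional tiling supported on homomorphisms onto edges and minimises the number of saturated vertices. Weight-shifting then shows that every homomorphism meeting $X$ sends exactly $\sigma(H)$ vertices into $X$ and the rest to unsaturated vertices.

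Step 3 is also flawed as written. Having few exceptional edges does not give a small vertex set meeting all of them. A colour of density below $d$ sprinkled over every pair survives the deletion of any $O(\sqrt{\eta+d})\,n$ vertices. So do irregular pairs arranged so that each cluster lies in about $\eta k$ of them. Since $b_r(m,H)$ is a maximum over all colourings, you should discard $G$'s colours and only require that every pair of remaining vertices meets some $X'_i$ or lies in a common $Y'_i$, then colour accordingly. Low-density edges then become irrelevant. However, irregular pairs, and pairs inside clusters lying in no $X_i\cup Y_i$, may still be uncovered, and your argument does not control them.

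The missing step is how the paper produces a member of $\mathcal F_r$ in the proof of \cref{theorem:fractional_bipartite}. An uncovered pair of clusters consists of two clusters outside $X:=\bigcup_i X_i$ whose types $\{i:A\in Y_i\}$ and $\{i:B\in Y_i\}$ are disjoint (possibly both empty). Clusters outside $X$ of two disjoint types are non-adjacent in every colour. So, after discarding the few clusters of low degree in $\Gamma$, for each of the at most $4^r$ pairs of disjoint types one of the two type classes has size $O(\sqrt\eta)\,k$ and can be deleted. After this every remaining pair, and every remaining cluster, is covered. With this step and a proof of the structural claim, your outline goes through.
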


{\noindent \textbf{Remark.}}
In this version of the paper, given a fixed graph $H$ with a bipartite component, we have not discussed what precise $r$-edge-colourings maximise $f_r(m,H)$. Similarly, we have not discussed which precise $r$-edge-colourings maximise $b_r(m,H)$. This will be discussed in an updated version of the paper.

\smallskip

{\noindent \textbf{Additional note.}}
In simultaneous and independent work, 
 Liu, Turevskii, Wang and  Yan~\cite{liuetal} have obtained similar results.
 More precisely, they proved that for any integer~$r\ge2$ and connected graph~$H$, the Ramsey number~$R_r(mH)$ and~$f_r(m,H)$ differ by~$o(m)$.
 Thus, their result applies to  all choices of~$r$ and connected~$H$, in exchange for a linear error term (rather than an additive constant \'a la Theorems~\ref{theorem:r=3} and~\ref{theorem:chi>=r}).
In particular, this means their work does not imply 
 Theorems~\ref{theorem:r=3} and~\ref{theorem:chi>=r}.
 
 Interestingly, a consequence of their result and our \cref{thm:bi} is that the functions~$f_r(m,H)$ and~$b_r(m,H)$ differ by~$o(m)$ when~$H$ is complete bipartite, which is not obvious from their respective definitions.
 
 We remark that the reason we can obtain results that are best-possible up to additive constant terms is our notion of {\it $(H,r)$-gadgets} which is introduced in Section~\ref{section:gadgets}.

\subsection{Generalisations of the random Ramsey theorem}
Another aim of the paper is to provide random analogues of some of the results mentioned in Section~\ref{sec11}.

The \emph{random graph $G_{n,p}$}
has vertex set $[n]:=\{1,\dots, n\}$ where each possible edge is present with probability $p$, independently of all other edges.
The celebrated \textit{random Rado theorem} of
R\"odl and Ruci\'nski~\cite{random1, random2, random3} essentially determines the values of $p$ for which $G_{n,p}$ is asymptotically almost surely (a.a.s.) $(H,r)$-Ramsey.
 To state their result formally we require a few definitions.
  Given a graph $H$,
we say that $G$ is \emph{$(H,r)$-Ramsey} if every $r$-edge-colouring of  $G$ yields a monochromatic copy of $H$ in~$G$. 
Given a graph $H$, set $d_2(H):=0$ if $e(H)=0$; $d_2(H):=1/2$ when $H$ is precisely an edge and define $d_2(H) := (e(H)-1)/(|H|-2)$ otherwise.
Then define $m_2(H) := \max_{H' \subseteq H}d_2(H')$ to be the \emph{$2$-density} of $H$.

\begin{theorem}[The random Ramsey theorem~\cite{random1, random2, random3}]\label{randomramsey} Let $r \geq 2$ 
 be a positive integer and let $H$ be a graph that is not a forest consisting of stars and paths of length $3$.
There are positive constants $c,C$ such that
$$\lim _{n \rightarrow \infty} \mathbb P \bigl[G_{n,p} \text{ is } (H,r)\text{-Ramsey}\bigr]=\begin{cases}
0 &\text{ if } p \leq cn^{-1/m_2(H)}; \\
 1 &\text{ if } p \geq Cn^{-1/m_2(H)}.\end{cases}$$
\end{theorem}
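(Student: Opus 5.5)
The statement is the random Ramsey theorem of R\"odl and Ruci\'nski, so the plan below follows the by now standard two-part strategy: a $1$-statement via a sparse regularity/embedding argument, and a $0$-statement via a structural analysis of sparse random graphs together with a deterministic colouring lemma. Monotonicity lets us take $p=Cn^{-1/m_2(H)}$ in the $1$-statement and $p=cn^{-1/m_2(H)}$ in the $0$-statement. Since $r\ge 3$ only makes being Ramsey easier, the $1$-statement should be proved for general $r$ directly. For the $0$-statement, a $2$-colouring with no monochromatic $H$ is trivially an $r$-colouring with no monochromatic $H$, so $r=2$ suffices there.

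\emph{The $1$-statement.} Fix an $r$-colouring of $\Gamma=G_{n,p}$ with $p=Cn^{-1/m_2(H)}$. I would apply the coloured sparse regularity lemma to the $r$ colour classes simultaneously. This yields an equipartition $V_1,\dots,V_k$ with $k$ bounded in terms of $\eps$, such that almost all pairs are $(\eps,p)$-regular in every colour. A.a.s.\ $\Gamma$ is upper-uniform and has density $(1\pm o(1))p$ between all large sets, so for each regular pair some colour has relative density at least roughly $1/r$. Colour the reduced graph on $[k]$ by such a majority colour, discarding the irregular pairs. This leaves an $r$-coloured graph on $k$ vertices missing only $\eps k^2$ edges. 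By Tur\'an's theorem it contains a clique of size $R_r(K_{|H|})$, and by Ramsey's theorem it therefore contains a monochromatic $K_{|H|}$, say in colour $1$. We now need to find a copy of $H$ in colour $1$ inside the corresponding clusters, using only that each of the $\binom{|H|}{2}$ pairs is $(\eps,p)$-regular with relative density at least about $1/r$. This is exactly what the K{\L}R conjecture supplies (proved by Balogh--Morris--Samotij and Saxton--Thomason via containers, and earlier by Conlon--Gowers--Samotij--Schacht). A.a.s.\ every such regular $|H|$-partite subgraph of $G_{n,p}$ with $p\ge Cn^{-1/m_2(H)}$ contains a copy of $H$. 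The union bound over colourings is built into K{\L}R, because it is stated for all subgraphs at once.

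\emph{The $0$-statement.} Take $p=cn^{-1/m_2(H)}$ with $c$ small. Consider the auxiliary hypergraph on $E(G_{n,p})$ whose hyperedges are the edge sets of copies of $H$. It suffices to $2$-colour $E(G_{n,p})$ so that no hyperedge is monochromatic.
\begin{enumerate}[label=(\roman*)]
\item First moment calculations show that a.a.s.\ every bounded-size subgraph $F$ of $G_{n,p}$ formed by overlapping copies of $H$ satisfies $e(F)/(|F|-2)$-type density bounds. More precisely, since $p\ll n^{-1/m_2(H)}$, any such union with more than a constant number of copies has density below the threshold, so its expected count tends to $0$.
\item A.a.s.\ every component of the copy-hypergraph is therefore either of bounded size with controlled density, or sparse in a tree-like way. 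Tree-like pieces can be coloured greedily, by peeling copies of $H$ that share at most one edge with the rest.
\item The remaining bounded configurations are handled by a deterministic lemma. Every graph all of whose subgraphs have $2$-density at most $m_2(H)$ (plus the bounded exceptions) admits a $2$-colouring with no monochromatic $H$, unless $H$ is a star forest or contains $P_3$ in the excluded way.
\end{enumerate}

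I expect step (iii) to be the main obstacle. Step (i) is routine counting, and on the $1$-statement side the difficulty is entirely absorbed by K{\L}R. Step (iii), by contrast, is where the exceptional graphs in the hypothesis enter, and where the balanced versus non-balanced behaviour of $H$, according to whether $m_2$ is attained by $H$ itself, must be handled separately. I would first reduce to the case where $H$ is strictly $2$-balanced, by replacing $H$ with a subgraph $H'$ attaining $m_2(H)$: a colouring avoiding monochromatic $H'$ avoids monochromatic $H$. I would then prove the colouring lemma for strictly $2$-balanced $H$ by induction on the number of copies, removing an edge lying in few copies.
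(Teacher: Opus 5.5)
\textbf{Verdict: the $1$-statement is fine; the $0$-statement has genuine gaps.}

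Your $1$-statement is correct and follows essentially the route the paper uses to prove $(\alpha_1)$ of Theorem~\ref{randomramsey2}, which strengthens the $1$-statement. That route is: coloured sparse regularity; colour each reduced pair by its densest colour; Tur\'an plus Ramsey to get a monochromatic clique in the reduced graph; then K{\L}R (Proposition~\ref{randomklr}) to embed $H$. One small correction: when $m_2(H)=1$ you only control $d_p(U,V)$ up to constant factors, as in (\ref{densitybound}), not up to $1\pm o(1)$. Constant factors are all you use, so nothing breaks. The paper does not prove the $0$-statement; it quotes it from R\"odl--Ruci\'nski.

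First gap: the reduction to a strictly $2$-balanced $H'\subseteq H$ with $d_2(H')=m_2(H)$ fails for forests. The hypothesis allows forests, e.g.\ the path with four edges. Then $m_2(H)=1$, and the only strictly $2$-balanced subgraph attaining it is the cherry $K_{1,2}$. For $K_{1,2}$ the $0$-statement is false, since $G_{n,c/n}$ a.a.s.\ has vertices of degree $r+1$. Forests therefore need their own argument at $p=c/n$. For $c<1$ all components are a.a.s.\ trees or unicyclic, and they must be coloured directly. This is exactly where the excluded graphs matter. For example, $C_5$ with a pendant edge at every vertex is $(P,2)$-Ramsey for $P$ the path with three edges, and it appears in $G_{n,c/n}$ with probability bounded away from $0$. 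Your sketch never uses the hypothesis on $H$ in a checkable way.

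Second, and more serious: for $H$ containing a cycle, the core of the argument is missing. Step (i) is wrong as stated.
\begin{itemize}
\item The regime is $p=cn^{-1/m_2(H)}$, not $p\ll n^{-1/m_2(H)}$. There the first moment only rules out bounded $F$ with $e(F)/|F|>m_2(H)$.
\item Unions of any fixed number of copies of $H$ glued along single edges lie below that threshold, so they occur in abundance rather than having vanishing expected count.
\item What you actually need is that a minimal $(H,2)$-Ramsey subgraph $G'\subseteq G_{n,p}$ a.a.s.\ has bounded size. Your peeling shows every edge of such a $G'$ lies in at least two copies of $H$.
\item Boundedness comes from a first-moment count over grow sequences. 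Attaching a copy along one canonically chosen edge costs a factor $O(c^{e(H)-1})$, which excludes chains of length $\Omega(\log n)$. Every other attachment costs $n^{-\gamma}$ by strict $2$-balancedness.
\item Bounded size then gives $m(G'):=\max_{F\subseteq G'}e(F)/|F|\le m_2(H)$.
\end{itemize}
So the deterministic lemma must read: if $m(F)\le m_2(H)$ then $F$ is not $(H,2)$-Ramsey.

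Your $2$-density formulation does not apply to what the random graph contains. For $H=K_4$, the graph $K_5$ minus an edge is a union of two copies of $K_4$ with $d_2=8/3>5/2=m_2(K_4)$. Yet it is a.a.s.\ present, because its maximum subgraph density is $9/5<5/2$.

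Nor does the proposed induction prove the lemma. For $H=K_3$, every edge of $K_4$ lies in two triangles, and the same holds for the octahedron, where $m=2$. So there is no edge in at most one copy to delete. Re-inserting an edge that lies in two copies may close a red copy and a blue copy simultaneously. This deterministic lemma is the real content of the R\"odl--Ruci\'nski $0$-statement, and your proposal gives no argument for it.
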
 
We refer to the $p \leq cn^{-1/m_2(H)}$ case of Theorem~\ref{randomramsey}
as the \textit{$0$-statement} and the  $p \geq Cn^{-1/m_2(H)}$ case of Theorem~\ref{randomramsey} as the \textit{$1$-statement}.

There has also been interest in transferring other Ramsey-type results to the setting of random graphs. For example, a classical result of Gerencs\'er and Gy\'arf\'as~\cite{GG}
states that every $2$-edge-coloured $K_n$ contains a monochromatic copy of a path 
$P_{2n/3}$ on $2n/3$ vertices (and the bound on the length of the path is best-possible): resolving a question of Dudek and Pra\l at~\cite{dud}. In 2016, Letzter~\cite{letz}
showed that, provided $pn\rightarrow \infty$, a.a.s.\@ $G_{n,p}$ contains
a monochromatic copy of $P_{(2/3+o(1))n}$ whenever it is  $2$-edge-coloured.
Analogous results for monochromatic cycles of linear size  were obtained in~\cite{aps, kkm}.

The following result significantly strengthens 
the $1$-statement of 
Theorem~\ref{randomramsey} and provides random analogues of the Burr--Erd\H{o}s--Spencer theorem (Theorem~\ref{theorem:BES_symmetric}),  \cref{thm:rambi}, \cref{theorem:r=3} and \cref{theorem:chi>=r}.
For any integer $r\ge2$ and graph $H$, we let $c_r(H)$ denote the limit of $f_r(m,H)/(m|H|)$ as $m\to\infty$.\footnote{The existence of this limit in the cases we consider follows immediately from the statements of Theorems~\ref{theorem:r=3'} and~\ref{theorem:chi>=r'}. More generally, it is not difficult to show this limit  exists by Definition~\ref{definition:extremal_construction}.}

\begin{theorem}\label{randomramsey2} Let $r \geq 2$ 
 be a positive integer and let $H$ be a graph without isolated vertices and 
 $m_2(H) \geq 1$.
 Given any $\gamma >0$, there exists a  positive constant $C=C(H,r, \gamma)$  so that if
 $p \geq Cn^{-1/m_2(H)}$, then
  a.a.s.\@ every $r$-edge-colouring of $G_{n,p}$ contains the following:
 \begin{itemize}
     \item[$(\alpha_1)$] an $H$-tiling $\mathcal H_1$ covering at least $(1-\gamma)n$ vertices where each copy of $H$ in $\mathcal H_1$ is monochromatic;
     \item[$(\alpha_2)$] if $r=2$, a monochromatic $H$-tiling $\mathcal H_2$ covering at least 
     $\left(\frac{|H|}{2|H|-\alpha (H)}-\gamma \right )n$ vertices;
     \item[$(\alpha_3)$] if $r=3$, a monochromatic $H$-tiling $\mathcal H_3$ covering at least $\left(\frac{1}{c_3(H)}-\gamma \right )n$ vertices; 
     \item[$(\alpha_4)$] if $\chi(H)\ge r$, a monochromatic $H$-tiling $\mathcal H_4$ covering at least $\left(\frac{1}{c_r(H)}-\gamma \right )n$ vertices. 
 \end{itemize}
\end{theorem}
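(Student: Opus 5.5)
The plan is a transference argument built on the sparse regularity lemma and the KŁR conjecture. The dense results then supply the monochromatic tilings inside a reduced graph.

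\textbf{Regularity step.} Fix $\gamma>0$ and apply a coloured version of the sparse regularity lemma to an arbitrary $r$-edge-colouring of $G_{n,p}$. This gives a partition $V_0\cup V_1\cup\dots\cup V_t$ with $|V_0|\le \eps n$. It also gives a reduced graph $R$ on $[t]$: an edge $ij$ is present if $(V_i,V_j)$ is $(\eps,p)$-regular in $G_{n,p}$ with density about $p$. Each edge $ij$ is given a colour $c$ for which the colour-$c$ subgraph of $(V_i,V_j)$ is $(\eps,p)$-regular with relative density at least $d\ge 1/(2r)$; such a colour exists by pigeonhole. Since $G_{n,p}$ is a.a.s.\ upper-uniform with density $p$, all but $\eps t^2$ pairs are regular. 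So $R$ is a multicoloured graph on $t$ vertices missing at most $\eps t^2$ edges, and the missing edges can be chosen to meet only a few vertices. I would next delete a small set of vertices of $R$ so that the remaining graph $R'$ has minimum degree at least $(1-\sqrt\eps)t$, and then add the missing edges in arbitrary colours. The KŁR embedding must avoid these added edges. To handle this, I would first find a \emph{dense} monochromatic tiling in an almost complete coloured graph.

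\textbf{Dense tilings in the reduced graph.} For $(\alpha_1)$, repeatedly apply Ramsey's theorem inside $R'$ for the graph $B:=K_{R_r(H)}$-blown-up version of $H$. More precisely, for a large $s$, find vertex-disjoint monochromatic copies of the complete $|H|$-partite graph $K_{s,\dots,s}$. Each such copy in colour $c$ on clusters $i_1,\dots,i_{|H|}$ yields, after splitting each cluster into $s$ equal parts, a family of colour-$c$ regular $|H|$-tuples. Almost-spanning tilings of $R'$ by such monochromatic cliques exist by greedy application of Ramsey's theorem, using only that $R'$ is almost complete; the uncovered part is at most $R_r(K_{|H|})+\sqrt\eps t$ clusters.

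For $(\alpha_2)$–$(\alpha_4)$ I instead need a fractional version of the dense theorems. From Theorem~\ref{theorem:BES_symmetric}, \cref{theorem:r=3} and \cref{theorem:chi>=r}, applied with $m$ linear in $t$, every $r$-coloured complete graph on $t$ vertices contains a monochromatic $(mH')$ with $m|H'|\ge (1/c_r(H')-o(1))t$. Here $H'$ is the blow-up $H(s)$, obtained by replacing each vertex with an $s$-set and each edge with a complete bipartite graph; note $c_r(H(s))=c_r(H)$, since the tiling problems scale. Blow-ups matter because they are robust: a monochromatic copy of $H(s)$ on clusters lets me tile its union in $G_{n,p}$ almost perfectly by copies of $H$, provided part sizes are balanced.

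\textbf{Embedding in $G_{n,p}$.} Given a colour-$c$ copy of $H(s)$ or $K_{s,\dots,s}$ in $R'$, split each involved cluster into equal pieces according to how many times it is used. Then greedily embed vertex-disjoint colour-$c$ copies of $H$ into the corresponding regular $|H|$-tuples. The KŁR conjecture, now a theorem of Balogh–Morris–Samotij and Saxton–Thomason and applicable since $p\ge Cn^{-1/m_2(H)}$ and $m_2(H)\ge1$, guarantees a copy of $H$ in every subfamily of sets of size at least $\gamma' |V_i|$ that stays regular. Since regularity is inherited by large subsets, the greedy embedding continues until only a $\gamma'$-fraction of each piece is left. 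This gives, in $G_{n,p}$, a monochromatic tiling covering all but $O(\gamma' + \eps + 1/s)n$ vertices of the chosen clusters. This proves $(\alpha_1)$–$(\alpha_4)$ after adjusting the constants.

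\textbf{Main obstacle.} The delicate step is the exceptional pairs. The dense theorems are stated for complete graphs, but $R'$ is only almost complete. I would handle this by applying the dense theorem to an arbitrary recolouring of the completion and showing that only $o(t)$ copies of $H(s)$ use a fake edge. If the deleted edges are spread out, one can instead first remove a vertex cover of them of size $O(\sqrt\eps t)$, which costs only $o(n)$ vertices. The alternative is to show that the dense results are stable under deleting $\eps t^2$ edges. This follows for $f_r$-type bounds from the linear error term ($c_r$ is defined as a limit, so losing $o(t)$ vertices only costs $\gamma$).
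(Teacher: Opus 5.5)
\textbf{Verdict: genuine gap.} Your architecture matches the paper's: coloured sparse regularity, majority colours on the reduced graph $R$, a dense tiling result inside $R$, then K{\L}R to embed canonical copies greedily. Your handling of $(\alpha_1)$ via Tur\'an plus Ramsey in $R$ is fine.

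The gap is the step you yourself call the main obstacle. You never obtain $(\alpha_2)$--$(\alpha_4)$ for the \emph{almost complete} graph $R$, and none of your three fixes works.

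Nothing in the regularity lemma controls where the $\le\eps t^2$ irregular pairs lie, so your claim that the missing edges ``meet only a few vertices'' is unjustified. For example, the non-edges of $R$ may form a disjoint union of cliques of order $2\eps t$. Then $\delta(R)\approx(1-2\eps)t$, but every vertex cover of the non-edges has about $t$ vertices, and the clique number of $R$ is $1/(2\eps)$, a constant.

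\begin{itemize}
\item Your third fix (``losing $o(t)$ vertices only costs $\gamma$'') presupposes that discarding $o(t)$ vertices leaves a complete graph. The example shows this is impossible. The linear error term in the complete-graph theorems says nothing about stability under deleting $\eps t^2$ edges.
\item Your second fix has it backwards: spread-out non-edges are exactly the case with \emph{no} small cover.
\item Your first fix has no mechanism. The complete-graph theorem gives no control over which edges its tiling uses. With fake edges at every vertex, every copy may use one, whatever colours you assign them.
\end{itemize}

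The paper supplies the missing ingredient with dedicated robust lemmas for almost complete coloured graphs, and then applies them directly to $R$:
\begin{itemize}
\item Lemma~\ref{robust2colours}: bowties plus Tur\'an and Ramsey.
\item Lemma~\ref{robust_chi}: rerunning the gadget induction, with a counting argument to find one clique of each colour whose union spans a complete graph.
\item Lemma~\ref{robust_3}: deleting vertices of large missing degree, regularising, and a random-sampling argument to obtain a \emph{completely joined} blow-up of the rainbow-star $K_4$.
\end{itemize}
This is a substantive part of the proof, not a bookkeeping step.

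A different repair would use LP duality. Suppose every $R_i$ had fractional $H$-tiling number below $(1/c_r(H)-\gamma)t$. Take optimal dual covers, capped at $|H|$, and restrict them to a random $k$-set that spans a clique of $R$. This contradicts the complete-graph theorem on $K_k$. You would then also have to embed a fractional tiling of $R_i$ into $G_{n,p}$.

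Finally, the passage to $H(s)$ is unnecessary. A copy of $H$ in $R$ on clusters $V_{i_1},\dots,V_{i_h}$ is already tiled almost perfectly by canonical copies, each using one vertex per cluster.
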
 

{\bf Remark.}
The cases $(\alpha_1)$ and $(\alpha_2)$ of Theorem~\ref{randomramsey2} were proved in our earlier preprint~\cite[Theorem~1.3]{arxivnote} which we made available on arXiv.
Since posting that note on arXiv, we decided to combine our work into a single paper.

\medskip

To emphasise, in $(\alpha_1)$ the copies of $H$ in $\mathcal H_1$ are each monochromatic, though different copies of $H$ can have different colours. In $(\alpha_2)$, all copies of $H$ in $\mathcal H_2$ have the same colour, and similarly for $\mathcal H_3$ in $(\alpha_3)$ and $\mathcal H_4$ in $(\alpha_4)$.

Roughly speaking, the $0$-statement of the random Ramsey theorem tells us that a typical $n$-vertex graph of edge density significantly less than $n^{-1/m_2(H)}$ has an $r$-edge-colouring that does not contain a single monochromatic copy of $H$.
On the other hand, for $H$ as in Theorem~\ref{randomramsey2}, $(\alpha_1)$ tells us that a typical $n$-vertex graph of edge density significantly more than $n^{-1/m_2(H)}$ 
has the property that, however it is $r$-edge-coloured, 
not only does it have a mononochromatic copy of $H$ (\'a la Theorem~\ref{randomramsey}) but in fact
it 
can almost be completely covered by vertex-disjoint monochromatic copies of $H$.

Further, $(\alpha_2)$ yields a random analogue of the Burr--Erd\H{o}s--Spencer theorem (Theorem~\ref{theorem:BES_symmetric}). In fact, the $p=1$
case of $(\alpha_2)$ is simply a rephrasing of an asymptotic version of Theorem~\ref{theorem:BES_symmetric}. In particular, note that 
Theorem~\ref{theorem:BES_symmetric} implies that there is a constant $K$ such that, for every sufficiently large $n$, there is an
$r$-edge-colouring of $K_n$ for which the largest monochromatic $H$-tiling in $K_n$
covers at most $\left(\frac{|H|}{2|H|-\alpha (H)} \right )n+K$ vertices.
Thus, the bound on the size of $\mathcal H_2$ in $(\alpha_2)$ is asymptotically sharp.
In particular, for  $H$ as in the statement of Theorem~\ref{randomramsey2}, $(\alpha_2)$ essentially tells us that once~$p$ is sufficiently large enough to ensure a single monochromatic of $H$ in $G_{n,p}$, one actually obtains a monochromatic $H$-tiling of `largest attainable' size.
Similarly, $(\alpha_3)$ and $(\alpha_4)$ yield random analogues of \cref{theorem:r=3} and \cref{theorem:chi>=r} and the bounds on the size of $\mathcal H_3$ and $\mathcal H_4$ are asymptotically sharp.
Moreover, since at least a $1/r$-proportion of the copies of $H$ in $\mathcal H _1$ have the same colour, ($\alpha_1$) yields a random version of Theorem~\ref{thm:rambi}.

\smallskip

The phenomenon described above does not extend to those graphs $H$ with 
 $m_2(H)< 1$,
however. Indeed,
 if $H$ does not contain isolated vertices, then note  that $m_2(H)<1$ implies $H$ is a matching.
Consider the case when $H=K_2$ and thus $m_2(H)=1/2$. A construction of Cockayne and Lorimer~\cite{CockayneL} shows that there is an $r$-edge-colouring
of $K_n$ for which the largest monochromatic $K_2$-tiling (i.e., monochromatic matching) contains at most $n/(r+1)$ edges. Moreover, recently Gishboliner, Krivelevich and Michaeli~\cite[Theorem~4]{gkm}
showed that, given any $r \geq 2$ and $\gamma >0$, there exists a constant $C=C(r,\gamma)$ such that if $p \geq C/n$, then a.a.s.\@ $G_{n,p}$ contains a monochromatic matching
on $(1/(r+1)-\gamma)n$ edges whenever it is $r$-edge-coloured.
The bound on $p$ here is best-possible since given any $\eps >0$ there exists a constant $c=c(\eps)$ such that if $p \leq c/n$, then a.a.s.\@ $G_{n,p}$ does not contain 
a matching of size $\eps n$ (let alone a monochromatic one); see, e.g.,~\cite[Theorem~4.9]{janson}.
On the other hand, the threshold for $G_{n,p}$ to be $(K_2,r)$-Ramsey is
$p=1/n^2$ since
if $p n^2\rightarrow \infty$ then a.a.s.\@ $G_{n,p}$
contains an edge (and thus a monochromatic copy of $K_2$ however the edges are coloured). Similarly, 
if $H$ is a fixed matching then the 
threshold for $G_{n,p}$ to be $(H,r)$-Ramsey is
$p=1/n^2$.

In Sections~\ref{sec26} and~\ref{sec:robust} we prove `robust' asymptotic versions of some of the results mentioned in Section~\ref{sec11}. These results, together with
 applications of the sparse regularity lemma~\cite{koh} and the K{\L}R conjecture
for random graphs~\cite{randomklr} yield 
Theorem~\ref{randomramsey2}.

{\noindent \textbf{Additional note.}}
The $(\alpha_2)$ case of Theorem~\ref{randomramsey2} has been simultaneously and independently proven
by Arag\~ao, Cheng, Filipe, Miyazaki, Peng and Yan~\cite{aragao}. Their proof is very different to ours, making use of the hypergraph container method.
The work in~\cite{liuetal} also  yields similar results for connected graphs $H$.

\smallskip

{\noindent \bf Notation.}
Throughout, $\mathbb N$ denotes the set of positive integers (i.e., it does not contain $0$).

Unless stated otherwise, if we consider a $2$-edge-colouring of a graph, then we assume the colours used are red and blue. On the other hand, if we consider 
an $r$-edge-colouring for $r \geq 3$ or when the value of $r$ is not specified, we
assume the colours used are $[r]$. Given an $r$-edge-coloured graph $G$,  for every $c\in[r]$, we write~$G_c$ to denote the subgraph of~$G$ induced by the edges coloured with~$c$.

We write $V(G)$ and $E(G)$ for the vertex and edge sets of a graph $G$, respectively, and define $|G|:=|V(G)|$ and $e(G):=|E(G)|$.
A set of vertices $S\subseteq V(G)$ is {\it independent} if no edge lies in it.
A subgraph~$H$ of~$G$ is {\it spanning} if~$V(H)=V(G)$.
Given a set~$X\subseteq V(G)$, we write~$G[X]$ for the {\it induced subgraph of~$G$ on~$X$}, that is, the subgraph with vertex set~$X$ which contains all edges of~$G$ lying in~$X$. Set $G\setminus X:=G[V(G)\setminus X]$.
Given a vertex $x \in V(G)$, we write $d_G(x,X)$ for the number of edges incident to $x$ in $G$ whose other endpoint is in $X$.

Given  disjoint $U,V\subseteq V(G)$, we  write~$G[U,V]$ to denote the bipartite
subgraph of $G$ with vertex set $U \cup V$ that has edge set equal  to the set of edges in $G$ with one endpoint in $U$ and the other endpoint in $V$.
If $\mathcal B$ is a collection of subgraphs of $G$, then $V(\mathcal B)$ denotes
the set of vertices that lie in at least one element of $\mathcal B$.

Given graphs $H$ and $G$, we say an $H$-tiling in $G$ is \textit{perfect} if it covers all of $V(G)$.
Given a graph~$H$ and~$k\in\mathbb N$, we write~$H(k)$ to denote the blow-up of~$H$ where every vertex is replaced by a class of~$k$ vertices.

Given a graph $H$, let $\sigma(H)$ denote the size of the smallest possible colour class in any proper $\chi(H)$-colouring of the vertices of $H$.

An \textit{equipartition} of a set $V$ is a partition of $V$ into classes whose sizes differ by at most $1$.

Constants in  hierarchies  are chosen from right to left.
For example, if we claim that a result holds whenever $0< a\ll b\ll c\le 1$, then 
there are non-decreasing functions $f:(0,1]\to (0,1]$ and $g:(0,1]\to (0,1]$ such that the result holds
for all $0<a,b,c\le 1$  with $b\le f(c)$ and $a\le g(b)$.  
Note that $a \ll b$ implies that we may assume in the proof that, e.g., $a < b$ or $a < b^2$.

\smallskip

{\noindent \bf Organisation of the paper.}
The paper is organised as follows.  In the next section we collect together a number of useful tools, including versions of Szemer\'edi's regularity lemma~\cite{sze},
the Poincar\'e--Miranda theorem and a robust version of Theorem~\ref{theorem:BES_symmetric}. In Section~\ref{section:gadgets} we introduce our crucial notion of $(H,r)$-gadgets.
We prove Theorems~\ref{theorem:r=3} and~\ref{theorem:chi>=r} in Section~\ref{sec:5}.
In Section~\ref{sec:bip} we prove (a significant strengthening of) Theorem~\ref{thm:bi}.
Robust versions of Theorems~\ref{theorem:r=3} and~\ref{theorem:chi>=r} are proven in Section~\ref{sec:robust}; these results are applied in the proof of Theorem~\ref{randomramsey2}, which is given in Section~\ref{sec:random}.

\section{Useful tools}

\subsection{{Coloured version of the  regularity lemma}}
We will use a multicolour version of Szemer\'edi's regularity lemma~\cite{sze}.
We first introduce some notation.
Given two disjoint sets $U,V$ of vertices in a graph $G$, the \emph{density}
$d(U,V)$ 
is defined as
$$d(U,V):=\frac{e(G[U,V])}{|U||V|}.$$
Given~$\eps,d>0$, a graph~$G$ and two disjoint sets~$U, V\subseteq V(G)$, we say that $G[U, V]$ is \emph{$[\eps, d]$-regular} if~$d(U,V)\ge d$ and, for all sets~$X\subseteq U$ and~$Y\subseteq V$ with~$|X| \ge \eps|U|$ and~$|Y| \ge \eps|V|$, we have~$|d(U,V) - d(X,Y)| < \eps$.

We will apply the following \textit{degree form}   of Szemer\'edi's regularity lemma that can be easily deduced from the multicoloured version, e.g., given in~\cite{ks}.

\begin{lemma}\label{theorem:regularity}
For every~$\eps>0$ and $r,\ell_0\in \mathbb N$ there is an~$M = M(\eps,\ell_0,r)\in \mathbb N$ such that the following holds.
For every $d\in[0,1]$ and every $r$-edge-coloured graph~$G$ on~$n\ge M$ vertices, there exists a partition~$V_0,V_1,\dots,V_\ell$ of $V(G)$ and a spanning subgraph~$G'$ of~$G$ with the following properties:
\begin{itemize}
    \item~$\ell_0 \le \ell \le M$ and~$|V_0| \le \eps n$;
    \item~$|V_i| = |V_1|~$ for every~$i\in [\ell]$;
        \item~$d_{G'}(x) \ge d_{G}(x) - (r d+\eps)n$ for all~$x\in V(G)$;
    \item for all~$i \in [\ell]$ the graph~$G'[V_i]$ is empty;
    \item for all~$1 \le i < j \le \ell$ and $c\in[r]$, the graph $G_c'[V_i, V_j]$  either has density $0$ or is~$[\eps,d]$-regular.
\end{itemize}
\end{lemma}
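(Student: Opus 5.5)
We derive \cref{theorem:regularity} from the standard multicoloured regularity lemma (e.g.\ as in~\cite{ks}) using the usual cleaning argument. Apply the multicoloured regularity lemma with parameter $\eps':=\eps^2/(10r)$ (say) and lower bound $\max\{\ell_0,1/\eps'\}$ on the number of clusters to the graphs $G_1,\dots,G_r$. This gives a partition $V_0',V_1,\dots,V_\ell$ with the following properties: $|V_0'|\le\eps' n$; the sets $V_1,\dots,V_\ell$ have equal size; $\ell\le M$; and all but at most $\eps'\ell^2$ pairs $(V_i,V_j)$ are $\eps'$-regular in every colour simultaneously. Here $M$ depends only on $\eps,\ell_0,r$.

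Next we build $G'$ by deleting edges. First, delete all edges inside each $V_i$. Second, delete all edges of colour $c$ between $V_i,V_j$ whenever the pair is irregular in some colour. Third, delete all edges of colour $c$ between $V_i,V_j$ whenever $d_{G_c}(V_i,V_j)<d$. The pairs surviving in colour $c$ are then $\eps'$-regular with density at least $d$, hence $[\eps,d]$-regular, since $\eps'\le\eps$. Pairs of density $0$ are allowed, so the last bullet holds. We keep every edge incident to $V_0'$, and we keep $G'[V_0']$ unchanged. The bullets only constrain $G'[V_i]$ for $i\ge1$ and pairs among $V_1,\dots,V_\ell$.

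The degree condition is the main point, and we obtain it by moving bad vertices into the exceptional set. For a vertex $x\in V_i$, its losses split into three parts:
\begin{itemize}
\item at most $|V_i|\le n/\ell\le\eps' n$ edges inside its own cluster;
\item edges to clusters $V_j$ with $(V_i,V_j)$ irregular;
\item at most $r\,d\,n$ plus a regularity error from low-density pairs.
\end{itemize}
The last part needs care, because a single vertex may have degree above $d|V_j|$ into a sparse pair. We handle this by declaring $x$ bad if, for some colour $c$, it loses more than $(d+\eps/(2r))n$ edges of colour $c$ to low-density pairs. Bad vertices and vertices in too many irregular pairs are moved to $V_0$.

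We count the vertices moved. Summing over clusters, the total number of colour-$c$ edges deleted because of low density is at most $d n^2$ (roughly). This does not immediately bound the number of bad vertices, so we use regularity instead. In an $\eps'$-regular pair of density below $d$, fewer than $\eps'|V_i|$ vertices of $V_i$ have degree more than $(d+\eps')|V_j|$ into $V_j$. Hence the number of vertices with more than $\sqrt{\eps'}\ell$ such exceptional incidences is at most $\sqrt{\eps'}n$ by double counting. Every other vertex loses at most $(d+\eps')n+\sqrt{\eps'}n$ per colour in this way. Similarly, at most $\sqrt{\eps'}\,n$ vertices lie in clusters with more than $\sqrt{\eps'}\ell$ irregular partners; we move whole such clusters to $V_0$.

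After moving these vertices, we rebalance the cluster sizes by moving a few further vertices to $V_0$ so that all clusters are again equal. This keeps $|V_0|\le\eps n$, and it preserves $[\eps,d]$-regularity because the clusters lose only a $\sqrt{\eps'}$-fraction of their vertices (slicing). A remaining vertex then loses at most $r(d+3\sqrt{\eps'})n+\eps'n\le(rd+\eps)n$ edges. Vertices moved to $V_0$ lose nothing further, as long as we restore edges incident to $V_0$.

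The obstacle I expect is exactly this per-vertex (rather than average) degree control, together with the reshuffling needed to keep cluster sizes equal.
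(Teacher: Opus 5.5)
Your plan is the standard cleaning deduction from the multicoloured regularity lemma, and that is all the paper has in mind: it gives no proof, only remarking that the degree form follows easily from the multicoloured version. The plan works, but the rebalancing step fails as written, and a few other points need repair.

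\textbf{Density after rebalancing.} When you shrink the clusters to $V_i^*\subseteq V_i$, slicing preserves regularity, with parameter about $2\eps'$. It only gives $d_{G_c}(V_i^*,V_j^*)>d_{G_c}(V_i,V_j)-\eps'$, however. So a colour-$c$ pair you kept because $d_{G_c}(V_i,V_j)\ge d$ may end with density in $(d-\eps',d)$. It is then neither empty nor $[\eps,d]$-regular, since the paper's definition demands density at least $d$. Deleting such pairs afterwards would need control of individual degrees into them, which you have not arranged.

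The fix is to raise the deletion threshold. Delete the colour-$c$ edges of every pair with $d_{G_c}(V_i,V_j)<d+\eps'$. Call $x\in V_i$ exceptional for $(V_j,c)$ when it has more than $(d+2\eps')|V_j|$ colour-$c$ neighbours in $V_j$. Regularity still gives fewer than $\eps'|V_i|$ such $x$ in each pair. The colour-$c$ loss of a vertex with few exceptional incidences rises only to $(d+2\eps'+\sqrt{\eps'})n$.

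\textbf{Per-cluster count.} Your claim that each cluster loses only a small fraction of its vertices does not follow from your global bound of $\sqrt{\eps'}n$ bad vertices, since these could all lie in a few clusters. It does follow from the same double count run inside a single cluster. The cluster $V_i$ has fewer than $r\ell\eps'|V_i|$ exceptional incidences over all $j$ and $c$. Hence fewer than $r\sqrt{\eps'}|V_i|$ of its vertices have more than $\sqrt{\eps'}\ell$ of them.

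\textbf{Constants.} With $\eps'=\eps^2/(10r)$ you get $3r\sqrt{\eps'}=3\eps\sqrt{r/10}>\eps$ for every $r\ge2$, so your closing inequality $r(d+3\sqrt{\eps'})n+\eps'n\le(rd+\eps)n$ fails. Take, for example, $\eps'=(\eps/(10r))^2$.

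\textbf{Lower bound on $\ell$.} You discard up to $2\sqrt{\eps'}\ell$ whole clusters with many irregular partners. This can push the number of clusters below $\ell_0$ if the regularity lemma returned close to $\ell_0$ of them. Request at least $2\ell_0$ clusters at the start.

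With these repairs your argument proves the lemma.
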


The \emph{reduced graph~$R$ of~$G$ with parameters~$\eps$,~$\ell_0$, $r$ and~$d$} is the graph with vertex set~$\{V_i:i\in[\ell]\}$ and in which~$V_i V_j$ is an edge precisely when~$G_c'[V_i, V_j]$ is~$[\eps, d]$-regular for some $c\in[r]$.
Furthermore, we assign colour~$c$ to $V_iV_j$; if~$G_c'[V_i, V_j]$ is~$[\eps, d]$-regular for more than one choice of $c\in[r]$, we select one such colour arbitrarily.
The following well-known and simple consequence of the regularity lemma states that the reduced graph almost inherits the minimum degree of the original graph.

\begin{proposition}\label{reducedgraph}
Let $r,\ell_0 \in \mathbb N$, let~$0<\eps,d,c<1$ and let~$G$ be an $r$-edge-coloured $n$-vertex graph with~$\delta(G)\geq cn$. 
If~$R$ is the reduced graph of~$G$ obtained by applying Lemma~\ref{theorem:regularity} with parameters~$\eps$,~$\ell_0, r$ and~$d$, then~$\delta(R)\geq(c-2\eps-r d)|R|$.\qed
\end{proposition}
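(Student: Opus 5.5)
We show that every vertex of $R$ has many neighbours by counting the edges of $G'$ leaving a single cluster $V_i$, $i\in[\ell]$, and comparing them with what the clusters adjacent to $V_i$ in $R$ can possibly absorb. Write $\ell=|R|$ and $k:=|V_1|$. Since $|V_0|\le\eps n$ and all clusters have equal size, $(1-\eps)n\le \ell k\le n$.

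\emph{Degree lower bound.} Fix $i\in[\ell]$. By Lemma~\ref{theorem:regularity}, every $x\in V_i$ satisfies
\[
d_{G'}(x)\ge d_G(x)-(rd+\eps)n\ge (c-rd-\eps)n .
\]
Since $G'[V_i]$ is empty, summing over $x\in V_i$ gives that the number of edges of $G'$ leaving $V_i$ is at least $k(c-rd-\eps)n$.

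\emph{Where these edges can go.} There are three possibilities for the other endpoint.
\begin{itemize}
\item It lies in $V_0$. Such edges number at most $k|V_0|\le k\eps n$.
\item It lies in some $V_j$ with $V_iV_j\notin E(R)$. Then for each colour $c'$ the graph $G'_{c'}[V_i,V_j]$ is not $[\eps,d]$-regular, so by the last property of Lemma~\ref{theorem:regularity} it has density $0$. Hence $G'$ has no edges between $V_i$ and $V_j$.
\item It lies in some $V_j$ with $V_iV_j\in E(R)$. Then $G'$ contributes at most $k^2$ edges between $V_i$ and $V_j$.
\end{itemize}

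\emph{Conclusion.} Combining these observations,
\[
k(c-rd-\eps)n\le k\eps n+d_R(V_i)\,k^2 .
\]
Dividing by $k^2$ and using $n/k\ge \ell$ yields
\[
d_R(V_i)\ge (c-rd-2\eps)\frac nk\ge (c-2\eps-rd)\ell .
\]
The last step is valid when $c-2\eps-rd\ge 0$; otherwise the claimed bound holds trivially.

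There is no real obstacle here. The only point needing care is to use $n/k\ge\ell$, rather than $(1-\eps)n/k$, so that the $\eps$-loss from $V_0$ is counted only once and the stated constant $2\eps$ comes out exactly. The key structural fact is that, because $G'$ retains only regular or empty coloured pairs, non-adjacent clusters in $R$ carry no edges of $G'$ at all.
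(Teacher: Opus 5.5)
Your proof is correct. It is the standard double-counting argument: edges of $G'$ leaving a cluster can only go to $V_0$ or to clusters adjacent in $R$, because non-adjacent pairs have density $0$ in every colour. The paper omits this argument entirely, stating the proposition as a well-known consequence of Lemma~\ref{theorem:regularity} with no proof, and your use of $n/k\ge \ell$ recovers the stated constant $c-2\eps-rd$ exactly.
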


We will also use the following well-known result; it is an immediate consequence of both the so-called key lemma~\cite{ks} as well as the blow-up lemma~\cite{blowup}.

\begin{lemma}\label{corollary:embedding}
Given a graph~$H$, $q\in\mathbb N$ and~$d>0$, there exist~$\eps > 0$ and~$m_0\in\mathbb N$ such that the following holds. 
Given a graph~$R$, let~$G$ be the graph obtained by replacing every vertex $v\in V(R)$ with a set $V_v$ of~$m\ge m_0$ vertices and replacing each edge of~$R$ with an $[\eps,d]$-regular pair.
If~$R$ contains a perfect $H$-tiling, then~$G$ contains an $H(q)$-tiling covering all but
at most~$\sqrt\eps|G|$ vertices of~$G$.\qed
\end{lemma}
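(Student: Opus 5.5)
The plan is to treat each copy of $H$ in the perfect $H$-tiling of $R$ separately and, inside it, to greedily remove vertex-disjoint copies of $H(q)$ in a \emph{balanced} way until every cluster is almost exhausted. For the embedding step I would invoke the key lemma (equivalently, the counting/embedding lemma for regular pairs) from~\cite{ks}.

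\textbf{Choice of constants and reduction to one copy.} Given $H$, $q$ and $d$, let $\eps_0=\eps_0(H,q,d/2)$ and $m_1$ be the constants supplied by the key lemma. Write $h:=|H|$. The key lemma, applied to $h$ vertex sets of size at least $m_1$ in which every pair corresponding to an edge of $H$ is $[\eps_0,d/2]$-regular, yields a copy of $H(q)$ in which the $q$ copies of each vertex $u\in V(H)$ lie in the set assigned to $u$. I would choose $\eps>0$ with $2\sqrt\eps\le\eps_0$ and $\eps\le d/2$, and then choose $m_0$ with $\sqrt\eps\, m_0\ge m_1+q$. Fix a copy $H'$ of $H$ in the perfect $H$-tiling of $R$, and consider the $h$ clusters $V_v$, $v\in V(H')$, each of size $m\ge m_0$. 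Every edge $vw$ of $H'$ corresponds to an $[\eps,d]$-regular pair $(V_v,V_w)$. Since the copies of $H$ in the tiling are vertex-disjoint, it suffices to show that $\bigcup_{v\in V(H')}V_v$ contains an $H(q)$-tiling covering all but at most $\sqrt\eps\, hm$ of its vertices. Summing over the copies of $H$ in the tiling then gives the stated bound $\sqrt\eps|G|$.

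\textbf{Greedy balanced embedding.} Suppose that so far we have removed $t$ vertex-disjoint copies of $H(q)$, each using exactly $q$ vertices from every cluster $V_v$. Then each remaining set $V_v'\subseteq V_v$ has the same size $m-tq$. As long as $m-tq\ge\sqrt\eps\, m$, I would use the slicing property of regular pairs. Take subsets $X\subseteq V_v'$ and $Y\subseteq V_w'$ of relative sizes at least $2\sqrt\eps$. These have absolute sizes at least $2\eps m\ge\eps m$, so both $d(X,Y)$ and $d(V_v',V_w')$ lie within $\eps$ of $d(V_v,V_w)$. Hence $(V_v',V_w')$ is $[2\sqrt\eps, d-\eps]$-regular, and in particular $[\eps_0,d/2]$-regular. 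Since $|V_v'|\ge\sqrt\eps\, m\ge m_1$, the key lemma gives a copy of $H(q)$ taking $q$ vertices from each $V_v'$. We remove it and iterate. The process stops only when $m-tq<\sqrt\eps\, m$, at which point the uncovered vertices number fewer than $\sqrt\eps\, m$ per cluster, as required.

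\textbf{Main obstacle.} The only delicate point is the bookkeeping of parameters. The regularity inherited by the shrinking sets must stay below the key lemma's threshold all the way down to size $\sqrt\eps\, m$. This is exactly why the tolerance $\sqrt\eps$ appears in the statement and why the embedding must be kept balanced: it guarantees that all clusters of $H'$ shrink at the same rate, so no single cluster becomes small before the others. With the choices $2\sqrt\eps\le\eps_0$ and $\sqrt\eps\, m_0\ge m_1+q$ made above, this is routine. Alternatively, one could apply the blow-up lemma~\cite{blowup} after first making the pairs super-regular by deleting a few atypical vertices from each cluster.
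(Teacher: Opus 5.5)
Your proof is correct and matches what the paper intends. The paper gives no argument beyond calling the lemma an immediate consequence of the key lemma of~\cite{ks} (or the blow-up lemma). Your balanced greedy removal of copies of $H(q)$ is the standard way of deriving it from the key lemma: the slicing estimate keeps each shrinking pair $[2\sqrt\eps,d-\eps]$-regular down to size $\sqrt\eps\, m$, and equal cluster sizes keep the key lemma applicable at every step.
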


    

\subsection{{Coloured version of the sparse regularity lemma}}
In the proof of Theorem~\ref{randomramsey2} we  apply a variant of the sparse regularity lemma~\cite{koh}. To state this result we require a few definitions.

Let $p, \eps >0$. Given two disjoint sets $U,V$ of vertices in a graph $G$, the density
\textit{$d_{p} (U,V)$ of edges between $U$ and $V$ with respect to $p$} is
$$
d_p(U,V):= \frac{e\bigl(G[U,V]\bigr)}{p|U| |V|}.
$$
A bipartite graph with vertex classes $U,V$ is \textit{$(\eps, p)$-regular} if, for every $U' \subseteq U$ and $V' \subseteq V$ with $|U'|\geq \eps |U|$ and $|V'| \geq \eps |V|$ 
we have that 
$\bigl|d_p(U',V')-d_p (U,V)\bigr| \leq \eps$. In this case we call $(U,V)$ an \textit{$(\eps, p)$-regular pair}.  The following well-known fact follows immediately from the definition of an $(\eps, p)$-regular pair.

\begin{fact}\label{fact1}
  Let $0<\eps <\alpha$ and $\eps ':= \max \{ \eps/\alpha, 2\eps \}$.
Suppose that $(U,V)$ forms an $(\eps, p)$-regular pair with $d:=d_p(U,V)$.
Suppose $U'\subseteq U$ and $V' \subseteq V$ such that $|U'| \geq \alpha |U|$ and
$|V'| \geq \alpha |V|$. Then $(U',V')$ is an $(\eps ',p)$-regular pair with 
$d':=d_p(U',V')$ such that 
$ |d' - d| \leq \eps $.\qed
\end{fact}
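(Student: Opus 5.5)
We need to prove Fact~\ref{fact1}: if $(U,V)$ is $(\eps,p)$-regular with density $d$, and $U'\subseteq U$, $V'\subseteq V$ satisfy $|U'|\ge\alpha|U|$ and $|V'|\ge\alpha|V|$, then $(U',V')$ is $(\eps',p)$-regular with $|d'-d|\le\eps$. The plan is to unwind the definition directly, in two steps, using only the regularity of $(U,V)$.

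\emph{Step 1: the density bound.} Since $\alpha>\eps$, we have $|U'|\ge\alpha|U|\ge\eps|U|$ and likewise $|V'|\ge\eps|V|$. So the pair $(U',V')$ is itself an admissible test pair in the definition of $(\eps,p)$-regularity of $(U,V)$. This gives $|d_p(U',V')-d_p(U,V)|\le\eps$ immediately, that is, $|d'-d|\le\eps$.

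\emph{Step 2: regularity of $(U',V')$.} Take any $U''\subseteq U'$ and $V''\subseteq V'$ with $|U''|\ge\eps'|U'|$ and $|V''|\ge\eps'|V'|$. Since $\eps'\ge\eps/\alpha$, we get
$$|U''|\ge(\eps/\alpha)\alpha|U|=\eps|U|,$$
and similarly $|V''|\ge\eps|V|$. So $(U'',V'')$ is also admissible for the original pair, and $|d_p(U'',V'')-d|\le\eps$. The triangle inequality with Step 1 then gives
$$|d_p(U'',V'')-d'|\le|d_p(U'',V'')-d|+|d-d'|\le2\eps\le\eps'.$$
This is exactly where the second term in the definition $\eps'=\max\{\eps/\alpha,2\eps\}$ is used. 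Hence $(U',V')$ is $(\eps',p)$-regular.

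There is no real obstacle here. The only points needing care are that the hypothesis $\alpha>\eps$ is what makes $(U',V')$ admissible in Step 1, and that each term in the maximum defining $\eps'$ is used once: $\eps/\alpha$ for the size condition and $2\eps$ for the density deviation.
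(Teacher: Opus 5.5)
Your proof is correct, and it is exactly the direct unwinding of the definition that the paper means when it says the fact ``follows immediately from the definition of an $(\eps,p)$-regular pair''; the paper gives no further proof. The three ingredients are as you use them: $\alpha>\eps$ makes $(U',V')$ an admissible test pair, $\eps'\ge\eps/\alpha$ makes $(U'',V'')$ admissible, and $\eps'\ge 2\eps$ absorbs the triangle inequality.
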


Let $D \geq 1$ and $0<\eta, p \leq 1$. A graph $G$ is \textit{$(\eta, p,D)$-upper-uniform} if, given any disjoint sets $U,V \subseteq V(G)$ with $|U|,|V| \geq \eta |G|$, we have that $d_p (U,V) \leq D$.

We will apply the following coloured version of the sparse regularity lemma (see, e.g.,~\cite[Theorem 5.2]{letz}).
\begin{lemma}\label{sparserl}
    For every $\eps >0$, $t, r \in \mathbb N$ and $D>1$, there exist 
    $\eta >0$ and $T \in \mathbb N$ such that for any $0< p\leq 1$, if
    $G_1,\dots, G_r$ are $(\eta, p,D)$-upper-uniform graphs on vertex set $V$, there is an
    equipartition $V_1,\dots, V_s$ of $V$ into $s$ parts, where $t \leq s \leq T$, and for which all but at most an $\eps$-proportion of the pairs $(V_i,V_j)$ 
    ($i \neq j \in [s]$) induce an $(\eps, p)$-regular pair in each of the graphs $G_1,\dots, G_r$.
\end{lemma}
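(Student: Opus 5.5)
The statement is the multicolour form of Kohayakawa's sparse regularity lemma~\cite{koh}. I would prove it by the standard energy-increment argument, run simultaneously for all $r$ colours, rather than by applying the one-colour lemma $r$ times.

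\textbf{Setup.} For an equipartition $\mathcal P=\{V_1,\dots,V_s\}$ of $V$ with $|V|=n$, define the index
$$q(\mathcal P):=\sum_{c\in[r]}\ \sum_{1\le i<j\le s}\frac{|V_i||V_j|}{n^2}\, d_{p,c}(V_i,V_j)^2,$$
where $d_{p,c}$ is the $p$-density in $G_c$.

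\textbf{Step 1: the index is bounded.} Suppose every part has size at least $\eta n$. Then $(\eta,p,D)$-upper-uniformity gives $d_{p,c}(V_i,V_j)\le D$ for every pair and colour, so $q(\mathcal P)\le rD^2$. The same bound holds for any refinement whose parts all have size at least $\eta n$.

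\textbf{Step 2: refine when too many pairs are irregular.} Suppose more than an $\eps$-proportion of pairs $(V_i,V_j)$ fail to be $(\eps,p)$-regular in some colour. For each such pair, fix one bad colour $c$ and witness sets $U'\subseteq V_i$, $V'\subseteq V_j$ of relative size at least $\eps$ with $|d_{p,c}(U',V')-d_{p,c}(V_i,V_j)|>\eps$.
\begin{itemize}
\item Take the common refinement of $\mathcal P$ by all witness sets. Each part splits into at most $2^{s}$ atoms.
\item A defect form of Cauchy--Schwarz shows that each irregular pair gains roughly $\eps^4|V_i||V_j|/n^2$ in its colour's summand.
\item Refining never decreases any summand, again by convexity.
\end{itemize}
Hence the index of the refinement exceeds $q(\mathcal P)$ by about $\eps^5/2$.

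\textbf{Step 3: restore an equipartition.} Chop each atom into pieces of a common size $\lfloor n/(s4^{s})\rfloor$. Collect the leftovers, at most a $4^{-s}$-fraction of $V$, and redistribute them into an equipartition. This cleanup changes the index by $O(rD^2\,2^{-s})$. Starting with $s\ge t$ large compared to $1/\eps$, the loss is at most $\eps^5/4$, so each round increases $q$ by at least $\eps^5/4$.

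\textbf{Step 4: termination and constants.} By Step 1 the process stops after at most $4rD^2/\eps^5$ rounds. The number of parts is then at most a tower-type function $T=T(\eps,t,r,D)$. Set $\eta:=1/(4^{T}T)^2$, say. Then every atom and piece that ever arises has size at least $\eta n$, so Step 1 applies throughout and the final equipartition has the required property with $t\le s\le T$. If $n$ is too small for the pieces to be nonempty, we can instead use the partition into singletons, adjusting $T$ accordingly; so without loss of generality $n$ is large.

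\textbf{Main obstacle.} The delicate point is Step 1 interacting with Step 3. Upper-uniformity controls densities only between sets of size at least $\eta n$, so the bound $q\le rD^2$ and the estimate on the index change during cleanup are valid only if every intermediate set is at least that large. This forces $\eta$ to be chosen after $T$, and hence after the whole iteration has been bounded. The cleanup estimate must therefore use upper-uniformity on the exceptional leftover pieces, possibly by grouping them into sets of size at least $\eta n$ before comparing densities. I would handle this exactly as in Kohayakawa's proof~\cite{koh}, merging small leftovers into blocks of size at least $\eta n$ before invoking the density bound $D$.
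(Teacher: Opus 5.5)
The paper does not prove this lemma. It imports it from the literature: Kohayakawa's sparse regularity lemma~\cite{koh}, in the multicolour form of \cite[Theorem~5.2]{letz}. Your energy-increment outline is the standard route to that result, but as written it has a genuine gap, exactly where you flag the delicate point.

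\textbf{The gap.} Step~4 asserts that every \emph{atom} has size at least $\eta n$, and this is false. Atoms are intersections of up to $s-1$ witness sets and their complements inside a part, so an atom can be a single vertex whatever $\eta$ is. Hence the index of the atom partition is not controlled by upper-uniformity. A term $\frac{|A||B|}{n^2}d_{p,c}(A,B)^2$ with $A$ a small atom can be as large as $|A||B|/(p^2n^2)$.

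For example, let $G_c$ be just a complete bipartite graph between a set $A$ of $Dp\eta n/2$ vertices and a set $B$ of $n/(2s)$ vertices. Then $G_c$ is $(\eta,p,D)$-upper-uniform. Yet if $A$ and $B$ occur as atoms, this single term equals $D\eta/(4sp)$, which is unbounded as $p\to 0$.

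Since $|A|$ is far below the piece size $\lfloor n/(s4^s)\rfloor$, all of $A$ is discarded in the cleanup. Meanwhile the new equipartition has index at most $rD^2/2$ by your Step~1. So the chain ``the refinement gains $\eps^5/2$, the cleanup loses only $O(rD^2 2^{-s})$'' fails: the cleanup can lose an arbitrarily large amount of index. Merging leftovers into blocks of size $\eta n$ does not repair this, because the problem is the quantity you compare against, not the new parts.

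\textbf{The remedy,} which is what Kohayakawa does, is never to evaluate the index on the atom partition. Compare the new equipartition $\mathcal P''$ directly with $\mathcal P$.
\begin{enumerate}
\item For an old pair $(V_i,V_j)$, let $V_i^*,V_j^*$ be the unions of the pieces inside $V_i,V_j$.
\item Apply the defect Cauchy--Schwarz inequality to the pieces in $V_i^*\times V_j^*$, all of which have size at least $\eta n$. Use the witnesses $U'\cap V_i^*$ and $V'\cap V_j^*$; these are unions of pieces because $U',V'$ are unions of atoms.
\item Each of $V_i,V_j,U',V'$ loses at most $2^{s-1}\lfloor n/(s4^s)\rfloor\le 2^{-s}|V_i|$ vertices. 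This is negligible next to $\eps|V_i|$ once $s$ is large in terms of $D/\eps$.
\item Apply upper-uniformity to a superset of size $\eta n$ of each discarded set to bound the lost edges. All relevant $p$-densities then change by $O\bigl(D(2^{-s}+\eta s)/\eps\bigr)$.
\item Adding the redistributed leftovers to the pieces only adds edges and enlarges each part by a factor $1+O(2^{-s})$. So each index term drops by at most that factor, and no upper-uniformity is needed here.
\end{enumerate}
Together these give $q(\mathcal P'')\ge q(\mathcal P)+\eps^5/4$, and only sets of size at least $\eta n$ ever enter the index.

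Two minor slips: the leftover set is a $2^{-s}$-fraction of $V$, not a $4^{-s}$-fraction. Also, for $|V|<t$ no equipartition into $s\ge t$ nonempty parts exists, so the statement implicitly needs $|V|$ large.
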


\subsection{The K{\L}R conjecture for random graphs}
Let $\eps, p >0$ and $n \in \mathbb N$. Let $H$ be a graph on vertex set $\{1,\dots, k\}$ and let $\textbf{m} =(m_{ij})_{ij  \in E(H)}$ be a sequence of non-negative integers. 
We define $\mathcal G(H,n,\textbf{m},p ,\eps)$ to be the collection  of all graphs $G$ obtained in the following way. The vertex set of $G$ is the disjoint union 
of sets $V_1,\dots, V_k$ of size $n$. For each edge $ij \in E(H)$, we add to $G$ an $(\eps, p)$-regular pair with $m_{ij}$ edges between the pair $(V_i,V_j)$.

A \textit{canonical copy of $H$} in such a $G \in \mathcal G(H,n,\textbf{m},p ,\eps)$
is a copy of $H$ with vertices $v_1,\dots, v_k$ where $v_i \in V_i$ for each $i \in V(H)$ and $v_iv_j \in E(G)$ for every $ij \in E(H)$.

In the proof of Theorem~\ref{randomramsey2} we will make use of the following
special case of the K{\L}R conjecture for random graphs~\cite[Proposition~4.2]{randomklr} (see also~\cite{bms, st}).
\begin{proposition}  
\label{randomklr}
    Given any graph $H$ and any $ d >0$, there exists $\eps >0$ with the following property. For every $\eta >0$, there is a $C>0$ such that if $p \geq CN^{-1/m_2(H)}$, then a.a.s.\@ the following holds in~$G_{N,p}$.
  For every $n \geq \eta N$ and $\textbf{m}$ with $m_{ij} \geq dpn^2$ for all $ij \in E(H)$, every subgraph $G$ of  $G_{N,p}$ in $\mathcal G(H,n,\textbf{m},p ,\eps)$ contains a canonical copy of $H$.
\end{proposition}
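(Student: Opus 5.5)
This proposition is a special case of the K{\L}R conjecture for random graphs as proved in~\cite{randomklr} (Proposition~4.2 there). The plan is to cite it, and to indicate how it follows from the counting form of the conjecture by a first-moment argument over $G_{N,p}$.

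\textbf{The counting form.} Write $k:=|H|$. The form of the K{\L}R conjecture established in~\cite{randomklr} (see also~\cite{bms, st}) reads as follows. For every $\beta>0$ and $d>0$ there exist $\eps>0$ and $C'>0$ such that, whenever $m_{ij}\ge C' n^{2-1/m_2(H)}$ for all $ij\in E(H)$, the number of graphs in $\mathcal G(H,n,\textbf{m},\cdot,\eps)$ with no canonical copy of $H$ is at most
$$\beta^{\sum_{ij} m_{ij}}\prod_{ij\in E(H)}\binom{n^2}{m_{ij}}.$$
I would apply this with $\beta:=d/(4e)$ and take the resulting $\eps$. This choice of $\eps$ depends only on $H$ and $d$, which matches the quantifier order in the statement.

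\textbf{First-moment bound.} Given $\eta$, let $X$ count the bad configurations in $G_{N,p}$. A bad configuration consists of a choice of disjoint sets $V_1,\dots,V_k\subseteq[N]$ of size $n\ge\eta N$, a vector $\textbf{m}$, and a subgraph $G\subseteq G_{N,p}$ lying in $\mathcal G(H,n,\textbf{m},p,\eps)$ with no canonical copy of $H$. A fixed such $G$ appears in $G_{N,p}$ with probability $p^{\sum m_{ij}}$. Note that $m_{ij}\ge dpn^2\ge d C\eta^2 N^{2-1/m_2(H)}$, so once $C$ is large in terms of $C'$, $d$ and $\eta$, the counting lemma applies. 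Using $\binom{n^2}{m}p^m\le (epn^2/m)^m\le (e/d)^m$, we get
$$\mathbb E[X]\le \sum_{n,\textbf{m}} 2^{kN}\,(\beta e/d)^{\sum m_{ij}}\le \sum_{n,\textbf{m}}2^{kN}\,4^{-e(H)dpn^2}.$$
There are only polynomially many choices of $n$ and $\textbf{m}$. It therefore suffices that $dp\eta^2N^2\gg N$.

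\textbf{Main obstacle.} The delicate step is this final comparison, namely beating the $2^{kN}$ choices of vertex sets. Since $p\ge CN^{-1/m_2(H)}$ and $m_2(H)\ge 1$ in all our applications, we have $pN^2\ge CN$. Choosing $C\gg k/(d\eta^2)$ then makes $\mathbb E[X]=o(1)$, and Markov's inequality gives the a.a.s.\ statement. For general $H$ with $m_2(H)<1$ this naive union bound is not enough as stated. There one instead appeals directly to~\cite[Proposition~4.2]{randomklr}, whose proof handles this via the sharper form of the counting estimate. A secondary technical point is that the $(\eps,p)$-regularity notion used here must match the one in~\cite{randomklr}. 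This only changes $\eps$ by a constant factor, by Fact~\ref{fact1}.
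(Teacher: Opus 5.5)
Your primary plan of citing the result is exactly what the paper does: it gives no argument and quotes \cite[Proposition~4.2]{randomklr}. Your supplementary route, deriving the statement from the counting form of the K{\L}R conjecture (the form proved in \cite{bms, st}) by a first-moment bound over $G_{N,p}$, is the standard deduction. It is correct whenever $m_2(H)\ge 1$: then $pN^2\ge CN$, so the factor $4^{-e(H)dp\eta^2N^2}$ beats the at most $2^{kN}$ choices of classes. This is the only case the paper uses, since Theorem~\ref{randomramsey2} assumes $m_2(H)\ge 1$.

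Three minor corrections to the sketch:
\begin{enumerate}
\item The passage from $(\eps,p)$-regularity to regularity relative to the pair's own density $q=m_{ij}/n^2\ge dp$ follows directly from the definitions. A density deviation of at most $\eps p$ is at most $(\eps/d)q$, so one takes $\eps:=d\,\eps_{\mathrm{count}}$. Fact~\ref{fact1}, which concerns passing to subpairs, is not the relevant tool.
\item The original Kohayakawa--{\L}uczak--R\"odl formulation of the counting conjecture uses a common number $m$ of edges on every pair. If the version you invoke is of that type, first thin each pair to exactly $\lceil dpn^2\rceil$ edges by a uniformly random choice. This keeps the pair $(2\eps,p)$-regular, by Chernoff's bound and a union bound over the at most $4^n$ pairs of subsets, because $pn\ge C\eta$ is large.
\item For $m_2(H)<1$ no sharper counting estimate is needed, and your remark there is off. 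In that case $H$ is a matching plus isolated vertices, so distinct edges of $H$ use disjoint classes. Each pair contains $m_{ij}\ge dpn^2>0$ edges, hence at least one, so a canonical copy exists trivially.
\end{enumerate}
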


\subsection{The Poincar{\'e}--Miranda theorem}
The following analytical result is key to the proof of Theorem~\ref{theorem:chi>=r} in particular.
\begin{theorem}[Poincar{\'e}--Miranda]
    Consider $n$ continuous, real-valued functions of $n$ variables, $f_1,\dots,f_n:[a,b]^n\to\mathbb R$. 
    Assume that for each variable $x_i$, the function $f_i$ is non-positive when $x_{i}=a$ and non-negative when $x_{i}=b$. Then there is a point in $[a,b]^n$ in which all functions are simultaneously equal to $0$.
\end{theorem}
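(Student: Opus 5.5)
The statement is the Poincaré--Miranda theorem. It is a classical result and is equivalent to the Brouwer fixed point theorem, so I would deduce it from Brouwer. By translating and rescaling each coordinate, we may assume $[a,b]=[-1,1]$. Write $f=(f_1,\dots,f_n)\colon[-1,1]^n\to\mathbb R^n$. The hypothesis says that on the face $x_i=-1$ we have $f_i\le 0$, and on the face $x_i=1$ we have $f_i\ge 0$.

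Define the continuous map $g\colon[-1,1]^n\to[-1,1]^n$ by
$$g_i(x):=\pi\bigl(x_i-f_i(x)\bigr),$$
where $\pi(t):=\max\{-1,\min\{1,t\}\}$ is the clamp onto $[-1,1]$. By Brouwer's fixed point theorem, $g$ has a fixed point $x^*$. It then suffices to show that $f_i(x^*)=0$ for every $i$.

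Fix $i$. There are three cases.
\begin{itemize}
\item If $x^*_i-f_i(x^*)\in[-1,1]$, then the clamp does nothing, so $x^*_i=x^*_i-f_i(x^*)$ and hence $f_i(x^*)=0$.
\item If $x^*_i-f_i(x^*)>1$, then $x^*_i=g_i(x^*)=1$. This gives $f_i(x^*)<0$. But $x^*$ lies on the face $x_i=1$, where $f_i\ge 0$ by hypothesis, a contradiction.
\item If $x^*_i-f_i(x^*)<-1$, then $x^*_i=-1$ and $f_i(x^*)>0$. This contradicts $f_i\le 0$ on the face $x_i=-1$.
\end{itemize}
Hence $f(x^*)=0$, as required.

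The only nontrivial input is Brouwer's theorem, which I would assume. The only step needing care is the case analysis: the clamping must be taken coordinatewise, so that the boundary sign conditions can be used one coordinate at a time. That is routine here.
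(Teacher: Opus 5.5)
Your proof is correct and complete, given Brouwer's theorem. The coordinatewise clamp $g_i(x)=\pi(x_i-f_i(x))$ is continuous and maps the cube into itself. At a fixed point $x^*$, your three-case analysis is right: the two clamped cases force $x^*$ onto the face $x_i=1$ or $x_i=-1$. There the sign hypothesis on $f_i$ contradicts the strict inequality you derived, so only the unclamped case survives, giving $f_i(x^*)=0$.

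The paper does not prove this statement at all. It quotes the Poincar\'e--Miranda theorem as a classical black box, and uses it only to produce capacities $\mathbf c$ with $\nu_H(F_i,\mathbf c)=|H|$ for every $i$, in Propositions~\ref{prop:mean_value} and~\ref{proposition:gadgets_r=3}. Your write-up makes explicit the standard reduction to Brouwer that the paper takes for granted. In the one-variable uses (the second and third cases in the proof of Proposition~\ref{proposition:gadgets_r=3}), the statement is just the intermediate value theorem.

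Two small remarks on your write-up:
\begin{enumerate}
\item The rescaling to $[-1,1]$ needs $a<b$. If $a=b$, the cube is a single point lying on both faces $x_i=a$ and $x_i=b$, so each $f_i$ vanishes there trivially.
\item You can skip the rescaling entirely by clamping onto $[a,b]$ directly. The same argument then works verbatim on any box $\prod_i[a_i,b_i]$.
\end{enumerate}
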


\subsection{A theorem of Gy\'arf\'as and S\'ark\'ozy}

  We will use the following result of Gy\'arf\'as and S\'ark\'ozy~\cite{GyarfasS} in the proof of Theorem~\ref{theorem:r=3}.
        Here $S_s$ denotes the star with $s$ edges.
    \begin{theorem}[Gy\'arf\'as and S\'ark\'ozy~\cite{GyarfasS}]\label{theorem:Gyarfas_Sarkozy}
        For every integers $m_1\ge m_2,s\ge 1$ we have
        $$R(m_1K_2,m_2K_2,S_s)=2m_1+m_2-1.$$
    \end{theorem}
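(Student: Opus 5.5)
The plan has two halves: a matching lower-bound construction, and a Tutte--Berge argument for the upper bound. For the lower bound, I take $N=2m_1+m_2-2$ vertices split as $A\cup B$ with $|A|=2m_1-1$ and $|B|=m_2-1$. Colour every edge inside $A$ with colour $1$ and every edge meeting $B$ with colour $2$, so colour $3$ is unused and no $S_s$ appears in it. A colour-$1$ matching lives inside $A$, so it has at most $\lfloor (2m_1-1)/2\rfloor=m_1-1$ edges. Every colour-$2$ edge meets $B$, so a colour-$2$ matching has at most $|B|=m_2-1$ edges. Hence $R(m_1K_2,m_2K_2,S_s)\ge 2m_1+m_2-1$.

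For the upper bound, let $N=2m_1+m_2-1$ and fix a $3$-colouring of $K_N$ with no colour-$3$ copy of $S_s$. Then every vertex has colour-$3$ degree at most $s-1$, so $G:=G_1\cup G_2$ has minimum degree at least $N-s$. Suppose $G_1$ has no matching of size $m_1$. By the Tutte--Berge formula there is a set $S$ with
$$\mathrm{odd}(G_1\setminus S)-|S|\ \ge\ N-2(m_1-1)\ =\ m_2+1.$$
Write $C_1,\dots,C_q$ for the components of $G_1\setminus S$, where $q\ge |S|+m_2+1$. Every edge of $K_N\setminus S$ joining distinct $C_j$'s is coloured $2$ or $3$. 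I would then build a colour-$2$ matching of size $m_2$ greedily across these components. Pick one vertex from each of many components (many are singletons when $|S|$ is small). These representatives span a clique in $K_N$ with no colour-$1$ edges. Inside it, colour $3$ has maximum degree at most $s-1$, so colour $2$ is nearly complete there, and a near-complete graph on many vertices has a large matching. I would also use vertices of $S$ and larger components to enlarge the pool. The natural accounting is to match off pairs of components: each component contributes a vertex, and a colour-$2$ edge between two components is available unless it is one of the few colour-$3$ edges.

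The main obstacle is the interaction with colour $3$. The bound on colour-$3$ degrees is in terms of $s$, not $m_2$, so the counting must show that at least $2m_2$ representatives can be paired using colour $2$. If $s$ is large compared with $m_1,m_2$, this may fail outright: for instance, when $s\ge N$ every edge can receive colour $3$ with no $S_s$ appearing. So I would first pin down the precise hypotheses of the cited result (presumably $m_1,m_2$ large relative to $s$, which is the regime used later in the paper), and run the argument there. Concretely, I would take a maximum colour-$2$ matching $M$ among the component representatives and suppose $|M|<m_2$. The unmatched representatives then form a colour-$2$-independent set, so all edges among them are colour $3$. Since colour-$3$ degrees are below $s$, this leaves fewer than $s$ unmatched representatives. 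Combined with $q\ge |S|+m_2+1$ and a Gallai-type exchange using the vertices of $S$, this gives the contradiction once $m_2$ is large enough relative to $s$.
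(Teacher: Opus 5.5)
The paper gives no proof of this statement; it quotes it from Gy\'arf\'as and S\'ark\"ozy. So your sketch has to stand on its own. Your lower-bound construction is correct, but there are two problems with the rest.

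First, the hypotheses. ``$m_1\ge m_2,s\ge 1$'' is meant as $m_1\ge\max\{m_2,s\}$; when the paper applies the result it checks exactly ``$m_1\ge m_2, s$''. The condition $s\le m_1$ is genuinely needed. Take disjoint sets of sizes $s$, $m_1-1$ and $m_2-1$. Colour edges inside the first set with colour $3$, edges meeting the second set with colour $1$, and all remaining edges (each of which meets the third set) with colour $2$. This avoids all three target graphs on $s+m_1+m_2-2$ vertices, which is at least $2m_1+m_2-1$ once $s>m_1$. So your proposed repair (``$m_1,m_2$ large relative to $s$'') is the wrong one. It also would not cover the uses in the proof of Theorem~\ref{theorem:r=3'}: there $m_2$ can be much smaller than $s$ (Case 3a), and $m_1=m_2=s$ (Case 3b).

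Second, and more seriously, the upper-bound argument does not close. One representative per component of $G_1\setminus S$ can give at most $\lfloor q/2\rfloor$ colour-$2$ edges. Tutte--Berge only guarantees $q\ge |S|+m_2+1$, and this can be exactly $m_2+1$: take $S=\emptyset$ and $G_1$ a clique on $2m_1-1$ vertices together with $m_2$ isolated vertices. Correspondingly, your final count yields only $q\le 2(m_2-1)+s$. This is compatible with $q\ge|S|+m_2+1$ for every choice of parameters, and it gets weaker, not stronger, as $m_2$ grows. The ``Gallai-type exchange using $S$'' that is supposed to finish the argument is never specified.

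The missing idea is that a single component must supply many partners. You have to use the component sizes (which sum to $N-|S|$) and colour-$2$ degrees of order $|V\setminus S|-|C_j|-(s-1)$, with $s\le m_1$ entering at a tight point. In the example, every isolated vertex has at least $N-s-(m_2-1)=2m_1-s$ colour-$2$ neighbours in the clique. The isolated vertices can be matched greedily into the clique precisely because $2m_1-s-(m_2-1)\ge 1$, i.e.\ $2m_1\ge m_2+s$, with equality when $m_1=m_2=s$. A proof has to find such a matching for every possible profile of component sizes, and your sketch has no mechanism for this.
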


\subsection{A robust version of Theorem~\ref{theorem:BES_symmetric}}\label{sec26}
The next result is a dense graph version of Theorem~\ref{theorem:BES_symmetric}.
It will be applied in the proof of Theorem~\ref{randomramsey2} as well as Theorem~\ref{theorem:r=3'}.

\begin{lemma}\label{robust2colours}
Let $H$ be a graph.
Given any $\eta >0$, there exist $\delta >0$ and $n _0 \in \mathbb N$ such that the following holds. Suppose that $G$ is a $2$-edge-coloured graph on $n \geq n_0$ vertices with $e(G) \geq (1-\delta) \frac{n^2}{2}$.
Then $G$ contains a monochromatic $H$-tiling covering at least 
     $\left(\frac{|H|}{2|H|-\alpha (H)}-\eta \right )n$
     vertices.
\end{lemma}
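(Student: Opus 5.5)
We prove Lemma~\ref{robust2colours} by combining the coloured regularity lemma with Theorem~\ref{theorem:BES_symmetric} applied in the reduced graph, and then blowing up via Lemma~\ref{corollary:embedding}. We may assume $H$ has no isolated vertices: removing them can only increase the required tiling proportion, and a tiling of the non-isolated part $H'$ covering $\bigl(\frac{|H'|}{2|H'|-\alpha(H')}-\eta'\bigr)n$ vertices can be padded using leftover vertices. This padding step needs care. We have $\frac{|H|}{2|H|-\alpha(H)}\le \frac{|H'|}{2|H'|-\alpha(H')}$ when $H=H'\cup I_k$, since $\alpha(H)=\alpha(H')+k$ and the ratio $\frac{a+k}{2a-\alpha'+k}$ is monotone in $k$ towards $1$. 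So the bound for $H$ can be weaker only when the isolated vertices fit into the unused part. We will check this by counting: the number of copies times $k$ must be at most the number of uncovered vertices, and this holds precisely because of the stated ratio inequality.

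\textbf{Setup and regularity.} Choose constants $1/n_0\ll\eps\ll d,\delta\ll 1/k\ll\eta,1/|H|$. Apply Lemma~\ref{theorem:regularity} with parameters $\eps,\ell_0,2,d$ to $G$ to obtain the reduced graph $R$ on $\ell$ vertices, 2-edge-coloured. Since $e(G)\ge(1-\delta)n^2/2$, all but at most $\sqrt\delta n$ vertices have degree at least $(1-\sqrt\delta)n$. Discarding the corresponding clusters (a standard counting argument), $R$ has at least $(1-\sqrt\delta-2\eps-2d)\binom{\ell}{2}$ edges. By Turán's theorem, or more precisely by a greedy argument, $R$ then contains a clique $K$ on $N\ge(1-\beta)\ell$ vertices after deleting few vertices. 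Here is the subtle step: high edge density does not directly give a near-spanning clique. So instead we work with the 2-coloured $R$ directly and tile it.

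\textbf{Tiling the reduced graph.} Let $t:=2|H|-\alpha(H)$. Consider the graph $H^*:=K_t$. By Theorem~\ref{theorem:BES_symmetric} applied to $mH$ with $m$ a large constant, every 2-colouring of $K_{tm+C}$ contains a monochromatic $mH$. Since $R$ is almost complete, we can greedily find vertex-disjoint cliques $K_{tm+C}$ in $R$ covering all but $\beta\ell$ vertices, using Hajnal--Szemerédi/Turán on the dense graph $R$ (minimum degree at least $(1-\sqrt{\delta}')\ell$ after trimming low-degree vertices, so a $K_{tm+C}$-tiling covering almost all vertices exists). In each clique we find a monochromatic $mH$, covering a $\frac{m|H|}{tm+C}\ge\frac{|H|}{t}-\eta/4$ fraction. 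Different cliques may use different colours, which is the main obstacle. To fix this, we instead fix the colour globally. By majority, one colour, say red, appears in at least half of the cliques. That is insufficient, so we must argue more cleverly. We apply Theorem~\ref{theorem:BES_symmetric} in a weighted form: we count cliques where red is ``good'' and where blue is ``good''. Rather than do this, the cleaner route is to apply Theorem~\ref{theorem:BES_symmetric} to the whole almost-complete $R$ after first making it complete. Colour each non-edge of $R$ arbitrarily, red say. Theorem~\ref{theorem:BES_symmetric} yields a monochromatic $m'H$ with $m'=\lfloor(\ell-C)/t\rfloor$, and at most $\sqrt{\delta'}\ell^2$ fake edges are present. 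Each copy using a fake edge is discarded. To keep discarded copies few, we first remove vertices of $R$ of fake-degree above $\delta'^{1/4}\ell$. The remaining fake edges form a graph of maximum degree $\delta'^{1/4}\ell$, and a copy of $H$ through a vertex is destroyed only if one of its edges is fake. Since this could still hit many copies, we expect it to be the hardest point. The fix we would try is to apply Theorem~\ref{theorem:BES_symmetric} to the blow-up version: replace $H$ by $H(q)$-type robustness, or use a random partition of $R$ into blocks of constant size $s\gg m$. With high probability each block spans no fake edge, because the expected number of fake edges per block is about $s^2\delta'^{1/4}/\ell\to0$. We then choose a common colour by applying Theorem~\ref{theorem:BES_symmetric} to an auxiliary colouring. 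This common-colour selection across blocks remains the genuine obstacle. We would resolve it by noting that within a single large clique block the Burr--Erdős--Spencer bound holds for \emph{either} colour's deficit, and by merging blocks into one clique using the regularity of colour classes between clusters.

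\textbf{Blowing up.} Given a monochromatic (say red) $H$-tiling in $R$ covering $(\frac{|H|}{t}-\eta/2)\ell$ clusters, apply Lemma~\ref{corollary:embedding} with $q=1$ to each copy of $H$ in the red regular pairs. This gives a red $H$-tiling in $G$ covering all but $\sqrt\eps$ of those clusters' vertices. Summing, and accounting for $|V_0|\le\eps n$, the tiling covers at least $(\frac{|H|}{t}-\eta)n$ vertices.
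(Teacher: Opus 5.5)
You have a genuine gap, and it is the step you flag yourself: nothing in the proposal produces a \emph{single-coloured} $H$-tiling of the required size. Applying Theorem~\ref{theorem:BES_symmetric} block by block only tells you that in each block \emph{some} colour covers a $\frac{|H|}{2|H|-\alpha(H)}$-fraction, and these statements do not add up. If half of the blocks are internally red and half internally blue, block-level tilings give each colour only about $n/2$ vertices. This is strictly less than $\frac{|H|}{2|H|-\alpha(H)}n$, since $\alpha(H)\ge 1$. So the bound has to come from edges \emph{between} blocks, and ``merging blocks using the regularity of colour classes'' gives no mechanism for exploiting them. The completion trick does not help either. After you colour the non-edges of $R$, quadratically many fake edges remain, even with fake-degree at most $\delta'^{1/4}\ell$. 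The tiling returned by Theorem~\ref{theorem:BES_symmetric} has only linearly many copies, and nothing prevents each copy from using a fake edge. Separately, your inequality in the isolated-vertex reduction is reversed: for $H=H'\cup I_k$ one has $\frac{|H|}{2|H|-\alpha(H)}\ge\frac{|H'|}{2|H'|-\alpha(H')}$. Padding still works if you keep only about $n/(2|H|-\alpha(H))$ of the $H'$-copies, but as written the justification is backwards.

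The missing idea is the paper's $H$-bowtie exchange, which works directly in $G$ with no regularity lemma and no assumption on isolated vertices. An $H$-bowtie consists of a red and a blue copy of $K_{|H|}$ minus a $K_{\alpha(H)}$, meeting exactly in their independent $\alpha(H)$-sets. It has $2|H|-\alpha(H)$ vertices and contains a copy of $H$ in \emph{each} colour.
\begin{itemize}
\item Take a maximum family $\mathcal B$ of disjoint bowties. If $|\mathcal B|\ge\bigl(\frac{1}{2|H|-\alpha(H)}-\frac{\eta}{|H|}\bigr)n$, you are done in either colour.
\item Otherwise the remainder $G_1$ is still almost complete, so Tur\'an plus Ramsey tile it almost perfectly with monochromatic $K_N$'s.
\item If at least $\eta_1 n/N$ of these cliques are red and at least $\eta_1 n/N$ are blue, density forces a red $K_N$ and a blue $K_N$ that are fully joined. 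A bipartite Ramsey step then finds $|H|$ vertices in each with all edges between them of one colour. These contain a new bowtie, contradicting maximality.
\item Hence almost all cliques have the same colour, say red. One red $H$ from each bowtie together with the red cliques covers at least $n-(|H|-\alpha(H))|\mathcal B|-2\eta_1 n\ge\bigl(\frac{|H|}{2|H|-\alpha(H)}-\eta\bigr)n$ vertices, using $|\mathcal B|<\frac{n}{2|H|-\alpha(H)}$.
\end{itemize}
The bowties are exactly the device that turns cross-block edges into simultaneous progress in both colours, which is what your block decomposition lacks.
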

For the proof of Lemma~\ref{robust2colours}, we use a variant of the notion of a bowtie from~\cite{BurrES}.
\begin{define}

         Given a graph $H$,  set $h:=|H|$ and $\alpha :=\alpha (H)$.
   Let $K(h,\alpha)$ be the graph obtained from $K_{h}$ by removing the edges of a copy of $K_{\alpha}$.  An \textit{$H$-bowtie}  is the $2$-edge-coloured graph obtained from  the union of two copies 
     $K$ and $K'$ of $K(h,\alpha)$ whose vertex sets intersect precisely in their independent sets of size $\alpha$; further, the edges of $K$ are coloured red and the edges of $K'$ are coloured blue.
\end{define}
Note that an $H$-bowtie has $2|H|-\alpha (H)$ vertices and contains both a red copy of $H$ and a blue copy of $H$.

\begin{proof}[Proof of Lemma~\ref{robust2colours}]
    Given $\eta >0$, define $\delta, \delta _1, \eta _1>0$ and $n_0, N \in \mathbb N$ such that
    \begin{align}\label{hieralmost}
            0< \frac{1}{n_0} \ll \delta \ll \delta _1 \ll \frac{1}{N} \ll \eta _1 \ll \eta , \frac{1}{|H|}
    \end{align}
and where $N$ is divisible by $|H|$.

    Consider any $2$-edge-coloured graph $G$ on $n \geq n_0$ vertices as in the statement of the lemma. Let $\mathcal B$ be a maximum size collection of vertex-disjoint
    $H$-bowties in $G$. If $\mathcal B$ contains at least 
        $\left(\frac{1}{2|H|-\alpha (H)}-\frac{\eta}{|H|} \right )n$
        $H$-bowties, then $G$ contains both a red $H$-tiling and a blue $H$-tiling, each covering at least $\left(\frac{|H|}{2|H|-\alpha (H)}-{\eta} \right )n$
        vertices.

        We may therefore assume that $|\mathcal B|<\left(\frac{1}{2|H|-\alpha (H)}-\frac{\eta}{|H|} \right )n$ and thus $G_1:=G\setminus V(\mathcal B)$
        contains at least $\eta n$ vertices. Set $n_1:=|G_1|$.
        Note that 
        $$e(G_1) \geq \binom{n_1}{2}-\frac{\delta n^2}{2} \stackrel{(\ref{hieralmost})}{\geq} (1 -\delta _1) \frac{n_1 ^2}{2}.
        $$
        As $\delta _1 \ll 1/N$, we can apply Tur\'an's theorem to find a copy of 
        $K_{4^N}$ in $G_1$ and thus, by Ramsey's theorem, a monochromatic copy of $K_N$ in $G_1$.
        Greedily repeating this process, we obtain a $K_N$-tiling $\mathcal K$ in $G_1$ covering all but at most $\eta _1 n$ vertices of $G_1$ and such that each copy
        of $K_N$ in $\mathcal K$ is monochromatic.

Suppose that $\mathcal K$ contains at least $\eta _1 n/N$ red copies of $K_N$ and at least $\eta _1 n/N$ blue copies of $K_N$. In this case, 
there exists a pair $K^1, K^2$ of copies of $K_N$  in $\mathcal K$ such that $K^1$ is red, $K^2$ is blue, and there are all possible edges between $K^1$ and $K^2$ in $G$.
Indeed, if not then there are at least
$$ \frac{\eta _1 n}{N} \times \frac{\eta _1 n}{N}  \stackrel{(\ref{hieralmost})}{> } \frac{\delta n^2}{2}$$
non-edges in $G$, a contradiction.

As $1/N \ll 1/|H|$, by, e.g., the Erd\H{o}s--Stone--Simonovits theorem, there exists a set $V_1$ of $|H|$ vertices in $V(K^1)$ and a set $V_2$ of $|H|$ vertices in $V(K^2)$
such that all edges between $V_1$ and $V_2$ are coloured the same in $G$.
It is now straightforward to see that $G[V_1\cup V_2]$ contains an $H$-bowtie $\hat{H}$. Adding $\hat{H}$ to $\mathcal B$ creates a larger collection of vertex-disjoint $H$-bowties in $G$, a contradiction to the maximality of $\mathcal B$.

Thus, we may assume that
 all but at most $\eta _1 n/N$ of the copies of $K_N$ in $\mathcal K$ have the same colour; without loss of generality, red. Each red copy of $K_N$ in $\mathcal K$  contains a spanning red  $H$-tiling and each $H$-bowtie in $\mathcal B$ contains a red copy of $H$.
Thus, $G$ contains a red $H$-tiling covering at least
\begin{align*}
     |H|\cdot |\mathcal B| +(|G_1| -2 \eta _1 n) \geq \left(\frac{|H|}{2|H|-\alpha (H)} \right )n -2\eta _1 n \stackrel{(\ref{hieralmost})}{\geq}
      \left(\frac{|H|}{2|H|-\alpha (H)}-\eta  \right )n
\end{align*}
vertices of $G$, as desired.
\end{proof}

\section{Gadgets}\label{section:gadgets}
\subsection{Fractional tilings, gadgets and a Ramsey lower bound}
In this section, we introduce some key notions that will be crucial in the proofs of \cref{theorem:r=3} and \cref{theorem:chi>=r}.

\begin{define}[Graph homomorphism]\label{definition:graph_homomorphism}
    Let~$G$ be a graph (possibly with loops) and~$H$ be a graph.
    A {\it graph homomorphism} from~$H$ to~$G$ is a function $\phi:V(H)\to V(G)$ such that $xy\in E(H)$ implies $\phi(x)\phi(y)\in E(G)$. 
    We write~$\text{Hom}(G:H)$ for the set of all graph homomorphisms from~$H$ to~$G$. 
\end{define}

Note that, in the above language, a copy of~$H$ in~$G$ corresponds to an injective graph homomorphism $\phi\in\text{Hom}(G:H)$.
Next we define a well-known relaxation of the notion of a tiling.

\begin{define}[Fractional $H$-tiling]\label{definition:fractional_tiling}
    Let~$G$ be a graph (possibly with loops) and~$H$ be a graph.
    Let~${\bf c}=\{c_v:v\in V(G)\}$ be a~$|G|$-tuple of non-negative real entries.
    We refer to ${\bf c}$ as the {\it capacity} and write $||{\bf c}||:=\sum_{v\in V(G)}c_v$. 
    A {\it fractional $H$-tiling} in $(G,{\bf c})$ is a weighting function
    $$w:\text{Hom}(G:H)\to\mathbb R^{\ge0}$$
    such that, for every vertex $v\in V(G)$, we have
    $$w(v):=\sum_{\phi\in \text{Hom}(G:H)}w(\phi)\cdot\bigl|\phi^{-1}(v)\bigr|\le c_v.$$
    The {\it size} of a fractional~$H$-tiling~$w$ is defined as $$w(G):=\sum_{v\in V(G)}w(v)=\sum_{\phi\in \text{Hom}(G:H)}w(\phi)\cdot|H|.$$
    We write~$\nu_H(G,{\bf c})$ for the largest size among all fractional $H$-tilings in~($G$,{\bf c}).\footnote{Note~$\nu_H(G,{\bf c})$ is well-defined as there exist fractional $H$-tilings achieving maximum size, see \cref{observation:linear_programming}.}
    If~$c_v=1$ for every $v\in V(G)$, we write~$\nu_H(G)$ for~$\nu_H(G,{\bf c})$.
\end{define}

We are now ready to introduce the key notion of $(H,r)$-gadgets.

\begin{define}[$(H,r)$-gadget]\label{definition:gadgets}
    Let $r\in\mathbb N$ and~$H$ be a graph.
    Let $F$ be an $r$-edge-coloured $r$-vertex complete graph equipped with a loop at each vertex.
    We say $F$ is an {\it $(H,r)$-gadget} if there exists an~$|F|$-tuple ${\bf c}=\{c_v:v\in V(F)\}$ such that~$\nu_H(F_i,{\bf c})=|H|$ for every $i\in[r]$.  (Recall that $F_i$ is the subgraph of $F$ spanned by the edges of colour $i$.)
    Sometimes, we refer to $(F,{\bf c})$ also as an $(H,r)$-gadget.\footnote{Note that there might be more than one choice of~${\bf c}$ for which $(F,{\bf c})$ is an $(H,r)$-gadget.}
\end{define}

The choice of~$F$ being a looped complete graph is designed to represent large simple graphs whose structure resembles that of \cref{definition:extremal_construction}.
The next definition helps establish this connection.

\begin{define}[Blow-up]\label{definition:blow_up}
    Let $r\in\mathbb N$ and $F$ be an $r$-edge-coloured graph (possibly with loops).
    A {\it (simple) blow-up}~$B$ of~$F$ is a graph (i.e., without loops) with vertex set $V(B)=\bigcup_{v\in V(F)} V_{v}$ so that
    \begin{itemize}
        \item if $uv\in E(F)$ and $u\neq v$ then $B[V_u,V_v]$ is a monochromatic complete bipartite graph of the same colour as $uv$; 
        \item if $F$ has a loop at $v$ then $B[V_v]$ is a monochromatic clique of the same colour as the loop.
    \end{itemize}
\end{define}

To help the reader absorb the above definitions, we describe an explicit $(H,r)$-gadget for any choice of~$H$ and~$r$.

\begin{example}[Lexicographic gadget]\label{example:lexicographic}
    Let $r\in\mathbb N$ and~$H$ be a graph.
    Let $F$ be the $r$-edge-coloured complete graph on vertex set $V=[r]$, equipped with a loop at each vertex, where the edge $ij\in E(F)$ is coloured with~$j$ for every $i\le j$.
    Then~$(F,{\bf c})$ is an $(H,r)$-gadget where we take $c_1:=|H|$ and $c_i:=|H|-\alpha(H)$ for every $i\ge2$.
    
    Indeed, observe that the only edge coloured with~$1$ is the loop on vertex~$1$.
    Hence there is a unique homomorphism $\phi_0\in\text{Hom}(F_1:H)$, namely that which sends all vertices of~$H$ to~$1$.
    Clearly $\nu_H(F_1,{\bf c})\le c_1=|H|$ and equality is attained by assigning weight~$1$ to~$\phi_0$.
    
    If $i>1$, note that every $\phi\in\text{Hom}(F_i:H)$ sends at most~$\alpha(H)$ vertices to $V(F)\setminus \{i\}$ (as no edge coloured~$i$ lies in this set) and so at least $|H|-\alpha(H)$ vertices to~$i$.
    It follows that $\nu_H(F_i,{\bf c})\le\frac{|H|}{|H|-\alpha(H)}\cdot c_i=|H|$. 
    Equality is attained by, e.g., assigning weight~$1$ to the homomorphism sending an independent set of size~$\alpha(H)$ to $1$ and all remaining vertices to~$i$ (and weight~$0$ to other homomorphisms).
\end{example}

Let~$F$ be as in \cref{example:lexicographic} and let~$B$ be the blow-up of~$F$ obtained by replacing the vertex~$i$ with a class of~$c_i\cdot m$ vertices for some~$m$.
It is easy to check that~$B$ contains a monochromatic copy of~$mH$ in each colour, but not a monochromatic copy of $(m+1)H$.

\smallskip

The next two propositions summarise the key features of $(H,r)$-gadgets.
Roughly speaking, consider the blow-up~$B$ of an $(H,r)$-gadget $(F,{\bf c})$ obtained by replacing each vertex~$v$ of $F$ with~$c_v\cdot m$ vertices.
On the one hand, slightly adjusting the parts of~$B$ will yield a graph not containing a monochromatic~$mH$ and thus give a lower bound for~$R_r(mH)$.
On the other hand, one can argue that a monochromatic copy of~$mH$ in~$B$ occurs {\it in every colour}.
This last property will be useful to set up the inductive arguments in the proofs of \cref{theorem:r=3} and \cref{theorem:chi>=r}.

\begin{proposition}\label{proposition:gadget_extremal}
    Let $r,m\in\mathbb N$ and~$H$ be a graph.
    Let $(F,{\bf c})$ be an $(H,r)$-gadget where ${\bf c}=\{c_v:v\in V(F)\}$.
    If $B$ is a simple blow-up of $F$ where each vertex $v\in V(F)$ is replaced by a class~$V_v$ of $\left\lfloor c_v\cdot (m-1)\right\rfloor$ vertices, then $B$ does not contain a monochromatic $mH$.
    In particular,
    $$R_r(mH)\ge |B|\ge||{\bf c}||\cdot m-||{\bf c}||-r.$$
\end{proposition}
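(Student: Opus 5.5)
The plan is to argue by contradiction: a monochromatic $mH$ in $B$ would yield a fractional $H$-tiling of $(F_i,{\bf c})$ of size larger than $|H|$, which is impossible since $(F,{\bf c})$ is a gadget. The bound on $R_r(mH)$ then follows by counting the vertices of $B$.

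Suppose $B$ contains a monochromatic copy of $mH$ in some colour $i\in[r]$, say with copies $H^1,\dots,H^m$ of $H$. Let $\pi:V(B)\to V(F)$ send each vertex of $V_v$ to $v$. For each copy $H^j$, composing the embedding of $H$ with $\pi$ gives a map $\phi_j:V(H)\to V(F)$. We check that $\phi_j\in\text{Hom}(F_i:H)$. Take an edge $xy$ of $H$; its image is an edge of $B$ of colour $i$. If both endpoints lie in the same class $V_v$, then $B[V_v]$ has edges, and by \cref{definition:blow_up} it is a clique coloured as the loop at $v$. So the loop at $v$ has colour $i$ and is an edge of $F_i$. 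If the endpoints lie in different classes $V_u,V_v$, then $uv$ has colour $i$. (Degenerate classes of size $0$ or $1$ cause no trouble, since they carry no internal edges.)

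Now define $w(\phi):=\frac{1}{m-1}\cdot\bigl|\{j:\phi_j=\phi\}\bigr|$. This needs $m\ge2$; for $m=1$ every class has size $0$, so $B$ is empty and the claim is trivial. For each $v\in V(F)$,
$$w(v)=\frac{1}{m-1}\sum_j|\phi_j^{-1}(v)|=\frac{|V(mH)\cap V_v|}{m-1}\le\frac{\lfloor c_v(m-1)\rfloor}{m-1}\le c_v,$$
where we use that the copies $H^j$ are vertex-disjoint and each is embedded injectively in $B$. So $w$ is a fractional $H$-tiling in $(F_i,{\bf c})$ of size $m|H|/(m-1)>|H|=\nu_H(F_i,{\bf c})$, a contradiction.

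For the bound, $B$ is an $r$-edge-colouring of $K_{|B|}$ with no monochromatic $mH$, so $R_r(mH)\ge|B|+1\ge|B|$. Moreover,
$$|B|=\sum_v\lfloor c_v(m-1)\rfloor\ge\sum_v\bigl(c_v(m-1)-1\bigr)=||{\bf c}||m-||{\bf c}||-r,$$
using $|F|=r$.

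The only step needing any care is verifying that each $\phi_j$ is a homomorphism into $F_i$, specifically the loop case for edges inside a single class. The rest is direct.
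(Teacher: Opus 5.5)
Your proof is correct and is essentially the paper's argument: you project the $m$ disjoint copies of $H$ to homomorphisms into $F_i$, weight each by $1/(m-1)$, and obtain a fractional $H$-tiling of size $m|H|/(m-1)>|H|=\nu_H(F_i,{\bf c})$. You also fill in details the paper leaves implicit: the check that each projected map really lands in $\text{Hom}(F_i:H)$ (including the loop case), and the separate handling of $m=1$, where the paper's division by $m-1$ would otherwise be undefined.
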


\begin{proof}
    Suppose for a contradiction that $B_i$ contains a copy of $mH$ for some~$i\in[r]$  (recall that~$B_i$ denotes the subgraph of~$B$ spanned by the edges coloured~$i$).
    
    We think of this copy of~$mH$ in~$B_i$ as a collection $\mathcal C$ of~$m$ injective homomorphisms in $\text{Hom}(B_i:H)$.
    Given $\phi\in\text{Hom}(F_i:H)$, we say a homomorphism $\phi'\in\text{Hom}(B_i:H)$ {\it corresponds to $\phi$} if $\phi(x)=v$ implies $\phi'(x)\in V_v$ for every $x\in V(H)$.
    Consider the weighting $w:\text{Hom}(F_i:H)\to\mathbb R^{\ge 0}$ where $$w(\phi)=\frac{\#\,\{\phi'\in\mathcal{C}:\text{$\phi'$ corresponds to $\phi$}\}}{m-1}.$$
    In particular, if~$\phi'$ corresponds to~$\phi$ then the image $\text{Im}(\phi')$ has precisely $|\phi^{-1}(v)|$ elements in~$V_v$ for each $v\in V(F_i)$.
    For each $v\in V(F_i)$, we have
    \begin{align*}
        w(v)&=\sum_{\phi\in \text{Hom}(F_i:H)}w(\phi)\cdot\bigl|\phi^{-1}(v)\bigr|
        =\sum_{\phi'\in\mathcal C}\frac{\bigl|\text{Im}(\phi')\cap V_v\bigr|}{m-1}\le\frac{\bigl|V_v\bigr|}{m-1}\le c_i,
    \end{align*}
    where the penultimate inequality follows from the fact that~$\mathcal C$ is a collection of vertex-disjoint copies of~$H$ and so it covers at most~$|V_v|$ vertices in~$V_v$.
    Therefore, $w$ is a fractional $H$-tiling of~$F_i$.
    The size of~$w$ is
    \begin{align*}
        w(F_i)=\sum_{v\in V(F_i)} w(v)=\sum_{\phi\in\text{Hom}(F_i:H)}|H|\cdot w(\phi)=\frac{|H|\cdot |\mathcal C|}{m-1}=\frac{m|H|}{m-1}>|H|.
    \end{align*}
    This is a contradiction as $\nu_H(F_i,{\bf c})=|H|$.
\end{proof}

\begin{proposition}\label{proposition:gadget_induction}
    Let $r\in\mathbb N$ and~$H$ be a graph.
    Let $(F,{\bf c})$ be an $(H,r)$-gadget such that all entries of ${\bf c}=\{c_v:v\in V(F)\}$ are rational.
    Then there exists $m\in\mathbb N$ such that the following holds. 
    If $B$ is a simple blow-up of $F$ where each vertex $v\in V(F)$ is replaced by a class $V_v$ of $ c_v \cdot m$ vertices, then $B$ contains a monochromatic copy of $mH$ in each colour $i\in[r]$.
\end{proposition}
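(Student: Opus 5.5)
Since $(F,{\bf c})$ is an $(H,r)$-gadget, for each colour $i\in[r]$ we have $\nu_H(F_i,{\bf c})=|H|$. The first step is to realise this value by a \emph{rational} fractional tiling. The maximisation defining $\nu_H(F_i,{\bf c})$ is a linear program: variables $w(\phi)\ge 0$ for $\phi\in\text{Hom}(F_i:H)$ (a finite set, since $F$ and $H$ are finite), constraints $\sum_\phi w(\phi)|\phi^{-1}(v)|\le c_v$, and objective $|H|\sum_\phi w(\phi)$. All coefficients are integers and the $c_v$ are rational, so an optimum is attained at a vertex of the polytope (cf.\ \cref{observation:linear_programming}). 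By Cramer's rule this vertex is rational. So for each $i$ we fix an optimal rational $w_i$ with $\sum_\phi w_i(\phi)=1$.

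Next, let $m$ be a common denominator of all values $w_i(\phi)$ over $i\in[r]$ and $\phi$, and of all $c_v$. Then $m w_i(\phi)\in\mathbb N\cup\{0\}$ and $c_v m\in\mathbb N\cup\{0\}$, so the blow-up $B$ with classes $|V_v|=c_v m$ makes sense. Fix a colour $i$. For each $\phi\in\text{Hom}(F_i:H)$ we want to embed $m w_i(\phi)$ vertex-disjoint copies of $H$ in $B_i$, each corresponding to $\phi$. A single such copy is an injective map $\phi'$ with $\phi'(x)\in V_{\phi(x)}$.

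We check that $\phi'$ is a homomorphism into $B_i$. Take $xy\in E(H)$. If $\phi(x)\neq\phi(y)$, then $\phi(x)\phi(y)$ is an edge of colour $i$ in $F$, so $B[V_{\phi(x)},V_{\phi(y)}]$ is complete bipartite in colour $i$. If $\phi(x)=\phi(y)=v$, then the loop at $v$ has colour $i$, so $B[V_v]$ is an $i$-coloured clique, and injectivity gives $\phi'(x)\ne\phi'(y)$. Hence any injective choice works.

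Finally, the total number of vertices required in $V_v$ is
$$\sum_\phi m w_i(\phi)|\phi^{-1}(v)|=m\,w_i(v)\le m c_v=|V_v|,$$
so we can allocate distinct vertices of each class greedily among all the copies. This yields $m\sum_\phi w_i(\phi)=m$ vertex-disjoint copies of $H$ in colour $i$, i.e.\ a copy of $mH$ in $B_i$. Since $m$ was chosen uniformly over all colours, the same $m$ works for every $i$.

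The only real subtlety is the rationality and attainment of the LP optimum, which is what makes a single integer $m$ clear all denominators. The remaining steps are bookkeeping.
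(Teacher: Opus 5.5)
Your proposal is correct and follows the paper's proof essentially step for step. You take a rational optimal fractional tiling $w_i$ for each colour via linear programming, choose $m$ so that every $m\,w_i(\phi)$ is an integer, and greedily place $m\,w_i(\phi)$ disjoint copies of $H$ corresponding to each $\phi$, using $m\,w_i(v)\le m c_v=|V_v|$. You also require $m$ to clear the denominators of the $c_v$, so that the classes $|V_v|=c_v m$ are integers, and you check that each such copy really lies in $B_i$; the paper leaves both points implicit.
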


\begin{proof}
    For every $i\in[r]$, since ${\bf c}$ has rational entries, it follows from linear programming that there is a fractional $H$-tiling $w_i$ in $(F_i,{\bf c})$ such that $w_i(F_i)=\nu_H(F_i,{\bf c})=|H|$ and $w_i(\phi)$ is rational for each $\phi\in\text{Hom}(F:H)$.
    Pick $m\in\mathbb N$ such that $w_i(\phi)\cdot m$ is an integer for every $i\in[r]$ and $\phi\in\text{Hom}(F:H)$. Let $B$ be as defined in the statement of the proposition.
    It suffices to show that, for each $i\in[r]$,  $B_i$ contains a copy of $mH$.
    
    Fix $i\in[r]$. For each $\phi\in\text{Hom}(F:H)$, select $w_i(\phi)\cdot m$ distinct injective graph homomorphisms~$\phi'$ in $\text{Hom}(B_i:H)$ such that $\phi(x)=v$ implies $\phi'(x)\in V_v$ for every $x\in V(H)$.
    Note that the image $\text{Im}(\phi')$ has precisely $\bigl|\phi^{-1}(v)\bigr|$ elements in $V_v$ for each $v\in V(F_i)$.
    For every $v\in V(F_i)$, since $w_i$ is a fractional $H$-tiling $w_i$ in $(F_i,{\bf c})$, we have
    $$\sum_{\phi\in \text{Hom}(F_i:H)} \bigl|\phi^{-1}(v)\bigr|\cdot \bigl(w_i(\phi)\cdot m\bigr)=w_i(v)\cdot m\le c_v\cdot m=|V_v|.$$
    Therefore, we can select the injective graph homomorphisms $\phi'$ so that their images are pairwise disjoint.
    In other words, we have found an $H$-tiling in $B_i$ consisting of
    $$\sum_{\phi\in\text{Hom}(F_i:H)}w_i(\phi)\cdot m=\frac{m}{|H|}\sum_{v\in V(F_i)}w_i(v)=\frac{w_i(F_i)\cdot m}{|H|}=m$$
    copies of $H$, as required.
\end{proof}


\subsection{Existence of $(H,r)$-gadgets with rational capacity}\label{subsection:blow-ups}

\cref{proposition:gadget_induction} requires that the entries of~${\bf c}$ are rational.
This technical subsection is devoted to prove that, indeed, such~${\bf c}$ exists.

Intuitively, given an $(H,r)$-gadget $(F,{\bf c})$ where ${\bf c}$ has at least one irrational entry, we can slightly adjust the entries of~${\bf c}$ to obtain a new capacity~${\bf q}$ with rational entries.
Since the entries of~${\bf c}$ and~${\bf q}$ differ by little, the difference~$\bigl|\nu_H(F_i,{\bf c})-\nu_H(F_i,{\bf q})\bigr|$ should be small for every $i\in[r]$.
Since~$\nu_H(F_i,{\bf c})=|H|$, if we can argue that~$\nu_H(F_i,{\bf q})$ is a fraction whose denominator is greater than~$|\nu_H(F_i,{\bf c})-\nu_H(F_i,{\bf q})|$, this would imply~$\nu_H(F_i,{\bf q})=|H|$ for every $i\in[r]$ and thus that~$(F,{\bf q})$ is also an $(H,r)$-gadget.  

The next fact will be used to pick~${\bf q}$ appropriately.

\begin{fact}\label{fact:technical}
    Let $k,t\in\mathbb N$ and $c_1,\dots,c_t\ge0$.
    There exist integers $n_1,\dots, n_t\ge 0$ and $N\ge1$ such that, for every $i\in[t]$,
    $$\left|c_i-\frac{n_i}{N}\right|\le\frac{1}{kN}.$$
\end{fact}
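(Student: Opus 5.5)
This is simultaneous Diophantine approximation, and I would prove it with Dirichlet's pigeonhole argument. It suffices to find an integer $N\ge1$ such that each $Nc_i$ lies within distance $1/k$ of an integer. Given such an $N$, set $n_i$ to be the nearest integer to $Nc_i$. Then $n_i\ge0$, because $c_i\ge0$ forces $Nc_i\ge0$, and so the nearest integer is non-negative. Dividing by $N$ gives $|c_i-n_i/N|\le 1/(kN)$.

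To find $N$, I would partition the cube $[0,1)^t$ into $k^t$ half-open subcubes of side length $1/k$. For each $j\in\{0,1,\dots,k^t\}$, consider the fractional-part vector
$$v_j:=\bigl(\{jc_1\},\dots,\{jc_t\}\bigr)\in[0,1)^t.$$
There are $k^t+1$ such vectors and only $k^t$ subcubes. By the pigeonhole principle, two of them, say $v_a$ and $v_b$ with $0\le a<b\le k^t$, lie in the same subcube.

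Now take $N:=b-a\ge1$. For each $i$ we have
$$Nc_i=bc_i-ac_i=\bigl(\lfloor bc_i\rfloor-\lfloor ac_i\rfloor\bigr)+\bigl(\{bc_i\}-\{ac_i\}\bigr).$$
The first bracket is an integer. The second has absolute value less than $1/k$, since both fractional parts lie in the same interval of length $1/k$. Hence $Nc_i$ is within $1/k$ of an integer, as required.

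There is no real obstacle here. The only points needing care are the non-negativity of the $n_i$, handled by the rounding remark above, and the fact that the bound $N\le k^t$ is not needed for the statement, so it can be discarded.
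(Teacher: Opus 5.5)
Your argument is correct, but it takes a different route from the paper. You use the classical Dirichlet box principle on the fractional-part vectors $(\{jc_1\},\dots,\{jc_t\})$ for $0\le j\le k^t$. This gives a denominator $N\le k^t$ with error at most $1/(kN)$, and rounding $Nc_i$ to the nearest integer handles $n_i\ge 0$ cleanly.

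The paper instead pigeonholes on decimal expansions. There are only $10^{kt}$ possible patterns for a block of $k$ consecutive fractional digits of $(c_1,\dots,c_t)$, so there are disjoint blocks at positions $a,\dots,a+k-1$ and $b,\dots,b+k-1$ on which every $c_i$ has the same digits. Consequently $c_i\cdot 10^{b+k-1}$ and $c_i\cdot 10^{a+k-1}$ share their last $k$ integer digits. Subtracting gives $N=10^{b-1}-10^{a-1}$ and an error of at most $1/(10^k N)$. That is much stronger in $k$ than required, but it gives no useful control on the size of $N$.

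In the proof of Lemma~\ref{lemma:gadget_induction}, the denominator $N$ cancels from both sides of~\eqref{newfootnote}, and only the factor $1/k$ matters. So either version suffices there. Yours is the more standard and self-contained of the two: it avoids any bookkeeping with decimal representations and yields an explicit bound on $N$.
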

    
    \begin{proof}
    For every $i\in[t]$, pick a decimal representation of~$c_i$.  
    By the pigeonhole principle, there exist two disjoint intervals $A,B\subseteq \mathbb N$ of size $|A|=|B|=k$, say $A=[a,\dots,a+k-1]$ and $B=[b,\dots,b+k-1]$ with $a+k\le b$, such that, for every $i\in[t]$ and $j\in[k]$, the $(a+j-1)$th and $(b+j-1)$th fractional digit of $c_i$ are equal.

    It follows that the last~$k$ integer digits before the fractional part of $c_i\cdot 10^{b+k-1}$ and $c_i\cdot 10^{a+k-1}$ are identical, for every $i\in[t]$.
    Thus, there exists an integer~$n_i\ge0$ such that
    $$\Bigl|\bigl(10^{b+k-1}-10^{a+k-1}\bigr)c_i-n_i\cdot 10^k\Bigr|\le 1.$$
    Dividing both sides by $(10^{b+k-1}-10^{a+k-1})$ yields
    $$\left|c_i-\frac{n_i}{10^{b-1}-10^{a-1}}\right|\leq \frac{1}{10^{b+k-1}-10^{a+k-1}}\leq \frac{1}{k(10^{b-1}+10^{a-1})}.$$
    Hence, we recover the statement of the claim by taking $N:=10^{b-1}-10^{a-1}$.
\end{proof}

The next proposition is useful to bound the change in~$\nu_H(G,{\bf c})$ under a small variation of~${\bf c}$.

\begin{proposition}\label{proposition:capacity_diff}
    Let~$G$ be a graph (possibly with loops) and~$H$ be a graph.
    Let ${\bf c}=\{c_v:v\in V(G)\}$ and ${\bf d}=\{d_v:v\in V(G)\}$ be two $|G|$-tuples of positive entries such that $c_v=d_v$ for all but one vertex $u\in V(G)$.
    Then
    $$|\nu_H(G,{\bf c})-\nu_H(G,{\bf d})|\le\min\left\{\frac{||{\bf c}||}{c_u},\frac{||{\bf d}||}{d_u}\right\}\cdot|c_u-d_u|.$$
\end{proposition}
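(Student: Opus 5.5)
We prove the bound by transferring optimal fractional tilings between the two capacities by rescaling. By symmetry between ${\bf c}$ and ${\bf d}$, it suffices to prove the two one-sided inequalities
$$\nu_H(G,{\bf d})\ge \nu_H(G,{\bf c})-\frac{||{\bf c}||}{c_u}|c_u-d_u| \quad\text{and}\quad \nu_H(G,{\bf d})\ge \nu_H(G,{\bf c})-\frac{||{\bf d}||}{d_u}|c_u-d_u|,$$
together with the same two inequalities with the roles of ${\bf c}$ and ${\bf d}$ swapped. This gives the absolute value bound with either factor, and hence with the minimum.

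The monotone direction is trivial. If $d_u\ge c_u$, then every fractional $H$-tiling in $(G,{\bf c})$ is also one in $(G,{\bf d})$, so $\nu_H(G,{\bf d})\ge\nu_H(G,{\bf c})$. So assume $d_u<c_u$ and take an optimal fractional $H$-tiling $w$ in $(G,{\bf c})$; it exists by the linear programming remark in the paper. Consider the scaled weighting $w':=\lambda w$ with $\lambda:=d_u/c_u\in(0,1)$. At every vertex $v$ we have $w'(v)=\lambda w(v)\le \lambda c_v\le d_v$: for $v\ne u$ this holds because $\lambda\le1$ and $c_v=d_v$, and for $v=u$ we get $\lambda c_u=d_u$. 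So $w'$ is a fractional $H$-tiling in $(G,{\bf d})$, and
$$\nu_H(G,{\bf d})\ge \lambda\,\nu_H(G,{\bf c})=\nu_H(G,{\bf c})-\frac{c_u-d_u}{c_u}\nu_H(G,{\bf c}).$$
Since $\nu_H(G,{\bf c})\le ||{\bf c}||$ (the size is the sum of $w(v)\le c_v$), this yields the bound with factor $||{\bf c}||/c_u$.

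For the other factor we bound $\nu_H(G,{\bf c})$ from above instead. Write $\nu_H(G,{\bf c})\le \nu_H(G,{\bf d})+(c_u-d_u)\cdot\frac{\nu_H(G,{\bf c})}{c_u}$ using the inequality just proved. Rearranging gives $\nu_H(G,{\bf c})\,\frac{d_u}{c_u}\le\nu_H(G,{\bf d})$, so $\nu_H(G,{\bf c})\le \frac{c_u}{d_u}\nu_H(G,{\bf d})$. Hence
$$\nu_H(G,{\bf c})-\nu_H(G,{\bf d})\le\frac{c_u-d_u}{d_u}\nu_H(G,{\bf d})\le\frac{||{\bf d}||}{d_u}|c_u-d_u|.$$
Combining both cases, and swapping ${\bf c}$ and ${\bf d}$ when $d_u>c_u$, gives the statement.

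There is no real obstacle here. The only points needing care are the positivity of entries, so that $\lambda$ is well defined, and the bookkeeping that each one-sided difference is controlled by both factors. This works because the nonzero difference always runs from the larger capacity to the smaller one.
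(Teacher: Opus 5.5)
Your proof is correct and is essentially the paper's argument. You rescale an optimal fractional $H$-tiling of $(G,{\bf c})$ by $d_u/c_u$ (when $c_u>d_u$) and combine this with monotonicity and $\nu_H(G,{\bf c})\le||{\bf c}||$. The only difference is in handling the minimum: the paper observes that $||{\bf c}||/c_u\le||{\bf d}||/d_u$ whenever $c_u>d_u$, so the first bound is already the minimum, whereas you derive the weaker $||{\bf d}||/d_u$ bound separately via $\nu_H(G,{\bf c})\le (c_u/d_u)\,\nu_H(G,{\bf d})$ and $\nu_H(G,{\bf d})\le||{\bf d}||$.
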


\begin{proof}
    Without loss of generality, assume $c_u>d_u$.
    Let $w_{c}$ be a fractional $H$-tiling in $(G,{\bf c})$ with $w_{c}(G)=\nu_H(G,{\bf c})$.
    We have $w_{c}(G)\ge \nu_H(G,{\bf d})$ since $c_u>d_u$.
    Next, we bound $w_{c}(G)$ from above.
    Let $w_0:\text{Hom}(G:H)\to\mathbb R^{\ge0}$ with
    $$w_0(\phi):=w_{c}(\phi)\cdot(d_u/c_u).$$
    Then we have $w_0(v)= w_c(v)\cdot(d_u/c_u)\le d_v$ for every $v\in V(G)$.
    Hence, $w_0$ is a fractional $H$-tiling in $(G,{\bf d})$ and so $w_0(G)\le \nu_H(G,{\bf d})$.
    On the other hand, we have 
    \begin{align*}
        0\le w_{c}(G)-w_{0}(G)&=\sum_{\phi\in\text{Hom}(G:H)} |H|\cdot(w_{c}(\phi)-w_0(\phi))=\sum_{\phi\in\text{Hom}(G:H)}|H|\cdot w_{c}(\phi)\cdot\frac{c_u-d_u}{c_u}\\
        &= w_c(G)\cdot\frac{c_u-d_u}{c_u}\le\frac{||{\bf c}||}{c_u}\cdot(c_u-d_u).
    \end{align*}
    We conclude that
    $$\nu_H(G,{\bf d})\le w_c(G)\le \nu_H(G,{\bf d})+\frac{||{\bf c}||}{c_u}\cdot(c_u-d_u)$$
    and so, since $w_c(G)=\nu_H(G,{\bf c})$,
    $$|\nu_H(G,{\bf c})-\nu_H(G,{\bf d})|\le\frac{||{\bf c}||}{c_u}\cdot(c_u-d_u).$$
    Note that
    $$\frac{||{\bf c}||}{c_u}=1+\frac{1}{c_u}\cdot\sum_{v\neq u} c_v=1+\frac{1}{c_u}\cdot\sum_{v\neq u} d_v\le1+\frac{1}{d_u}\cdot\sum_{v\neq u} d_v=\frac{||{\bf d}||}{d_u}.$$
    This concludes the proof.
\end{proof}

Finally, we use linear programming.

\begin{observation}[A linear programming perspective]\label{observation:linear_programming}
    In what follows, we treat~$G$, $H$ and~${\bf c}$ as fixed parameters.
    A weighting function $w:\hom(G:H)\to\mathbb R^{\ge0}$ can be viewed as an element of the set $(\mathbb R^{\ge 0})^{|\hom(G:H)|}$.
    From this perspective, the set $S\subseteq(\mathbb R^{\ge 0})^{|\hom(G:H)|}$ of fractional $H$-tilings is the set of feasible solutions to the linear constraints \{$w(v)\le c_v:v\in V(G)$\}, hence~$S$ is a convex polytope in $(\mathbb R^{\ge 0})^{|\hom(G:H)|}$.
    Since $S$ is non-empty (${\bf 0}\in S$) and bounded (the constraints imply $S\subseteq[0,\max\{c_v:v\in V(G)\}]^{|\hom(G:H)|}$), there exists an optimal solution to the maximisation problem $\max w(G)$.
    In particular, we have $\max w(G)=\nu_H(G,{\bf c})$.
\end{observation}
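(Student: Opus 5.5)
The plan is to view the problem of finding a largest fractional $H$-tiling in $(G,{\bf c})$ as a finite-dimensional linear program and to use compactness to show that its maximum is attained. Since $G$ and $H$ are finite, $\text{Hom}(G:H)$ is a finite set, say of size $N$. So a weighting $w$ is simply a vector in $(\mathbb R^{\ge 0})^{N}$. For each $v\in V(G)$, the quantity
$$w(v)=\sum_{\phi\in\text{Hom}(G:H)} w(\phi)\cdot\bigl|\phi^{-1}(v)\bigr|$$
is a linear function of this vector with non-negative integer coefficients. The set $S$ of fractional $H$-tilings is therefore cut out by finitely many non-strict linear inequalities: $w(\phi)\ge 0$ for each $\phi$, and $w(v)\le c_v$ for each $v\in V(G)$. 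Hence $S$ is a closed convex polyhedron.

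Next I check that $S$ is non-empty and bounded. Non-emptiness is immediate, since the zero weighting satisfies every constraint because $c_v\ge 0$. For boundedness, fix $\phi\in\text{Hom}(G:H)$. Since $V(H)\neq\emptyset$, there is some $v\in V(G)$ with $|\phi^{-1}(v)|\ge 1$. All terms in the sum defining $w(v)$ are non-negative, so
$$w(\phi)\le w(\phi)\cdot\bigl|\phi^{-1}(v)\bigr|\le w(v)\le c_v\le \max_{u\in V(G)} c_u .$$
Consequently $S\subseteq[0,\max_u c_u]^{N}$, and so $S$ is a bounded polytope. Being closed and bounded in $\mathbb R^N$, it is compact.

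Finally, the objective $w(G)=|H|\sum_{\phi} w(\phi)$ is linear and hence continuous on the non-empty compact set $S$. It therefore attains its maximum, and this maximum is by definition $\nu_H(G,{\bf c})$; in particular $\nu_H(G,{\bf c})$ is well defined. Alternatively, one can invoke the fundamental theorem of linear programming, which gives more: the maximum is attained at a vertex of $S$. A vertex is the unique solution of a square subsystem of the constraints taken with equality, so by Cramer's rule it has rational coordinates whenever ${\bf c}$ is rational; this is the fact used in the proof of Proposition~\ref{proposition:gadget_induction}. There is no real obstacle here. The only point needing care is the boundedness bound on each $w(\phi)$, which relies on $H$ having at least one vertex, so that every homomorphism contributes to some capacity constraint.
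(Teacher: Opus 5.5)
Your proposal is correct and follows the same route as the paper: $S$ is a non-empty, bounded polyhedron cut out by the linear constraints $w(\phi)\ge 0$ and $w(v)\le c_v$, so the linear objective $w(G)$ attains its maximum on it. Your justification of the bound $w(\phi)\le \max_{u} c_u$, obtained by picking a vertex in the image of each homomorphism, spells out the detail that the paper states without proof.
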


We are now ready to prove the main lemma of this section.

\begin{lemma}\label{lemma:gadget_induction}
    Let $r\in\mathbb N$ and~$H$ be a graph.
    Let $F$ be an $(H,r)$-gadget.
    Then there exists a $|G|$-tuple ${\bf q}=\{q_v:v\in V(G)\}$ with rational entries such that $(F,{\bf q})$ is an $(H,r)$-gadget. 
\end{lemma}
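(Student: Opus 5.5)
The plan is to approximate ${\bf c}$ by a rational capacity ${\bf q}$ with a common denominator $N$, chosen so finely (relative to $N$) that $\nu_H(F_i,{\bf q})$ cannot differ from $|H|$ at all. We use \cref{fact:technical} for the approximation, \cref{proposition:capacity_diff} to control how $\nu_H$ moves, and \cref{observation:linear_programming} together with Cramer's rule to bound the denominators of $\nu_H(F_i,{\bf q})$. (Here the $|G|$-tuple in the statement is of course an $|F|$-tuple indexed by $V(F)$.)

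\textbf{Step 1 (denominators are bounded independently of $N$).} Fix $i\in[r]$ and consider the linear program from \cref{observation:linear_programming} for $(F_i,{\bf n})$, where ${\bf n}$ has non-negative integer entries. Its constraint matrix $A_i$ has rows indexed by $V(F)$, columns indexed by $\text{Hom}(F_i:H)$, and integer entries $|\phi^{-1}(v)|\in\{0,\dots,|H|\}$. Crucially, $A_i$ depends only on $F$ and $H$, not on the capacity. The optimum is attained at a vertex of the feasible polytope, whose coordinates are obtained by Cramer's rule from a square nonsingular submatrix of $A_i$ (augmented by identity rows for the constraints $w(\phi)\ge 0$) together with an integer right-hand side. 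Hence every coordinate lies in $\frac1{D}\mathbb Z$, where $D$ is the product of the absolute values of all nonzero minors of all the $A_i$; this $D$ is a constant depending only on $F$ and $H$. Consequently $\nu_H(F_i,{\bf n})\in\frac{|H|}{D}\mathbb Z$. Scaling gives, for ${\bf q}={\bf n}/N$,
$$\nu_H(F_i,{\bf q})=\frac{\nu_H(F_i,{\bf n})}{N}\in\frac{1}{DN}\mathbb Z .$$
Since $|H|$ is an integer, $|H|\in\frac1{DN}\mathbb Z$ as well.

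\textbf{Step 2 (choice of approximation).} Let $c_{\min}$ be the least positive entry of ${\bf c}$. Apply \cref{fact:technical} with $k$ large, say $k>\max\{2/c_{\min},\,2Dr(2||{\bf c}||+1)/c_{\min}\}$, and set $q_v:=n_v/N$. Whenever $c_v=0$ we get $n_v\le N/(kN)\cdot N<1$, so $q_v=0$. Vertices of capacity $0$ contribute nothing in either program, so we may delete them and assume all entries are positive. For the remaining vertices, $|c_v-q_v|\le 1/(kN)\le c_{\min}/2$, so $q_v\ge c_v/2$ and $||{\bf q}||\le||{\bf c}||+1$.

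\textbf{Step 3 (comparison).} Pass from ${\bf c}$ to ${\bf q}$ by changing one coordinate at a time, at most $r$ steps. At each step, \cref{proposition:capacity_diff} bounds the change in $\nu_H(F_i,\cdot)$ by (total capacity)$/$(changed coordinate)$\,\times 1/(kN)$. All intermediate tuples have total at most $||{\bf c}||+1$ and the changed coordinate is at least $c_{\min}/2$, so each step changes $\nu_H$ by at most $2(||{\bf c}||+1)/(c_{\min}kN)$. Summing,
$$\bigl|\,|H|-\nu_H(F_i,{\bf q})\bigr|\le \frac{2r(||{\bf c}||+1)}{c_{\min}kN}<\frac1{DN}.$$
Both $|H|$ and $\nu_H(F_i,{\bf q})$ lie in $\frac1{DN}\mathbb Z$, so they are equal for every $i$, and $(F,{\bf q})$ is an $(H,r)$-gadget.

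The main obstacle is Step 1: one must make sure the denominator bound $D$ is uniform in $N$, which holds precisely because the constraint matrix is fixed and only the right-hand side is rescaled. The zero-capacity vertices also need the small separate treatment given in Step 2.
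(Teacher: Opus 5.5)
Your proposal is correct and follows essentially the same route as the paper: rational approximation via \cref{fact:technical}, control of the change in $\nu_H$ via \cref{proposition:capacity_diff}, and a Cramer's-rule bound on the denominator of $\nu_H(F_i,{\bf q})$ that is uniform in $N$. Your explicit constant $D$, together with your use of $c_{\min}/2$ and $||{\bf c}||+1$ to bound the intermediate tuples, makes precise what the paper handles in a footnote and treats somewhat loosely; the only slip is cosmetic, since for $c_v=0$ the bound should read $n_v\le N\cdot\frac{1}{kN}=\frac1k<1$.
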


\begin{proof}
    Let $(F,{\bf c})$ be an $(H,r)$-gadget for some~${\bf c}=\{c_v:v\in V(F)\}$.
    If any entry of ${\bf c}$ is zero, say $c_u=0$, then $F\setminus \{u\}$ is also an $(H,r)$-gadget, and we may run the argument below  on it instead.
    Hence, we may assume all entries of ${\bf c}$ are positive.
    
    Pick a sufficiently large integer $k\ge1$.
    Apply \cref{fact:technical} to the $|F|$-tuple ${\bf c}$ to obtain non-negative integers $\{n_v:v\in V(F)\}$ and $N\ge1$.
    Consider the $|F|$-tuple ${\bf q}:=\{q_v=n_v/N:v\in V(F)\}$; so all entries of ${\bf q}$ are rational.
    It suffices to show that $\nu_H(F_i,{\bf q})=|H|$ for every $i\in[r]$.

    Since all entries of~${\bf c}$ are positive and~$k$ is sufficiently large, we may assume all entries of~${\bf q}$ are also positive.
    By repeatedly applying \cref{proposition:capacity_diff}, we have
    $$|\nu_H(F_i,{\bf c})-\nu_H(F_i,{\bf q})|\le\sum_{v\in V(F_i)}\frac{||{\bf c}||}{c_v}\cdot|c_v-q_v|.$$
    Thus, by \cref{fact:technical} we have
    \begin{equation}\label{eq:diff_c_q}
        |\nu_H(F_i,{\bf c})-\nu_H(F_i,{\bf q})|\le\frac{1}{kN}\sum_{v\in V(F_i)}\frac{||{\bf c}||}{c_v}.
    \end{equation}

    Now, consider the linear program described in \cref{observation:linear_programming} with respect to~$F_i$, $H$ and~${\bf q}$.
    There exists a so-called {\it optimal basic feasible solution}, that is, a fractional $H$-tiling of size $\nu_H(F_i,{\bf q})$ whose non-zero weights are the unique solution to an equation $A{\bf x}={\bf b}$ representing a system of equations $\{w(v)= q_v:v\in S\}$ for some subsets $S\subseteq V(F_i)$.
    In particular, ${\bf x}=A^{-1}{\bf b}$.
    Since~${\bf b}$ is a subvector of~${\bf q}$, its entries can be expressed as fractions with denominator~$N$.
    As $A$ is a matrix with integer entries between~$0$ and~$|H|$, the entries of $A^{-1}$ are fractions with denominator $\det(A)$ by Cramer's rule.
    Therefore, we conclude that the entries of ${\bf x}$, i.e., the non-zero weights of the optimal basic feasible solution can be written as fractions with denominator~$N\cdot\det(A)$. 
    Thus, $\nu_H(F_i,{\bf q})$ can be written as a fraction with denominator $N\cdot\det(A)$.
    
    Since~$k$ was picked sufficiently large, we have\footnote{Note that 
    although ${\bf q}$ and thus {\bf b} depend on $k$, $\det(A)$ does not in the following sense. At the start of the proof one already knows that $A$ must be a matrix with entries from $\{0, \dots, |H|\}$, with $\det (A) \neq 0$ and whose dimension is bounded. Thus, we can choose $k$ sufficiently large so that
    $1/k \ll 1/\det(A)$ for all such candidate matrices $A$.
Hence, we indeed can ensure  (\ref{newfootnote}) holds.} 
    \begin{align}\label{newfootnote}
    \frac{1}{N\cdot\det(A)}>\frac{1}{kN}\sum_{v\in V(F_i)}\frac{||{\bf c}||}{c_v}.
    \end{align}
    The above inequality together with~\eqref{eq:diff_c_q} and $\nu_H(F_i,{\bf c})=|H|$ implies $\nu_H(F_i,{\bf q})=|H|$.
    This holds for every $i\in[r]$, and so we are done.
\end{proof}

\section{Proofs of \cref{theorem:r=3} and \cref{theorem:chi>=r}}\label{sec:5}

We prove the following generalisations of \cref{theorem:r=3} and \cref{theorem:chi>=r}.

\begin{theorem}\label{theorem:r=3'}
    Given any graph $H$ containing at least one edge, there exists a constant~$C=C(H)$ such that for every $m\in\mathbb N$
    $$|R_3(mH)-\max\{||{\bf c}||:(F,{\bf c}) \text{ is an $(H,3)$-gadget}\}\cdot m|\le C.$$
\end{theorem}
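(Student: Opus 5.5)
Write $g(H):=\max\{||{\bf c}||:(F,{\bf c})\text{ is an }(H,3)\text{-gadget}\}$. The plan has two halves: a lower bound from \cref{proposition:gadget_extremal}, and an upper bound by induction on $m$ that uses \cref{proposition:gadget_induction}.

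First I check that the maximum defining $g(H)$ exists and is positive. The lexicographic gadget of \cref{example:lexicographic} shows the set of gadgets is non-empty. Each capacity is bounded: in a gadget we have $\nu_H(F_i,{\bf c})=|H|$ for every $i$. Every vertex of $F$ carries a loop of some colour $i$, and the constant homomorphism onto that loop shows $c_v\le\nu_H(F_i,{\bf c})=|H|$. The gadget condition is a closed condition in ${\bf c}$ (by \cref{proposition:capacity_diff}, $\nu_H$ is continuous in ${\bf c}$), and there are only finitely many colourings $F$. So the supremum is attained. By \cref{lemma:gadget_induction}, applied in the form ``perturb the optimum to a rational ${\bf q}$'', we may moreover take an optimal ${\bf c}$ with rational entries. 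To do this I would check that the gadget set for fixed $F$ is a polytope-like set defined by rational data, so its maximiser of $||{\bf c}||$ can be taken at a rational vertex. The lower bound $R_3(mH)\ge g(H)m-g(H)-3$ is then immediate from \cref{proposition:gadget_extremal}.

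For the upper bound I would show $R_3(mH)\le g(H)m+C$. Fix a rational optimal gadget $(F,{\bf c})$ and let $m_1$ be given by \cref{proposition:gadget_induction}. Its blow-up $B$, with parts of size $c_vm_1$, has $g(H)m_1$ vertices and contains a monochromatic $m_1H$ in every colour. The induction step is as follows. Suppose an $n$-vertex $3$-coloured $K_n$ with $n\ge g(H)m+C$ contains a copy of such a blow-up $B$ (or of any gadget blow-up with $||{\bf c}||\ge g(H)$). Remove $B$ and apply induction to the remaining $n-g(H)m_1\ge g(H)(m-m_1)+C$ vertices. This yields a monochromatic $(m-m_1)H$ of some colour, and $B$ supplies $m_1H$ in that same colour. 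So it suffices to prove a structural dichotomy: every $3$-coloured $K_n$ with $n\ge g(H)m+C$ either contains a monochromatic $mH$ directly, or contains a vertex-disjoint copy of some bounded gadget blow-up with total weight $\ge g(H)m_1$.

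The main obstacle is this dichotomy, i.e.\ a stability-type statement for $3$ colours with only an additive constant loss. My approach starts from a maximal collection $\mathcal B$ of disjoint gadget blow-ups, each of bounded size. If the rest $G_1$ of the graph is large, I would tile it greedily by monochromatic cliques $K_N$, as in the proof of \cref{robust2colours}. I would then consider the auxiliary $3$-coloured ``reduced'' complete graph on these cliques. Here Ramsey and Erd\H{o}s--Stone--Simonovits give monochromatic complete bipartite connections between cliques, so any three cliques joined suitably form a blow-up of a looped $3$-vertex coloured complete graph. If that configuration is a gadget, we contradict maximality. Hence the cliques and their connections avoid all gadget configurations, and the structure is forced: for instance, the clique colours involve at most two colours, or some colour behaves like a star. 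In each such forbidden-gadget case I would bound the largest monochromatic tilings directly. The cases split into (a) two colours essentially missing on $G_1$, which is handled by the $2$-colour argument of \cref{robust2colours} made exact, and (b) a colour appearing only as edges touching few vertices, which is handled via \cref{theorem:Gyarfas_Sarkozy} applied to matchings in the reduced structure, with stars as the third colour. Finally, I would show that the leftover configuration together with $\mathcal B$ has total size at most $g(H)m+C$ unless some colour already has $mH$. This uses the LP view from \cref{observation:linear_programming}: the leftover structure is a blow-up of a looped coloured graph, and the optimality of $g(H)$ among gadgets, combined with a Poincar\'e--Miranda-style balancing of the capacities, compares its size to $g(H)m$. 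I expect this final case analysis, in which the non-gadget leftover must cost only $O(1)$ vertices, to be the hardest part.
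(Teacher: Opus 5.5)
There is a genuine gap, and it lies in the part you yourself flag as hardest.

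\textbf{The inequality in the inductive step is reversed.} To remove $B$ and apply induction you need $|B|\le g(H)\,m'$, so that $|G|-|B|\ge g(H)(m-m')+C$. Requiring total weight $\ge g(H)m_1$, i.e.\ an optimal gadget, is the wrong way round. Every blow-up from \cref{proposition:gadget_induction} satisfies the correct inequality automatically. So does any subgraph containing $m'H$ in each colour on at most $\frac32|H|m'$ vertices, since $g(H)\ge\frac32|H|$ for bipartite $H$ by the cyclic gadget of \cref{proposition:gadgets_r=3}. You therefore never need a rational \emph{optimal} gadget. Restricting to optimal gadgets only makes the required dichotomy harder.

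\textbf{The dichotomy itself is never proved.} Avoiding gadget configurations does not force the structure you guess. Take bipartite $H$ and three large monochromatic cliques of colours $1,2,3$, pairwise joined entirely in colour $1$. All three clique colours occur, yet no gadget can be placed on parts of this structure. Colours $2$ and $3$ live only inside their cliques, so they force capacity $|H|$ on each. Colour $1$ then already has $\nu_H\ge 2|H|$ across them. More fundamentally, your final step has no mechanism. The leftover colouring is arbitrary, not a blow-up of a looped graph, so there is nothing for the LP or Poincar\'e--Miranda to compare with $g(H)$. Any Ramsey or regularity analysis of the leftover loses $\Theta(n)$ vertices, so it cannot yield an $O(1)$ error.

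\textbf{The missing idea.} The additive-constant precision should come entirely from the induction; the structured case is then handled only approximately, but with a linear margin. The paper removes more than gadget blow-ups:
\begin{itemize}
\item blow-ups of a rainbow triangle, which on $3q$ vertices give $2q/|H|$ copies of $H$ in every colour;
\item blow-ups of a $K_4$ with a spanning rainbow star, which after Ramsey cleaning yield either such a $6t$-vertex structure or a blow-up of one of the (non-optimal) gadgets of \cref{proposition:gadgets_r=3}.
\end{itemize}
Once these are absent, the reduced graph has no rainbow triangle and no rainbow $K_{1,3}$. It therefore splits as $W_{12}\cup W_{13}\cup W_{23}$ with $|W_{23}|\le 6d|R|$ and $R[W_{12},W_{13}]$ almost complete in colour~$1$.

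A case analysis on $|W_{13}|$ then finds a monochromatic $mH$. It uses bipartite $H$-tilings across $W_{12},W_{13}$, \cref{robust2colours} inside $W_{12}$, and \cref{theorem:Gyarfas_Sarkozy} for matchings. Crucially, it needs only the crude bounds $g(H)\ge\frac32|H|$ and $g(H)>|H|+\sigma(H)$, the latter by a constant $c_0$ when $\sigma(H)=|H|/2$. Each case has a margin that absorbs the regularity losses, so no exact comparison with $g(H)$ is ever required.

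Note also that this argument uses bipartiteness of $H$. The paper handles non-bipartite $H$ separately by reducing to \cref{theorem:chi>=r'}, which your plan does not address.
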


\begin{theorem}\label{theorem:chi>=r'}
    Given $r\geq 2$ and a graph $H$ with $\chi(H)\ge r$, there exists a constant~$C=C(H,r)$ such that for every $m\in\mathbb N$
    $$|R_r(mH)-\max\{||{\bf c}||:(F,{\bf c}) \text{ is an $(H,r)$-gadget}\}\cdot m|\le C.$$
\end{theorem}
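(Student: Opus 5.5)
The statement immediately above is \cref{theorem:chi>=r'}. Write $g:=\max\{||{\bf c}||:(F,{\bf c})\text{ is an $(H,r)$-gadget}\}$. First I check that $g$ is finite and attained. There are finitely many $r$-coloured looped $K_r$'s. For a fixed $F$ the admissible ${\bf c}$ form a closed set: $\nu_H(F_i,\cdot)$ is continuous by \cref{proposition:capacity_diff}, and the equation $\nu_H(F_i,{\bf c})=|H|$ is closed. This set is also bounded, since every vertex carries a loop of some colour $i$, and the constant homomorphism to that vertex forces $c_v\le\nu_H(F_i,{\bf c})=|H|$.

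The lower bound $R_r(mH)\ge gm-g-r$ is then \cref{proposition:gadget_extremal} applied to a maximising gadget. So all the work is the upper bound $R_r(mH)\le gm+C$.

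For the upper bound, take an $r$-edge-coloured $K_n$ with $n=gm+C$, and suppose it has no monochromatic $mH$.
\begin{enumerate}
\item Greedily pull out vertex-disjoint monochromatic copies of $K_N$, using Ramsey's theorem, until fewer than $R_r(K_N)$ vertices remain.
\item Between any two such cliques, a bipartite Ramsey/K\H{o}v\'ari--S\'os--Tur\'an argument gives large sub-cliques joined monochromatically.
\item Group the cliques into a bounded number of types. Each type records a vertex of an auxiliary looped $r$-coloured complete graph, and the cliques are grouped so that the cross colours between groups are (almost) monochromatic.
\end{enumerate}
This yields an approximate blow-up structure on a bounded number $t$ of classes. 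Because $\chi(H)\ge r$, the $r$-edge-coloured looped structure is very restricted. Any colour class whose bipartite part on the classes is not ``rich'' contributes a homomorphism pattern. The key structural claim I would aim for is that, up to $O(1)$ exceptional vertices, the relevant structure collapses to a looped $K_r$ (at most $r$ effective classes). The reason is that if two classes received the same loop colour and were joined in that colour, they could be merged. Otherwise, a pigeonhole over the $r$ colours, combined with $\chi(H)\ge r$, shows that merging classes never reduces $\nu_H$ by more than constants.

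With the near-blow-up of a looped $r$-vertex $F$ and class sizes $x_v$ in hand, the absence of a monochromatic $mH$ gives $\nu_H(F_i,{\bf x}/m)<|H|+O(1/m)$ for each $i$ (the converse of \cref{proposition:gadget_induction}). From here I need an \emph{exact} additive-constant comparison with $g$, which is where the Poincar\'e--Miranda theorem enters. Consider the maps $f_i({\bf y})=\nu_H(F_i,{\bf y})-|H|$, whose arguments are rescalings of ${\bf x}/m$. I would use a monotone deformation, increasing the capacity $y_i$ of the vertex whose loop has colour $i$, so that $f_i$ is nonpositive on one face of a box and nonnegative on the opposite face. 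Poincar\'e--Miranda then gives ${\bf c}\ge{\bf x}/m-O(1/m)$ coordinatewise with all $f_i({\bf c})=0$, i.e.\ a gadget. Hence $n\le g m+O(1)$.

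To get a constant rather than $o(m)$, I would run the argument without regularity losses. Leftover vertices are absorbed using the gadget property: by \cref{proposition:gadget_induction} and \cref{lemma:gadget_induction}, every colour has an $mH$ in a blow-up, so copies can be completed inductively on $m$, adding bounded pieces. This gives a recursion of the form $R_r((m+m_1)H)\le R_r(mH)+g m_1$.

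I expect the main obstacle to be the structural reduction to a looped $K_r$ with only $O(1)$ error, together with setting up the Poincar\'e--Miranda boundary signs. The induction argument should be routine once the gadget is rational.
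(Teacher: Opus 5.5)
Your lower bound and your check that $g:=\max\{\|\mathbf{c}\|\}$ is attained are fine. The upper bound, however, has a genuine gap.

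\textbf{The structural claim is not proved.} Steps 1--3 and the ``key structural claim'' (that the clique types collapse to a looped $K_r$ up to $O(1)$ vertices) are only asserted. ``Merging classes never reduces $\nu_H$ by more than constants'' is not an argument. Also, two classes with the same loop colour that are joined in a different colour cannot be merged into a blow-up at all.

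\textbf{The Poincar\'e--Miranda step fails.} Each $f_i(\mathbf{y})=\nu_H(F_i,\mathbf{y})-|H|$ is nondecreasing in \emph{every} coordinate. So on the face $y_i=x_i/m$ it can be positive once the other coordinates are raised, and the promised conclusion (a gadget on the same $F$ dominating $\mathbf{x}/m$ coordinatewise) is false. Here is a counterexample. Take $r=2$, $H=K_2$, and let $F$ have the loop at vertex $1$ and the edge $12$ in colour $1$, and the loop at vertex $2$ in colour $2$. Then $\nu_H(F_1,\mathbf{y})=y_1+\min\{y_1,y_2\}$ and $\nu_H(F_2,\mathbf{y})=y_2$, so $(1,2)$ is the only gadget capacity on this $F$. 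Yet $\mathbf{x}=(3/2,1/2)$ satisfies $\nu_H(F_i,\mathbf{x})\le|H|$ for both $i$, and a slightly shrunk blow-up of it has no monochromatic $mK_2$. This $\mathbf{x}$ is not dominated by $(1,2)$, and $f_1(3/2,2)=1>0$ on the face $y_1=3/2$. What actually holds is only $\|\mathbf{x}\|\le g$, and your deformation does not prove it.

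\textbf{The missing idea.} No global structure is needed. The paper's proof is a short induction on $m$ with a dichotomy. Tile all but $O(1)$ vertices of $G$ by monochromatic copies of $K_\ell$.
\begin{itemize}
\item[(a)] Suppose every colour appears among these cliques. Pick one clique $V_i$ of each colour $i$ and use bipartite Ramsey to get $t$-subsets $V_i'$ with every pair $V_i',V_j'$ monochromatic. This is a blow-up of a looped $K_r$ whose loop at $i$ has colour $i$. Since $\chi(H)\ge r$, we have $\text{Hom}(F_i\setminus\{i\}:H)=\emptyset$, so $c_i=0$ forces $\nu_H(F_i,\mathbf{c})=0$. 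That is exactly the boundary sign that makes Poincar\'e--Miranda work on $[0,|H|]^r$, so \emph{every} such $F$ is a gadget (\cref{prop:mean_value}). By \cref{lemma:gadget_induction} and \cref{proposition:gadget_induction} you get $B'$ with $|B'|=m_F\|\mathbf{q}_F\|\le g\,m_F$ containing $m_FH$ in every colour. Delete $B'$, apply induction with $m-m_F$, and combine the two tilings in the common colour.
\item[(b)] Suppose some colour is absent. Then the cliques use at most $r-1$ colours. Since $g\ge(r-1)|H|$ (\cref{remark:chi>=r}), pigeonhole gives at least $m|H|/\ell$ cliques of one colour, hence $mH$.
\end{itemize}
Your recursion $R_r((m+m_1)H)\le R_r(mH)+gm_1$ is case (a), but it is only available there. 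Without case (b), and without the fact that every distinctly-looped $K_r$ is a gadget, the proposal does not close.
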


To see why \cref{theorem:r=3'} and \cref{theorem:chi>=r'} imply \cref{theorem:r=3} and \cref{theorem:chi>=r} respectively, consider the following.
By \cref{proposition:gadget_extremal} and the definition of $(H,r)$-gadgets, for every graph~$H$ and $m,r\in\mathbb N$ we have 
\begin{equation}\label{eq:lower_bound}
R_r(mH)>f_r(m,H)\ge\max\{||{\bf c}||:(F,{\bf c}) \text{ is an $(H,r)$-gadget}\}\cdot m-C_0,
\end{equation}
where $C_0:=\max\{||{\bf c}||:(F,{\bf c}) \text{ is an $(H,r)$-gadget}\}+r$ depends only on~$r$ and~$H$.
Inequality~\eqref{eq:lower_bound} and \cref{theorem:r=3'} imply that~$R_3(mH)$ and $f_3(mH)$ differ by a constant independent of~$m$, which is precisely the statement of \cref{theorem:r=3}.
Similarly,~\eqref{eq:lower_bound} and \cref{theorem:chi>=r'} imply \cref{theorem:chi>=r}.

Furthermore, observe that~\eqref{eq:lower_bound} provides an appropriate lower bound on~$R_r(mH)$ for  \cref{theorem:r=3'} and \cref{theorem:chi>=r'}.
Thus, to prove these theorems, it suffices to show a corresponding upper bound.
We start with \cref{theorem:chi>=r'}.

\subsection{Proof of \cref{theorem:chi>=r'}} 

As we have just discussed, it suffices to show that
\begin{align}\label{target1}
R_r(mH)\le \max\{||{\bf c}||:(F,{\bf c}) \text{ is an $(H,r)$-gadget}\}\cdot m + C
\end{align}
for some constant $C=C(H,r)$.

We briefly describe our proof strategy.
We proceed by induction on~$m$.
Given an $r$-edge-coloured complete graph with more vertices than the upper bound in (\ref{target1}), we try to find a blow-up~$B$ of an $(H,r)$-gadget $(F,{\bf c})$.
If we succeed, we remove~$B$ from~$G$ and find a large monochromatic $H$-tiling in the remaining graph by the inductive hypothesis.
We then use \cref{proposition:gadget_induction} to find an $H$-tiling of the same colour in~$B$.
The union of these two $H$-tilings will yield a monochromatic copy of~$mH$.
If~$G$ contains no such blow-up, we obtain enough structural information on the edge-colouring of~$G$ to conclude that a monochromatic copy of~$mH$ must exist. 

\smallskip

Before proceeding further, we define a special class of $(H,r)$-gadgets.

\begin{proposition}\label{prop:mean_value}
    Let $r\in\mathbb N$ and~$H$ be a graph.
    Let $F$ be a complete graph with vertex set $V(F)=[r]$ equipped with a loop at each vertex.
    Fix an~$r$-edge-colouring of~$F$ such that for every~$i\in[r]$,
    \begin{itemize}
        \item the loop on vertex~$i$ is coloured with~$i$ and 
        \item the set~$\text{Hom}(F_i\setminus \{i\}:H)$ is empty (recall~$F_i$ is the subgraph of~$F$ induced by the edges of colour~$i$).
    \end{itemize}
    Then $F$ is an $(H,r)$-gadget.
\end{proposition}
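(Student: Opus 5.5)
The plan is to choose the capacities ${\bf c}$ via the Poincar\'e--Miranda theorem, applied to the functions
$$f_i({\bf c}):=\nu_H(F_i,{\bf c})-|H|,\qquad i\in[r],$$
on a cube $[a,b]^r$, where the coordinate $c_i$ of ${\bf c}$ is the capacity of vertex~$i$. A common zero ${\bf c}$ of $f_1,\dots,f_r$ gives $\nu_H(F_i,{\bf c})=|H|$ for every $i\in[r]$. By \cref{definition:gadgets}, this means $(F,{\bf c})$ is an $(H,r)$-gadget. I take $a:=1$ and $b:=|H|$.

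First I check the boundary conditions, using the two hypotheses on the colouring.

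\emph{Lower face $c_i=1$.} Since $\text{Hom}(F_i\setminus\{i\}:H)=\emptyset$, every $\phi\in\text{Hom}(F_i:H)$ satisfies $|\phi^{-1}(i)|\ge1$. Hence any fractional $H$-tiling $w$ in $(F_i,{\bf c})$ obeys
$$\sum_\phi w(\phi)\le\sum_\phi w(\phi)\,|\phi^{-1}(i)|=w(i)\le c_i,$$
so its size is $|H|\sum_\phi w(\phi)\le |H|c_i$. At $c_i=1$ this gives $f_i\le 0$, whatever the other coordinates are.

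\emph{Upper face $c_i=|H|$.} The loop at $i$ has colour $i$, so the constant map $\phi_0$ sending all of $V(H)$ to $i$ lies in $\text{Hom}(F_i:H)$. Giving $\phi_0$ weight~$1$ uses capacity $|H|\le c_i$ at $i$ and nothing elsewhere, so it is feasible. Thus $\nu_H(F_i,{\bf c})\ge|H|$, i.e.\ $f_i\ge0$, again for arbitrary values of the other coordinates.

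Next I verify that each $f_i$ is continuous on $[1,|H|]^r$; this is the one point needing care, and \cref{proposition:capacity_diff} handles it. All entries lie in $[1,|H|]$, so $||{\bf c}||/c_u\le r|H|$ for every $u$. Changing one coordinate at a time and applying \cref{proposition:capacity_diff} repeatedly (as in the proof of \cref{lemma:gadget_induction}) gives
$$|\nu_H(F_i,{\bf c})-\nu_H(F_i,{\bf d})|\le r|H|\sum_v|c_v-d_v|,$$
so each $f_i$ is Lipschitz. The maximum defining $\nu_H$ is attained by \cref{observation:linear_programming}, so $f_i$ is well defined.

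Finally, Poincar\'e--Miranda applies with $a=1$, $b=|H|$ and yields ${\bf c}\in[1,|H|]^r$ with $f_i({\bf c})=0$ for all $i$. Hence $F$ is an $(H,r)$-gadget. The degenerate case $|H|=1$ needs no separate treatment: then $a=b$, and taking ${\bf c}\equiv1$ works directly by the two bounds above.
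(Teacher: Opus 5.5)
Your proof is correct and is essentially the paper's argument: Poincar\'e--Miranda applied to $f_i({\bf c})=\nu_H(F_i,{\bf c})-|H|$, with the upper face handled by the constant homomorphism onto the loop at $i$ and the lower face by the fact that every $\phi\in\text{Hom}(F_i:H)$ must use vertex~$i$. The only deviation is that you use the cube $[1,|H|]^r$ instead of the paper's $[0,|H|]^r$; this keeps all capacities positive, so \cref{proposition:capacity_diff} (stated for positive entries) directly gives the continuity that the paper only asserts in a footnote, a minor but genuine tidying.
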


To prove Proposition~\ref{prop:mean_value} we use the Poincar{\'e}--Miranda theorem.

\begin{proof}[Proof of \cref{prop:mean_value}]
    For an~$r$-tuple ${\bf c}=\{c_v:v\in [r]\}$ with real entries between~$0$ and~$|H|$, let $f_i({\bf c}):=\nu_H(F_i,{\bf c})-|H|$ for every $i\in[r]$.
    Note that~$f_1,\dots,f_{r}$ are continuous functions\footnote{This follows, e.g., from Proposition~\ref{proposition:capacity_diff}.} with variables~$c_1,\dots,c_r\in[0,|H|]$.

    If $c_i=0$ then $\nu_H(F_i,{\bf c})=0$ by the second bullet point, and so $f_i({\bf c})=-|H|$.
    If $c_i=|H|$ then~$\nu_H(F_i,{\bf c})\ge|H|$, and so $f_i({\bf c})\ge 0$. 
    Therefore, we can apply the Poincar{\'e}--Miranda theorem and conclude that there exists a choice of~${\bf c}$ such that $f_i({\bf c})=0$ for every~$i\in[r]$, that is, $\nu_H(F_i,{\bf c})=|H|$ for every~$i\in[r]$.
\end{proof}

\begin{remark}
    If~$\chi(H)\ge r$, then (assuming the first bullet point holds) the second bullet point in \cref{prop:mean_value} is always satisfied: $F_i\setminus \{i\}$ is isomorphic to a subgraph of~$K_{r-1}$ and there is no graph homomorphism from~$H$ to~$K_{r-1}$.
\end{remark}

\begin{remark}\label{remark:chi>=r}
    Let~$F$ be as in \cref{prop:mean_value} where all non-loop edges are coloured with~$r$, and suppose $\chi(H)\ge r$.
    It is easy to see that there exists ${\bf c}=\{c_v:v\in [r]\}$ with $c_r>0$ and $c_i=|H|$ for every $i\in[r-1]$, such that $(F,{\bf c})$ is an $(H,r)$-gadget.
    By \cref{proposition:gadget_extremal}, it follows that $R_r(mH)\ge[(r-1)|H|+c_r]m-C_0$ for some constant~$C_0=C_0(H,r)$.
\end{remark}

\begin{remark}\label{remark:stronger_statement}
    It turns out that the statement of \cref{theorem:chi>=r'} still holds if we restrict to the $(H,r)$-gadgets as in \cref{prop:mean_value}.
    This will follow easily from the proof.
\end{remark}

We are now ready to prove \cref{theorem:chi>=r'}.

\begin{proof}[Proof of \cref{theorem:chi>=r'}]
    By \cref{lemma:gadget_induction}, for each $(H,r)$-gadget $F$ we can select a capacity~${\bf q_F}$ of rational entries such that $(F,{\bf q_F})$ is an $(H,r)$-gadget.
    Let $m_F\in\mathbb N$ be the integer output by \cref{proposition:gadget_induction} on input $(F,{\bf q_F})$; set $m_0:=\max\{m_F:F\text{ is an $(H,r)$-gadget}\}$.
    
    Pick integers $C\gg \ell\gg t\gg m_0, r,|H|$ such that $\ell$ is divisible by $|H|$. Recall that, by (\ref{eq:lower_bound}), to prove the theorem it suffices to
     prove that for every $m\in\mathbb N$,
    $$R_r(mH)\le\max\{||{\bf c}||:(F,{\bf c}) \text{ is an $(H,r)$-gadget}\}\cdot m+C.$$

    We proceed by induction on~$m$. 
    For the base case, the statement trivially holds for $m\le m_0$ since $C\gg m_0,r,|H|$.
    Thus, we may assume $m>m_0$.

    Let $G$ be an $r$-edge-coloured complete graph with
    $$|G|\ge\max\{||{\bf c}||:(F,{\bf c}) \text{ is an $(H,r)$-gadget}\}\cdot m+C.$$
    By \cref{remark:chi>=r}, we have $|G|\ge(r-1)|H|m+C/2$.
    
    Since $R_r(K_\ell)\le r^{r\ell}$~\cite{es}, there is a $K_\ell$-tiling $\mathcal K$ in $G$ consisting of monochromatic copies of $K_\ell$ which cover all but at most $r^{r\ell}$ vertices of $G$.

    \smallskip

    Suppose that $\mathcal K$ includes a copy of~$K_\ell$ in colour $i$ for every $i\in[r]$.
    Say $V_i$ is the vertex set of such a clique in colour $i$.
    Since $\ell\gg t$, by repeatedly applying the bipartite version of Ramsey's theorem, we can find a set $V_i'\subseteq V_i$ for each $i\in[r]$ such that
    \begin{itemize}
        \item $G[V_i',V_j']$ is a monochromatic complete bipartite graph for every $i,j\in[r]$ with $i\neq j$ and 
        \item $|V_i'|=t$ for every $i\in[r]$.
    \end{itemize}
    Let $B:=G[V_1'\cup\dots\cup V_r']$.
    Since $\chi(H)\ge r$ and $B[V_i']$ is coloured with~$i$ for every $i\in[r]$, by \cref{prop:mean_value} $B$ is a blow-up of an $(H,r)$-gadget $F$.
    As $t\geq m_0 \cdot |H|$,
    \cref{proposition:gadget_induction} implies  that~$B$ contains a blow-up~$B'$ of~$F$ such that $|B'|=m_F\cdot||{\bf q_F}||$ and~$B'$ contains a monochromatic copy of~$m_FH$ in each colour. 
    Let~$G':=G\setminus V(B')$.
    We have 
    $$|G'|=|G|-|B'|\ge\max\{||{\bf c}||:(F,{\bf c}) \text{ is an $(H,r)$-gadget}\}\cdot (m-m_F)+C.$$
    By the inductive hypothesis, $G'$ contains a monochromatic copy of $(m-m_F)H$, say in colour $i\in[r]$.
    Since $B'$ contains a monochromatic copy of $m_FH$ in colour $i$, we conclude that $G$ contains a monochromatic copy of $mH$, as required. 

    \smallskip

    Suppose instead that $\mathcal K$ does not include any copy of $K_\ell$ in colour $i$, for some $i\in[r]$.
   Then $\mathcal K$ contains at least 
    $$\frac{|G|-r^{r\ell}}{\ell(r-1)}\ge \frac{(r-1)|H|m+C/2-r^{r\ell}}{\ell(r-1)}\ge\frac{m|H|}{\ell}$$
    monochromatic copies of~$K_\ell$ in the same colour.
    Each such $K_\ell$ contains $\ell/|H|$ vertex-disjoint copies of $H$.
    Therefore, there is a monochromatic copy of~$mH$ in $G$, as required.
\end{proof}

\subsection{Proof of \cref{theorem:r=3'}}
At a high level, our proof strategy is  similar to that of \cref{theorem:chi>=r'} but with some relevant modifications.
Firstly, for the inductive step, we try to remove not only blow-ups of $(H,3)$-gadgets but also other types of structures.
In such structures, we do not insist that the largest tilings in each colour have approximately the same size (unlike blow-ups of $(H,3)$-gadgets) but every colour yields a tiling covering a proportion of the vertices {\it at least as large} as a monochromatic tiling in some blow-up of an $(H,3)$-gadget.
Secondly, the structural analysis is significantly more involved than the one for \cref{theorem:chi>=r'}.
We remark that if~$H$ is non-bipartite then $\chi(H)\ge 3$ and so the statement of the theorem follows already from \cref{theorem:chi>=r'}.

\smallskip

Similarly to the proof of \cref{theorem:chi>=r'}, we first define a special class of $(H,3)$-gadgets when~$H$ is bipartite. 

\begin{proposition}\label{proposition:gadgets_r=3}
    Let $H$ be a bipartite graph and let $F$ be a complete graph with vertex set $V(F)=\{1,2,3\}$ equipped with a loop at each vertex.
    Fix a $3$-edge-colouring of~$F$ such that
    \begin{itemize}
        \item the loop $(3,3)$ is coloured with $3$;
        \item $(1,2)$ and $(1,3)$ are coloured with $1$;
        \item $(2,3)$ is coloured with $2$;
        \item the loops $(1,1)$ and $(2,2)$ are not coloured with $3$.
    \end{itemize}
    Then $F$ is an $(H,3)$-gadget.
    Furthermore, if we instead colour $(i,i)$ and~$(i,i+1)$ with~$i$ for each $i\in[3]$ (entries are taken modulo~$3$), then~$F$ is also an $(H,3)$-gadget.
\end{proposition}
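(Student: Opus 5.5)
Following the proof of \cref{prop:mean_value}, I would apply the Poincar\'e--Miranda theorem to $f_i({\bf c}):=\nu_H(F_i,{\bf c})-|H|$ for $i\in[3]$, with ${\bf c}=(c_1,c_2,c_3)\in[0,a]^3$ for a suitably large $a$ (say $a=|H|$ or $2|H|$). Continuity follows from \cref{proposition:capacity_diff}. Since $H$ is bipartite, a homomorphism from $H$ into any single edge $uv$ of $F_i$ with $u\ne v$ exists, and so does one into any looped vertex. The work is in checking the sign conditions on the faces $c_i=0$ and $c_i=a$, for each variable paired with a suitably chosen colour.

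First colouring. The colour-$3$ graph $F_3$ consists of the loop at $3$ only, because the loops at $1$ and $2$ are not coloured $3$ and $(1,3),(2,3)$ have colours $1,2$. Hence $\nu_H(F_3,{\bf c})=c_3$. This pairs $f_3$ with $c_3$: it is negative at $c_3=0$ and nonnegative at $c_3=|H|$.

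Colour $2$ contains $(2,3)$ and possibly the loop at $2$, but never vertex $1$ and never the loop at $3$. Every homomorphism into $F_2$ therefore uses vertex $2$. If the loop at $2$ is absent, the image is the edge $23$ with $3$ unlooped, so vertex $2$ must absorb a vertex cover of $H$, i.e.\ at least $|H|-\alpha(H)\ge 1$ vertices. In either case $\nu_H(F_2,{\bf c})\le |H|c_2/\max(1,|H|-\alpha(H))$, which is $0$ at $c_2=0$. For $c_2$ large, say $c_2=|H|$, weight $1$ on the map sending all of $H$ to $2$ works if the loop at $2$ has colour $2$. Otherwise I would use the map sending an independent set of size $\alpha(H)$ to $3$ and the rest to $2$; this needs $c_3\ge\alpha(H)$, which fails on the whole face. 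This is where the choice of box and pairing matters.

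Colour $1$ contains $(1,2),(1,3)$ and possibly the loop at $1$, and every homomorphism uses vertex $1$, so $f_1<0$ at $c_1=0$. At $c_1=a$, $F_1$ contains the star with centre $1$ and leaves $2,3$ (leaves unlooped). Sending a smaller colour class to $1$ and the rest to $\{2,3\}$, weight on such maps is limited by $c_2+c_3$, and this sum could be $0$ on parts of the face.

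The main obstacle is therefore that the boundary conditions on the faces $c_i=a$ depend on the other coordinates when $H$ is bipartite. I would handle this by changing variables so that Poincar\'e--Miranda applies cleanly. Solve explicitly: take $c_3=|H|$ from $F_3$, which is independent of the other coordinates. Then $f_2$, as a function of $c_2$ with $c_3=|H|$ fixed, is monotone, continuous, $-|H|$ at $0$, and $\ge0$ at $c_2=|H|$ (using the map $\alpha(H)$-set $\mapsto 3$, rest $\mapsto 2$, since $c_3=|H|\ge\alpha(H)$). The intermediate value theorem gives $c_2^*$. Next, $f_1$ with $c_2^*,c_3$ fixed is $-|H|$ at $c_1=0$. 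For $c_1$ large I use the map sending one colour class to $1$ and the other to $3$, which has capacity $c_3=|H|$. Taking $c_1=|H|$, so each colour class fits, gives $\nu\ge|H|$. A further intermediate value step then gives $c_1^*$. This sequential triangular structure ($F_3$ depends only on $c_3$; $F_2$ only on $c_2,c_3$) replaces the Miranda argument.

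Cyclic colouring. Here $F_i$ consists of the loop at $i$ and the edge $(i,i+1)$. I would use Poincar\'e--Miranda on $[0,|H|]^3$ pairing $f_i$ with $c_i$. At $c_i=0$, every homomorphism into $F_i$ uses $i$, because $i+1$ is unlooped in colour $i$ (its loop has colour $i+1$), so $f_i=-|H|$. At $c_i=|H|$, weight $1$ on the map sending all of $H$ to the looped vertex $i$ gives $f_i\ge0$ regardless of the other coordinates. This case is therefore routine.
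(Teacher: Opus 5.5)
There is a genuine gap in the first part, in your description of the colour-$2$ graph. You claim colour $2$ ``never'' touches vertex $1$, and your triangular scheme ($c_3$ first, then $c_2$ from $f_2$, then $c_1$ from $f_1$) rests on ``$F_2$ depends only on $c_2,c_3$''. The hypothesis only forbids the loops $(1,1),(2,2)$ from having colour $3$, so the loop $(1,1)$ may have colour $2$. In that case vertex $1$ is a looped vertex of $F_2$. Hence $\nu_H(F_2,{\bf c})\ge c_1$, $f_2$ depends on $c_1$, and at $c_2=0$ you get $f_2=c_1-|H|$, not $-|H|$.

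The scheme then outputs a wrong capacity. Suppose both loops have colour $2$, and you solve $f_2=0$ with $c_1=0$, $c_3=|H|$; this gives $c_2^*=|H|-\alpha(H)$. Now $F_1$ is the unlooped star centred at $1$, so $f_1=0$ forces $c_1^*=\sigma(H)$. Put weight $\sigma(H)/|H|$ on the constant map to $1$, and weight $1$ on the map sending a maximum independent set to $3$ and the rest to $2$. This shows $\nu_H(F_2,{\bf c}^*)\ge|H|+\sigma(H)>|H|$.

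That case can be rescued by reversing the order, fixing $c_1=\sigma(H)$ first as the paper does. This is legitimate because with $c_3=|H|$ one has $\nu_H(F_1,{\bf c})=|H|c_1/\sigma(H)$ for $c_1\le\sigma(H)$, independently of $c_2$. But suppose $(1,1)$ has colour $2$ and $(2,2)$ has colour $1$. Then $f_1$ depends on $c_2$ through the loop at $2$, and $f_2$ depends on $c_1$ through the loop at $1$. So the system does not decouple into successive one-variable intermediate-value steps. This coupled case is exactly where the paper needs Poincar\'e--Miranda in two variables.

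A repair close to your original plan works as follows. Since $\nu_H(F_3,{\bf c})=c_3$ in every case, fix $c_3=|H|$ and apply Poincar\'e--Miranda to $(f_1,f_2)$ on $[0,|H|]^2$. On the face $c_1=0$, the only colour-$1$ structure avoiding vertex $1$ is the possible loop at $2$, so $f_1\le c_2-|H|\le 0$. On the face $c_1=|H|$, send one colour class of $H$ to $1$ and the other to $3$ (edge $13$ has colour $1$ and $c_3=|H|$), giving $f_1\ge 0$. The same holds for $f_2$, using the possible loop at $1$ and the edge $23$. This handles all four loop colourings at once, whereas the paper splits into four cases.

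There are two smaller slips.
\begin{itemize}
\item When $(2,2)$ does not have colour $2$, the map ``maximum independent set to $3$, rest to $2$'' need not be a homomorphism into the unlooped edge $23$. The complement of a maximum independent set in a bipartite graph need not be independent, so use a bipartition instead.
\item ``Every homomorphism into $F_1$ uses vertex $1$'' fails when $(2,2)$ has colour $1$, although the required sign survives.
\end{itemize}

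Your Poincar\'e--Miranda argument for the cyclic colouring is correct. The paper instead exhibits ${\bf c}=(|H|/2,|H|/2,|H|/2)$ explicitly, which also yields the bound $3|H|/2$ used in \cref{remark:r=3}.
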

Recall that
given a graph $H$,  $\sigma(H)$ denotes the size of the smallest possible colour class in any proper $\chi(H)$-colouring of the vertices of $H$, whilst $\alpha(H)$ denotes the size of the largest independent set in $G$.
\begin{proof}[Proof of \cref{proposition:gadgets_r=3}]
    For the first part of the statement, we consider four possible cases for the colours assigned to the loops~$(1,1)$ and~$(2,2)$.
    \begin{itemize}
        \item $(1,1)$ and $(2,2)$ are coloured with $1$ and $2$ respectively.
        Let $${\bf c}=(c_1,c_2,c_3):=(|H|-\alpha(H),|H|-\alpha(H),|H|).$$
        It is easy to check that $\nu_H(F_i,{\bf c})=|F|$ for every~$i\in[3]$.
    
        \item $(1,1)$ and $(2,2)$ are both coloured with $1$.
        Let ${\bf c}=(c_1,c_2,c_3)$ with $0\le c_1\le|H|$, $c_2:=\sigma(H)$ and $c_3:=|H|$.
        It is easy to check that, for any value of $c_1$, we have $\nu_H(F_2,{\bf c})=\nu_H(F_3,{\bf c})=|H|$.
        Let $f(c_1):=\nu_H(F_1,{\bf c})-|H|$.
        Note that~$f$ is a continuous function.
        If $c_1=0$ then $\nu_H(F_{1},{\bf c})=\sigma(H)<|H|$ and so $f(c_1)<0$.
        If $c_1=|H|$ then~$\nu_H(F_{1},{\bf c})\ge|H|$, and so $f(c_1)\ge 0$. 
        By the Poincar{\'e}--Miranda theorem there exists a choice of~$c_1$ such that $f(c_1)=0$, that is, $\nu_H(F_1,{\bf c})=|H|$.
        
        \item $(1,1)$ and $(2,2)$ are both coloured with $2$.
        Let ${\bf c}=(c_1,c_2,c_3)$ with $c_1:=\sigma(H)$, $0\le c_2\le |H|$ and $c_3:=|H|$.
        It is easy to check that, for any value of $c_2$, we have $\nu_H(F_1,{\bf c})=\nu_H(F_3,{\bf c})=|H|$.
        Let $f(c_2):=\nu_H(F_2,{\bf c})-|H|$.
        If $c_2=0$ then $\nu_H(F_{2},{\bf c})=\sigma(H)<|H|$ and so $f(c_2)<0$.
        If $c_2=|H|$ then~$\nu_H(F_{2},{\bf c})\ge|H|$, and so $f(c_2)\ge 0$. 
        By the Poincar{\'e}--Miranda theorem there exists a choice of~$c_2$ such that $f(c_2)=0$, that is, $\nu_H(F_2,{\bf c})=|H|$.

        \item $(1,1)$ and $(2,2)$ are coloured with $2$ and $1$ respectively.
        Let ${\bf c}=(c_1,c_2,c_3)$ with $0\le c_1,c_2\le \sigma(H)$ and $c_3:=|H|$.
        For any values of $c_1$ and $c_2$, we have $\nu_H(F_3,{\bf c})=|H|$.
        Let $f_i(c_1,c_2):=\nu_H(F_i,{\bf c})-|H|$ for $i\in\{1,2\}$.
        Note that~$f_i$ is a continuous function for both $i\in\{1,2\}$.
        We have 
        $$f_1(0,c_2)=c_2-|H|\le\sigma(H)-|H|<0\qquad\text{and}\qquad f_1(\sigma(H),c_2)\ge 0.$$
        Similarly,
        $$f_2(c_1,0)=c_1-|H|\le\sigma(H)-|H|<0\qquad\text{and}\qquad f_2(c_1,\sigma(H))\ge 0.$$
        By the Poincar{\'e}--Miranda theorem there exists a choice of~$c_1$ and $c_2$ such that $f_1(c_1,c_2)=f_2(c_1,c_2)=0$, that is, $\nu_H(F_1,{\bf c})=\nu_H(F_2,{\bf c})=|H|$.
    \end{itemize}
    For the furthermore part, let ${\bf c}:=(|H|/2,|H|/2,|H|/2)$.
    It is then easy to check that~$(F,{\bf c})$ is an $(H,3)$-gadget.
\end{proof}

\begin{remark}\label{remark:r=3} 
    Let~$H$ be a bipartite graph.
    From the proof of the ``Furthermore" part of \cref{proposition:gadgets_r=3}, we immediately conclude that
    $$\max\{||{\bf c}||:(F,{\bf c}) \text{ is an $(H,3)$-gadget}\}\ge\frac{3}{2}\cdot|H|.$$
    Similarly, from  the case where both~$(1,1)$ and~$(2,2)$ are coloured~$1$, we conclude that
    $$\max\{||{\bf c}||:(F,{\bf c}) \text{ is an $(H,3)$-gadget}\}>|H|+\sigma(H).$$
    Note that $\sigma(H)\le |H|/2$ and so the second bound is better than the first only when $\sigma(H)=|H|/2$.
\end{remark}

We are ready to prove \cref{theorem:r=3'}.

\begin{proof}[Proof of \cref{theorem:r=3'}]
    If $H$ is non-bipartite, i.e., $\chi(H)\ge 3$, we are done by 
    \cref{theorem:chi>=r'}.
    Therefore, suppose $H$ is bipartite.

    \smallskip

    By \cref{lemma:gadget_induction}, for each $(H,3)$-gadget $F$ we can select a capacity ${\bf q_F}$ of rational entries such that $(F,{\bf q_F})$ is an $(H,3)$-gadget.
    Let $m_F\in\mathbb N$ be the integer output of \cref{proposition:gadget_induction} on input $(F,{\bf q_F})$; set $m_0:=\max\{m_F:F\text{ is an $(H,3)$-gadget}\}$.
    
    Define constants 
    \begin{align}\label{bighier}
    1/C\ll1/M\ll\eps\ll1/\ell_0\ll d\ll1/q_1 \ll 1/q\ll1/t\ll 1/|H|,1/m_0,
    \end{align}
    where~$C,M, \ell_0, q_1, q, t \in \mathbb N$  and~$q$ and~$t$ are divisible by $|H|$. By (\ref{eq:lower_bound}), to prove the theorem it suffices to
     prove that for every $m\in\mathbb N$,
    $$R_3(mH)\le\max\{||{\bf c}||:(F,{\bf c}) \text{ is an $(H,3)$-gadget}\}\cdot m+C.$$

    We proceed by induction on~$m$. 
    For the base case, the statement trivially holds for $m\le M$ since $C\gg M,|H|$.
    Thus, we may assume $m>M$.

    Let $G$ be a $3$-edge-coloured complete graph with
    $$|G|\ge\max\{||{\bf c}||:(F,{\bf c}) \text{ is an $(H,3)$-gadget}\}\cdot m+C.$$
    Note that, by \cref{remark:r=3}, we have $|G|\ge 3|H|m/2+C$.
    Furthermore, if $\sigma(H)=|H|/2$ then by \cref{remark:r=3} $|G|\ge (3|H|/2+c_0)m+C$ for some constant $c_0=c_0(H)>0$ independent of~$m$. 
    In particular, in this case
    we may assume that 
    \begin{align}\label{extra1}
      1/t \ll c_0.  
    \end{align}
    If $\sigma(H) < |H|/2$, then we may assume that
 \begin{align}\label{extra2}
      1/t \ll |H|/2-\sigma (H).  
    \end{align}

    We will frequently invoke the following simple claim.
    \begin{claim}\label{claim:inductive_step}
    If there exists a subgraph~$B$ of~$G$ such that, for some~$m'\in\mathbb N$,
    \begin{itemize}
        \item $B$ contains a monochromatic copy of~$m'H$ in each colour and
        \item $|B|\le\max\{||{\bf c}||:(F,{\bf c}) \text{ is an $(H,3)$-gadget}\}\cdot m'$,
    \end{itemize}
    then~$G$ contains a monochromatic copy of~$mH$.
    In particular, the second bullet point above holds if $|B|\le 3|H|m'/2$.
    \end{claim}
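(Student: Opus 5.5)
The plan is to remove $B$, apply the inductive hypothesis to what remains, and then use the fact that $B$ contains a monochromatic $m'H$ in \emph{every} colour to extend the tiling found in the remainder, exactly as in the inductive step of the proof of \cref{theorem:chi>=r'}. Write $\Gamma:=\max\{||{\bf c}||:(F,{\bf c}) \text{ is an $(H,3)$-gadget}\}$.

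First I would dispose of the trivial case $m'\ge m$. Then $B$, and hence $G$, already contains a monochromatic copy of $m'H\supseteq mH$ (in any colour), and we are done. So assume $m'<m$, and set $G':=G\setminus V(B)$. Since $|B|\le \Gamma m'$, we get
$$|G'|\ge \Gamma m + C - \Gamma m' = \Gamma (m-m')+C.$$
Since $1\le m-m'<m$, the inductive hypothesis applies to $m-m'$. It gives $R_3((m-m')H)\le \Gamma(m-m')+C$, so $G'$ contains a monochromatic copy of $(m-m')H$, say in colour $i\in[3]$.

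By the first bullet point, $B$ contains a copy of $m'H$ in colour $i$. This copy is vertex-disjoint from $G'$, so taking the union of the two tilings gives a copy of $mH$ in colour $i$ in $G$.

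For the ``in particular'' part, it suffices to note that $3|H|m'/2\le \Gamma m'$, which holds because $\Gamma\ge 3|H|/2$ by \cref{remark:r=3}. That remark applies since $H$ is bipartite at this point of the proof.

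The only point needing care is the induction bookkeeping. The induction is on $m$ with all $m''<m$ as hypothesis, and the base case $m\le M$ is handled separately. So the hypothesis is available for $m-m'\ge1$, including small values of $m-m'$ covered by the base case. I expect no real obstacle.
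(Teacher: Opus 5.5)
Your proposal is correct and follows the paper's proof: delete $V(B)$, apply the inductive hypothesis to the remaining graph on at least $\Gamma(m-m')+C$ vertices, and combine with the copy of $m'H$ in $B$ of the same colour. You also handle the case $m'\ge m$ explicitly and justify the ``in particular'' via $\Gamma\ge 3|H|/2$ from \cref{remark:r=3} (valid as $H$ is bipartite there); the paper leaves both points implicit.
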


    \begin{proofclaim}
        Let~$G':=G\setminus V(B)$.
        We have
        $$|G'|=|G|-|B|\ge\max\{||{\bf c}||:(F,{\bf c}) \text{ is an $(H,3)$-gadget}\}\cdot (m-m')+C,$$
        hence by inductive hypothesis, $G'$ contains a monochromatic copy of~$(m-m')H$.
        Since~$B$ contains a monochromatic copy of~$m'H$ in the same colour, we conclude there is a monochromatic copy of~$mH$ in~$G$, as required.
    \end{proofclaim}

    Next, we show that if~$G$ contains a (sufficiently large) blow-up of a  rainbow triangle or a $K_4$ containing a spanning rainbow star, then we can find a suitable subgraph~$B$ to apply Claim~\ref{claim:inductive_step} to.
    
    \begin{claim}\label{claim:rainbow_triangle}
        If $G$ contains a blow-up~$B$ of a rainbow triangle with parts of size~$q$, then~$G$ contains a monochromatic~$mH$.
    \end{claim}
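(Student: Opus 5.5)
The plan is to apply \cref{claim:inductive_step} to the vertex set of the given blow-up, using its ``in particular'' form, which only needs $|B|\le 3|H|m'/2$. Write $V_1,V_2,V_3$ for the parts of the blow-up, each of size~$q$. By relabelling colours we may assume that all edges between $V_1$ and $V_2$ have colour~$3$, all edges between $V_2$ and $V_3$ have colour~$1$, and all edges between $V_1$ and $V_3$ have colour~$2$. The edges inside each $V_i$ are coloured arbitrarily, and we will not use them. Set $B:=G[V_1\cup V_2\cup V_3]$, so that $|B|=3q$.

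We may assume $H$ is bipartite, since the non-bipartite case was already reduced to \cref{theorem:chi>=r'}. Let $a+b=|H|$ be the part sizes of some bipartition of~$H$. The key observation is that every colour class of~$B$ contains a complete bipartite graph $K_{q,q}$ between two of the parts.

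We claim that $K_{q,q}$ contains a perfect $H$-tiling, using the fact that $q$ is divisible by~$|H|$ (from~\eqref{bighier}). Consider two copies of~$H$, the first placed with its $a$-side in one class and its $b$-side in the other, and the second placed the opposite way round. Together they use exactly $|H|$ vertices of each class. Taking $q/|H|$ vertex-disjoint such pairs therefore tiles $K_{q,q}$ perfectly with $m':=2q/|H|$ copies of~$H$. Hence $B$ contains a monochromatic copy of $m'H$ in each of the three colours.

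Finally, we check the hypotheses of \cref{claim:inductive_step}. We have
$$|B|=3q=\frac{3|H|}{2}\cdot m',$$
so the ``in particular'' clause applies. We also have $m'\le m$, because $m>M$ and $1/M\ll 1/q$ in~\eqref{bighier}, so the inductive hypothesis invoked inside the claim is legitimate. The claim then gives a monochromatic $mH$ in~$G$. I expect no step to be a real obstacle.
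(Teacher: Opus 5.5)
Your proposal is correct and matches the paper's proof: each colour class of $B$ contains a $K_{q,q}$ between two parts, which has a perfect $H$-tiling with $m'=2q/|H|$ copies, and $|B|=3q=\frac{3}{2}m'|H|$ lets you apply \cref{claim:inductive_step}. Your oppositely-oriented-pairs construction spells out the tiling step that the paper leaves implicit, and your check that $m'\le m$ is harmless but not needed, since if $m'\ge m$ then $B$ already contains a monochromatic $mH$.
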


    \begin{proofclaim}
        Suppose~$B$ has parts~$V_1$, $V_2$ and~$V_3$, each of size~$q$.
        We may assume the edges in $B[V_1,V_2]$, $B[V_2,V_3]$ and~$B[V_1,V_3]$ are all coloured $1$, $2$ and~$3$, respectively.
        Since~$H$ is bipartite, for every $i\neq j$ the graph~$B[V_i,V_j]$ contains~$2q/|H|=:m'$ vertex-disjoint copies of~$H$.
        We conclude that~$B$ contains a monochromatic copy of~$m'H$ in each colour $i\in[3]$.
        Since $|B|=3q=(3/2)m'|H|$, we are done by \cref{claim:inductive_step}.
    \end{proofclaim}

    \begin{claim}\label{claimK}
    Let $K$ be a $3$-edge-coloured  $K_4$ 
        such that $K$ contains a spanning rainbow star. 
        If $G$ contains a blow-up~$B$ of $K$
        with parts of size~$q$, then~$G$ contains a monochromatic~$mH$.
    \end{claim}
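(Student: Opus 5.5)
The plan is to reduce Claim~\ref{claimK} to Claim~\ref{claim:inductive_step}. From the blow-up we extract a subgraph $B'$ that is itself a blow-up of an $(H,3)$-gadget with rational capacity, and then invoke \cref{proposition:gadget_induction}. Where that is not possible, we fall back on Claim~\ref{claim:rainbow_triangle}.

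Write $V_0,V_1,V_2,V_3$ for the parts of $B$. Here $V_0$ corresponds to the centre of the rainbow star, so $B[V_0,V_i]$ is monochromatic in colour $i$ for $i\in[3]$. First, if the triangle on $\{1,2,3\}$ in $K$ is rainbow, then $B[V_1\cup V_2\cup V_3]$ is a blow-up of a rainbow triangle with parts of size $q$, and Claim~\ref{claim:rainbow_triangle} finishes. So assume this triangle uses at most two colours.

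Next we make the parts monochromatic. Since $q\gg q_1\gg t$ by (\ref{bighier}), Ramsey's theorem applied inside each $V_j$ gives subsets $V_j'\subseteq V_j$ of size $t$ (say) with each $G[V_j']$ a monochromatic clique. Then $B'':=G[V_0'\cup\dots\cup V_3']$ is a simple blow-up of a looped, $3$-edge-coloured $K_4$, call it $K^\circ$, whose non-loop edges coincide with those of $K$.

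The core step is to show that $K^\circ$ has an induced looped sub-configuration $F$, on three or four vertices, that is an $(H,3)$-gadget. I would try to imitate \cref{prop:mean_value} and \cref{proposition:gadgets_r=3}. For each colour $i$, choose a distinct vertex $v_i$ of $F$ with the following two properties. First, every homomorphism of $H$ into $F_i$ uses $v_i$; the natural choice is $v_i=i$, which lies on the star edge $0i$. Second, giving $v_i$ a large capacity forces $\nu_H(F_i,{\bf c})\ge |H|$. With the remaining capacities fixed suitably (as in the cases of \cref{proposition:gadgets_r=3}, using $\sigma(H)$ or $|H|$), the Poincar\'e--Miranda theorem applied to $f_i({\bf c})=\nu_H(F_i,{\bf c})-|H|$ yields a gadget capacity. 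The case split is according to how the triangle $\{1,2,3\}$ is coloured (monochromatic, or two colours with some colour appearing twice) and according to the colours of the loops. In each case I would either delete one of the four vertices to land exactly in a configuration of \cref{proposition:gadgets_r=3}, possibly after permuting colours, or run the Poincar\'e--Miranda argument directly on four vertices.

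Once such an $F$ is found, \cref{lemma:gadget_induction} provides a rational capacity ${\bf q_F}$. Since $t\gg m_0,|H|$, \cref{proposition:gadget_induction} gives a sub-blow-up $B'\subseteq B''$ with $|B'|=m_F\|{\bf q_F}\|\le \max\{\|{\bf c}\|:(F,{\bf c})\text{ is an $(H,3)$-gadget}\}\cdot m_F$ that contains a monochromatic copy of $m_FH$ in every colour. Claim~\ref{claim:inductive_step} with $m'=m_F$ then finishes the proof.

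The main obstacle is the case analysis in the core step, namely verifying the boundary conditions of Poincar\'e--Miranda. A naive approach using only the bipartite pairs and the bound $|B|\le 3|H|m'/2$ fails: for instance, when the triangle $\{1,2,3\}$ is monochromatic, two of the colours live on a single bipartite pair through $V_0$, so they cannot each cover a $2/3$-proportion of the vertices. This is why the argument must compare against $\max\|{\bf c}\|$ for gadgets rather than against $3|H|/2$. The delicate points are the cases where a colour $i$ has homomorphisms of $H$ avoiding the natural vertex $i$. These arise through loops of colour $i$ or through a triangle edge of colour $i$, and there I would choose $v_i$ differently or discard a part so that the hypothesis ``$c_{v_i}=0\Rightarrow \nu_H(F_i,{\bf c})<|H|$'' holds.
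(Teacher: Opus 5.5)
\textbf{Gap in the core step.} Your plan rests on the claim that the looped $K^\circ$ always contains a three- or four-vertex configuration with a balanced capacity, so that Poincar\'e--Miranda can finish. That claim is false, so no case analysis of the kind you sketch can complete the proof.

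First note a reduction you missed. Claim~\ref{claim:rainbow_triangle} also applies to the triangles through $V_0$: if $V_jV_k$ has the colour $l\notin\{j,k\}$, then $V_0V_jV_k$ is rainbow. So up to symmetry one may assume $01,12,13$ have colour $1$, $02,23$ have colour $2$, and $03$ has colour $3$. Now colour the loop at $1$ with colour $3$ and the other three loops with colour $1$, and take $H=K_2$. Then
\begin{itemize}
\item $\nu_H(F_2,{\bf c})=2\min\{c_2,c_0+c_3\}$;
\item $\nu_H(F_1,{\bf c})=c_0+c_2+c_3+\min\{c_1,c_0+c_2+c_3\}$;
\item $\nu_H(F_3,{\bf c})=c_1+2\min\{c_0,c_3\}$.
\end{itemize}
Requiring all three to equal $2$ forces $c_1=0$, $c_2=1$ and $c_0+c_3=1$, and then $\nu_H(F_3,{\bf c})\le 1$. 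Zero capacities cover the three-vertex subconfigurations, so no gadget lives inside this blow-up at all. Separately, even a genuinely four-vertex balanced capacity would not plug into Claim~\ref{claim:inductive_step}. You would first need to bound its norm by the maximum over three-vertex $(H,3)$-gadgets, and you do not address this.

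\textbf{What the paper does instead.} It uses exactly the route you dismissed as naive. Claim~\ref{claim:inductive_step} does not need balance. It only needs $B$ to contain $m'H$ in every colour with $|B|\le 3|H|m'/2$ (by \cref{remark:r=3}), and the monochromatic cliques inside the parts must be used, not just the bipartite pairs.

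After making the parts monochromatic with $|S_i'|=2t$, and taking $U_i\subseteq S_i'$ of size $t$, the example above is handled by $B_1=G[U_0\cup U_3\cup S_1'\cup S_2']$. It has $6t$ vertices and contains $4t/|H|$ copies of $H$ in each colour:
\begin{itemize}
\item colour $1$ from $S_1'\times S_2'$;
\item colour $2$ from $(U_0\cup U_3)\times S_2'$;
\item colour $3$ from $U_0\times U_3$ together with the colour-$3$ clique $S_1'$.
\end{itemize}
In the same way, the paper disposes of the cases ``$S_1'$ or $S_2'$ has colour $3$'' and ``$S_0',S_3'$ both avoid colour $3$'' by unbalanced $6t$-vertex structures $B_1,\dots,B_4$. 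Only in the remaining case, where $S_1',S_2'$ avoid colour $3$ and $S_0'$ or $S_3'$ has colour $3$, does it find a true gadget: the first configuration of \cref{proposition:gadgets_r=3} on three of the parts.

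Finally, the obstruction you cite, a monochromatic triangle $\{1,2,3\}$, is not one. It already contains a rainbow triangle through $V_0$.
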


    \begin{proofclaim}
        Suppose~$B$ has parts~$S_0$, $S_1$, $S_2$ and~$S_3$, each of size~$q$, where~$S_0$ corresponds to the center of the rainbow star.
        We may assume the edges in~$B[S_0,S_1]$, $B[S_0,S_2]$ and~$B[S_0,S_3]$ are all coloured $1$, $2$ and~$3$, respectively.

        By the previous claim we may assume $B[S_1\cup S_2 \cup S_3]$ is not the blow-up of a rainbow triangle.
        Hence, without loss of generality, we may assume that none of~$B[S_1,S_2]$, $B[S_2,S_3]$ and~$B[S_1,S_3]$ is coloured with~$3$ and at least two must be coloured with~$1$.
        However, if $B[S_2 , S_3]$ is coloured with~$1$ then~$B[S_0\cup S_2 \cup S_3]$ is a blow-up of a rainbow triangle, and we are done by the previous claim.
        Hence, we may assume~$B[S_1,S_2]$ and~$B[S_1,S_3]$ are coloured~$1$ and~$B[S_2,S_3]$ is coloured~$2$.

        Since~$q\gg t$, by Ramsey's theorem for every $i\in[3]\cup\{0\}$ we can find sets $S_i'\subseteq S_i$, each of size $2t$, such that
        $G[S'_i]$ is monochromatic.
        Select~$U_i\subseteq S_i'$ with $|U_i|=t$ for each $i\in[3]\cup\{0\}$.

         Suppose~$G[S'_1]$ or $G[S'_2]$ is coloured with~$3$.
        Let~$B_1:=G[U_0\cup U_3\cup S'_1\cup S'_2]$.
        Observe that~$B_1$ contains a monochromatic copy of $(4t/|H|)H$ in colours~$1$ and~$2$.
        On the other hand, $B_1[U_0,U_3]$ contains a monochromatic copy of $(2t/|H|)|H|$ in colour~$3$.
        If~$G[S'_1]$ (resp.\@ $G[S'_2]$) is coloured with~$3$ then it contains a monochromatic copy of $(2t/|H|)|H|$ in colour~$3$, and so~$B_1$ has a copy of $(4t/|H|)H$ in colour~$3$.
        Since $|B_1|=6t=3|H|m'/2$ with $m':=4t/|H|$, we are done by \cref{claim:inductive_step}.

        If both~$G[S_0']$ and~$G[S_3']$ are coloured~$1$, then let~$B_2:=G[S_0'\cup S_2'\cup S_3']$.
        Then~$|B_2|=6t$ and~$B_2$ contains a monochromatic copy of $(4t/|H|)H$ in each colour $i\in[3]$, so we are done by \cref{claim:inductive_step}.
        
        Similarly, if both~$G[S_0']$ and~$G[S_3']$ are coloured~$2$, let~$B_3:=G[S_0'\cup S_1'\cup S_3']$.
        Note~$|B_3|=6t$ and~$B_3$ contains a monochromatic copy of $(4t/|H|)H$ in each colour $i\in[3]$, so we are done by \cref{claim:inductive_step}.

        If~$G[S_0']$ and~$G[S_3']$ are coloured with~$1$ and~$2$ respectively (or vice versa), then let~$B_4:=G[S_0'\cup S_3'\cup U_1\cup U_2]$.
        Note~$|B_4|=6t$ and~$B_4$ contains a monochromatic copy of $(4t/|H|)H$ in each colour $i\in[3]$, so we are done by \cref{claim:inductive_step}

        Therefore,  we may assume (i)~$G[S_1']$ and~$G[S_2']$ are not coloured with~$3$ and (ii)~$G[S_0']$ or $G[S_3']$ is coloured with~$3$.
        Assume that $G[S_3']$ is coloured with~$3$; the case when $G[S_0']$
        is coloured with~$3$ follows analogously.
        Let $B_5:=G[S_1'\cup S_2'\cup S_3']$.
        To recap, we have that~$G[S_3']$, $G[S_1',S_2']$, $G[S_1',S_3']$ and~$G[S_2',S_3']$ are coloured with~$3$, $1$, $1$ and~$2$, respectively; furthermore, $G[S_1']$ and~$G[S_2']$ are not coloured with~$3$.

        By \cref{proposition:gadgets_r=3}, we conclude~$B_5$ is the blow-up of an~$(H,3)$-gadget $(F,{\bf q_F})$.
        Since~$|S'_i| =2 t\gg m_0, |H|$, by \cref{proposition:gadget_induction} $B_5$ contains a blow-up $B'_5$ of $F$ such that $|B'_5|=m_F\cdot||{\bf q_F}||$ and $B'_5$ contains a monochromatic copy of $m_FH$ in each colour. 
        Thus, we are done by \cref{claim:inductive_step}. 
    \end{proofclaim}

    By the previous claims, we may assume that~$G$ does not contain a blow-up of a rainbow star  of size~$q$ or a blow-up of a $K_4$ as in Claim~\ref{claimK}.    
    We will use this structural information to find a monochromatic copy of~$mH$.
In particular, under these assumptions we will not need to apply the induction hypothesis to obtain this monochromatic $mH$.

    \medskip

Note that $|G| \geq M$ as $m > M$. Thus, we can
   apply \cref{theorem:regularity} to~$G$ with parameters $r:=3,\ell_0 \in \mathbb N$ and $0<\eps,d<1$ to obtain a partition $V_0,V_1, \dots,V_\ell$ with $\ell_0\le \ell\le M$ (recall we picked~$M$ so that $1/M\ll \eps,1/\ell_0$), a spanning subgraph~$G'$ of~$G$ and a reduced graph~$R$.
    Note that $|V_0|\le\eps |G|$ and so $|V_i|=(|G|-|V_0|)/\ell\ge(1-\eps)|G|/M$ for every $i\in[\ell]$.
 As $\eps \ll d$, \cref{reducedgraph} implies that  $\delta(R)\ge(1-6d)|R|$.

    Recall that~$R$ is $3$-edge-coloured and has vertex set $\{V_1,\dots,V_\ell\}$. Further,   if $V_xV_y\in E(R)$ is coloured $c \in [3]$, then $G'_c[V_x,V_y]$ is $[\eps,d]$-regular.

    \begin{claim}\label{claimR}
    $R$ contains no rainbow star on $4$ vertices nor a rainbow triangle.
    \end{claim}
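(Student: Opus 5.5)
We argue by contradiction: if $R$ contained a rainbow triangle or a rainbow star on $4$ vertices, we would lift it to a blow-up in $G$ with parts of size $q$. That blow-up would be of the kind excluded by \cref{claim:rainbow_triangle} or \cref{claimK}, and those claims already give a monochromatic $mH$. So we may assume no such blow-up exists, and the aim is to show that then $R$ cannot contain these configurations.

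\emph{Rainbow triangle.} Suppose $V_xV_yV_z$ is a rainbow triangle in $R$, with $V_xV_y$ coloured $1$, $V_yV_z$ coloured $2$ and $V_xV_z$ coloured $3$. Then $G'_1[V_x,V_y]$, $G'_2[V_y,V_z]$ and $G'_3[V_x,V_z]$ are $[\eps,d]$-regular. We want subsets $W_x\subseteq V_x$, $W_y\subseteq V_y$, $W_z\subseteq V_z$ of size $q$ such that each of the three pairs is complete in the corresponding colour in $G'\subseteq G$. Equivalently, we need a copy of the complete tripartite graph $K_{q,q,q}$ whose three bipartite pieces lie in the three different colour graphs. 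This is a "coloured" counting statement: the standard key lemma / counting lemma applies to a multigraph in which each pair uses its own regular graph, because regularity is only used pairwise. Concretely, we embed vertex by vertex, as in the usual proof of the key lemma, choosing $q$ vertices of $W_x$ one at a time. At each step we keep common neighbourhoods of size at least $(d-\eps)^{q}|V_y|\ge\eps|V_y|$, using $1/\ell_0,\eps\ll d\ll 1/q$ and the fact that $|V_i|\ge(1-\eps)|G|/M$ is large. Alternatively, we apply \cref{corollary:embedding}, or the key lemma, to the three-vertex coloured cluster graph, noting that it works colour by colour. The resulting $G[W_x\cup W_y\cup W_z]$ contains a blow-up of a rainbow triangle with parts of size $q$ (edges inside the parts are irrelevant). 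By \cref{claim:rainbow_triangle}, $G$ then contains a monochromatic $mH$, which is excluded.

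\emph{Rainbow star.} Suppose $V_w$ is the centre of a rainbow star on $4$ vertices with leaves $V_a,V_b,V_c$, so the edges $V_wV_a$, $V_wV_b$, $V_wV_c$ have the three distinct colours. Here we must also control the colours of the pairs among $W_a,W_b,W_c$, and these need not correspond to edges of $R$ at all. The plan is to work in two stages. First, we embed $q_1$-sized sets $W_w\subseteq V_w$ and $W_a\subseteq V_a$, $W_b\subseteq V_b$, $W_c\subseteq V_c$ such that each star pair is complete in its colour. This uses the same coloured key-lemma argument, now for a $K_{1,3}$-type blow-up, with $q_1\gg q$ chosen so that the sizes survive; the order $d\ll1/q_1$ allows this. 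Second, we note that $G$ is a complete $3$-coloured graph, so every pair among $W_a,W_b,W_c$ is fully edged. Repeatedly applying the bipartite Ramsey theorem to the pairs $(W_a,W_b)$, $(W_a,W_c)$, $(W_b,W_c)$ (three colours), we shrink to subsets of size $q$ on which each of these pairs is monochromatic. The star pairs stay complete in their colours under shrinking. This gives a blow-up of a $K_4$ containing a spanning rainbow star with parts of size $q$, and \cref{claimK} then yields a monochromatic $mH$, again a contradiction.

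The main obstacle is the first stage: justifying the multicolour embedding of complete bipartite pieces. The quantitative concern is that $q_1$ must be large enough for the bipartite Ramsey reduction to $q$, while $d$ must be small relative to $1/q_1$ for the greedy counting to work. The hierarchy \eqref{bighier} is designed for exactly this, so I would carry out the greedy common-neighbourhood argument explicitly rather than cite \cref{corollary:embedding}.
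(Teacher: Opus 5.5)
Your proposal is correct and is the same argument as the paper's. The paper lifts a rainbow triangle to a blow-up with parts of size $q$, and a rainbow star to a blow-up with parts of size $q_1$, by citing \cref{corollary:embedding} directly; this applies because the differently coloured regular pairs sit on different cluster pairs. It then uses the completeness of $G$ and the bipartite Ramsey theorem to cut the star blow-up down to a blow-up of a $K_4$ as in \cref{claimK}, contradicting \cref{claim:rainbow_triangle} or \cref{claimK}. Your explicit greedy common-neighbourhood embedding is just an unpacking of that lemma. It works because $\eps$ is chosen after $d$, $q$ and $q_1$ in \eqref{bighier}, so $\eps$ can be made small relative to $d^{2q_1}$.
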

 \begin{proofclaim}
If $R$ contains a rainbow triangle, then  by \cref{corollary:embedding}
$G' \subseteq G$ contains a blow-up of a rainbow triangle with parts of size $q$, a contradiction to our assumption.

If $R$ contains a rainbow star $S$ on $4$ vertices, then  by \cref{corollary:embedding}
$G' \subseteq G$ contains a blow-up of  $S$ with parts of size $q_1$.
As $1/q_1 \ll 1/q$ and $G$ is a complete graph, we may apply the bipartite version of
Ramsey's theorem to thus find a blow-up of a $3$-edge-coloured $K_4$ as in Claim~\ref{claimK}, a contradiction to our assumption.
\end{proofclaim}

    Claim~\ref{claimR} implies that
     there exists a partition $V(R)=W_{12}\cup W_{23}\cup W_{13}$ where the vertices in~$W_{ij}$ are incident to edges of colour~$i$ or~$j$ in $R$ but not of colour~$k$, for any permutation $ijk$ of $123$.
    Note that all edges between $W_{ij}$ and $W_{jk}$ in $R$ must be coloured~$j$.

    Without loss of generality, we assume that $|W_{12}|\ge|W_{13}|\ge|W_{23}|$.
    Since $\delta(R)\ge(1-6d)|R|$, if $|W_{23}|>6d|R|$ then there is an edge~$V_xV_y \in E(R)$ between~$W_{13}$ and~$W_{23}$.
    Since $|W_{12}|\ge |R|/3>12d|R|$, we know~$V_x$ and~$V_y$ have a common neighbour~$V_z$ in~$W_{12}$.
    But then $V_xV_yV_z$ is a rainbow triangle in $R$, a contradiction.
    Thus, we may assume $|W_{23}|\le 6d|R|$. 
    
    Observe that the subgraph $R[W_{12},W_{13}]$ is close to being complete bipartite and all its edges are coloured~$1$.
    In the next two claims, we exploit these properties to find $H$-tilings in $R[W_{12},W_{13}]$ coloured with~$1$.
   \begin{claim}\label{claim:bipartite_embedding}
         For any subsets $A\subseteq W_{13}$ and $B\subseteq W_{12}$ with $|A|=|B|\ge 9d|R|$, there exists a copy of~$H$ in $R[A,B]$ with precisely~$\sigma(H)$ vertices in~$A$ and $|H|-\sigma(H)$ vertices in~$B$, or vice versa.
    \end{claim}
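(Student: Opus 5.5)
We want a copy of $H$ inside $R[A,B]$ with $\sigma(H)$ vertices on one side and $|H|-\sigma(H)$ on the other. Since $H$ is bipartite here, $\chi(H)\le 2$; if $H$ has at least one edge then $\chi(H)=2$. So $H$ has a proper $2$-colouring with classes $X$ and $Y$, where $|X|=\sigma(H)$ and $|Y|=|H|-\sigma(H)$. It therefore suffices to find sets $A'\subseteq A$ and $B'\subseteq B$ with $|A'|=\sigma(H)$ and $|B'|=|H|-\sigma(H)$ such that $R[A',B']$ is complete bipartite. We then map $X$ to $A'$ and $Y$ to $B'$ bijectively, and every edge of $H$ goes between $X$ and $Y$, hence to an edge of $R$. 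Only the edges between $A$ and $B$ are used; the colour plays no role in this claim, although all such edges happen to be coloured $1$.

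The key input is the minimum degree condition $\delta(R)\ge(1-6d)|R|$. Every vertex of $R$ has at most $6d|R|$ non-neighbours, so in particular each $a\in A$ has at least $|B|-6d|R|\ge |B|/3$ neighbours in $B$, using $|B|\ge 9d|R|$. Hence the bipartite graph $R[A,B]$ has density at least $1/3$ and at least $|A|^2/3$ edges. Since $|A|\ge 9d|R|$ and $|R|\ge\ell_0$ with $1/\ell_0\ll d\ll 1/|H|$, both sides have size at least $9d\ell_0$, which is large compared with $|H|$.

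We then apply the K\H{o}v\'ari--S\'os--Tur\'an theorem: a bipartite graph with parts of size $n=|A|$ and at least $n^2/3$ edges contains $K_{s,s}$ with $s=|H|$, provided $n$ is large in terms of $|H|$. Indeed, the extremal number is $O(n^{2-1/s})$, which is less than $n^2/3$ once $n\ge n_0(|H|)$. This $K_{s,s}$ contains a complete bipartite graph with $\sigma(H)$ vertices in $A$ and $|H|-\sigma(H)$ in $B$. That gives the claim, even in the first orientation; the ``vice versa'' alternative is not needed.

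Alternatively, and more elementarily, we can embed greedily. Pick $\sigma(H)$ vertices of $A$ one at a time. Their common neighbourhood in $B$ has size at least $|B|-\sigma(H)\cdot 6d|R|\ge 9d|R|-6d|H||R|$. This quantity is positive only if $|H|<3/2$, so the crude count fails. The fix is to use only that non-neighbourhoods are small relative to $|R|$, not relative to $|B|$. This suggests that the K\H{o}v\'ari--S\'os--Tur\'an (or Erd\H{o}s--Stone) route is the correct one.

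The only step needing care is checking that $|A|$ is large enough in absolute terms for the K\H{o}v\'ari--S\'os--Tur\'an theorem to apply. This follows from the constant hierarchy (\ref{bighier}), via $|A|\ge 9d\ell_0$ and $1/\ell_0\ll d,1/|H|$.
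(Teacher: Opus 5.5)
Your proposal is correct and matches the paper's argument: $\delta(R)\ge(1-6d)|R|$ together with $|A|=|B|\ge 9d|R|$ gives $R[A,B]$ minimum degree at least $|A|/3$, and the K\H{o}v\'ari--S\'os--Tur\'an theorem then gives a $K_{|H|,|H|}$, which contains the required copy of $H$. Your explicit check that $|A|\ge 9d\ell_0$ is large in terms of $|H|$ is the right justification (the paper leaves it implicit), and the greedy digression can simply be dropped.
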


    \begin{proofclaim}
        Since $\delta(R)\ge(1-6d)|R|$, we have 
        $$\delta(R[A,B])\ge|A|-6d|R|\ge |A|/3$$
        and so $e(R[A,B])\ge |A||B|/3$.
        Since $|A|\ge 9d|R|$  by, e.g., the K\"ov\'ari--S\'os--Tur\'an theorem there exists a copy of~$H$ in $R[A,B]$ with precisely~$\sigma(H)$ vertices in~$A$ and $|H|-\sigma(H)$ vertices in~$B$ (or vice versa).      
    \end{proofclaim}

    \begin{claim}\label{claim:bipartite_embedding+}
         For any subsets $A\subseteq W_{13}$ and $B\subseteq W_{12}$ with 
         $$\min\left\{\frac{|A|}{|B|},\frac{|B|}{|A|}\right\}\ge\frac{\sigma(H)}{|H|-\sigma(H)},$$ 
         there exists an~$H$-tiling in $R[A,B]$ covering all but at most~$18d|H||R|$ vertices of~$R[A,B]$.
    \end{claim}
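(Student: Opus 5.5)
We build the tiling greedily, using \cref{claim:bipartite_embedding} repeatedly and steering the sizes so that the leftover on each side stays balanced. Write $s:=\sigma(H)$ and $h:=|H|$, and assume without loss of generality that $|A|\ge|B|$, so that $s|A|\le(h-s)|B|$. Each copy of $H$ found by \cref{claim:bipartite_embedding} uses either $s$ vertices of $A$ and $h-s$ of $B$ (type~I), or $h-s$ of $A$ and $s$ of $B$ (type~II). Since $A$ is the larger side, we would like to use type~II copies mostly, which consume more of $A$, and mix in type~I copies to keep the ratio of the remaining sets in the right range.

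The key step is to run a greedy process with a balancing rule. Maintain current remainders $A'\subseteq A$ and $B'\subseteq B$. At each step:
\begin{itemize}
\item if $|A'|\ge|B'|$, look for a type~II copy in $R[A',B']$;
\item if $|A'|<|B'|$, look for a type~I copy.
\end{itemize}
To guarantee that a copy of the desired type exists, we need a small strengthening of \cref{claim:bipartite_embedding}. The proof of that claim in fact gives both orientations. Indeed, $R[A',B']$ has minimum degree at least $\min\{|A'|,|B'|\}-6d|R|$, so as long as both sides have size at least roughly $9d|R|$, K\H{o}v\'ari--S\'os--Tur\'an applied to sets of equal size (trim the larger side to match) yields a copy of $K_{h,h}$. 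This contains a copy of $H$ in either orientation, so we can choose the type.

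Under this rule, $\bigl||A'|-|B'|\bigr|$ never exceeds $\max\{|A|-|B|,\,h\}$, because each step reduces the larger side by at least as much as the smaller. The step will in fact never need type~I until the sides cross. However, that alone does not show we exhaust both sides; we must also check that the larger side cannot be left over. Suppose instead that only type~II copies are ever used while $|A'|\ge|B'|$. Each such copy removes $h-s$ vertices from $A'$ and $s$ from $B'$. Starting from the hypothesis $|A|\le\frac{h-s}{s}|B|$, the ratio $|A'|/|B'|$ moves toward $1$ and eventually falls below it. From that point the two types alternate and keep the difference below $h$. So the process continues until one side, and hence (within $h$) both sides, drops below the threshold $9d|R|$. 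The ratio hypothesis is exactly what prevents $A$ from being too large to be absorbed before $B$ runs out: if $|A|>\frac{h-s}{s}|B|$, even using only type~II copies, $B$ would be exhausted first.

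We expect the main obstacle to be the bookkeeping showing that the terminal leftover is at most $18d|H||R|$ rather than merely $O(d|R|)$. When the process stops, we have $\min\{|A'|,|B'|\}<9d|R|$ and $\bigl||A'|-|B'|\bigr|\le h$, so the number of uncovered vertices is below $18d|R|+h\le18d|H||R|$, using $|R|\ge\ell_0\gg 1/d$. The remaining thing to verify is that the process really reaches the balanced regime before $B'$ becomes small. We handle this by computing that the $A$-excess $|A'|-|B'|$ decreases by $h-2s$ per type~II step, while $|B'|$ decreases by $s$. The total decrease of $|B'|$ needed to cancel the excess is $\frac{s}{h-2s}\bigl(|A|-|B|\bigr)\le|B|-|A|\frac{s}{h-s}$, which is nonnegative by the ratio hypothesis. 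When $h=2s$, the sides are already balanced by the hypothesis.
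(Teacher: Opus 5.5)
Your overall strategy is the same as the paper's. You greedily remove copies of $H$ that put $|H|-\sigma(H)$ vertices into the larger side until the two remainders are balanced, then alternate orientations to keep them within $|H|$ of each other. Your remark that the K\H{o}v\'ari--S\'os--Tur\'an argument actually gives $K_{h,h}$, so either orientation can be chosen, is correct; the paper uses this implicitly.

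However, the step you single out as the main obstacle fails. First, the inequality $\frac{s}{h-2s}(|A|-|B|)\le|B|-|A|\frac{s}{h-s}$ does not follow from the ratio hypothesis. It is equivalent to $s(2h-3s)|A|\le(h-s)^2|B|$, which is strictly stronger than $s|A|\le(h-s)|B|$ whenever $h>2s$. For instance, with $h=3$, $s=1$, $|A|=2|B|$, the left side is $|B|$ and the right side is $0$.

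The correct computation is that, at the moment the excess is cancelled, $|B'|=\frac{(h-s)|B|-s|A|}{h-2s}$. This is nonnegative, but it can be $0$ or smaller than $9d|R|$. In the example $H=K_{1,2}$, $|A|=2|B|$, every type~II step preserves $|A'|=2|B'|$. The process therefore never reaches the balanced regime; it halts with $|B'|<9d|R|$ and $|A'|=2|B'|$. So your claim that the process becomes balanced before $B'$ gets small is false, and so is the parenthetical ``hence (within $h$) both sides''.

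The missing case is easy, and it is exactly the first stopping case in the paper's proof. A type~II step lowers both $s|A'|$ and $(h-s)|B'|$ by $s(h-s)$, so the inequality $s|A'|\le(h-s)|B'|$ supplied by the hypothesis is preserved throughout the unbalanced phase. Hence if the process halts with $|B'|<9d|R|$ while still $|A'|\ge|B'|$, then $|A'|+|B'|\le\frac{h}{s}|B'|<9dh|R|\le 18d|H||R|$. Adding this case to your balanced-regime bound of $18d|R|+h$ completes the argument.
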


    \begin{proofclaim}
        Without loss of generality, suppose that $|A|\le |B|$.
        Using \cref{claim:bipartite_embedding},
        we repeatedly remove copies of~$H$ from~$R[A,B]$ with~$\sigma(H)$ vertices in~$A$ and $|H|-\sigma(H)$ vertices in~$B$  until we either cover all but at most
        $9d|R|$ vertices in~$A$ or the number of vertices left uncovered in~$A$ and~$B$ differ by at most~$|H|$.
        Note that one of these two instances must occur since $|A|\le|B|$, $\sigma(H)\le|H|-\sigma(H)$ and, by \cref{claim:bipartite_embedding}, we can find such a copy of~$H$ whenever there are at least~$9d|R|$ vertices left uncovered in both~$A$ and~$B$.

        Suppose we stopped when all but~$9d|R|$ vertices in~$A$ are covered.
        We thus have an $H$-tiling covering at least $|A|-9d|R|$ vertices in~$A$.
        Hence, the number of vertices covered in~$B$ is at least
        $$\frac{|H|-\sigma(H)}{\sigma(H)}\cdot(|A|-9d|R|)\ge|B|-9d|H||R|,$$
        where here we use the inequality in the statement of the claim and $(|H|-\sigma(H))/\sigma(H)\le|H|$.
        This $H$-tiling satisfies the required properties.

        Suppose instead we stopped when the number of vertices left uncovered in~$A$ and~$B$ differ by at most~$|H|$.
        We now keep removing copies of~$H$ as follows.
        We remove two vertex-disjoint copies of~$H$ at a time, one with~$\sigma(H)$ and~$|H|-\sigma(H)$ vertices in~$A$ and~$B$ respectively, and the other with~$\sigma(H)$ and~$|H|-\sigma(H)$ vertices in~$B$ and~$A$ respectively.
        Observe that the difference between the uncovered vertices in $A$ and $B$ does not change and so it is always at most $|H|$.
        By \cref{claim:bipartite_embedding}, we can repeat this until the number of vertices left uncovered in~$A\cup B$ is at most~$18d|R|+2|H|\le18d|H||R|$.
        Combining these copies of $H$ with our initial $H$-tiling yields the desired $H$-tiling in $R[A,B]$.
    \end{proofclaim}

We now split into cases depending on the size of $|W_{13}|$.

    \noindent{ \bf Case 1:}
    $|W_{13}|\ge(1+\sqrt{d})\frac{2\sigma(H)\cdot |R|}{3|H|}.$
    
    Let $S$ be a subset of $W_{12}$ with $|S|=\min\{|W_{12}|,\lfloor (|H|-\sigma(H))|W_{13}|/\sigma(H) \rfloor \}$.
    Note that $1\ge|W_{13}|/|S|\ge\sigma(H)/(|H|-\sigma(H))$.
    Taking $A:=W_{13}$ and $B:=S$, 
     \cref{claim:bipartite_embedding+} implies that there exists an $H$-tiling in $R[W_{12},W_{13}]$ covering at least $|A|+|B|-18d|H||R|$ vertices; so at least
    $$\min\left\{|W_{12}|+|W_{13}|,\frac{|H|}{\sigma(H)}\cdot|W_{13}|-1\right\}-18d|H||R|\ge\frac{(1+d)2|R|}{3}$$ vertices.
    In particular, this is a monochromatic $H$-tiling in $R$ in colour~$1$.
    By \cref{corollary:embedding}, $G$ contains a monochromatic $H$-tiling in colour~$1$ covering at least
    \begin{align*}
        (1-\sqrt\eps)\cdot\frac{2(1+d)|R|}{3}\cdot\frac{|G|-|V_0|}{|R|}&\ge\frac{2(1-\sqrt\eps)(1+d)(1-\eps)|G|}{3}\ge\frac{2|G|}{3}
    \end{align*}
    vertices in~$G$, where the last inequality follows as $\eps \ll d$.
    Since $|G|\ge3m|H|/2$ by Remark~\ref{remark:r=3}, it follows that we have found a monochromatic copy of~$mH$, as desired.

\smallskip

    \noindent{ \bf Case 2:} $|W_{13}|\le |R|/q$.
    
    In this case we have $$\delta(R[W_{12}])\ge\delta(R)-|W_{13}|-|W_{23}|\ge(1-6d-1/q-6d)|R|\stackrel{(\ref{bighier})}{\ge}(1-2/q)|W_{12}|.$$
    Recall that $R[W_{12}]$ is $2$-edge-coloured and note that $1/|W_{12}| \ll 1/q \ll 1/t$. Thus, we can apply
    Lemma~\ref{robust2colours} with $|W_{12}|,2/q,1/(2t)$ playing the role of $n,\delta,\eta$, respectively, to obtain a monochromatic $H$-tiling  in $R[W_{12}]$ covering at least
    $$
    \frac{|W_{12}||H|}{2|H|-\alpha(H)} - \frac{|W_{12}|}{2t} \geq
    \frac{(1-1/t)|W_{12}||H|}{2|H|-\alpha(H)}\ge\frac{(1-1/t)(1-2/q)|R||H|}{|H|+\sigma(H)}
    $$
    vertices; here we used the inequality $\alpha(H)\ge|H|-\sigma(H)$.
    By \cref{corollary:embedding}, there exists a monochromatic $H$-tiling in~$G$ covering at least
    $$\frac{(1-\sqrt{\eps})(1-1/t)(1-2/q)|H|(|G|-|V_0|)}{|H|+\sigma(H)}
    \stackrel{(\ref{bighier})}{\ge}
    \frac{(1-3/t)|H||G|}{|H|+\sigma(H)}$$
    vertices of~$G$.
    The right-hand side of the last inequality is always at least~$m|H|$, as desired.
    Indeed, if $\sigma(H)<|H|/2$ then this follows from (\ref{extra2}) and $|G|\ge 3m|H|/2$; if $\sigma(H)=|H|/2$ then this follows from (\ref{extra1}) and $|G|\ge (3|H|+c_0)m/2$.

\smallskip

\noindent{ \bf Case 3:}
    $\frac{|R|}{q}\le|W_{13}|\le(1+\sqrt d)\frac{2\sigma(H)|R|}{3|H|}.$
    
    We deal with this final case  by splitting into two further subcases.

    \smallskip

    \noindent{\it Case 3a:} $\sigma(H)<|H|/2$. 
    We have 
    \begin{align}\label{newX}
    |W_{13}|\le(1+\sqrt d) \frac{2\sigma(H)|R|}{3|H|}\le(1+\sqrt d) \frac{(|H|-1)|R|}{3|H|}\le\frac{(1-\sqrt d)|R|}{3},
    \end{align}
    where the last inequality holds since $d\ll 1/|H|$.

    Let $m_1:=\lceil (1+d)|R|/3\rceil$, $m_2:=|W_{12}|-2m_1$ and $s:=\lceil |R|/4 \rceil$.
    Note that
    \begin{align*}
    m_2=|W_{12}|-2m_1=(|R|-|W_{13}|-|W_{23}|)-2m_1&\stackrel{(\ref{newX})}{\ge}\left(1-\frac{1-\sqrt{d}}{3}-6d-\frac{2(1+d)}{3}\right)|R|-2\ge 1.
    \end{align*}
    Furthermore,
    $$m_2\le |R|-2m_1\le\frac{(1-2d)|R|}{3}\le m_1.$$
    Let~$R'$ be the $3$-edge-coloured complete graph on vertex set~$W_{12}$ obtained from~$R[W_{12}]$ by adding each missing edge between distinct vertices and colouring them with~$3$.    
    Since $|W_{12}|=2m_1+m_2$ and $m_1\ge m_2, s$, by \cref{theorem:Gyarfas_Sarkozy} $R'$ must contain a copy of $m_1K_2$ in colour~$2$, a copy of $m_2K_2$ in colour~$1$ or a copy of~the star $S_{s}$ in colour~$3$.

    The last case is not possible since $\delta(R)\ge(1-6d)|R|$ implies each vertex in~$W_{12}$ is incident to at most $6d|R|$ missing edges in $R$ and so $R'$ does not contain a star $S_{s}$ in colour $3$.
    
    If~$R'$ contains a copy of~$m_1K_2$ in colour~$2$, then by \cref{corollary:embedding} $G$ contains a monochromatic $H$-tiling in colour~$2$ covering at least 
    $(2/3)|G| \geq m|H|$ vertices of~$G$, as required.

    Finally, suppose that $R'$ contains a copy $\mathcal M$ of~$m_2K_2$ in colour~$1$.
    The number of uncovered vertices in $W_{12}$ is 
    $$|W_{12}|-2m_2=4m_1-|W_{12}|\geq \frac{4(1+d)}{3}|R|-|W_{12}|\ge\frac{1+4d}{3}|R|+|W_{13}|.$$
    Let $A$ be a subset of these uncovered vertices with 
    $$|A|=\min \left \{\lfloor (|H|-\sigma(H))|W_{13}|/\sigma(H) \rfloor \ ,\left \lfloor \frac{1+4d}{3}|R|+|W_{13}| \right \rfloor \right \},$$
    and set $B:=W_{13}$.
    By \cref{claim:bipartite_embedding+}, there exist an $H$-tiling $\mathcal H$ in $R[A,B]$ covering at least $|A|+|B|-18d|H||R|$ vertices in $R[W_{12},W_{13}]$;  so at least
    $$\min\left\{\frac{1+4d}{3}|R|+2|W_{13}|,\frac{|H||W_{13}|}{\sigma(H)}\right\}-1-18d|H||R|\ge\left(2+\sqrt{d}\right)|W_{13}|$$
    vertices (here the last inequality follows as $\sigma (H) \leq (|H|-1)/2$).
    This $H$-tiling $\mathcal H$ is monochromatic in colour~$1$ and is vertex-disjoint from $\mathcal M$.
    
    Together $\mathcal M$ and $\mathcal H$ cover at least
    \begin{align*}
2m_2+\left(2+\sqrt{d}\right)|W_{13}| &\geq 
    2|W_{12}|-\frac{4(1+d)|R|}{3}-4+\left(2+\sqrt{d}\right)|W_{13}|\\ &=\frac{2|R|}{3}-2|W_{23}|-\frac{4d|R|}{3}+\sqrt{d}|W_{13}|-4
        \ge\frac{(1+d)2|R|}{3}
    \end{align*}
    vertices in~$R$, where in the last inequality we use that $|W_{13}|\ge|R|/q$; $|W_{23}|\le 6d|R|$; $d\ll 1/q$.
    By applying Lemma~\ref{corollary:embedding} twice (once for $\mathcal M$ and once for $\mathcal H$), we obtain a monochromatic $H$-tiling in colour $1$ in $G$
    covering at least
    $$(1-\sqrt{\eps}) \cdot \frac{(1+d)2|R|}{3} \cdot \frac{(|G|-|V_0|)}{|R|} 
    \stackrel{(\ref{bighier})}{\geq} \frac{2|G|}{3} \geq m|H|$$
    vertices, as desired.

    \smallskip
    
    \noindent{\it Case 3b:} $\sigma(H)=|H|/2$.
    We have
    \begin{align}\label{extraD}
    |W_{13}|+6d|R|\le(1+\sqrt d)\frac{2\sigma(H)|R|}{3|H|}+6d|R|<(1+3\sqrt d)\frac{|R|}{3}.
    \end{align}
    Let $m_1:=s:=\lceil (1+3\sqrt{d})  |R|/3\rceil$.
    Let~$R'$ be the complete graph obtained from~$R[W_{12}\cup W_{13}]$ by adding each missing edge between distinct vertices and colouring them with~$3$, and adding a set~$S$ of new vertices with~$|S|=3m_1-|W_{12}|-|W_{13}|>0$ and colour all edges incident to them with~$1$ or~$2$.
    
    By \cref{theorem:Gyarfas_Sarkozy}, $R'$ contains a copy of $m_1K_2$ in colour~$1$ or~$2$, or a copy of~$S_s$ in colour~$3$.
    However, note that~$R'$ does not contain a copy of~$S_s$ in colour~$3$, since a vertex in~$R'$ is incident to at most $$|W_{13}|+6d|R|\stackrel{(\ref{extraD})}{<} s$$ edges coloured with~$3$.

    Hence, $R'$ contains a copy of $m_1K_2$ in, say,  colour $1$; so $R$ contains a copy $\mathcal K$ of $(m_1-|S|)K_2$ in colour~$1$.
    Notice $|S| \leq |R| + 4 \sqrt {d} |R|/3 -|W_{12}|-|W_{13}| \leq  4 \sqrt {d} |R|/3+ |W_{23}| 
    \leq 5 \sqrt {d} |R|/3$.
    Thus, $\mathcal K$ covers at least
    $$2(m_1-|S|)\ge\frac{2(1+3\sqrt{d})|R|}{3}-\frac{10\sqrt{d}|R|}{3}\ge\frac{2(1-2\sqrt{d})|R|}{3}$$
    vertices of $R$.
    It follows from \cref{corollary:embedding} that $G$ contains an $H$-tiling in colour $1$ covering at least
    $$\frac{2(1-\sqrt{\eps})(1-2\sqrt{d})(|G|-|V_0|)}{3}
    \stackrel{(\ref{bighier})}{\ge}\frac{2(1-3\sqrt{d})|G|}{3}$$
    vertices of~$G$.
    Since $|G|\ge(3|H|/2+c_0)m$ and $d \ll c_0$ by (\ref{bighier}) and (\ref{extra1}), we have found a monochromatic copy of~$mH$, as required.

\end{proof}

\section{Proof of \cref{thm:bi}}\label{sec:bip}

If~$H$ is a complete bipartite graph, we are able to determine~$R_r(mH)$ (up to a linear error term) as a function of a parameter independent of~$m$.
In fact, we prove a more general asymmetric result that also holds for some non-complete bipartite graphs.
Given $r \in \mathbb N$ and graphs $F_1,\dots, F_r$ we write 
$R_r(F_1,\dots, F_r)$ for the smallest $n \in \mathbb N$ such that every $r$-edge-colouring of $K_n$ contains a copy of $F_i$ in colour $i$ for some $i \in [r]$.
We will use the notation $R_r(m_iH_i)$ to indicate the Ramsey number $R_r(m_1H_1,\ldots,m_rH_r)$. 
Furthermore, when $m_1=\cdots=m_r=1$, we will use $R_r(H_i)$ to indicate the Ramsey number $R_r(H_1,\ldots,H_r)$. 

For a graph~$H$, we let 
$$s(H):=\max\left\{\frac{|I|}{|N_H(I)|}:\text{ $I\subseteq V(H)$ is an independent set}\right\},$$
where~$N_H(I)$ is the set of vertices in $V(H)\setminus I$ which are neighbours of some vertex in~$I$.
Note that, if~$H$ is complete bipartite, then $s(H)=(|H|-\sigma(H))/\sigma(H)$.
Next, we introduce the key parameter that governs~$R_r(mH)$ when $H$ is complete bipartite.
This parameter is well-defined for arbitrary graphs.

\begin{define}[$\mathcal F_r(m_iH_i)$ and~$t_r(m_iH_i)$]\label{define:lexico_cliquecover_parameter}
    Let $r\in\mathbb N$ with $r\ge2$.
    Let $H_1,\dots,H_r$ be graphs.
    Let $m_1,\dots,m_r\ge 1$.\footnote{Observe that we do not require that $m_1,\dots,m_r$ are integers.}
    Let~$\mathcal F_r(m_iH_i)$ denote the family of the following $r$-edge-coloured weighted graphs~$(F,{\bf c})$.   
    Let~$F$ be the complete graph on vertex set $V(F)=\{1,2,\dots, r\}\cup 2^{[r]}$ equipped with a loop at each vertex.  
    Consider non-negative weights ${\bf c}=\{c_v:v\in V(F)\}$ such that
    \begin{itemize}
        \item for every $i\in[r]$ we have
        $$(s(H_i)+1)\cdot c_{i}+\sum_{i\in I\subseteq[r]}c_{I}=m_i|H_i|,$$
        \item if $I,I'\subseteq[r]$ and $I\cap I'=\emptyset$ then $c_I=0$ or $c_{I'}=0$.
    \end{itemize}
    Pick an $r$-edge-colouring of~$F$ such that if an edge~$e$ is coloured with~$i$ then one of the following holds: $e$ is incident to the vertex~$i$, or $e=I_1I_2$ for some $I_1,I_2\subseteq [r]$ with $i\in I_1\cap I_2$, or~$e$ is incident to a vertex with weight~$0$.
    Let $t_r(m_iH_i):=\max\{||{\bf c}||:(F,{\bf c})\in\mathcal F_r(m_iH_i)\}$.
\end{define}

\begin{remark}
    It is easy to check that for any $r\in\mathbb N$ with $r\ge2$, graphs $H_1,\dots,H_r$ and $m\ge1$, we have $t_r(mH_i)=m\cdot t_r(H_i)$.
\end{remark}

In the next proposition, we show how blowing up members of the family~$\mathcal F_r(m_iH_i)$ yields complete $r$-edge-coloured graphs which do not contain a monochromatic copy of~$m_iH_i$ in colour~$i$, for every $i\in[r]$.
This provides a general lower bound for the Ramsey number~$R_r(m_iH_i)$.

\begin{proposition}\label{proposition:blow_up}
    Let $r\in\mathbb N$ with $r\ge2$.
    Let $H_1,\dots,H_r$ be graphs.
    Let $m_1,\dots,m_r\in\mathbb N$. 
    Let $(F,{\bf c})\in\mathcal F_r(m_iH_i)$.
    If~$B$ is a blow-up of~$F$ where each vertex $v\in V(F)$ is replaced by a class of $\left\lceil c_v-1\right\rceil$ vertices, then~$B$ does not contain a monochromatic copy of~$m_iH_i$ in colour~$i$ for every $i\in[r]$, and $|B|\ge ||{\bf c}||-2^r-r$.
    In particular, we have $R_r(m_iH_i)> t_r(m_iH_i)-2^r-r$.
\end{proposition}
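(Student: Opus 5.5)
The plan is to mimic the double-counting argument used earlier in the paper for the lower bound of \cref{thm:bi}. Let $B$ be the blow-up with classes $V_v$ of size $\lceil c_v-1\rceil$. Suppose for contradiction that $B_i$ contains a copy $\mathcal C$ of $m_iH_i$ for some $i\in[r]$. The goal is to show that $\mathcal C$ covers fewer than $m_i|H_i|$ vertices, which is impossible.

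\emph{Step 1: decompose by where the copies can sit.} First discard vertices of weight $0$: their classes have size $\lceil 0-1\rceil\le 0$, so they are empty. Hence every edge of $B_i$ lies in one of two places:
\begin{itemize}
\item it is incident to the class $V_i$ (including edges inside $V_i$);
\item it lies inside $Y_i:=\bigcup_{I\ni i}V_I$, since an edge $I_1I_2$ is coloured $i$ only if $i\in I_1\cap I_2$.
\end{itemize}
No other colour-$i$ edges exist among nonempty classes. Set $X_i:=V_i$.

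\emph{Step 2: count the vertices outside $X_i\cup Y_i$.} Take a copy $H'$ of $H_i$ in $\mathcal C$, and let $I'$ be the set of its vertices outside $X_i\cup Y_i$. Every edge at such a vertex goes to $X_i$, so $I'$ is independent in $H'$ and $N_{H'}(I')\subseteq X_i$. By the definition of $s(H_i)$ we get $|I'|\le s(H_i)\,|N_{H'}(I')|$, so $|I'|\le s(H_i)\,|V(H')\cap X_i|$. Summing over the vertex-disjoint copies in $\mathcal C$, the number of vertices of $\mathcal C$ outside $X_i\cup Y_i$ is at most $s(H_i)|X_i|$.

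\emph{Step 3: total count.} Step 2 gives
$$|V(\mathcal C)|\le (s(H_i)+1)|X_i|+|Y_i|.$$
Since $\lceil c_v-1\rceil<c_v$, this is strictly less than
$$(s(H_i)+1)c_i+\sum_{I\ni i}c_I=m_i|H_i|.$$
If any relevant $c_v$ is $0$, the class is empty and the strict inequality still holds, because $m_i|H_i|>0$ forces at least one of $c_i$ and the $c_I$ with $I\ni i$ to be positive. This contradicts $|V(\mathcal C)|=m_i|H_i|$.

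\emph{Step 4: size and conclusion.} We have $|V(F)|=r+2^r$ and $\lceil c_v-1\rceil\ge c_v-1$, so $|B|\ge\|\mathbf c\|-2^r-r$. Taking $(F,\mathbf c)$ attaining $t_r(m_iH_i)$ gives a colouring of $K_{|B|}$ with no copy of $m_iH_i$ in colour $i$ for any $i$, hence $R_r(m_iH_i)>|B|\ge t_r(m_iH_i)-2^r-r$.

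The maximum in $t_r(m_iH_i)$ is attained by compactness: the feasible set is closed and bounded (each weight is at most $\max_i m_i|H_i|$), and there are finitely many colourings. The main obstacle is Step 2, which needs care in two places. The first is the weight-$0$ exception in the colouring rule, handled by the empty classes. The second is the case where $I'$ is empty or $N_{H'}(I')$ is empty; the latter cannot occur when $I'\neq\emptyset$ provided $H_i$ has no isolated vertices. If isolated vertices are allowed, I would note that $s(H_i)$ is then infinite and the constraint forces $c_i=0$, so $X_i$ is empty and $I'$ contributes only through $Y_i$, which still needs a short separate check.
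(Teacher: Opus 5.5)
Your proof is correct and is essentially the paper's argument: the same sets $X_i$ and $Y_i$, the same use of $s(H_i)$ to bound by $s(H_i)|X_i|$ the vertices of a colour-$i$ copy lying outside $X_i\cup Y_i$, and the same final count, while your treatment of weight-zero classes, strictness of the inequality and attainment of the maximum fills in details the paper leaves implicit. One correction to your closing remark: no separate check can handle graphs with isolated vertices, because the statement is false there (an isolated vertex of $H_i$ can sit anywhere in $B$; for instance with $H_1=K_1$ and $m_1=1$, every nonempty blow-up, which exists once $m_2$ is large, contains a copy of $H_1$ in colour $1$), so the proposition tacitly assumes no isolated vertices, which holds for the edge-contractible graphs to which the paper applies it.
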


\begin{proof}
    Let~$X_1,\dots,X_r\subseteq V(B)$ be the vertex classes corresponding to the vertices $1,2,\dots, r\in V(F)$.
    Furthermore, for every $i\in[r]$ let~$Y_i$ be the union of all vertex classes corresponding to some vertex~$v_I\in V(F)$ with~$i\in I$.
    By \cref{define:lexico_cliquecover_parameter}, for every $i\in[r]$ we have that an edge coloured~$i$ is either incident to~$X_i$ or lies in~$Y_i$.\footnote{Recall from \cref{define:lexico_cliquecover_parameter} that an edge in~$F$ coloured with~$i$ can be incident to a vertex of weight~$0$.
    Such a vertex is removed in the blow-up~$B$, hence all edges incident to it are removed too.}

    Say~$T$ is a monochromatic copy of~$H_i$ in~$B$ of colour~$i$.
    There is no edge of colour~$i$ in~$V(B)\setminus(X_i\cup Y_i)$, hence $V(T)\setminus(X_i\cup Y_i)$ is an independent set in~$T$.
    Furthermore, since there are no edges of colour~$i$ between~$Y_i$ and~$V(B)\setminus(X_i\cup Y_i)$, it follows that $N_T(V(T)\setminus(X_i\cup Y_i))\subseteq X_i$. 
    Hence
    $$\frac{|V(T)\setminus(X_i\cup Y_i)|}{|V(T)\cap X_i|}\le s(H_i).$$


    We conclude that a monochromatic copy of~$m_iH_i$ in~$B$ of colour~$i$ has at most~$s(H_i)\cdot|X_i|$ vertices in~$V(B)\setminus(X_i\cup Y_i)$.
    It follows that
    \begin{align*}
    m_i|H_i|&\le |X_i|+|Y_i|+s(H_i)\cdot|X_i|<(s(H_i)+1)\cdot c_i+\sum_{i\in I\subseteq[r]}c_I = m_i|H_i|,
    \end{align*}
    a contradiction.
    Thus~$B$ does not contain a monochromatic copy of~$m_iH_i$ in colour~$i$, for every $i\in[r]$.
    Finally, we have 
    $$|B|=\sum_{v\in V(F)} \lceil c_v-1\rceil\ge\sum_{v\in V(F)}(c_v-1)=||{\bf c}||-2^r-r.$$
\end{proof}

\begin{remark}\label{remark_bip}
    Observe that, when $H$ is complete bipartite and $H_i=H$ and $m_i=m$ for every $i\in[r]$, the graph $B$ and the sets $X_1,\dots,X_r$ and $Y_1,\dots,Y_r$ in the proof of \cref{proposition:blow_up} satisfy the assumptions of \cref{definition:extremal_construction_bi}, since $s(H)=(|H|-\sigma(H))/\sigma(H)$.
    Therefore, the proof of \cref{proposition:blow_up} yields that $b_r(m,H)\ge t_r(mH)-2^r-r$.
\end{remark}

The lower bound in \cref{proposition:blow_up} is not tight in general.
However, in the next definition, we introduce a special class of bipartite graphs for which this bound is attained, up to a linear error term.
This class includes complete bipartite graphs.

\begin{define}[Edge-contractible]
    A graph~$H$ is {\it edge-contractible} if there exists a fractional $H$-tiling $w:\text{Hom}(H:H)\to\mathbb R$ of~$H$ such that~$w(H)=|H|$ and, for every $\phi\in\text{Hom}(H:H)$, we have $w(\phi)>0$ if and only if the image of~$\phi$ has size~$2$, that is, $\phi$ sends~$H$ to an edge.
    In particular, an edge-contractible graph must be bipartite.
\end{define}

\begin{remark}
    If~$H$ is complete bipartite then it is edge-contractible.
    Indeed, say the bipartite parts of~$H$ are~$A$ and~$B$.
    For every $a\in A$ and $b\in B$ let $\phi_{ab}\in\text{Hom}(H:H)$ with
    $$\phi_{ab}(v)
    =\begin{cases}
    a \text{ if $v\in A$},\\
    b \text{ if $v\in B$}.
    \end{cases}$$
    Let $w(\phi_{ab})=(|A|\cdot|B|)^{-1}$ for every $a\in A$ and $b\in B$, and $w(\phi)=0$ for all other homomorphisms~$\phi\in\text{Hom}(H:H)$.
    Then $w$ is a fractional $H$-tiling of~$H$ with~$w(H)=|H|$.
    Similarly, it is easy to see that the vertex-disjoint union of multiple copies of $H$ is also edge-contractible.
\end{remark}

\begin{remark}
    If~$H$ is edge-contractible then $s(H)=(|H|-\sigma(H))/\sigma(H)$.
    Indeed, let $I$ be an independent set of $H$ and let $w$ be a fractional $H$-tiling of~$H$ such that~$w(H)=|H|$ and $w(\phi)>0$ if and only if the image of~$\phi$ has size~$2$.
    Any such homomorphism whose image intersects~$I$ must satisfy $\text{Im}(\phi)=\{x,y\}$ for some $x\in I$ and $y\in N_H(I)$.
    Since $\phi^{-1}(x)$ and $\phi^{-1}(y)$ are both independent sets in $H$, we have
    $$\sigma(H)\le|\phi^{-1}(x)|,|\phi^{-1}(y)|\le|H|-\sigma(H)$$ 
    and so
    \begin{align*}
    |I|&=\sum \{w(\phi)\cdot|\phi^{-1}(x)|:x\in I, y\in N_H(I),\text{Im}(\phi)=\{x,y\}\}\\
    &\le\frac{|H|-\sigma(H)}{\sigma(H)}\cdot\sum \{w(\phi)\cdot|\phi^{-1}(y)|:x\in I, y\in N_H(I),\text{Im}(\phi)=\{x,y\}\} \\       
    &\le \frac{|H|-\sigma(H)}{\sigma(H)}\cdot|N_H(I)|.  
    \end{align*}
    As $I$ is arbitrary, it follows that $s(H)=(|H|-\sigma(H))/\sigma(H)$ as required.
\end{remark}

The next theorem states that~$R_r(mH_i)$ equals $m\cdot t_r(H_i)$, up to a linear error term, for any edge-contractible graphs~$H_1,\dots,H_r$ and $m\in\mathbb N$.
In particular, this holds when $H_i=H$ for all $i\in[r]$ and $H$ is a complete bipartite graph.

\begin{theorem}\label{theorem:complete_bipartite}
    Let $r\in\mathbb N$ with $r\ge2$.
    Let $H_1,\dots,H_r$ be edge-contractible graphs.
    For every~$\eps>0$, there exists~$m_0\ge1$ such that for all integers $m\ge m_0$ we have 
    $$m\cdot t_r(H_i)-2^r-r\le R_r(m H_i)\le(1+\eps)\cdot m\cdot t_r(H_i).$$
\end{theorem}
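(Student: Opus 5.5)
The lower bound is immediate from \cref{proposition:blow_up}: applying it with $m_i=m$ and using $t_r(mH_i)=m\cdot t_r(H_i)$ gives $R_r(mH_i)>m\,t_r(H_i)-2^r-r$. The work is the upper bound. I would prove it by regularity plus a fractional-tiling/LP duality argument in the reduced graph. Take an $r$-edge-coloured $K_n$ with $n\ge(1+\eps)m\,t_r(H_i)$. Apply \cref{theorem:regularity} with $\eps'\ll d\ll\eps$ to get a reduced graph $R$ on $\ell$ vertices with $\delta(R)\ge(1-2\eps'-rd)\ell$ and a colouring of its edges. By edge-contractibility, a fractional matching in colour $i$ in $R$ of total size $x$ (in vertex weight) can be turned into an $H_i$-tiling in $G$ covering about $x\cdot n/\ell$ vertices. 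Indeed, each regular pair $(V_a,V_b)$ of colour $i$ carrying weight $w_{ab}$ contains, by \cref{corollary:embedding} (or directly Kővári--Sós--Turán on regular pairs), copies of $H_i$ distributed in the proportions prescribed by the edge-contracting fractional tiling. Hence it suffices to show that for some $i$ the maximum fractional matching $\nu^*(R_i)$ (with loops excluded, since $G'[V_a]$ is empty) is at least $(1+\eps/2)^{-1}\cdot m|H_i|\,\ell/n$ after rescaling, i.e.\ at least roughly $\ell\,m|H_i|/(m\,t_r(H_i))$.

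\textbf{Step 1 (LP duality).} Suppose that for every $i$, the colour-$i$ fractional matching number of $R_i$ is less than $\beta_i\ell$, where $\beta_i:=m|H_i|/((1+\eps/2)m\,t_r(H_i))$. By the Gallai--Edmonds/Tutte-type structure for fractional matchings, there is a set $X_i$ and an independent part: $\nu^*(R_i)=\min_{S}\bigl(|V|-i(R_i-S)\cdot\ldots\bigr)$. Concretely, there exist $S_i\subseteq V(R)$ such that $\nu^*(R_i)=|V(R)|-|I_i|+|S_i|$ with $I_i$ independent in $R_i - S_i$ and $N_{R_i}(I_i)\subseteq S_i$, the remaining vertices $Y_i$ spanning components. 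The point is to extract the structure of \cref{definition:extremal_construction_bi}. To match the factor $s(H_i)+1$, I would actually work with the weighted LP whose dual directly yields $X_i$ (weight $s(H_i)+1$) and $Y_i$ (weight $1$). The LP maximises the number of vertices covered by $H_i$-tilings using only ``contracted edge'' homomorphisms, where a vertex in the small class consumes capacity at ratio $\sigma/(|H|-\sigma)$. Its dual gives a vertex cover type object: every colour-$i$ edge touches $X_i$ or lies inside $Y_i$, and $(s(H_i)+1)|X_i|+|Y_i|\le$ optimum.

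\textbf{Step 2 (assembling a member of $\mathcal F_r$).} Since $R$ is almost complete, every edge of $R$ receives some colour. Thus the sets $X_i,Y_i$ (scaled by $n/\ell$) give a near-complete graph whose edges are covered as in \cref{define:lexico_cliquecover_parameter}. Grouping vertices of $V(R)\setminus\bigcup X_i$ by the set $I=\{i: v\in Y_i\}$ yields weights $c_i=|X_i|$ and $c_I$. Vertices in disjoint $I,I'$ classes must be non-adjacent in $R$, so there are only $O(d\ell)$ of them; discarding them enforces the second bullet. Uncovered pairs are absorbed by the weight-$0$ rule. Scaling so the first bullet holds with equality (slack can only be increased by enlarging classes) produces $(F,{\bf c})\in\mathcal F_r(m_iH_i)$ with $\|{\bf c}\|\ge(1-O(d))(1+\eps/2)m\,t_r(H_i)>m\,t_r(H_i)$, a contradiction.

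The main obstacle is Step 1. One must set up the right LP on $R_i$ whose integrality/duality gives exactly the weights $s(H_i)+1$ on $X_i$ and $1$ on $Y_i$. One must also show that a fractional optimum really converts to an $H_i$-tiling in $G$ of matching size, which uses edge-contractibility together with rational approximation as in \cref{lemma:gadget_induction}. I would first try the dual of ``maximise $\sum_{ab}w_{ab}$ subject to per-vertex capacity with asymmetric split $\sigma:(|H|-\sigma)$ allowed in either orientation''. I would then check that an optimal dual can be chosen with values in $\{0,\tfrac12\cdot\text{stuff},1\}$ inducing the $X/Y$ split; half-integral dual values are where the $Y_i$ cliques come from.
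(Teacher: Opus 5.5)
The overall architecture matches the paper's proof. You use regularity, then the fractional $H_i$-tiling LP on the coloured reduced graph (restricted, via edge-contractibility, to homomorphisms onto an edge). You then take a certificate $(X_i,Y_i)$ per colour and group the vertices outside $\bigcup_j X_j$ by $I=\{i:v\in Y_i\}$. Finally you delete one side of each disjoint pair $I,I'$ using the minimum degree, raise the $c_i$, and contradict the maximality in the definition of $t_r$. The lower bound and Step~2 are fine.

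The gap is Step~1, which is the heart of the argument (the paper's \cref{lemma:key}). You need disjoint $X,Y$ such that every colour-$i$ edge meets $X$ or lies inside $Y$, \emph{and} $\nu_{H_i}(R_i)=|Y|+\frac{|H_i|}{\sigma(H_i)}|X|$ with equality. Weak duality only gives $\le$ for each such pair. Strong duality gives an optimal dual $y\ge 0$ with $\sigma y_a+(|H_i|-\sigma)y_b\ge|H_i|$ in both orientations of every edge, but with arbitrary real values. The statement that an optimal $y$ can be taken in $\{0,1,|H_i|/\sigma(H_i)\}$ is exactly what has to be proved, and you leave it ``to check''. The tools you cite do not reach it. Gallai--Edmonds and half-integrality for fractional matchings only cover $\sigma(H_i)=|H_i|/2$. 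For unbalanced $H_i$ the fractional matching number can be strictly smaller than $\nu_{H_i}(R_i)$; in a star it is $2$ versus $|H_i|/\sigma(H_i)$. So your first reduction to bounding $\nu^*(R_i)$ cannot work in general: near-extremal colourings exist where only the star-like colour class reaches the target. The relevant dual values are $0,1,|H_i|/\sigma(H_i)$, not half-integers.

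There are two ways to close the gap.
\begin{enumerate}
\item The paper's combinatorial argument. Take an optimal fractional tiling supported on edge-contracting homomorphisms that minimises the number of saturated vertices. Let $X$ be the saturated vertices with an unsaturated neighbour, and $Y$ the saturated vertices all of whose neighbours are saturated. The unsaturated vertices form an independent set adjacent only to $X$. Shifting $\eps$ of weight between homomorphisms shows that every homomorphism meeting $X$ sends exactly $\sigma(H)$ vertices into $X$ and the rest to unsaturated vertices, which gives equality.
\item Threshold rounding, which completes your LP route. For an optimal dual $y$, put $P=\{y>1\}$ and $Q=\{y<1\}$. Feasibility forces $Q$ to be independent with $N(Q)\subseteq P$. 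Set $\lambda_p=\sigma(y_p-1)/(|H|-\sigma)$ for $p\in P$ and $\mu_q=1-y_q\in(0,1]$ for $q\in Q$; every edge $pq$ then satisfies $\lambda_p\ge\mu_q$. For $\theta\in(0,1]$, let $y^\theta$ be $|H|/\sigma$ on $\{\lambda_p\ge\theta\}$, $0$ on $\{\mu_q\ge\theta\}$, and $1$ elsewhere. Each $y^\theta$ is feasible, and its cost averaged over $\theta$ is at most $\sum_v y_v$. Hence some $y^\theta$ is optimal, and it has the required $\{X,Y\}$ form.
\end{enumerate}
With either argument supplied, the rest of your plan goes through.
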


To see why \cref{theorem:complete_bipartite} implies \cref{thm:bi}, consider the following.
As observed in \cref{remark_bip}, we have $R_r(mH)\ge b_r(m,H)\ge t_r(mH)-2^r-r$ for every complete bipartite graph $H$ and $m\in\mathbb N$.
This inequality together with \cref{theorem:complete_bipartite} imply that~$b_r(m,H)$ and $R_r(mH)$ differ by a linear error term, which is precisely the statement of \cref{thm:bi}.

Note that, by letting~$H_i:=q_iF_i$ with~$q_i\in\mathbb N$ and~$F_i$ complete bipartite, the above theorem determines the asymmetric Ramsey number~$R_r((mq_i)F_i)$ up to a linear error term, provided $m$ is sufficiently large.

The rest of the section covers the proof of Theorem~\ref{theorem:complete_bipartite}.



\subsection{Deducing Theorem~\ref{theorem:complete_bipartite} from a fractional analogue}

The following theorem is a fractional dense analogue of \cref{theorem:complete_bipartite}.
Combining it with the Regularity Lemma immediately yields \cref{theorem:complete_bipartite}.

\begin{theorem}[Dense fractional version]\label{theorem:fractional_bipartite}
    Let $r\in\mathbb N$ with $r\ge2$.
    Let $H_1,\dots,H_r$ be edge-contractible graphs.
    For every~$\eps>0$ there exists $m_0\ge1$ such that the following holds.
    If~$R$ is an $r$-edge-coloured graph with $\delta(R)\ge(1-2r\eps)|R|$ and~$|R|\ge m_0$, then 
    $$\nu_{H_i}(R_i)\ge(1-\eps2^{3r})\cdot\frac{|H_i|}{t_r(H_i)}\cdot|R|$$ 
    for some $i\in[r]$, where~$R_i$ denotes the subgraph of~$R$ spanned by the edges coloured~$i$.
\end{theorem}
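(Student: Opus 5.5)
The plan is to pass to the linear-programming dual of each $\nu_{H_i}(R_i)$. Each dual optimum is then read as a structure of the type in \cref{define:lexico_cliquecover_parameter}. If all $r$ colours had small $\nu_{H_i}(R_i)$, the vertex count $|R|$ would be bounded by $t_r(H_i)$ times a factor slightly less than $|R|/|H_i|$, which is a contradiction. Throughout, write $s_i:=s(H_i)=(|H_i|-\sigma(H_i))/\sigma(H_i)$, which is valid because $H_i$ is edge-contractible.

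\textbf{Step 1 (reduction to edge-supported tilings).} Since $R$ is simple (no loops), every $\phi\in\text{Hom}(R_i:H_i)$ has image of size at least $2$. Edge-contractibility lets us replace $\phi$ by the composition $\phi\circ\psi$, with $\psi$ ranging over the homomorphisms $H_i\to H_i$ weighted by the tiling $w$ from the definition. Thus we may assume without loss of generality that optimal fractional $H_i$-tilings of $R_i$ are supported on homomorphisms onto single edges. Such a homomorphism puts loads $(a,b)$ on the endpoints $(u,v)$, with $a/b\in[1/s_i,s_i]$. Hence $\nu_{H_i}(R_i)$ equals the optimum of an ``$s_i$-skewed fractional matching'' LP. In it, each edge $uv$ of colour $i$ carries two loads $x_{uv}^{u},x^{v}_{uv}\ge0$ satisfying $x^u_{uv}\le s_i x^v_{uv}$ and $x^v_{uv}\le s_ix^u_{uv}$, and each vertex has total load at most $1$.

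\textbf{Step 2 (dual structure).} The dual LP asks for weights $y_v\ge0$ with $\sum_v y_v$ minimal. Its constraints reduce to requiring, for every $i$-coloured edge $uv$, that $y_u+y_v\ge 1$ when the edge is used ``balancedly'', and that $s_iy_u+y_v\ge s_i+1$ in the extreme skew direction. I would show that an optimal basic dual solution takes values in a fixed finite set, roughly $\{0,\tfrac1{s_i+1},\tfrac{s_i}{s_i+1},1\}$, up to rescaling. This gives the classification. Let $X_i$ be the set of vertices carrying full weight. Let $Y_i$ be the set of vertices carrying the ``half'' weight, so that every $i$-edge either touches $X_i$ or lies inside $Y_i$. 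The dual value is then at least $(s_i+1)|X_i|+|Y_i|$, up to the normalisation by $|H_i|$. I expect this step to be the main obstacle: pinning down the vertices of the dual polytope precisely enough that the weights $(s_i+1)$ on $X_i$ and $1$ on $Y_i$ emerge. If exact integrality fails, I would first try rounding the dual (thresholding at $1/2$ and at $s_i/(s_i+1)$) while losing only a constant factor on a small set of vertices.

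\textbf{Step 3 (building a member of $\mathcal F_r$).} Suppose, for contradiction, that $\nu_{H_i}(R_i)<(1-\eps2^{3r})|H_i||R|/t_r(H_i)$ for every $i$. By LP duality we then obtain sets $X_i,Y_i$ with $(s_i+1)|X_i|+|Y_i|$ less than $(1-\eps 2^{3r})|H_i||R|/t_r(H_i)$. First make the $X_i$ pairwise disjoint and disjoint from all the $Y_j$, as in the remark after \cref{definition:extremal_construction_bi}. Then give each remaining vertex $v$ its type $I(v)=\{i:v\in Y_i\}$. Set $c_i:=|X_i|$ and $c_I:=\#\{v:I(v)=I\}$, and rescale so that the equalities in \cref{define:lexico_cliquecover_parameter} hold with $m_i\le1$; any slack is absorbed by enlarging weights. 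Now consider two types $I,I'$ with $I\cap I'=\emptyset$. An edge between vertices of these types has some colour $j$, and must then touch $X_j$ or lie in $Y_j$, which is impossible. Since $\delta(R)\ge(1-2r\eps)|R|$, any two such type classes of size at least $2r\eps|R|$ would have an edge between them. So we delete all type classes of size below roughly $2^{r}\cdot 2r\eps|R|$, losing at most $2^{2r}\eps|R|$ vertices. The surviving classes then satisfy the disjointness condition, and the colouring condition holds by construction. This produces $(F,\mathbf c)\in\mathcal F_r$ with $\|\mathbf c\|\ge(1-2^{2r+1}\eps)|R|$. By the homogeneity $t_r(mH_i)=m\,t_r(H_i)$, we also get $\|\mathbf c\|<(1-\eps2^{3r})|R|$. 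These two bounds contradict each other, which proves the theorem; $m_0$ is needed only to absorb rounding terms of order $2^r$.
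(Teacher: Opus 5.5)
Your Steps 1 and 3 match the paper's argument. Those are: edge-supported tilings via edge-contractibility; type classes of vertices outside $\bigcup_i X_i$; deletion of small classes using $\delta(R)\ge(1-2r\eps)|R|$; and comparison with $t_r$ by homogeneity.

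\textbf{The gap is Step 2, which carries essentially the whole content of the theorem.} For every $i$ you need sets $X_i,Y_i$ such that each $i$-edge meets $X_i$ or lies in $Y_i$, \emph{and} $(s_i+1)|X_i|+|Y_i|\le\nu_{H_i}(R_i)$ with no multiplicative loss. This is exactly the paper's \cref{lemma:key}, and you only announce it. Your fallback cannot replace it. Threshold-rounding a fractional dual loses a constant factor in general, as for fractional vertex cover. Nothing confines that loss to $O(\eps)|R|$ vertices, and the conclusion tolerates only a $1-\eps2^{3r}$ factor. The dual is also misdescribed. The load cone of an $i$-edge is generated by $(s_i,1)$ and $(1,s_i)$. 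So the dual is exactly $\min\sum_v y_v$ subject to $s_iy_u+y_v\ge s_i+1$ and $y_u+s_iy_v\ge s_i+1$ for every $i$-edge $uv$. There is no separate ``balanced'' constraint, and the correct value set is $\{0,1,s_i+1\}$.

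Your LP route can be completed, and it would give a proof of \cref{lemma:key} different from the paper's. The paper takes an optimal edge-supported tiling that minimises the number of saturated vertices. It lets $X$ be the saturated vertices with an unsaturated neighbour and $Y$ the other saturated ones. Two local weight-shifts then show that every positive-weight homomorphism meeting $X$ sends exactly $\sigma(H)$ vertices into $X$ and the rest to an unsaturated vertex.

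For your version, take a vertex $y$ of the dual polytope and set $z:=y-1$. A tight edge constraint then reads $z_v=-s_iz_u$ or $z_v=-z_u/s_i$. On each component of the graph of tight edges, either $z\equiv0$ or $z$ never vanishes. In the latter case the tight edge equations are homogeneous in $z$, so full rank forces some constraint $y_{v_0}=0$ to be tight. Feasibility then forces every tight neighbour of a $0$ to have value $s_i+1$, and every tight neighbour of an $s_i+1$ to have value $0$. Hence every vertex of the dual polytope lies in $\{0,1,s_i+1\}^{V(R)}$. An optimal vertex yields $X_i=\{y=s_i+1\}$ and $Y_i=\{y=1\}$, with the covering property and $(s_i+1)|X_i|+|Y_i|=\nu_{H_i}(R_i)$. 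Some argument of this kind must be supplied.

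Two minor points on Step 3. It suffices to delete classes of size at most $2r\eps|R|$, which loses at most $2^{r+1}r\eps|R|\le2^{3r}\eps|R|$ vertices; your stated loss drops a factor $r$. Making the $X_i$ pairwise disjoint is unnecessary, since $\sum_i|X_i|\ge|\bigcup_iX_i|$ only helps.
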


\begin{proof}[Proof of~\cref{theorem:complete_bipartite} via Theorem~\ref{theorem:fractional_bipartite}]
    
    By \cref{proposition:blow_up} we have $R_r(mH_i)\ge t_r(mH_i)-2^r-r=m\cdot t_r(H_i)-2^r-r$ for any~$m\in\mathbb N$.
    Next, we prove the upper bound.

    Pick constants
    $$1/M\ll\eps'\ll1/\ell_0\ll d\ll\eps\ll 1/|H_1|,\dots,1/|H_r|,1/r,$$
    where~$M, \ell_0 \in \mathbb N$.
    Pick an arbitrary $m\in\mathbb N$ with $m\ge M$.
    Let~$G$ be an $r$-edge-coloured $n$-vertex complete graph with $n\ge(1+\eps)\cdot m\cdot t_r(H_i)$.

    We apply \cref{theorem:regularity} to~$G$ with parameters $r,\ell_0 \in \mathbb N$ and $0<\eps',d<1$ to obtain a partition $V_0,V_1, \dots,V_\ell$ with $\ell_0\le \ell\le M$ (recall we picked~$M$ so that $1/M\ll \eps',1/\ell_0$), a spanning subgraph~$G'$ of~$G$ and a reduced graph~$R$.
    Note that $|V_0|\le\eps' |G|$.
    As $\eps' \ll d$, \cref{reducedgraph} implies that  $\delta(R)\ge(1-2rd)|R|$.
    
    By \cref{theorem:fractional_bipartite} (recall $|R|\ge\ell_0\gg 1/d,r,|H_1|,\dots,|H_r|$) we have 
    $$\nu_{H_i}(R_i)=(1-d2^{3r})\cdot\frac{|H_i|}{t_r(H_i)}\cdot|R|,$$ 
    for some $i\in[r]$.
    By \cref{corollary:embedding}, it follows that~$G$ contains a monochromatic $H_i$-tiling in colour $i$ whose vertex set has size at least
    $$(1-\sqrt{\eps'})\cdot\nu_{H_i}(R_i)\cdot\frac{n-|V_0|}{|R|}\ge (1-\sqrt{\eps'})\cdot(1-d2^{3r})\cdot(1-\eps')\cdot\frac{n\cdot|H_i|}{t_r(H_i)}\ge m|H_i|,$$
    where the last inequality holds since $n\ge(1+\eps)\cdot m\cdot t_r(H_i)$ and $\eps\gg\eps',d$.
    In particular, $G$ contains a monochromatic copy of~$mH_i$ in colour~$i$. 
    It follows that $R_r(mH_i)\le(1+\eps)\cdot m\cdot t_r(H_i)$ provided~$m$ is sufficiently large.
\end{proof}

\subsection{Proof of Theorem~\ref{theorem:fractional_bipartite}}
The proof of \cref{theorem:fractional_bipartite} relies on the following structural lemma for edge-contractible graphs.

\begin{lemma}\label{lemma:key}
    Let~$G$ and~$H$ be graphs such that~$H$ is edge-contractible.
    There exist disjoint sets~$X,Y\subseteq V(G)$ such that
    $$\nu_H(G)=|Y|+\frac{|H|}{\sigma(H)}\cdot|X|$$ 
    and there are no edges in~$V(G)\setminus(X\cup Y)$, nor between~$Y$ and~$V(G)\setminus(X\cup Y)$.
\end{lemma}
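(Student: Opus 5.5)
The plan is to use LP duality. First I reduce the fractional $H$-tiling problem on $G$ to a ``fractional edge-loading'' problem, and then I show that the dual of this problem has an optimal solution taking only the values $0$, $1$ and $|H|/\sigma(H)$. Throughout write $h:=|H|$ and $\sigma:=\sigma(H)$. Since $H$ is edge-contractible it is bipartite, so $\sigma\le h/2$.

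\emph{Step 1 (reduction to edge homomorphisms).} Let $w$ be an optimal fractional $H$-tiling of $G$, and let $w_H$ be the fractional $H$-tiling of $H$ given by edge-contractibility. For each $\phi\in\text{Hom}(G:H)$, I replace $w(\phi)$ by the weighting $\psi\mapsto w(\phi)\,w_H(\psi)/\sum_{\psi'}w_H(\psi')$ on the compositions $\phi\circ\psi$. Because $w_H(H)=h$ and $w_H$ respects unit capacities on $H$, this does not increase any vertex load and preserves the total size. Hence $\nu_H(G)$ is attained by a weighting supported on homomorphisms whose image is a single edge of $G$. Such a homomorphism maps the two colour classes of some $2$-colouring of $H$ onto the two ends $u,v$, so the loads on $u$ and $v$ are in ratio between $\sigma:(h-\sigma)$ and $(h-\sigma):\sigma$. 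Both extreme ratios are realised by the $2$-colouring with smallest class of size $\sigma$.

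\emph{Step 2 (the dual).} By \cref{observation:linear_programming} and LP duality, $\nu_H(G)$ equals the minimum of $\sum_v y_v$ over $y\ge0$ satisfying $\sum_{x\in V(H)}y_{\phi(x)}\ge h$ for every $\phi\in\text{Hom}(G:H)$. By Step 1 the primal may be restricted to edge homomorphisms, and every other homomorphism's constraint is implied. Hence it suffices to impose, for every edge $uv\in E(G)$,
\[
\sigma y_u+(h-\sigma)y_v\ge h \quad\text{and}\quad \sigma y_v+(h-\sigma)y_u\ge h .
\]
Moreover we may cap $y_v\le h/\sigma$, since a value of $h/\sigma$ already satisfies every constraint involving $v$.

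\emph{Step 3 (the target structure).} Suppose an optimal dual $y$ takes values only in $\{0,1,h/\sigma\}$, and set $X:=\{y=h/\sigma\}$ and $Y:=\{y=1\}$. An edge with both ends of value $0$ violates the constraints, so $V(G)\setminus(X\cup Y)$ is independent. An edge between value $0$ and value $1$ gives $(h-\sigma)\cdot1<h$, so there are no edges between $Y$ and the rest. The objective is exactly $|Y|+\frac{h}{\sigma}|X|$, which gives the lemma. (As a sanity check, any such $X,Y$ conversely give a feasible dual, because $(h-\sigma)h/\sigma\ge h$ and $h/\sigma\ge 1$ cover the remaining edge types.)

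\emph{Step 4 (main obstacle: integrality of the dual).} I need an optimal vertex of the dual polyhedron with entries in $\{0,1,h/\sigma\}$. I would argue by perturbation, as in the half-integrality proof for fractional vertex cover. Take an optimal $y$ maximising the number of entries in $\{0,1,h/\sigma\}$. Suppose some entry lies strictly inside $(0,1)$ or $(1,h/\sigma)$. Consider the tight constraints; each relates two non-special values via $y_v=(h-\sigma y_u)/(h-\sigma)$ or its symmetric version. These are affine maps that send $1\mapsto1$, $0\mapsto h/(h-\sigma)$ and $h/\sigma\mapsto 0$. I would move each connected component of the ``tight graph'' on non-special vertices along the one-parameter family $y_u\mapsto y_u+\theta\lambda_u$, with $\lambda$ propagated along tight edges by the slopes $-\sigma/(h-\sigma)$ or $-(h-\sigma)/\sigma$.

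The objective is linear in $\theta$, so for one sign of $\theta$ it does not increase. Pushing until a new entry hits $0$, $1$ or $h/\sigma$, or until a new constraint becomes tight, contradicts the choice of $y$. The delicate point is consistency: a cycle of tight edges could force $\lambda$ to be incompatible. In that case the component's values are forced; I expect that these forced values must be special, because the affine maps above fix $1$ and swap $0$ and $h/\sigma$ in the relevant sense, but this case check is where I expect most of the work to lie.
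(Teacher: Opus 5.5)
Your Steps 1--3 are correct. Averaging against the edge-contractibility weighting preserves loads and size, since it necessarily has $w_H(x)=1$ for every $x$. The dual reduces to the two extreme constraints per edge because $a y_u+(h-a)y_v$ is linear in $a\in[\sigma,h-\sigma]$. A $\{0,1,h/\sigma\}$-valued optimal dual then yields exactly the required $X,Y$.

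However, your sanity check in Step 3 also shows that the existence of such a dual optimum is \emph{equivalent} to the lemma. So all the content lies in Step 4, which you do not carry out. Besides the cycle case you flag, the sketch has two concrete defects:
\begin{itemize}
\item You propagate $\lambda$ only along tight edges between non-special vertices. If a non-special $y_u$ were tight with a special neighbour, shifting the component of $u$ would violate that constraint.
\item Ending the push when a new constraint between two non-special vertices becomes tight does not increase the number of special entries. So it does not contradict your choice of $y$.
\end{itemize}

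These defects can be repaired as follows.
\begin{itemize}
\item[(a)] Non-special coordinates are tight only with each other. If $y_v=1$, either tight constraint on $uv$ forces $y_u=1$. If $y_v=0$, tightness forces $y_u=h/\sigma$ or $y_u=h/(h-\sigma)$; the latter is feasible only when $\sigma=h/2$, where it equals $h/\sigma$. If $y_v=h/\sigma$, tightness forces $y_u=0$. Non-special coordinates are also never tight with $y\ge 0$ or with the cap.
\item[(b)] Each tight-edge relation is an affine bijection fixing $1$. Going round a cycle of the tight graph therefore gives an affine map fixing $1$. If its slope is not $1$, its unique fixed point is $1$, contradicting non-specialness. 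Otherwise it is the identity, and your $\lambda$ is consistent.
\item[(c)] Replace the counting argument by taking $y$ to be an optimal \emph{vertex} of the dual polyhedron, which is pointed since $y\ge 0$. By (a) and (b), $\lambda$ propagated on one component of non-special coordinates and set to zero elsewhere is a nonzero vector annihilated by every tight constraint. This contradicts the tight constraints having rank $|G|$.
\end{itemize}
Completed this way, your route proves the stronger Nemhauser--Trotter-type fact that every vertex of the dual is $\{0,1,h/\sigma\}$-valued.

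The paper instead stays on the primal side and needs no duality. It takes an optimal edge-supported tiling minimising the number of saturated vertices. It lets $X$ (resp.\ $Y$) be the saturated vertices with some (resp.\ no) unsaturated neighbour. Two local exchange moves then show that every weighted homomorphism meeting $X$ sends exactly $\sigma(H)$ vertices into $X$ and the rest outside $X\cup Y$, which gives the identity by direct counting.
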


\begin{proof}
    Pick a fractional $H$-tiling $w$ in~$G$ such that $w(G)=\nu_H(G)$. 
    Since~$H$ is edge-contractible, we may assume that if $w(\phi)>0$ then the the image $\phi$ of consists of precisely~$2$ elements, that is, $\phi$ maps $H$ to an edge in $G$.

    Let $X$ be the set of vertices~$v$ in~$G$ such that $w(v)=1$ and there exists a vertex~$u$ with $uv\in E(G)$ and $w(u)<1$.
    Let~$Y$ be the set of vertices~$v$ in~$G$ such that $w(v)=1$ and if $uv\in E(G)$ then $w(u)=1$.
    We assume~$w$ is chosen so that~$|X|+|Y|$ is minimal. 
    Pick~$\eps>0$ sufficiently small.

    Firstly, the set $V(G)\setminus(X\cup Y)$ is independent.
    Indeed, if there is an edge $xy\in V(G)\setminus(X\cup Y)$ then $w(x),w(y)<1$ and so increasing the weight of any homomorphism from~$H$ to~$xy$ increases the total weight of the fractional $H$-tiling, a contradiction.
    Furthermore, there are no edges between~$Y$ and~$V(G)\setminus(X\cup Y)$ by the definition of~$Y$.
    Thus, all edges incident to~$V(G)\setminus(X\cup Y)$ must be incident to~$X$.

    Since any homomorphism~$\phi\in\text{Hom}(G:H)$ with, say, $\text{Im}(\phi)=\{x,y\}$ satisfies $\sigma(H)\le\phi^{-1}(x),\phi^{-1}(x)\le|H|-\sigma(H)$, by the previous observation we conclude that
    $$\sum_{v\in V(G)\setminus(X\cup Y)}w(v)\le\frac{|H|-\sigma(H)}{\sigma(H)}\cdot|X|.$$
    It follows that 
    $$\nu_H(G)\le|X|+|Y|+\frac{|H|-\sigma(H)}{\sigma(H)}\cdot|X|=|Y|+\frac{|H|}{\sigma(H)}\cdot|X|.$$
    It remains to prove a corresponding lower bound.

    Suppose there exists $\phi_0\in\text{Hom}(G:H)$ such that $w(\phi_0)>0$ and the image $\{x,y\}$ of~$\phi_0$ intersects~$X$, say $x\in X$.
    If~$y\in X\cup Y$ then pick $z\in V(G)\setminus(X\cup Y)$ with $xz\in E(G)$ and~$w(z)<1$ (such~$z$ exists by the definition of~$X$).
    Let 
    $$\phi_1(v):=
    \begin{cases}
        x \text{ if $\phi_0(v)=x$}\\
        z \text{ if $\phi_0(v)=y$}
    \end{cases}$$
    and
    $$w'(\phi):=
    \begin{cases}
        w(\phi) &\text{ if $\phi\neq\phi_0,\phi_1$}\\
        w(\phi)-\eps &\text{ if $\phi=\phi_0$}\\
        w(\phi)+\eps &\text{ if $\phi=\phi_1$}.
    \end{cases}$$
    Then~$w'$ is a fractional $H$-tiling of~$G$ (for~$\eps$ sufficiently small).
    We have $w'(G)=w(G)=\nu_H(G)$.
    Furthermore, the set of vertices in $G$ with weight $1$ with respect to $w'$ is precisely $X\cup Y\setminus\{y\}$ (for $\eps$ sufficiently small).
    This contradicts the minimality of~$|X|+|Y|$.

    Hence, we have $y\in V(G)\setminus(X\cup Y)$.
    If $|\phi_0^{-1}(x)|>\sigma(H)$ then take a bipartition of~$H$ with parts consisting of~$\sigma(H)$ and~$|H|-\sigma(H)$ vertices respectively, and let~$\phi_2$ be the homomorphism sending the $\sigma(H)$-vertex class  of~$H$ to~$x$ and the $(|H|-\sigma(H))$-vertex class of~$H$ to~$y$.
    Let
    $$w''(\phi):=
    \begin{cases}
        w(\phi) &\text{ if $\phi\neq\phi_0,\phi_2$}\\
        w(\phi)-\eps &\text{ if $\phi=\phi_0$}\\
        w(\phi)+\eps &\text{ if $\phi=\phi_2$}.
    \end{cases}$$
    Then~$w''$ is a fractional $H$-tiling of~$G$ (for~$\eps$ sufficiently small).
    We have $w''(G)=w(G)=\nu_H(G)$.
    Furthermore, the set of vertices in $G$ with weight $1$ with respect to $w''$ is precisely $X\cup Y\setminus\{x\}$ (for $\eps$ sufficiently small).
    Again, this contradicts the minimality of~$|X|+|Y|$.

    \medskip

    We conclude that $y\in V(G)\setminus(X\cup Y)$ and~$|w_0^{-1}(x)|=\sigma(H)$.
    This holds for any homomorphism whose image intersects $X$, that is, such homomorphism sends $\sigma(H)$ vertices to $X$ and $|H|-\sigma(H)$ vertices to $V(G)\setminus(X\cup Y)$.
    Thus, we have
    $$\nu_H(G)=w(G)\ge |Y|+|X|+\frac{|H|-\sigma(H)}{\sigma(H)}\cdot|X|=|Y|+\frac{|H|}{\sigma(H)}\cdot|X|$$ 
    and so equality holds, as required. 
\end{proof}

\begin{proof}[Proof of Theorem~\ref{theorem:fractional_bipartite}]
    Let~$R$ be an $r$-edge-coloured graph with $\delta(R)\ge(1-2r\eps)|R|$.
    By \cref{lemma:key}, there are sets $X_1,\dots,X_r,Y_1,\dots,Y_r\subseteq V(G)$ such that for every~$i\in[r]$
    $$\nu_{H_i}(R_i)=|Y_i|+\frac{|H_i|}{\sigma(H_i)}\cdot|X_i|$$ 
    and there are no edges coloured with~$i$ in $V(G)\setminus(X_i\cup Y_i)$, nor between~$Y_i$ and~$X_i\cup Y_i$. 
    Therefore, every edge in~$G$ coloured with~$i$ is either incident to~$X_i$ or lies completely in~$Y_i$.

    Let $X:=X_1\cup\dots\cup X_r$ and $Z:=V(R)\setminus(X\cup Y_1\cup\dots\cup Y_r)$.
    For any $I,I'\subseteq[r]$ with $I\cap I'=\emptyset$, observe that all pairs of distinct vertices~$(x,y)$ with
    $$x\in Z\cup\left(\bigcap_{i\in I}(Y_i\setminus X)\right)\setminus \bigcup_{i\notin I}Y_i\quad\text{ and }\quad y\in Z\cup\left(\bigcap_{i\in I'}(Y_i\setminus X)\right)\setminus \bigcup_{i\notin I'}Y_i$$
    are not adjacent: the pair~$(x,y)$ does not intersect~$X$ and does not lie in any~$Y_i$.
    Since $\delta(R)\ge(1-2r\eps)|R|$, it follows that one of these two sets has size at most~$2r\eps|R|$.
    Select such set.

    \smallskip

    Let~$R'$ denote the graph obtained by deleting all selected vertex sets from~$R$.
    Since there are at most~$2^{2r}$ choices for~$I$ and~$I'$, we deleted at most~$(\eps\cdot2^{3r})|R|$ vertices.
    Let~$Y_i':=V(R')\cap Y_i\setminus X$ for every $i\in[r]$.
    Since all vertices in~$Z$ have been removed from~$R$, we conclude that every vertex in~$R'$ lies in some~$X_i$ or~$Y'_i$.
    Furthermore, if $I,I'\subseteq[r]$ are disjoint then
    $$\bigcap_{i\in I}Y'_i\setminus\bigcup_{i\notin I}Y'_i=\emptyset\quad\text{ or }\quad\bigcap_{i\in I'}Y'_i\setminus\bigcup_{i\notin I'}Y'_i=\emptyset$$
    by the definition of~$R'$.
    Let~$c_i:=|X_i|$ for every $i\in[r]$ and 
    $c_I:=|\bigcap_{i\in I} Y'_i\setminus(\bigcup_{i\notin I} Y'_i)|$
    for every $I\subseteq [r]$.
    Hence we have $|R'|=||{\bf c}||$ and
    $$\frac{|H_i|}{\sigma(H_i)}\cdot c_i+\sum_{i\in I\subseteq[r]}c_I=\frac{|H_i|}{\sigma(H_i)}\cdot|X_i| +|Y_i'|$$
    for every $i\in[r]$.
    Furthermore, if $I,I'\subseteq[r]$ are disjoint then $c_I=0$ or $c_{I'}=0$.
    
    By writing~$|R'|$ as
    $$|R'|=t_r\left(\frac{|R'|}{t_r(H_i)}H_i\right),$$
    it follows from \cref{define:lexico_cliquecover_parameter} that
    $$|Y_i'|+\frac{|H_i|}{\sigma(H_i)}\cdot|X_i|\ge\frac{|R'|\cdot|H_i|}{t_r(H_i)}$$
    for some $i\in[r]$.
    We have
    $$\nu_{H_i}(R_i)=\frac{|H_i|}{\sigma(H_i)}\cdot |X_i|+|Y_i|\ge\frac{|H_i|}{\sigma(H_i)}\cdot |X_i|+|Y_i'|\ge\frac{(1-\eps2^{3r})\cdot|R|\cdot|H_i|}{t_r(H_i)},$$
    as required.
\end{proof}

\section{Robust versions of  Theorems~\ref{theorem:r=3} and~\ref{theorem:chi>=r}}\label{sec:robust}
In this section we prove two results about monochromatic $H$-tilings in extremely dense (but non-complete) graphs; these results are then applied in the proof of
Theorem~\ref{randomramsey2} in Section~\ref{sec:random}.

\begin{lemma}\label{robust_chi}
Let $r\geq 2$ and~$H$ be a graph with $\chi(H)\ge r$.
Given any $\eta >0$, there exist $\delta >0$ such that the following holds. 
Provided $N>0$ is sufficiently large, if~$G$ is an $r$-edge-coloured graph on~$n\le N$ vertices with $e(G) \geq \binom{n}{2}-\frac{\delta N^2}{2}$, then~$G$ contains a monochromatic $H$-tiling whose vertex set has size at least 
$$\frac{|H|n}{\max\{||{\bf c}||:(F,{\bf c}) \text{ is an $(H,r)$-gadget}\}}-\eta N.$$ 
\end{lemma}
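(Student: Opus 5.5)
We will mimic the proof of \cref{theorem:chi>=r'} (the inductive/absorbing argument with gadget blow-ups), but with the induction replaced by a greedy iteration, since the host graph is no longer complete. Write $\kappa:=\max\{||{\bf c}||:(F,{\bf c})\text{ is an $(H,r)$-gadget}\}$. Fix $m_0$ as in the proof of \cref{theorem:chi>=r'} (so every gadget $F$ has a rational capacity ${\bf q_F}$ and an integer $m_F\le m_0$ from \cref{proposition:gadget_induction}), and choose constants $\delta\ll 1/\ell\ll 1/t\ll \eta,1/m_0,1/r,1/|H|$. If $n\le \eta N$ there is nothing to prove, so assume $n>\eta N$; then $G$ misses at most $\delta N^2/2\le (\delta/\eta^2) n^2/2$ edges, so $G$ is very dense relative to its own order.

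\emph{Step 1 (removing gadget blow-ups).} Greedily select vertex-disjoint subgraphs $B^1,B^2,\dots$ of $G$, each a \emph{complete} blow-up $B'$ of some $(H,r)$-gadget $(F,{\bf q_F})$ with $|B'|=m_F||{\bf q_F}||\le \kappa m_F$ that contains a monochromatic $m_FH$ in every colour. These are found as in \cref{theorem:chi>=r'}: in the leftover graph $G^*$ (as long as $|G^*|\ge \eta N/2$) apply Tur\'an plus Ramsey repeatedly to get a $K_\ell$-tiling by monochromatic cliques covering all but $\eta N/4$ vertices of $G^*$; if every colour is represented by at least $\eta N/(4r\ell)$ of these cliques, then by density (at most $\delta N^2$ non-edges, $\delta\ll 1/\ell$) we can pick one clique of each colour with all $\binom r2$ pairs complete, and by bipartite Ramsey and \cref{prop:mean_value} (using $\chi(H)\ge r$) extract a blow-up $B$ of an $(H,r)$-gadget with parts of size $t$, and then $B'$ via \cref{proposition:gadget_induction}. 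Stop when this fails.

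\emph{Step 2 (counting).} Say we collected $k$ blow-ups, $B^1,\dots,B^k$, covering $\sum_j m_{F_j}||{\bf q}_{F_j}||\le \kappa\sum_j m_{F_j}$ vertices, and each colour $i$ has an $H$-tiling in $\bigcup_j B^j$ with $|H|\sum_j m_{F_j}$ vertices. If the leftover $G^*$ has fewer than $\eta N/2$ vertices, then $\sum_j m_{F_j}\ge (n-\eta N/2)/\kappa$, giving a monochromatic tiling (any colour) of size $\ge |H|n/\kappa-\eta N$. Otherwise some colour, say $i_0$, is missing from all but $\eta N/(4r\ell)$ cliques of the $K_\ell$-tiling of $G^*$, so among the cliques in the other $r-1$ colours one colour $i$ covers at least $(|G^*|-\eta N/2)/(r-1)$ vertices of $G^*$, each clique contributing a perfect $H$-tiling (as $|H|\mid \ell$). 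Combined with the colour-$i$ tiling inside the $B^j$'s, we get a colour-$i$ tiling of size at least $|H|\sum_j m_{F_j}+(|G^*|-\eta N/2)/(r-1)$. Since $\kappa\ge (r-1)|H|+c_r>(r-1)|H|$ by \cref{remark:chi>=r}, we have $|H|/\kappa\le 1/(r-1)$ and $|H|\sum_j m_{F_j}\ge (|H|/\kappa)(n-|G^*|)$, so the total is at least $|H|n/\kappa-\eta N$.

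The main obstacle is Step 1's density bookkeeping: the leftover graph shrinks while the number of missing edges stays bounded by $\delta N^2$ in terms of $N$, so we must only run the clique-finding while $|G^*|\ge\eta N/2$ and ensure $\delta\ll \eta^2/\ell^2$ so that the Tur\'an/Ramsey step and the ``all pairs complete between chosen cliques'' step (via counting non-edges against $(\eta N/(4r\ell))^2$ clique pairs) still go through; I would verify these two estimates carefully first, as in the proof of \cref{robust2colours}.
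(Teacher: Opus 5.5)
Your proposal is correct and is essentially the paper's argument. The paper phrases your greedy removal of complete gadget blow-ups as an induction on $n$, keeping the non-edge budget $\delta N^2/2$ fixed in terms of $N$, and uses thresholds $\delta^{1/3}n$ where you use $\eta N/4$ and $\eta N/(4r\ell)$. The steps otherwise coincide: a Tur\'an plus Ramsey monochromatic $K_\ell$-tiling; one clique per colour with complete cross pairs, found by counting non-edges; bipartite Ramsey with \cref{prop:mean_value} and \cref{proposition:gadget_induction}; and the pigeonhole fallback using $\kappa\ge (r-1)|H|$.
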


The proof is very similar to that of \cref{theorem:chi>=r'}, with appropriate adjustments.

\begin{proof}[Proof of \cref{robust_chi}]
    For brevity, we write $C:=\max\{||{\bf c}||:(F,{\bf c}) \text{ is an $(H,r)$-gadget}\}$.
    Note that $C\ge (r-1)|H|$ by \cref{remark:chi>=r}.

    By \cref{lemma:gadget_induction}, for each $(H,r)$-gadget $F$ we can select a capacity~${\bf q_F}$ of rational entries such that $(F,{\bf q_F})$ is an $(H,r)$-gadget.
   Let $m_F\in\mathbb N$ be the integer output by \cref{proposition:gadget_induction} on input $(F,{\bf q_F})$; set $m_0:=\max\{m_F:F\text{ is an $(H,r)$-gadget}\}$.
    
    Define constants
    \begin{align}\label{hier11}
    0<1/N\ll \delta \ll 1/\ell \ll  1/t
    \ll 1/m_0, 1/r,1/|H|,\eta,
    \end{align}
    where $\ell,t \in \mathbb N$ and~$\ell$ is divisible by $|H|$.
    
    We proceed by induction on~$n$.
    Since $C\ge|H|$, the base cases when $n\le\eta N$ are trivially true as we can simply take an empty $H$-tiling.
    Thus, suppose $n\ge\eta N$ and let~$G$ be an $r$-edge-coloured $n$-vertex  graph with $e(G)\ge\binom{n}{2}-\frac{\delta N^2}{2}$.

    Since $R_r(K_\ell)\le r^{r\ell}$,
    by repeatedly applying Tur\'an's theorem
    one obtains 
     a $K_\ell$-tiling $\mathcal K$ in~$G$ consisting of monochromatic copies of $K_\ell$ which cover all but at most ${\delta} ^{1/3} n$ vertices of~$G$.
    Indeed, if there are $s\ge {\delta} ^{1/3} n$ vertices uncovered
   then they span a subgraph of $G$ with at least $\binom{s}{2}-\frac{\delta N^2}{2}\ge(1-1/r^{r\ell})s^2/2$ edges; 
    the last inequality follows as $n \geq \eta N$ and by (\ref{hier11}).
    Hence this subgraph contains a copy of~$K_{r^{r\ell}}$ and so a monochromatic copy of~$K_\ell$.

    Suppose that, for some integer $g\ge {\delta} ^{1/3} n$, the collection~$\mathcal K$ includes~$g$ cliques in each colour.
    There are $g^r$ possible ways of picking a copy of~$K_\ell$ in each colour from this collection.
    If, for every such choice, the union of the picked cliques spans an induced subgraph of $G$ with a missing edge, then we have
    $$\delta ^{2/3} n^2\le\frac{g^r}{g^{r-2}}\le |E(\bar G)|\le\frac{\delta N^2}{2}
    \leq \frac{\delta n^2}{2 \eta ^2} ,$$
    where we used the fact that a missing edge is counted at most $g^{r-2}$ times.   
    This is a contradiction by (\ref{hier11}), hence we can pick a copy of~$K_\ell$ in each colour such that the subgraph spanned by the union of these cliques is complete.      
    Say the vertex set of the clique in colour $i$ is  $V_i$ for each $i \in [r]$.   
    By the bipartite version of Ramsey's theorem, since $\ell\gg t$, we can find a set $V_i'\subseteq V_i$ for each $i\in[r]$ such that
    \begin{itemize}
        \item $G[V_i',V_j']$ is a monochromatic complete bipartite graph for every $i,j\in[r]$ with $i\neq j$ and 
        \item $|V_i'|=t$ for every $i\in[r]$.
    \end{itemize}
    Let $B:=G[V_1'\cup\dots\cup V_r']$.
    Since $\chi(H)\ge r$ and $B[V_i']$ is coloured with~$i$ for every $i\in[r]$, by \cref{prop:mean_value} $B$ is a blow-up of an $(H,r)$-gadget $F$.
    As $t \geq m_0 \cdot |H|$,
    \cref{proposition:gadget_induction} implies  that~$B$ contains a blow-up~$B'$ of~$F$ such that $|B'|=m_F\cdot||{\bf q_F}||\le m_F\cdot C$ and~$B'$ contains a monochromatic copy of~$m_FH$ in each colour. 

    Let~$G':=G\setminus V(B')$.
    We have $|G'|=n-|B'|$ and $e(G')\geq\binom{n-|B'|}{2}-\frac{\delta N^2}{2}$.
    By the inductive hypothesis, $G'$ contains a monochromatic $H$-tiling covering at least $\frac{|H|(n-|B'|)}{C}-\eta N$ vertices.
    Since~$B'$ contains a monochromatic copy of~$m_FH$ in the same colour, we conclude there is a monochromatic $H$-tiling in~$G$ covering at least
    $$\left(\frac{|H|(n-|B'|)}{C}-\eta N\right)+m_F|H|\ge\left(\frac{|H|(n-|B'|)}{C}-\eta N\right)+\frac{|B'||H|}{C}=\frac{|H|n}{C}-\eta N$$
    vertices in~$G$, as required.
 
    Thus, we may now assume that~$\mathcal K$ contains at most ${\delta} ^{1/3}n$ copies of~$K_\ell$ in some colour.
    By the pigeonhole principle, $\mathcal K$ contains at least $\frac{|\mathcal K|-{\delta}^{1/3}n}{(r-1)}$ copies of~$K_\ell$ in the same colour.
    Since~$\ell$ is divisible by~$|H|$, each copy of~$K_\ell$ can be partitioned into $\ell/|H|$ copies of~$H$.
    Hence, there exists a monochromatic $H$-tiling in $G$ whose vertex set has size at least
    $$\ell\cdot\frac{|\mathcal K|-{\delta}^{1/3}n}
{(r-1)}
\ge\frac{(n-{\delta}^{1/3} n)-\ell{\delta}^{1/3}n}{(r-1)}
\stackrel{(\ref{hier11})}{\ge} \frac{n}{r-1}-\eta n\ge\frac{|H|n}{C}-\eta N,$$
    where the last inequality follows from $C\ge |H|(r-1)$ and $n\le N$.
\end{proof}

\begin{lemma}\label{robust_3}
Let~$H$ be a graph.
Given any $\eta >0$, there exist $\delta >0$ such that the following holds.
Provided $N>0$ is sufficiently large, if~$G$ is a $3$-edge-coloured graph on~$n\le N$ vertices with $e(G) \geq \binom{n}{2}-\frac{\delta N^2}{2}$, then~$G$ contains a monochromatic $H$-tiling whose vertex set has size at least 
$$\frac{|H|n}{\max\{||{\bf c}||:(F,{\bf c}) \text{ is an $(H,3)$-gadget}\}}-\eta N.$$ 
\end{lemma}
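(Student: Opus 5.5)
The plan is to adapt the proof of \cref{theorem:r=3'} in the same way that \cref{robust_chi} adapts the proof of \cref{theorem:chi>=r'}. We write $C:=\max\{||{\bf c}||:(F,{\bf c}) \text{ is an $(H,3)$-gadget}\}$. If $\chi(H)\ge3$, the statement is exactly \cref{robust_chi} with $r=3$, so we assume $H$ is bipartite. Then \cref{remark:r=3} gives $C\ge 3|H|/2$. Moreover, $C\ge(3|H|/2+c_0)$ when $\sigma(H)=|H|/2$, and $C>|H|+\sigma(H)$ in general. We fix rational gadgets $(F,{\bf q_F})$ via \cref{lemma:gadget_induction} and the integers $m_F$, $m_0$ via \cref{proposition:gadget_induction}. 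The constant hierarchy is that of the proof of \cref{theorem:r=3'}, with $\delta$ placed below $1/M$ and $1/N$ placed below $\delta$. We argue by induction on $n$. The base case $n\le\eta N$ is trivial because the target is then negative.

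For the inductive step, the key observation is that \cref{claim:inductive_step} has a robust analogue. Suppose $B\subseteq G$ contains a monochromatic $m'H$ in each colour, with $|B|\le C m'$ and $|B|$ bounded. Deleting $B$ keeps the hypothesis $e(G')\ge\binom{n-|B|}{2}-\delta N^2/2$. Induction then gives a monochromatic tiling in $G'$ covering $|H|(n-|B|)/C-\eta N$ vertices. Adding the copy of $m'H$ in the same colour gains at least $|B||H|/C$ vertices, exactly as in the proof of \cref{robust_chi}. So it suffices to find in $G$ a bounded-size blow-up (parts of size $q$) of a rainbow triangle, or of a $K_4$ with a spanning rainbow star, as in Claims~\ref{claim:rainbow_triangle} and~\ref{claimK}. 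Those claims only used that the blow-up is complete, together with Ramsey arguments inside it. The one exception is the final step of \cref{claimK}, which uses completeness of $G$ to pass from a star blow-up to a $K_4$ blow-up. Here we instead use that $G$ misses only $\delta N^2\le \delta n^2/\eta^2$ edges, so a counting or averaging argument inside the large regular pairs produces a complete such configuration.

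If no such configuration exists, we apply \cref{theorem:regularity} to $G$. Since $G$ has at most $\delta n^2/\eta^2$ non-edges and $\delta\ll d$, the reduced graph $R$ still satisfies $e(R)\ge(1-\sqrt{\delta'})\binom{|R|}{2}$. After deleting the few low-degree clusters we may assume $\delta(R)\ge(1-6d)|R|$, at the cost of $O(d)|R|$ clusters. \cref{claimR} goes through verbatim, giving the partition $W_{12},W_{13},W_{23}$ with $|W_{23}|\le 6d|R|$. We then run Cases 1, 2, 3a and 3b unchanged. Each case yields a monochromatic $H$-tiling in $R$ covering either $(1+d)2|R|/3$ vertices, or $(1-3/t)|H||R|/(|H|+\sigma(H))$ vertices (Case 2, via \cref{robust2colours}), or $2(1-2\sqrt d)|R|/3$ vertices (Case 3b). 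By \cref{corollary:embedding}, this gives a tiling in $G$ of size at least $(1-\eta/2)n\cdot\min\{2/3,\ |H|/(|H|+\sigma(H))\}$. This is at least $|H|n/C-\eta N$, because $C\ge 3|H|/2$, and because of the stated slack between $C$ and $|H|+\sigma(H)$ (or $3|H|/2$ when $\sigma(H)=|H|/2$).

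The main obstacle is the bookkeeping in the transfer to non-complete $G$. First, the regularity application must be done at scale $n\ge\eta N$ so that $\delta N^2$ non-edges become a $\delta/\eta^2$ fraction. Second, the Ramsey and bipartite-Ramsey steps inside \cref{claimK}, which previously relied on $G$ being complete, must be redone. For the latter, I would first try choosing $q_1$-blow-ups from \cref{corollary:embedding} many times disjointly and averaging to find one whose cross pairs between non-adjacent reduced vertices are complete, since $\delta\ll 1/q_1$ makes such a choice possible.
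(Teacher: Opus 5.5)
Your skeleton matches the paper's. You use induction on $n$ with a robust version of Claim~\ref{claim:inductive_step}, and the rainbow-triangle and rainbow-star-$K_4$ claims with the blow-up now required to span a complete subgraph of $G$. Then come regularity, Claim~\ref{claimR}, Cases 1--3b unchanged, and the final comparison via $C\ge 3|H|/2$ and $C\ge|H|+\sigma(H)$. Applying regularity to $G$ rather than to a pruned subgraph is also harmless.

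The gap is in the one genuinely new step. You must turn a rainbow star in $R$ into a blow-up of $K$ as in Claim~\ref{claimK_robust} that spans a \emph{complete} subgraph of $G$. Completeness is needed both for Ramsey inside the parts and for bipartite Ramsey between the leaf parts, so the star half of Claim~\ref{claimR} does not go through verbatim. Your concrete plan is to take the vertex-disjoint $K_{1,3}(q_1)$-blow-ups produced by \cref{corollary:embedding} and average, and this fails. That tiling has only $\Theta(n/(Mq_1))$ tiles, while $G$ may miss $\delta N^2/2=\Theta(n^2)$ edges. Disjointness only says each non-edge lies in at most one tile, so linearly many non-edges already suffice to meet every tile, for instance a matching of non-edges inside a leaf cluster. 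Moreover, \cref{corollary:embedding} gives no control over pairs inside a part or between two leaf parts. The condition $\delta\ll 1/q_1$ is on the wrong scale: averaging helps only over a family of candidate configurations in which each fixed pair of vertices lies in an $O(n^{-2})$-fraction of the members.

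So you need such a spread-out family. One option is a counting lemma. All clusters have size $|V_c|\ge(1-\eps)n/M$, and the three $[\eps,d]$-regular pairs of the star contain $\Omega_{d,q_1}(|V_c|^{4q_1})$ ordered canonical copies of $K_{1,3}(q_1)$. At most $16q_1^2|V_c|^{4q_1-2}\cdot\delta n^2/(2\eta^2)=O(q_1^2\delta M^2\eta^{-2})\,|V_c|^{4q_1}$ canonical $4q_1$-tuples contain a non-edge. Hence some copy spans a complete subgraph of $G$ once $\delta$ is small in terms of $d,q_1,M,\eta$.

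The paper avoids the counting lemma. It first deletes the vertices incident to at least $2\delta n/(\eps\eta^2)$ non-edges, so that random $g$-subsets of the four clusters span a complete graph with probability at least $9/10$. It then uses regularity and Chernoff to keep at least $g/2$ centre vertices with at least $dg/4$ colour-$i$ neighbours in the $i$th random subset. Next it picks random $q_1$-subsets $S_i$ of the leaf subsets and shows their common neighbourhood $S_0$ has expected size at least $q_1$. It finishes with bipartite Ramsey. With either argument in place of the disjoint-averaging step, the rest of your proposal goes through.
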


As before, we closely follow the proof of \cref{theorem:r=3'}.

\begin{proof}[Proof of \cref{robust_3}]
    For brevity, we write $C:=\max\{||{\bf c}||:(F,{\bf c}) \text{ is an $(H,r)$-gadget}\}$.
    If $H$ is non-bipartite, i.e., $\chi(H)\ge 3$, we are done by 
    \cref{robust_chi}.
    Therefore, suppose $H$ is bipartite.
    Note that \cref{remark:r=3} implies $C\ge 3|H|/2$. 

    \smallskip

    By \cref{lemma:gadget_induction}, for each $(H,3)$-gadget $F$ we can select a capacity ${\bf q_F}$ of rational entries such that $(F,{\bf q_F})$ is an $(H,3)$-gadget.
    Let $m_F\in\mathbb N$ be the parameter as in \cref{proposition:gadget_induction} with respect to $(F,{\bf q_F})$ and set $m_0:=\max\{m_F:F\text{ is an $(H,3)$-gadget}\}$.
    Pick constants 
    $$1/N\ll\delta\ll1/g\ll1/M\ll\eps\ll1/\ell_0\ll d\ll1/q_1 \ll 1/q \ll1/t\ll 1/|H|,1/m_0,\eta$$ 
    where~$N,g,M,\ell_0,q_1, q, t \in \mathbb N$ are positive integers, and~$q$ and~$t$ are divisible by $|H|$.
    
    We proceed by induction on~$n$.
    Since $C\ge|H|$, the base case $n\le\eta N$ is trivially true as we can simply take an empty $H$-tiling.
    Thus, suppose $n\ge\eta N$ and let~$G$ be a $3$-edge-coloured $n$-vertex complete graph with $e(G)\ge\binom{n}{2}-\frac{\delta N^2}{2}$.    
    For the inductive step, we will use the following claim.
    


    \begin{claim}\label{claim:inductive_step_robust}
    If there exists a subgraph~$B$ of~$G$ such that, for some~$m'\in\mathbb N$,
    \begin{itemize}
        \item $B$ contains a monochromatic copy of~$m'H$ in each colour and
        \item $|B|\le C\cdot m'$,
    \end{itemize}
    then~$G$ contains a monochromatic $H$-tiling whose vertex set has size at least $|H|n/C-\eta N$.  
    In particular, the second bullet point holds if $|B|\le 3|H|m'/2$.
    \end{claim}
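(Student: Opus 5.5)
The plan is to mirror the proof of \cref{claim:inductive_step} from the proof of \cref{theorem:r=3'}, with the inductive hypothesis now on $n$ (the number of vertices) in place of $m$. Set $G':=G\setminus V(B)$, so that $|G'|=n-|B|\le N$. The first step is to check that $G'$ still satisfies the hypothesis of \cref{robust_3}. Deleting vertices only removes edges, and the number of missing edges inside $G'$ is at most the number of missing edges in $G$, which is at most $\frac{\delta N^2}{2}$. Hence
$$e(G')\ge\binom{n-|B|}{2}-\frac{\delta N^2}{2}.$$
Since $|B|\ge m'|H|\ge 1$, we have $|G'|<n$. So the inductive hypothesis applies to $G'$, with the same $N$ and $\delta$. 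If $|G'|\le\eta N$, we are in the trivial base case. In either case, $G'$ contains a monochromatic $H$-tiling $\mathcal T$, say in colour $i$, covering at least $|H|(n-|B|)/C-\eta N$ vertices.

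The second step is to extend $\mathcal T$ inside $B$. By the first bullet point, $B$ contains a copy of $m'H$ in colour $i$. This copy is vertex-disjoint from $\mathcal T$, because $V(\mathcal T)\subseteq V(G')$. Their union is therefore a monochromatic $H$-tiling of colour $i$ in $G$ covering at least
$$\frac{|H|(n-|B|)}{C}-\eta N+m'|H|\ge\frac{|H|(n-|B|)}{C}-\eta N+\frac{|H||B|}{C}=\frac{|H|n}{C}-\eta N$$
vertices. Here the inequality is exactly the second bullet point, rewritten as $m'|H|\ge |H||B|/C$. If the bound $|H|(n-|B|)/C-\eta N$ for $G'$ is negative, the estimate still holds, since then the empty tiling in $G'$ suffices and the same arithmetic goes through.

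For the ``in particular'' part, recall that $H$ is bipartite here and that \cref{remark:r=3} gives $C\ge 3|H|/2$. Hence $|B|\le 3|H|m'/2\le Cm'$.

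The only point needing care is the induction framework itself: the inductive statement must be over all $n\le N$ with $N$ and $\delta$ fixed, so that the deficiency bound $\frac{\delta N^2}{2}$ is inherited by $G'$ without change. This is exactly how the lemma is phrased, so no real obstacle arises.
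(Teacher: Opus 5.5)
Your proposal is correct and is essentially the paper's own argument. As you do, the paper deletes $V(B)$, applies the induction hypothesis on $n$ (with $N,\delta$ fixed) to $G'$, and adds the copy of $m'H$ of the matching colour from $B$, using $m'|H|\ge |H||B|/C$. The ``in particular'' part follows from $C\ge 3|H|/2$ via \cref{remark:r=3}, exactly as you note.
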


    \begin{proofclaim}
        Let~$G':=G\setminus V(B)$.
        We have $|G'|=n-|B|$ and $e(G')\geq\binom{n-|B|}{2}-\frac{\delta N^2}{2}$.
        By the inductive hypothesis, $G'$ contains a monochromatic $H$-tiling covering
        $|H|(n-|B|)/C-\eta N$ vertices.
        Since~$B$ contains a monochromatic copy of~$m'H$ in the same colour, we conclude there is a monochromatic $H$-tiling in~$G$ whose vertex set has size at least
        $$\left(\frac{|H|(n-|B|)}{C}-\eta N\right)+m'|H|\ge\left(\frac{|H|(n-|B|)}{C}-\eta N\right)+\frac{|B||H|}{C}=\frac{|H|n}{C}-\eta N.$$
    \end{proofclaim}

    The next claim states that if~$G$ contains a (sufficiently large) blow-up of a rainbow triangle, then we can find a suitable subgraph~$B$ to apply Claim~\ref{claim:inductive_step_robust} to.
    The proof is word-by-word the same as that of Claim~\ref{claim:rainbow_triangle} in the proof of \cref{theorem:r=3'}, so we omit it.
    
    \begin{claim}
        If $G$ contains a blow-up~$B$ of a rainbow triangle with parts of size~$q$, then~$G$ contains a monochromatic $H$-tiling whose vertex set has size at least $|H|n/C-\eta N$.
    \end{claim}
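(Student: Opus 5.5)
We mirror the proof of \cref{claim:rainbow_triangle} and reduce to \cref{claim:inductive_step_robust}. Let $B$ be the given blow-up, with parts $V_1,V_2,V_3$ of size $q$. After relabelling colours, assume $B[V_1,V_2]$, $B[V_2,V_3]$ and $B[V_1,V_3]$ are complete bipartite graphs in colours $1$, $2$ and $3$, respectively. Note that $B$ may be taken to be the subgraph of $G$ consisting only of these three bipartite graphs. Missing edges inside the parts are irrelevant, since we never use edges within a $V_i$, and the edges between parts are present because $B$ is assumed to be a blow-up.

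The core step is to find a monochromatic $H$-tiling of each colour inside $B$. Since $H$ is bipartite (we are in the case where $H$ is bipartite), the complete balanced bipartite graph $K_{q,q}$ contains $2q/|H|$ vertex-disjoint copies of $H$. Indeed, $|H|$ divides $q$, so $|H|$ divides $2q$. Moreover, $K_{|H|,|H|}$ contains two disjoint copies of $H$ (place the parts of the two copies in opposite orders), so $K_{q,q}$ splits into $q/|H|$ copies of $K_{|H|,|H|}$ and hence contains $2q/|H|$ disjoint copies of $H$.

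Set $m':=2q/|H|\in\mathbb N$. Then $B[V_i,V_j]$ contains a copy of $m'H$ in the colour of that pair, so $B$ contains a monochromatic copy of $m'H$ in each of the three colours. Also $|B|=3q=\tfrac32 m'|H|$. By the ``in particular'' part of \cref{claim:inductive_step_robust} (which uses $C\ge 3|H|/2$ from \cref{remark:r=3}), $G$ contains a monochromatic $H$-tiling covering at least $|H|n/C-\eta N$ vertices.

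There is no real obstacle. The only point needing care is the divisibility giving $m'\in\mathbb N$, which holds because $q$ was chosen divisible by $|H|$.
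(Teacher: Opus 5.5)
Your proposal is correct and is the paper's argument: the paper omits this proof because it is word-for-word the earlier rainbow-triangle claim. That argument tiles each of the three monochromatic complete bipartite pairs with $m'=2q/|H|$ copies of $H$ and then applies the ``in particular'' part of \cref{claim:inductive_step_robust} using $|B|=3q=\tfrac32 m'|H|$. Your explicit tiling of $K_{q,q}$ by two oppositely oriented copies of $H$ in each $K_{|H|,|H|}$ block fills in a detail the paper leaves implicit.
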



    Similarly, the next claim states that if~$G$ contains a (sufficiently large) blow-up of a $3$-edge coloured $K_4$ containing a spanning rainbow star, then we can find a suitable subgraph~$B$ to apply Claim~\ref{claim:inductive_step_robust} to.
    We omit the proof, as it is again word-by-word identical to the proof of Claim \ref{claimK}.

    \begin{claim}\label{claimK_robust}
      Let $K$ be a $3$-edge-coloured  $K_4$ 
        such that $K$ contains a spanning rainbow star. 
        If $G$ contains a  blow-up~$B$ of $K$ with parts of size~$q$ spanning a complete subgraph in~$G$, then~$G$ contains a monochromatic $H$-tiling whose vertex set has size at least $|H|n/C-\eta N$.
    \end{claim}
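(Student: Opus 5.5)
The plan is to repeat, essentially verbatim, the argument of Claim~\ref{claimK} in the proof of \cref{theorem:r=3'}. We replace each appeal to Claim~\ref{claim:inductive_step} by Claim~\ref{claim:inductive_step_robust}, and each appeal to Claim~\ref{claim:rainbow_triangle} by its robust analogue stated just above. The one point needing care is that $G$ is no longer complete. This is why the statement assumes that the blow-up $B$ spans a complete subgraph of $G$. Under this assumption, every auxiliary structure we build lives inside $V(B)$, so every pair we use is an edge of $G$, and Ramsey's theorem can be applied inside each part.

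Concretely, let $S_0,S_1,S_2,S_3$ be the parts of $B$, each of size $q$, with $S_0$ the centre of the rainbow star and $B[S_0,S_j]$ coloured $j$. If $B[S_1\cup S_2\cup S_3]$ or $B[S_0\cup S_2\cup S_3]$ is a rainbow triangle blow-up, we are done by the robust rainbow triangle claim. This holds because a sub-blow-up of $B$ with parts of size $q$ is again such a blow-up inside a complete subgraph. Otherwise, exactly as before, we may assume $B[S_1,S_2]$ and $B[S_1,S_3]$ are coloured $1$ and $B[S_2,S_3]$ is coloured $2$.

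Since $G[S_i]$ is complete and $q\gg t$, Ramsey's theorem gives monochromatic sets $S_i'\subseteq S_i$ of size $2t$, together with subsets $U_i\subseteq S_i'$ of size $t$. We then run the same case analysis as before.
\begin{enumerate}[label=(\roman*)]
\item If $G[S_1']$ or $G[S_2']$ has colour $3$, use $B_1=G[U_0\cup U_3\cup S_1'\cup S_2']$.
\item If $G[S_0']$ and $G[S_3']$ both have colour $1$, use $B_2=G[S_0'\cup S_2'\cup S_3']$.
\item If $G[S_0']$ and $G[S_3']$ both have colour $2$, use $B_3=G[S_0'\cup S_1'\cup S_3']$.
\item If $G[S_0']$ and $G[S_3']$ have colours $1$ and $2$ in some order, use $B_4=G[S_0'\cup S_3'\cup U_1\cup U_2]$.
\end{enumerate}
Each of these subgraphs has $6t$ vertices and contains a monochromatic copy of $(4t/|H|)H$ in every colour. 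Because $6t = \tfrac32|H|\cdot(4t/|H|)$, Claim~\ref{claim:inductive_step_robust} applies with $m'=4t/|H|$, using $C\ge 3|H|/2$.

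In the remaining case, $G[S_1']$ and $G[S_2']$ avoid colour $3$, and (up to symmetry) $G[S_3']$ has colour $3$. Then $B_5=G[S_1'\cup S_2'\cup S_3']$ is a blow-up of an $(H,3)$-gadget by \cref{proposition:gadgets_r=3}. Since $2t\gg m_0,|H|$, \cref{proposition:gadget_induction} yields a sub-blow-up $B_5'$ with $|B_5'|=m_F\|{\bf q_F}\|\le C m_F$ containing a monochromatic $m_FH$ in each colour. Claim~\ref{claim:inductive_step_robust} then finishes the proof.

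There is no real obstacle here. The only step to verify is that the copies of $H$ used in each case live entirely inside the complete graph $G[V(B)]$, which holds by construction.
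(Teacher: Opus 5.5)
Your proposal is correct and follows the paper's own argument. The paper omits this proof precisely because it is the proof of Claim~\ref{claimK} verbatim, with Claim~\ref{claim:inductive_step_robust} in place of Claim~\ref{claim:inductive_step}, and the hypothesis that $B$ spans a complete subgraph is there exactly so that Ramsey's theorem applies inside each $S_i$ and every edge you use lies in $G$. Your ``up to symmetry'' step in the final case is literally valid, since swapping $S_0$ and $S_3$ preserves all the edge colours of the configuration.
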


    By the previous two claims, we may assume that~$G$ does not contain a blow-up of a rainbow triangle with parts of size~$q$, nor a blow-up of a $K_4$ as in   Claim~\ref{claimK_robust}.  

    \medskip

    As in the proof of \cref{theorem:r=3'}, we will use this structural information to find our desired monochromatic  $H$-tiling, namely by applying Lemma~\ref{theorem:regularity} and considering the structure of the reduced graph.
    There are two notable adjustments compared to the proof of \cref{theorem:r=3'}:

    \begin{itemize}
        \item We apply Lemma~\ref{theorem:regularity} to an almost-spanning subgraph $G_0$ of $G$ with large minimum degree, rather than $G$. 
        The large minimum degree of $G_0$ ensures the reduced graph also has large minimum degree, and the set $V(G)\setminus V(G_0)$ is sufficiently small.
        
        \item As in the proof of \cref{theorem:r=3'}, we will argue that the reduced graph contains no rainbow triangles nor $3$-edge coloured $K_4$ as in Claim~\ref{claimK_robust}.
        The latter case is more delicate: if the reduced graph contains such a $K_4$, then $G$ contains a blow-up this $K_4$, but this might not span a complete subgraph  and so we cannot immediately reach a contradiction.
        Thus, further steps are required.
    \end{itemize}

    Let~$G_0$ be the induced subgraph of~$G$ obtained by removing the vertices incident to at least $2\delta n/(\eps\eta^2)$ missing edges.
    Note that if we have removed more than $\eps n/2$ vertices, then
    $$\frac{1}{2}\cdot\frac{\eps n}{2}\cdot\frac{2\delta n}{\eps\eta^2}<E(\bar G)\le \frac{\delta N^2}{2}\le\frac{\delta n^2}{2\eta^2 },$$
    a contradiction.
    Thus, $|G_0|\ge(1-\eps/2)n$ and $$\delta(G_0)\ge|G_0|-\frac{2\delta n}{\eps\eta^2}\ge\left(1-\frac{4\delta }{\eps\eta^2}\right)|G_0|\ge(1-\eps)|G_0|,$$
    where the last inequality follows since $\delta\ll\eps, \eta$.

    \medskip

    We apply \cref{theorem:regularity} to~$G_0$ with parameters $r:=3,\ell_0 \in \mathbb N$ and $0<\eps/2,d<1$ to obtain a partition $V_0,V_1, \dots,V_\ell$ with $\ell_0\le \ell\le M$ (recall we picked~$M$ so that $1/M\ll \eps,1/\ell_0$ and $|G_0|\geq n/2\geq \eta N/2\geq M$), a spanning subgraph~$G'$ of~$G_0$ and a reduced graph~$R$.
    Note that $|V_0|\le(\eps/2)|G_0|\leq(\eps/2)n$ and so $|V_i|=(|G_0|-|V_0|)/\ell\ge(1-\eps)n/M$ for every $i\in[\ell]$.
    As $\eps \ll d$, \cref{reducedgraph} implies that  $\delta(R)\ge(1-6d)|R|$.

    Recall that~$R$ is $3$-edge-coloured and has vertex set $\{V_1,\dots,V_\ell\}$. Further,   if $V_xV_y\in E(R)$ is coloured $c \in [3]$, then $G'_c[V_x,V_y]$ is $[\eps/2,d]$-regular (and thus also $[\eps,d]$-regular).

    \begin{claim}\label{claimR_robust}
    $R$ contains no rainbow star on $4$ vertices nor a rainbow triangle.
    \end{claim}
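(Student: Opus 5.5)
The plan follows the proof of \cref{claimR} in the proof of \cref{theorem:r=3'}, with extra care because $G$ is no longer complete.

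\emph{Rainbow triangle.} Suppose $R$ contains a rainbow triangle $V_xV_yV_z$. Then \cref{corollary:embedding}, applied to the corresponding regular pairs of $G'\subseteq G_0\subseteq G$, gives a blow-up of a rainbow triangle with parts of size $q$ in $G$. No completeness is needed here: the bipartite graphs between the parts are complete and of the prescribed colours, and edges inside the parts play no role. This contradicts our assumption.

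\emph{Rainbow star on $4$ vertices.} Suppose $R$ contains a rainbow star with centre $V_w$ and leaves $V_x,V_y,V_z$. Applying \cref{corollary:embedding} again gives sets $S_0\subseteq V_w$ and $S_1,S_2,S_3$ in the respective leaf clusters, each of size $q_1$, such that $G[S_0,S_j]$ is complete in colour $j$. Here is the new difficulty: the pairs $G[S_i,S_j]$ with $i,j\ge1$, and the sets $G[S_i]$, need not be complete in $G$. To restore completeness, I would choose the embedded sets more carefully. The total number of missing edges of $G$ is at most $\delta N^2/2\le \delta n^2/(2\eta^2)$, and $\delta\ll 1/M,\eps,d$. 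Each cluster has size at least $(1-\eps)n/M$, so for any four clusters the number of non-edges meeting their union is a $\delta'$-fraction of all pairs, for some $\delta'\ll 1/q_1$.

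I would proceed as follows. Inside each cluster, take a random (or greedily chosen) subset of size much larger than $q_1$ that is almost complete together with the other three clusters. Then apply the embedding lemma (or the key lemma, counting copies of the blow-up) to get many copies of the star blow-up with parts of size $q_1$. A counting argument shows that some copy spans a complete subgraph of $G$. Concretely, the key lemma gives at least $c\,|V_w|^{q_1}\cdots$ copies for some $c=c(d,q_1)>0$. Each missing edge is contained in at most a $O(1/|V|^2)$-fraction of these copies, so the copies containing a missing edge number at most $\delta'' \cdot(\text{total possible})$ with $\delta''\ll c$. Hence some copy uses no missing edge.

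\emph{Finishing the star case.} With a complete copy in hand, run the bipartite Ramsey theorem as in \cref{claimR}, using $1/q_1\ll1/q$. This yields a blow-up of a $3$-edge-coloured $K_4$ containing a spanning rainbow star, with parts of size $q$ and spanning a complete subgraph of $G$. That contradicts the assumption made after \cref{claimK_robust}.

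The main obstacle is the counting step ensuring a fully complete (no missing edges, including inside parts) copy. The plan is to use a supersaturated (counting) form of the key lemma, whose count is polynomial in cluster size with a constant depending only on $d,\eps,q_1$, against the $\delta n^2$ missing edges. This works because $\delta$ was chosen below all of $1/M,\eps,d,q_1$ in the hierarchy.
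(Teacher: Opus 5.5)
Your proof is correct. The rainbow-triangle case matches the paper's. For the rainbow star you reach the same endpoint as the paper by a different mechanism. That endpoint is four $q_1$-sets spanning a complete subgraph of $G$ with each $G'_j[S_0,S_j]$ complete, followed by bipartite Ramsey.

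The paper uses no counting lemma; it samples in two rounds:
\begin{enumerate}
\item It takes random $g$-subsets $V_c',V_1',V_2',V_3'$ of the four clusters, with $\delta\ll 1/g\ll 1/M$.
\item It uses the per-vertex bound $2\delta n/(\eps\eta^2)$ on missing edges at vertices of $G_0$ to make the expected number of missing edges in their union at most $1/10$.
\item It combines regularity with Chernoff's bound to keep at least $g/2$ centre vertices whose colour-$j$ degree into each $V_j'$ is at least $dg/4$.
\item Finally, it takes $S_0$ to be the common neighbourhood of random $q_1$-subsets $S_j\subseteq V_j'$, via a first-moment bound.
\end{enumerate}

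You instead count in the full clusters. The number of tuples $(S_0,S_1,S_2,S_3)$ of $q_1$-sets with each $G'_j[S_0,S_j]$ complete is at least $c\binom{|V|}{q_1}^4$, with $c$ of order $(d/2)^{3q_1^2}$; this is valid since $\eps\ll d,1/q_1$. There are at most $\delta N^2/2\le\delta n^2/(2\eta^2)$ missing edges, so the tuples whose union contains one number at most $O(\delta q_1^2M^2\eta^{-2})\binom{|V|}{q_1}^4$. This is smaller than the good count because $\delta$ sits at the far left of the hierarchy.

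Your route is shorter and uses only the global bound on missing edges. However, it needs a counting (supersaturated) form of the embedding lemma, which the paper does not state: \cref{corollary:embedding} only gives existence. The paper's sampling argument is self-contained from the definition of regularity.

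Your preliminary step of passing to \emph{almost complete} random subsets of size much larger than $q_1$ is superfluous. If those subsets had bounded size it would actually be harmful, since regularity is not inherited by sets smaller than $\eps|V|$, and you would then need exactly the paper's Chernoff argument. It is the direct count over whole clusters that makes your argument go through.
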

    
    \begin{proofclaim}
    If $R$ contains a rainbow triangle, then  by \cref{corollary:embedding}
    $G' \subseteq G_0\subseteq G$ contains a blow-up of a rainbow triangle with parts of size $q$, a contradiction to our assumption.
 
    Suppose~$R$ contains a rainbow star on $4$ vertices, say with center $V_c$ and leaves $V_1$, $V_2$ and~$V_3$, where $V_cV_i$ is coloured with $i$ and so $G_i'[V_c,V_i]$ is $[\eps,d]$-regular, for every $i\in[3]$.
    Take a random subset $V_i'\subseteq V_i$ of size~$g$, for every $i\in\{c\}\cup [3]$.
    The probability a fixed pair of vertices lies in the subgraph $G[V_c'\cup V_1'\cup V_2'\cup V_3']$ is at most
    $g^2/|V_c|^2$.
    Hence, the expected number of missing edges in~$G[V_c'\cup V_1'\cup V_2'\cup V_3']$ is at most
    $$
    \frac{g^2}{|V_c|^2}\cdot\frac{2\delta n}{\eps\eta^2}\cdot 4|V_c|
    \le \frac{8g^2\delta M}{\eps\eta^2 (1-\eps)}
    \le \frac{1}{10},
    $$
    where the first inequality follows from $|V_c|\ge(1-\eps)n/M$ and the second inequality follows from $\delta\ll 1/M,1/g,\eta,\eps$.
    By Markov's inequality, the probability~$G[V_c'\cup V_1'\cup V_2'\cup V_3']$ has a missing edge (i.e, it is not a complete subgraph of $G$) is at most $1/10$.

    Furthermore, since the pairs $G_1'[V_c,V_1]$, $G_2'[V_c,V_2]$ and $G_3'[V_c,V_3]$ are $[\eps,d]$-regular, it follows that the set $T$ of vertices $v\in V_c$ such that $d_{G_i'}(v,V_i')\ge d|V_c|/2$ for every $i\in[3]$ has size $|T|\geq(1-3\eps)|V_c|$.
    Indeed, if that is not the case, then there exists a set $T_0\subseteq V_c$ of at least $\eps|V_c|$ vertices and some $i\in[3]$ such that each vertex $v\in T_0$ satisfies $d_{G_i'}(v,V_i)\le d|V_c|/2$.
    In particular, $d_{G'_i}(T_0,V_i)\le d/2$, which contradicts the fact that $G_i'[V_c,V_i]$ is $[\eps,d]$-regular since $\eps\ll d$.
    
    If $v\in V_c$ satisfies $d_{G_i'}(v,V_i)\ge d|V_c|/2$ for some $i\in[3]$, then by Chernoff's bound we have
    $$\mathbb P(d_{G'}(v,V_i')\le dg/4)\le\exp(-dg/24).$$
    Let $P$ be the set of vertices $v\in V_c'$ such that $d_{G_i'}(v,V_i')\leq dg/4$ for some $i\in[3]$.
    Then
    \begin{align*}
    \mathbb E(|P|)&=\sum_{v\in V_c}\mathbb P(v\in P)\\
    &\le\sum_{v\in V_c\setminus T}\mathbb P(v\in V_c')+\sum_{v\in T}\mathbb P(v\in V_c')\cdot\mathbb P(d_{G'_i}(v,V_i')\le dg/4 \text{ for some $i\in[3]$})\\
    &\le(|V_c|-|T|)\cdot\frac{g}{|V_c|}+|T|\cdot\frac{g}{|V_c|}\cdot3\exp(-dg/24)\\
    &\leq 3\eps g+(1-3\eps)g\cdot3\exp(-dg/24)\le g/100,
    \end{align*}
    where the last inequality holds since $1/g\ll d$.
    By Markov's inequality,
    $$\mathbb P(|P|\ge g/2)\le \mathbb E(|P|)\cdot\frac{2}{g}\le\frac{1}{50}.$$
    Hence, there exists a choice of $V_c'$, $V_1'$, $V_2'$, $V_3'$ such that together they span a complete graph in $G$ and at least $g/2$ vertices $v\in V_c'$ satisfy $d_{G'_i}(v,V_i')\ge dg/4$ for every $i\in[3]$.
    Now, pick random sets~$S_i\subseteq V_i'$ of size~$q_1$ for every $i\in[3]$.
    Let $S_0$ be the set of vertices $v\in V_c'$ such that $v$ is a neighbour of all vertices in $S_i$ in $G'_i$, for every $i\in[3]$.
    The probability a vertex $v\in V_c'$ is in $S_0$ is precisely
    $$\binom{d_{G'_1}(v,V'_1)}{q_1}\cdot\binom{d_{G'_2}(v,V'_2)}{q_1}\cdot\binom{d_{G'_3}(v,V'_3)}{q_1}\cdot\binom{g}{q_1}^{-3}$$
    and so we have
    $$\mathbb E(|S_0|)=\sum_{v\in V_c'}\binom{d_{G'_1}(v,V'_1)}{q_1}\cdot\binom{d_{G'_2}(v,S_2)}{q_1}\cdot\binom{d_{G'_3}(v,V'_3)}{q_1}\cdot\binom{g}{q_1}^{-3}.$$
    As at least $g/2$ vertices $v\in V_c'$ satisfy $d_{G'_i}(v,V_i')\ge dg/4$ for every $i\in[3]$, we have
    $$\mathbb E(|S_0|)\geq \frac{g}{2}\cdot\binom{dg/4}{q_1}^3\cdot\binom{g}{q_1}^{-3}\ge\frac{g}{4}\left(\frac{d}{4}\right)^{3q_1}\ge q_1,$$
    where the last inequality holds since $1/g\ll d, 1/q_1$.
    Hence, there exist vertex-disjoint sets $S_0$, $S_1$, $S_2$ and~$S_3$, each of size~$q_1$, such that $G[S_0\cup S_1\cup S_2\cup S_3]$ is complete and $G'_i[S_0, S_i]$ is complete bipartite for every $i\in[3]$.
    As $1/q_1 \ll 1/q$ and $G[S_0\cup S_1\cup S_2\cup S_3]$ is a complete graph, the bipartite version of Ramsey's theorem implies that we can find subsets $S'_i \subseteq S_i$ of size $q$ for each $i \in \{0\} \cup [3]$
    such that $G[S'_1,S'_2]$, $G[S'_1,S'_3]$ and $G[S'_2,S'_3]$ are each monochromatic bipartite graphs. 
    Hence $G[S'_0\cup S'_1 \cup S'_2 \cup S'_3]$ is a blow-up of a $K_4$ as in Claim~\ref{claimK_robust}.
    This is a contradiction.
    \end{proofclaim}

    By the previous claim, $R$ contains no rainbow  triangle nor a rainbow star on $4$ vertices.
    Following word-by-word the  proof of \cref{theorem:r=3'} after Claim~\ref{claimR}, we conclude that $G_0$ contains a monochromatic copy of~$mH$ where
    $m\ge\frac{|G_0|}{C}$.
    In particular, this copy of~$mH$ covers at least $(1-\eps)n|H|/C\ge n|H|/C-\eta N$ vertices of $G$, since $n\leq  N$ and $\eps\ll \eta, 1/|H|$.
    This concludes the proof.
\end{proof}

\section{Proof of Theorem~\ref{randomramsey2}}\label{sec:random}
Let $H$, $r$ and $\gamma$ be as in the statement of the theorem. Set $h:=|H|$. Define additional constants $\eta, \eps >0$ and $C , T, t\in \mathbb N$ such that
\begin{align}\label{hierrandom}
            0< \frac{1}{C} \ll \eta \ll \frac{1}{T} \ll \frac{1}{t}     \ll \eps \ll \gamma, \frac{1}{r},\frac{1}{h}.
    \end{align}

Let $p \geq Cn^{-1/m_2(H)}$ and consider $G:=G_{n,p}$. 
Note that $p \geq C/n$ as $m_2(H)\geq 1$; so since $1/C \ll \eta$, Chernoff's bound implies that a.a.s.\@  every disjoint pair $U,V \subseteq V(G)$ with $|U|,|V| \geq \eta n$ satisfies
\begin{align}\label{densitybound}
  1/2 \leq d_p (U,V)  \leq 2.
\end{align}
Thus, a.a.s.\@ $G$ is $(\eta, p,2)$-upper-uniform.
Note that $G$ also a.a.s.\@ satisfies the conclusion of Proposition~\ref{randomklr} on input $H$, $d:=1/5$ and $\eta$, and with $\sqrt{\eps}$ playing the role of $\eps$.

Consider any $r$-edge-colouring of $G$ using colours $1,\dots, r$. Let $G_1,\dots, G_r$ denote the spanning subgraphs of $G$ such that, for each $k \in [r]$, the edge set of $G_k$  consists precisely of those
edges in $G$ coloured $k$. Since $G$ is $(\eta, p,2)$-upper-uniform,
each of $G_1,\dots, G_r$ is $(\eta, p,2)$-upper-uniform. Thus,
the choice of $\eps, t,T, \eta$ ensures we can apply Lemma~\ref{sparserl} to obtain 
an equipartition $V_1,\dots, V_s$ of $V(G)$ into $s$ parts, where $t \leq s \leq T$, and for which all but at most an $\eps$-proportion of the pairs $(V_i,V_j)$ 
    ($i \neq j \in [s]$) induce an $(\eps, p)$-regular pair in each of the graphs $G_1,\dots, G_r$.

Let $R$ be the (reduced)  graph with vertex set $[s]$, where $ij$ is an edge precisely if $(V_i,V_j)$ 
     induce an $(\eps, p)$-regular pair in each of the graphs $G_1,\dots, G_r$.
Thus, we have that $e(R)\geq (1-\eps)\binom{s}{2}$.

Note that if $ij\in E(R)$, then as $|V_i|,|V_j| \geq \lfloor n/T\rfloor \stackrel{(\ref{hierrandom})}{\geq} \eta n$, (\ref{densitybound}) implies that
$d_p(V_i,V_j) \geq 1/2$. For each $k \in [r]$, write $d_{p,G_k}(V_i,V_j)$ for the value of  $d_{p}(V_i,V_j)$ in $G_k$.
Let $k_{ij}$ denote the choice of $k$ that maximises the value of $d_{p,G_k}(V_i,V_j)$; so
$d_{p,G_{k_{ij}}}(V_i,V_j) \geq 1/(2r)$.
Define an $r$-edge-colouring of $R$ by assigning each edge $ij\in E(R)$ colour $k_{ij} \in [r]$.

Suppose that $r=2$. As $\eps \ll \gamma$, we can choose $\eps$ to be much smaller than the output $\delta$ of Lemma~\ref{robust2colours} on input $\gamma/2$.
In particular, Lemma~\ref{robust2colours} implies that $R$ contains a
monochromatic $H$-tiling $\mathcal H_R$ covering at least
$$
\left(\frac{|H|}{2|H|-\alpha (H)}-\frac{\gamma}{2} \right )s
$$
of the vertices of $R$.
Without loss of generality assume that $\mathcal H_R$ has colour $1$.

\begin{claim}\label{claimrandom}
    Suppose that $i_1,\dots ,i_{h}$ are the vertices of a copy $H'$ of $H$ in $\mathcal H_R$. Then $G_1[V_{i_1}\cup \dots \cup V_{i_h}]$ contains an $H$-tiling covering
    all but at most $\gamma ^2 n/s$ vertices in each of $V_{i_1}, \dots , V_{i_h}$.
\end{claim}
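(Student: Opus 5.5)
We embed greedily, using Proposition~\ref{randomklr} to find one canonical copy of $H$ at a time and removing it. Write $V_{i_1},\dots,V_{i_h}$ for the clusters, indexed by the vertices $1,\dots,h$ of $H$ via the copy $H'$, so each edge $ab\in E(H)$ corresponds to an edge $i_ai_b\in E(R)$ of colour $1$. For such a pair, $(V_{i_a},V_{i_b})$ is $(\eps,p)$-regular in $G_1$ with $d_{p,G_1}(V_{i_a},V_{i_b})\ge 1/(2r)$.

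To cover the clusters in a balanced way we build the tiling in rounds. In each round we use all $h$ automorphism-free ``rotations'': take $h$ copies of $H$ in which vertex $a$ of $H$ is sent to cluster $V_{i_{\pi(a)}}$ only when that is allowed. Since $H'$ is a fixed copy, a simpler way is to note that a canonical copy places exactly one vertex in each $V_{i_a}$. Hence removing canonical copies automatically keeps the leftover sets $U_a\subseteq V_{i_a}$ of equal size $u$, and no balancing is needed.

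The main step is as follows. As long as $u\ge \gamma^2 n/(hs)$ (say), we find a canonical copy of $H$ in $G_1[U_1\cup\dots\cup U_h]$. Since $|V_{i_a}|\ge \lfloor n/s\rfloor$, we have $u\ge \gamma^2|V_{i_a}|/(2h)=:\alpha|V_{i_a}|$ with $\alpha\gg\eps$ (as $\eps\ll\gamma,1/h$). Fact~\ref{fact1} then gives that each $(U_a,U_b)$ with $ab\in E(H)$ is $(\eps',p)$-regular in $G_1$ with $\eps'=\max\{\eps/\alpha,2\eps\}\le\sqrt\eps$, and with density at least $1/(2r)-\eps\ge 1/(4r)$. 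So the number of edges is at least $pu^2/(4r)$.

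We then apply Proposition~\ref{randomklr}, with $\sqrt\eps$ in the role of $\eps$, $u$ in the role of $n$ and $n$ in the role of $N$. This requires $u\ge \eta n$, which holds since $\gamma^2/(2hT)\ge\eta$ by (\ref{hierrandom}). It also requires the density threshold $m_{ab}\ge dpu^2$ with $d=1/5$. Here lies a small obstacle: we only have density $\ge 1/(4r)$, which is smaller than $1/5$ when $r\ge 2$. To fix this, one should instead have invoked Proposition~\ref{randomklr} with $d:=1/(4r)$; I would adjust the setup accordingly, since the choice of $d$ there is only constrained to precede $\eps$ in the hierarchy, and $\eps\ll 1/r$ allows it.

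A second subtlety is that the subgraph must lie in $\mathcal G(H,u,\mathbf m,p,\sqrt\eps)$, which needs the $h$ parts to be disjoint; this holds because the clusters are distinct. Non-edges of $H$ impose no conditions, so we simply take only the $G_1$-edges between pairs $(U_a,U_b)$ with $ab\in E(H)$.

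The canonical copy found is a copy of $H$ in colour $1$. We remove its vertices and repeat. The a.a.s.\ statement of Proposition~\ref{randomklr} holds simultaneously for all subsets, so the repeated applications are legitimate. When the process stops, fewer than $\gamma^2 n/(hs)\le\gamma^2 n/s$ vertices remain in each $V_{i_a}$, which proves the claim. The only delicate point is the bookkeeping of the density constant $d$ against $\eps$ in Proposition~\ref{randomklr}; everything else is a direct iteration.
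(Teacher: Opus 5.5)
Your proof is correct and is essentially the paper's argument: take equal-sized leftover sets inside $V_{i_1},\dots,V_{i_h}$ of size at least of order $\gamma^2 n/s$ (trimming by one vertex if needed, since clusters of an equipartition can differ in size by one), use Fact~\ref{fact1} to get $(\sqrt\eps,p)$-regular pairs with density bounded below, apply Proposition~\ref{randomklr} to find a canonical copy of $H$, and iterate greedily. The density obstacle you raise does not arise here, because the claim is stated in the case $r=2$, where $d_{p,G_1}(V_{i_a},V_{i_b})\ge 1/4$ and $1/4-\eps\ge 1/5$; your switch to $d:=1/(4r)$ is harmless, and it is in fact what is needed when the same argument is reused for $(\alpha_1)$, $(\alpha_3)$ and $(\alpha_4)$ with $r\ge 3$.
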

\begin{proofclaim}
    Without loss of generality we may assume that $i_1=1, \dots , i_h=h$.
Consider any $W_{1} \subseteq V_{1}, \dots, W_{h} \subseteq V_{h}$
such that $n_1:=|W_{1}|=\dots =|W_{h}|\geq \gamma ^2 n/s $.
If $ij$ is an edge in $H'$ then $d_{p,G_1}(V_i,V_j)\geq 1/(2r)=1/4$
and $(V_i,V_j)$ induces an $(\eps,p)$-regular pair in $G_1$.
By Fact~\ref{fact1} and (\ref{hierrandom}), we have that
$(W_i,W_j)$ induces a $(\sqrt{\eps}, p)$-regular pair in $G_1$ containing at least
$$
(d_{p,G_1}(V_i,V_j)- \eps )p|W_i||W_j|\geq \frac{pn_1^2}{5}
$$
edges.

Therefore, $G_1[W_1\cup \dots \cup W_h]$ belongs to $\mathcal G (H,n_1,\textbf{m}  , p , \sqrt{\eps}) $ for some $\textbf{m}=(m_{ij} )_{ij \in E(H')}$ with
$m_{ij} \geq {pn_1^2}/{5}$ for all $ij \in E(H')$.
By Proposition~\ref{randomklr}, $G_1[W_1\cup \dots \cup W_h]$ contains 
a canonical copy of $H$; that is, a copy of $H$ with precisely one vertex in each 
class $W_1,\dots, W_h$.

Using this property we can now greedily obtain the desired $H$-tiling
in $G_1[V_1\cup \dots \cup V_h]$.
\end{proofclaim}

By applying Claim~\ref{claimrandom} to each copy of $H$ in $\mathcal H_R$, we
obtain a (monochromatic) $H$-tiling in $G_1$ covering at least
$$
\left(\frac{|H|}{2|H|-\alpha (H)}-\frac{\gamma}{2} \right )s \times \left (1-\gamma ^2 \right )\frac{n}{s} \geq \left(\frac{|H|}{2|H|-\alpha (H)}-{\gamma} \right )n
$$
vertices, thereby confirming the validity of ($\alpha _2$).

The proofs of ($\alpha _3$) and ($\alpha _4$) follow analogously by invoking (the $n=N$ cases of) \cref{robust_3} and \cref{robust_chi} respectively instead of Lemma~\ref{robust2colours}.

\smallskip

We next verify ($\alpha _1$). Since $e(R)\geq (1-\eps)\binom{s}{2}$, as in the proof of Lemma~\ref{robust2colours} we can apply 
Tur\'an's theorem and then Ramsey's theorem to find a monochromatic copy of $H$ in $R$.
Greedily repeating this process we obtain an $H$-tiling $\mathcal H_R$ in $R$ covering all but at most $\gamma  s/2$ vertices in $R$ so that each copy of $H$
in $\mathcal H_R$ is monochromatic. 

One can now argue as before to obtain an analogue of Claim~\ref{claimrandom} for each copy of $H$ in $\mathcal H_R$. We therefore obtain an $H$-tiling $\mathcal H_1$ in $G$ covering 
at least
$$
(1-\gamma /2)s \times \left (1-\gamma ^2 \right )\frac{n}{s} \geq (1-\gamma )n
$$
vertices where each copy of $H$ in $\mathcal H_1$ is monochromatic, as desired.\qed

\smallskip
{\noindent \bf Data availability statement.}
There are no additional data beyond that contained within the main manuscript.

{\noindent \bf Open access statement.}
	This research was funded in part by the  EPSRC grant   UKRI1117. For the purpose of open access, a CC BY public copyright licence is applied to any Author Accepted Manuscript arising from this submission.

\section*{Appendix}

\subsection*{Proof of \cref{thm:rambi}}

Firstly, we describe an $r$-edge-coloured complete graph that does not contain a monochromatic copy of~$mH$.
This construction has been observed in, e.g., \cite{GyarfasSS}.

\begin{lemma}\label{lemma:no_bipartite_component}
    Let~$r\ge3$ and~$m\ge1$ and
    let~$H$ be a graph with no bipartite components.
   Then  $R_r(mH)\ge rm|H|-r+1$.
\end{lemma}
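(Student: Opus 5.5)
We construct an explicit $r$-edge-coloured complete graph on $rm|H|-r$ vertices with no monochromatic $mH$; this gives $R_r(mH)\ge rm|H|-r+1$. The natural candidate is a lexicographic-type colouring. Let $V=V_1\cup\dots\cup V_r$ with $|V_1|=\dots=|V_{r-1}|=m|H|-1$ and $|V_r|=m|H|-1$. Colour every edge inside $V_i$ with colour $i$. For $i<j$, colour every edge between $V_i$ and $V_j$ with colour $j$, except that we must break the bipartite structure created when several classes send edges of the same colour to one class. The intention is that every colour class $G_i$, after removing $V_i$, spans only a bipartite (or empty) graph, together with $V_i$ being the only place where a component of $H$ can sit.

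The cleaner choice, which we would commit to, is the following (cf.\ \cite{GyarfasSS}). Take $|V_i|=m|H|-1$ for all $i\in[r]$, so that $|V|=rm|H|-r$. Colour edges inside $V_i$ with $i$. For the cross edges, use colour $i$ on $V_iV_{i+1}$ (indices mod $r$). Then all edges of colour $i$ lie inside $V_i\cup V_{i+1}$. Any remaining cross pair $V_iV_j$ with $j\notin\{i\pm1\}$ (this only occurs for $r\ge4$) must also be coloured. The key requirement is that, for every colour $i$, each edge of colour $i$ is either inside $V_i$ or goes between $V_i$ and a set $U_i$ that is independent in colour $i$; moreover, $U_i$ should meet no other colour-$i$ edges. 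If this holds, then the colour-$i$ graph is $K[V_i]$ plus a bipartite graph between $V_i$ and $U_i$.

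Assuming that structure, the argument runs as follows. Take any copy of a component $H'$ of $H$ in colour $i$. Since $H'$ is non-bipartite, it cannot live in the bipartite part, and since $U_i$ is independent in colour $i$, an odd cycle of $H'$ cannot avoid $V_i$ entirely. We want more: every vertex of the copy should be in $V_i$. This fails if $H'$ can use vertices of $U_i$ whose colour-$i$ neighbours all lie in $V_i$, since $K[V_i]\vee U_i$ (independent) contains non-bipartite graphs with vertices outside $V_i$. So the bound must instead be a counting bound: a colour-$i$ copy of $mH$ must use at least $m|H|$ vertices of $V_i$ only if $U_i=\emptyset$. This strongly suggests each colour should live entirely inside one clique $V_i$ plus a structure containing no odd cycle. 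The correct construction is then the following. Colour class $i$ consists of $K[V_i]$ together with some bipartite graph disjoint from $V_i$. A non-bipartite connected $H'$ in colour $i$ cannot use a vertex outside $V_i$: it is connected, there are no colour-$i$ edges between $V_i$ and the rest, and the outside part is bipartite. Hence every colour-$i$ copy of $mH$ lies in $V_i$, which has only $m|H|-1$ vertices, a contradiction.

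So the plan reduces to a purely combinatorial task: $2$-colour-like assign colours to the edges of the complete $r$-partite graph $K(V_1,\dots,V_r)$ so that colour $i$ is bipartite and avoids $V_i$. For $r\ge3$, first handle $r=3$. Colour $V_1V_2$ with $3$, $V_2V_3$ with $1$ and $V_1V_3$ with $2$; each such colour class is a single complete bipartite graph, which is bipartite and avoids the relevant clique. For general $r$, colour each pair $V_aV_b$ ($a<b$) with some colour $c\notin\{a,b\}$ so that the pairs given colour $c$ form a bipartite graph on the classes. One way is to take $c=a+1$ if $a+1<b$. The remaining pairs $V_aV_{a+1}$ are then given a colour $\ne a,a+1$, chosen so that no odd cycle of classes appears. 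Verifying this bipartiteness for a suitable explicit rule is the main obstacle. We would first try: pairs $\{a,b\}$ with $a<b$ and $b\ne a+1$ get colour $a+1$; pairs $\{a,a+1\}$ get colour $a+2$ (mod $r$, using $r\ge3$). Then colour $c$ is supported on edges of the form $\{c-1,b\}$ together with $\{c-2,c-1\}$, which is a star centred at class $c-1$ and hence bipartite. Since the centre is $c-1\neq c$ and the leaves avoid $c$, colour $c$ avoids $V_c$. This completes the proof.
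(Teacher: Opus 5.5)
Your final construction is correct and is essentially the paper's: $r$ cliques $V_i$ of size $m|H|-1$ with $V_i$ in colour $i$, and cross edges coloured so that the colour-$c$ cross edges form a star of classes avoiding $V_c$ (centred at $V_{c-1}$ in your rule, at $V_{i+1}$ in the paper's greedy rule). Connectivity and non-bipartiteness of each component then force every colour-$c$ copy of $H$ into $V_c$, exactly as in the paper; the exploratory constructions earlier in your write-up (the lexicographic one and colouring $V_iV_{i+1}$ with $i$) do not work and should be dropped.
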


\begin{proof}
    Let $V_1\cup\dots\cup V_r$ be a partition of a complete graph~$K$ into~$r$ parts where $|V_i|=m|H|-1$ for each~$i\in[r]$.
    For each  $i\in[r]$ in turn, we assign colour~$i$ to all edges in~$V_i$ and to all previously uncoloured edges between~$V_{i+1}$ and~$V(K)\setminus\{V_{i},V_{i+1}\}$.\footnote{The subscript of~$V_{i+1}$ is taken modulo~$r$.}
    As~$r\ge3$, each edge receives a colour.
    Furthermore, since~$H$ has no bipartite components, all monochromatic copies of~$H$ in colour~$i$ must lie completely in~$V_i$, for every $i\in[r]$.
    Since $|V_i|<m|H|$ for every $i\in[r]$, it follows that there is no monochromatic copy of~$m|H|$.
    Thus $R_r(mH)\ge|K|+1=rm|H|-r+1$, as required.
\end{proof}

We are ready to prove \cref{thm:rambi}.

\begin{proof}[Proof of \cref{thm:rambi}]
    We first prove the following claim.
    \begin{claim}
        If $R_r(mH)\le rm|H|+C$ for some $C\in\mathbb Z$ with $C\ge R_r(H)-(m+r)|H|$,
        then $R_r((m+1)H)\le r(m+1)|H|+C$.
    \end{claim}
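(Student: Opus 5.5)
Let $G$ be an $r$-edge-coloured complete graph on $n:=r(m+1)|H|+C$ vertices. The plan is to find a monochromatic copy of $(m+1)H$ in $G$ by removing one monochromatic copy of $H$, applying the hypothesis to the remainder, and, when the colours do not match, repairing the tiling by a swap.

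First, observe that $n\ge rm|H|+C\ge R_r(mH)$, so $G$ certainly contains a monochromatic $mH$. To gain an extra copy, we first use $n\ge R_r(H)$; this holds because $C\ge R_r(H)-(m+r)|H|$ gives $n\ge R_r(H)+(r-1)|H|-r|H|+r(m+1)|H|-m|H|\ge R_r(H)$ after a short computation. So $G$ contains a monochromatic copy $H_0$ of $H$, say in colour $1$. Delete its $|H|$ vertices. The remaining graph has $n-|H|\ge rm|H|+C\ge R_r(mH)$ vertices, so it contains a monochromatic copy $\mathcal T$ of $mH$ in some colour $c$. If $c=1$ we are done, since $\mathcal T\cup H_0$ is a copy of $(m+1)H$ in colour $1$.

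Otherwise $c\neq1$, and we want to exploit the slack. The vertices outside $V(\mathcal T)\cup V(H_0)$ number
\[
n-(m+1)|H|=(r-1)(m+1)|H|+C\ge R_r(H)+(r-2)|H|+\dots,
\]
which is at least $R_r(H)$ by the lower bound on $C$. So this leftover set $L$ again contains a monochromatic copy $H_1$ of $H$. If $H_1$ has colour $c$, we extend $\mathcal T$ and are done. If not, we iterate: keep extracting monochromatic copies of $H$ from the leftover set as long as at least $R_r(H)$ vertices remain. This yields many vertex-disjoint monochromatic copies of $H$ together with $\mathcal T$. By pigeonhole over colours, some colour class among these copies plus $\mathcal T$ should reach $m+1$ copies. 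Concretely, the leftover supports about $((r-1)(m+1)|H|+C-R_r(H))/|H|$ further copies. If none of them has colour $c$, they are spread among $r-1$ colours, and one colour then gets at least $m+1$ copies precisely when the count exceeds $(r-1)m$. This is where the inequality $C\ge R_r(H)-(m+r)|H|$ should be exactly what is needed; I expect the constants to line up with the count $r(m+1)|H|+C-R_r(H)+1$ of coverable vertices.

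The main obstacle is the last counting step. A naive count gives $m$ copies in $\mathcal T$ plus roughly $(r-1)(m+1)+(C-R_r(H))/|H|$ greedy copies, and the greedy copies in colours other than $c$ must reach $(r-1)m+1$ for pigeonhole to work. I would first check this directly. If it fails by a small amount, I would instead apply the greedy argument to all of $G$ from the start: tile all but $R_r(H)-1$ vertices by monochromatic copies of $H$, which gives at least $(n-R_r(H)+1)/|H|$ copies, and use pigeonhole on $r$ colours. Then $(n-R_r(H)+1)/|H|>rm$ follows from the hypothesis on $C$, giving $m+1$ copies in one colour. This direct route may even make the inductive hypothesis unnecessary, so I would try it first.
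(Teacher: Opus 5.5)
There is a genuine gap. Neither of your two routes closes the count, and the fallback is provably wrong.

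\textbf{The fallback route.} With $n=r(m+1)|H|+C$ and $C\ge R_r(H)-(m+r)|H|$ you only get $n-R_r(H)+1\ge (r-1)m|H|+1$. A greedy tiling of all but $R_r(H)-1$ vertices therefore yields at least $(r-1)m+1$ copies, not $rm+1$. Pigeonhole over $r$ colours would need $C\ge R_r(H)-r|H|$, which is stronger than the hypothesis by $m|H|$. This route also never uses $R_r(mH)\le rm|H|+C$. If it worked, it would give $R_r((m+1)H)\le (r-1)m|H|+R_r(H)$ unconditionally. That contradicts $R_r((m+1)H)\ge r(m+1)|H|-r+1$ from Lemma~\ref{lemma:no_bipartite_component} for large $m$.

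\textbf{The first route.} After removing $H_0$ and $\mathcal T$, the leftover has $(r-1)(m+1)|H|+C\ge((r-2)m-1)|H|+R_r(H)$ vertices. So you can only guarantee $(r-2)m$ further copies. Together with $H_0$, that is $(r-2)m+1$ copies spread over the $r-1$ colours other than $c$. This is short of the $(r-1)m+1$ needed, by about $m$. The underlying problem is that you delete a single copy before invoking the hypothesis, so the $mH$ it returns may be in a colour you have no spare copy of.

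\textbf{The missing idea.} Invoke the hypothesis only after deleting one copy of \emph{every} colour. First tile $G$ greedily by monochromatic copies of $H$ covering all but at most $R_r(H)-1$ vertices.
\begin{itemize}
\item If all $r$ colours occur in this tiling, delete one copy of each colour. The remainder has exactly $rm|H|+C$ vertices, so it contains a monochromatic $mH$ in some colour $j$. The deleted copy of colour $j$ extends it to $(m+1)H$.
\item If some colour is absent, the at least $(r-1)m+1$ copies lie in only $r-1$ colours. Pigeonhole over $r-1$ colours then gives $m+1$ copies of one colour.
\end{itemize}
This second case is precisely where $C\ge R_r(H)-(m+r)|H|$ is used.
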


    \begin{proofclaim}
        Let~$G$ be an $r$-edge-coloured graph on $r(m+1)|H|+C$ vertices.
        It suffices to show that~$G$ contains a monochromatic copy of~$(m+1)H$.
        First, we find a collection~$\mathcal H$ of vertex-disjoint monochromatic copies of~$H$ which cover all but at most~$R_r(H)-1$ vertices of~$G$. 

        Suppose that each colour appears in~$\mathcal H$.
        Let~$G'$ be the graph obtained by removing a copy of~$H$ in~$\mathcal H$ of colour~$i$ from~$G$, for every $i\in[r]$.
        Then $|G'|=|G|-r|H|= rm|H|+C$.
        By assumption, $G'$ contains a monochromatic copy of~$mH$, say in colour $j\in[r]$.
        Among the removed copies of~$H$ in~$\mathcal H$, one is monochromatic in colour~$j$.
        We thus conclude there is a monochromatic copy of $(m+1)H$ in~$G$, as required.

        Suppose instead that some colour does not appear in~$\mathcal H$.
        By the pigeonhole principle, $\mathcal H$ includes at least $(|G|-R_r(H)+1)/((r-1)|H|)$ copies of~$H$ of the same colour.
        Note that
        \begin{align*}
            \frac{|G|-R_r(H)+1}{(r-1)|H|}&=\frac{r(m+1)|H|+C-R_r(H)+1}{(r-1)|H|}\\
            &=\frac{(r-1)m|H|+(m+r)|H|+C-R_r(H)+1}{(r-1)|H|}\\
            &\ge\frac{(r-1)m|H|+1}{(r-1)|H|}>m.
        \end{align*}
        Hence, $G$ contains a monochromatic copy of~$(m+1)H$, as required.
    \end{proofclaim}

    For every integer $m\ge1$, let $C_m$ be so that $R_r(mH)=rm|H|+C_m$.
    We have $C_m\ge-r+1$ for every $m\ge1$ by \cref{lemma:no_bipartite_component}.
    In particular, $C_m\ge R_r(H)-(m+r)|H|$ provided~$m$ is sufficiently large.
    Therefore, by the above claim, the sequence $C_1,C_2,\dots$ is eventually non-increasing.
    In particular, the sequence $C_1,C_2,\dots$ is eventually constant; that is, there exists an integer~$C$ such that $R_r(mH)=rm|H|+C$ for any~$m$ sufficiently large. 

    By \cref{lemma:no_bipartite_component}, we have $C\ge -r+1$.
    Set $C':=R_r(H)-r|H|$.
    So  $R_r(H)=r|H|+C'$ and
     for every $m\ge 1$ we have $C'\ge R_r(H)-(r+m)|H|$.
    Hence, by the above claim, we have $R_r(mH)\le rm|H|+C'$ for every $m\ge1$.
    This implies $C\le C'= R_r(H)-r|H|$, as required.
\end{proof}

\end{document}